\newtheorem{theorem}{Theorem}[section]
\newtheorem{corollary}[theorem]{Corollary}
\newtheorem{lemma}[theorem]{Lemma}
\newtheorem{proposition}[theorem]{Proposition}
\theoremstyle{definition}
\newtheorem{example}[theorem]{Example}
\newtheorem{remark}[theorem]{Remark}
\newcommand{\N}{\mathbb{N}}
\newcommand{\Z}{\mathbb{Z}}
\newcommand{\Q}{\mathbb{Q}}
\newcommand{\R}{\mathbb{R}}
\newcommand{\C}{\mathbb{C}}
\newcommand{\T}{\mathbb{T}}
\newcommand{\TT}{\mathbb{T}}
\newcommand{\RP}{\mathbb{RP}}
\renewcommand{\L}{\mathbf{L}}
\renewcommand{\k}{\Bbbk}
\newcommand{\RR}{\mathcal{R}}
\newcommand{\V}{\mathcal{V}}
\newcommand{\VV}{\mathcal{V}}
\newcommand{\A}{{\mathcal{A}}}
\newcommand{\ZZ}{{\mathcal{Z}}}
\newcommand{\WW}{\mathcal{W}}
\newcommand{\YY}{{\mathcal{Y}}}
\newcommand{\B}{{\mathfrak{B}}}
\renewcommand{\AA}{{\mathfrak{A}}}
\newcommand{\II}{{\mathfrak{I}}}
\newcommand{\h}{{\mathfrak{h}}}
\DeclareMathOperator{\rank}{rank}
\DeclareMathOperator{\gr}{gr}
\DeclareMathOperator{\im}{im}
\DeclareMathOperator{\coker}{coker}
\DeclareMathOperator{\id}{id}
\DeclareMathOperator{\pr}{pr}
\DeclareMathOperator{\ab}{{ab}}
\DeclareMathOperator{\abf}{{abf}}
\DeclareMathOperator{\Sym}{Sym}
\DeclareMathOperator{\Hom}{{Hom}}
\DeclareMathOperator{\Hilb}{{Hilb}}
\DeclareMathOperator{\ann}{{ann}}
\DeclareMathOperator{\Lie}{Lie}
\DeclareMathOperator{\Spec}{{Spec_m}}
\DeclareMathOperator{\supp}{{supp}}
\DeclareMathOperator{\Aut}{Aut}
\DeclareMathOperator{\Inn}{Inn}
\DeclareMathOperator{\Out}{Out}
\DeclareMathOperator{\Tors}{Tors}
\DeclareMathOperator{\TC}{TC}
\DeclareMathOperator{\Fitt}{Fitt}
\DeclareMathOperator{\rat}{\scalebox{0.5}{$\Q$}}
\DeclareMathOperator{\dash}{\!-\!}
\newcommand{\p}{{\mathfrak p}}
\newcommand{\m}{{\mathfrak m}}
\DeclareMathOperator{\ii}{i}
\newcommand{\surj}{\twoheadrightarrow}
\newcommand{\inj}{\hookrightarrow}
\newcommand{\isom}{\xrightarrow{
   \,\smash{\raisebox{-0.3ex}{\ensuremath{\scriptstyle\simeq}}}\,}}
\newcommand{\abs}[1]{\left| #1 \right|}
\newcommand{\ssqrt}{\!\sqrt}
\newcommand{\vs}{\vskip 2pt}
\def\dot{\mathchar"013A}  
\newcommand{\hdot}{{\raise1pt\hbox to0.35em{\Huge $\dot$}}} 
\newcommand{\bwedge}{\mbox{\normalsize $\bigwedge$}}
\newcommand{\bbwedge}{\mbox{\small $\bigwedge$}}
\definecolor{lime}{HTML}{A6CE39}
\DeclareRobustCommand{\orcidicon}{
	\begin{tikzpicture}
	\draw[lime, fill=lime] (0,0) 
	circle [radius=0.16] 
	node[white] {{\fontfamily{qag}\selectfont \tiny ID}};
	\draw[white, fill=white] (-0.0625,0.095) 
	circle [radius=0.007];
	\end{tikzpicture}
	\hspace{-2mm}
}
\begin{document}
%\date{November 8, 2021}
%\date{August 18, 2022}

\title[Alexander invariants and jump loci in group extensions]{%
Alexander invariants and cohomology jump loci in \\ group extensions} 

\author[Alexander~I.~Suciu]{Alexander~I.~Suciu$^1$\orcidA{}}
\address{Department of Mathematics,
Northeastern University,
Boston, MA 02115, USA}
\email{\href{mailto:a.suciu@northeastern.edu}{a.suciu@northeastern.edu}}
\urladdr{\href{https://web.northeastern.edu/suciu/}%
{web.northeastern.edu/suciu/}}
\thanks{$^1$Supported in part by Simons Foundation Collaboration 
Grants for Mathematicians \#354156 and \#693825}

\subjclass[2020]{Primary
20F14,  % Derived series, central series, and generalizations
55N25. %% Homology with local coefficients, equivariant cohomology
Secondary
14M12, % Determinantal varieties
16W70, % Filtered rings; filtrational and graded techniques
17B70,   % Graded Lie (super)algebras
20F40,  % Associated Lie structures for groups
20J05,  % Homological methods in group theory
57M05, %  Fundamental group, presentations, free differential calculus
57M07.  %Topological methods in group theory
}

\keywords{Derived series, rational derived series, mod-$p$ derived series, 
Alexander invariant, rational Alexander invariant, associated graded Lie algebra, 
Chen Lie algebra, holonomy Lie algebra, infinitesimal Alexander invariant, 
characteristic variety, resonance variety, formality,  group extension.}

\begin{abstract}
We study the integral, rational, and modular Alexander invariants, 
as well as the cohomology jump loci of groups arising as extensions 
with trivial algebraic monodromy.  Our focus is on extensions of the form 
$1\to K\to G\to Q\to 1$, where $Q$ is an abelian group acting trivially 
on $H_1(K;\Z)$, with suitable modifications in the rational 
and mod-$p$ settings.  We find a tight relationship between 
the Alexander invariants, the characteristic varieties, and the 
resonance varieties of the groups $K$ and $G$.  This  leads to 
an inequality between the respective Chen ranks, which becomes 
an equality in degrees greater than $1$ for split extensions.
\end{abstract}
\maketitle

\section{Introduction}
\label{sect:intro}

\subsection{Overview}
\label{intro:overview}

The Alexander invariants and the characteristic varieties of 
spaces and groups have their origin in the study of the Alexander 
polynomials of knots and links.  The basic topological idea in defining 
these invariants is to take the homology of the universal abelian cover 
of a cellular complex $X$ and view it as a module over the group ring 
of $H_1(X;\Z)$. One then studies the support loci of these modules, 
or, alternatively, the characteristic varieties, which are the jump 
loci for homology with coefficients in rank $1$ local systems 
on $X$. 

From a group-theoretical point of view, one looks at the 
fundamental group, $G=\pi_1(X)$, and its derived series, 
$G\triangleright G'\triangleright G'' \triangleright \cdots $. Since all the 
succesive quotients of this series are abelian, and since 
 $G_{\ab}=G/G'$ naturally acts on the group $B(G)=G'/G''$, 
one may view $B(G)$ as a module over the group ring $\Z[G_{\ab}]$, 
filtered by the powers of the augmentation ideal. The group $G$ is 
also filtered by its lower central series, 
$G\triangleright [G,G]\triangleright [G,[G,G]]\triangleright \cdots$, 
whose succesive quotients are again abelian, and whose 
associated graded object, $\gr(G)$, inherits 
the structure of a graded Lie algebra. As noted by Massey 
in \cite{Ms-80}, when $G_{\ab}$ is finitely generated, the 
ranks of the associated graded module of $B(G)$ determine the 
Chen ranks of $G$, that is, the ranks of its Chen Lie algebra,  
$\gr(G/G'')$.

The Alexander invariant, $B(G)$, 
determines the characteristic varieties, $\VV_k(G)$, which 
live inside the character group, $\Hom(G,\C^*)$. 
The infinitesimal version of this theory starts from the low-degrees 
cup-product map in group cohomology, 
$\cup_G\colon H^1(G;\Z)\wedge H^1(G;\Z) \to H^2(G;\Z)$, 
and builds from it the infinitesimal Alexander invariant $\B(G)$ 
and the resonance varieties $\RR_k(G)$, which live inside the 
affine space $H^1(G;\C)$. As shown in \cite{DPS-duke}, 
under a formality assumption, there is a strong relationship 
between the modules $B(G)$ and $\B(G)$, on the one hand, 
and the characteristic and resonance varieties (collectively known 
as the cohomology jump loci) of $G$, on the other hand.

We revisit here these classical topics from three main directions. 
First, we broaden the study of the Alexander invariants to include 
their rational and modular versions, based on the maximal torsion-free 
abelian cover and the mod-$p$ congruence covers of a space, 
or, alternatively, on the rational and mod-$p$ variants  of the 
derived series. Second, we analyze the behavior of the 
Alexander-type invariants in short exact sequences of the form 
$1\to K\to G\to Q\to 1$, where the quotient group $Q$ is abelian and the 
sequence stays exact upon abelianization, with suitable modifications in 
the rational and modular settings. Thirdly, we establish a tight relationship 
between the cohomology jump loci and the Chen ranks of the groups 
$G$ and $K$ in such sequences. We proceed now to give a more 
detailed account of the main results of this paper. 

\subsection{Lower central series and derived series}
\label{intro:derived}
Among all the descending series of subgroups associated to a group $G$, 
the most prominent are the lower central series, $\{\gamma_n(G)\}_{n\ge 1}$, 
and the derived series, $\{G^{(r)}\}_{r\ge 0}$. 
Following \cite{BL,Co04,CH05,CH08,CH10,Ha05,St,St83}, 
we also consider the rational and mod-$p$ (where $p$ is a prime) 
versions of these series. All these series start at $G$, and obey 
the following recursion formulas:
\begin{align}
\label{lcs1}
\gamma_{n+1}(G) &=[ G,\gamma_n(G)]
&G^{(r)}&=[G^{(r-1)},G^{(r-1)}]\, ,
\\
\label{lcs2}
\gamma^{\rat}_{n+1}(G) &=\ssqrt{[ G,\gamma^{\rat}_n(G)]}
&G^{(r)}_{\rat}&=\sqrt{ \big[G^{(r-1)}_{\rat},G^{(r-1)}_{\rat}\big]}\, ,
\\
\label{lcs3}
\gamma^{p}_{n+1}(G)&=(\gamma^{p}_{n}(G))^p \big[G,\gamma^{p}_{n}(G)\big]
&G^{(r)}_p&=(G^{(r-1)}_p)^p \big[G^{(r-1)}_p,G^{(r-1)}_p\big]\, .
\end{align}

Here, for a subset $S\subseteq G$, we let $\ssqrt{S}$ 
denote the set of elements $g\in G$ such that $g^m\in S$ for 
some $m>0$, and for a subgroup $H\le G$, we let 
$H^p=\langle g^p \mid g\in H\rangle $ denote 
the subgroup generated by the $p$-powers of elements in $H$ 
(see also \cite{HHR, Leedham-McKay, LR}). Moreover, 
the notation $HK$ in \eqref{lcs3} is shorthand for the subgroup 
of $G$ generated by $H$ and $K$. The successive quotients 
of all these series are abelian groups, which are torsion-free 
in case \eqref{lcs2} and elementary $p$-groups in case \eqref{lcs3}. In particular, 
$G/G'=G_{\ab}$ is the abelianization of $G$, while $G/G'_{\rat}=G_{\abf}$ 
is its maximal torsion-free abelian quotient and $G/G'_p=H_1(G;\Z_p)$ is the 
maximal elementary $p$-abelian quotient of $G$.

The direct sum of the lower central series quotients, $\gr(G)  
= \bigoplus_{n\ge 1} \gamma_n(G)/\gamma_{n+1}(G)$, 
acquires the structure of a graded Lie algebra, with addition 
induced from the group multiplication and Lie bracket 
induced from the group commutator. The associated graded 
Lie algebra $\gr(G)$ is generated by its degree $1$ piece, 
$\gr_1(G)=G_{\ab}$. Thus, if the first Betti number $b_1(G)=\dim H_1(G,\Q)$ 
is finite, the LCS ranks $\phi_n(G)= \dim_{\Q} \gr_n(G)\otimes \Q$ 
are also finite. The graded Lie algebras $\gr^{\rat}(G)$ and $\gr^{p}(G)$ 
and their graded ranks are defined in like manner. Moreover, if $b_1(G)<\infty$, 
then $\phi^{\rat}_n(G)=\phi_n(G)$, and if  $b^p_1(G)=\dim_{\Z_p} H_1(G,\Z_p)$ is finite, 
then so are the mod-$p$ LCS ranks, $\phi_n^p(G)$.

Replacing in this construction the group $G$ by its maximal 
metabelian quotient, $G/G''$, leads to the Chen Lie algebra, 
$\gr(G/G'')$, studied in \cite{Chen51,Markl-Papadima,Ms-80,
PS-imrn04,SW-forum}. We also consider the Lie algebras 
$\gr^{\rat}(G/G''_{\rat})$ and $\gr^p(G/G''_{p})$. The graded 
ranks of all these Lie algebras, $\theta_n(G)=\theta^{\rat}_n(G)$ 
and $\theta^p_n(G)$,  are bounded above by the corresponding 
LCS ranks.

\subsection{Alexander invariants}
\label{intro:alexinv}
We are primarily  interested here in the Alexan\-der-type invariants 
associated with the three kinds of derived series from \eqref{lcs1}--\eqref{lcs3}, 
and how these modules relate to other group invariants. As mentioned 
in \S\ref{intro:overview}, the classical Alexander invariant of $G$ is 
the abelian group 
\begin{equation}
\label{eq:alexinv-intro}
B(G)= G'/G'',
\end{equation} 
viewed as a module over the group-ring $\Z[G_{\ab}]$.  
We introduce and study here two variations thereof: the {\em rational 
Alexander invariant}\/ is the quotient $B_{\rat}(G)= G'_{\rat}/G''_{\rat}$, 
viewed as a module over $\Z[G_{\abf}]$, while the {\em mod-$p$ 
Alexander invariant}\/ is the quotient $B_{p}(G)= G'_{p}/G''_{p}$, 
viewed as a module over $\Z_p[H_1(G;\Z_p)]$. In the process, 
we define analogues of the Crowell exact sequence, relating 
the the Alexander invariant, the Alexander module, and the 
augmentation ideal of the group ring. Important to our analysis 
are the naturality properties of these constructions. For instance, 
every homomorphism $\alpha\colon G\to H$ gives rise 
to a morphism of modules, $B(\alpha)\colon B(G)\to B(H)$, which 
covers the ring map $\tilde\alpha_{\ab}\colon \Z[G_{\ab}]\to\Z[H_{\ab}]$. 
In turn, $B(\alpha)$ factors through a $\Z[G_{\ab}]$-linear map 
from $B(G)$ to the module $B(H)_{\alpha}$ obtained from $B(H)$ 
by restriction of scalars along $\tilde\alpha_{\ab}$.

In \cite{Ms-80}, Massey uncovered a very fruitful connection between 
the Alexander invariant of a group and the lower central series 
of its maximal metabelian quotient. Due to the importance of this 
connection to the further development of the subject, we provide 
in \S\ref{sect:ext} a complete account of the result 
(part \eqref{mm1} of the theorem below), and establish 
rational and modular analogues of Massey's correspondence. 
We summarize these results, as follows.

\begin{theorem}
\label{intro:massey}
For a group $G$, the following hold, for all $n\ge 0$. 
\begin{enumerate}[itemsep=3pt]
\item \label{mm1}
$I^n B(G) = \gamma_{n+2} (G/G'')$, where 
$I$ is the augmentation ideal of $\Z[G_{\ab}]$.
\item \label{mm2}
$I^n (B_{\rat}(G) \otimes \Q) 
= \gamma^{\rat}_{n+2} (G/G''_{\rat}) \otimes \Q$, 
where $I$ is the augmentation ideal of $\Q[G_{\abf}]$.
\item \label{mm3}
$I^n B_p(G) = \gamma_{n+2} (G/G''_p)$, where 
$I$ is the augmentation ideal of $\Z_p[H_1(G;\Z_p)]$.
\end{enumerate} 
\end{theorem}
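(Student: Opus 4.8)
The common engine for all three parts is Massey's observation~\cite{Ms-80} that, after passing to the relevant metabelian-type quotient, the module action of the augmentation ideal is nothing but the group commutator. The plan is to set $M=G/G''$ (respectively $M=G/G''_{\rat}$ and $M=G/G''_p$), to identify the Alexander invariant $B(G)=G'/G''$ with the \emph{abelian} normal subgroup $M'=\gamma_2(M)$ (and likewise in the other two cases), and to record the following dictionary: if $t\in M_{\ab}$ lifts to $\tilde t\in M$ and $y$ lies in a submodule $N\subseteq M'$, then, writing the module additively, $(t-1)\cdot y=\tilde t\,y\,\tilde t^{-1}y^{-1}=[\tilde t,y]$. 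The right-hand side is independent of the chosen lift because $M'$ is abelian, and the commutators $[\tilde t,y]$ generate $[M,N]$; since $[M,N]\subseteq M'$, this gives the key identity
\begin{equation}
\label{eq:massey-dict}
I\cdot N=[M,N]\qquad\text{for every submodule } N\subseteq M'.
\end{equation}

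Part~\eqref{mm1} is then immediate by induction on $n$. The base case is the identification $I^0B(G)=B(G)=M'=\gamma_2(M)$, and, assuming $I^nB(G)=\gamma_{n+2}(M)$, the submodule $\gamma_{n+2}(M)\subseteq M'$ satisfies $I^{n+1}B(G)=I\cdot\gamma_{n+2}(M)=[M,\gamma_{n+2}(M)]=\gamma_{n+3}(M)$ by \eqref{eq:massey-dict}.

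For part~\eqref{mm2} I would run the same induction inside $M=G/G''_{\rat}$, with the torsion-free module $B_{\rat}(G)\otimes\Q=M'_{\rat}\otimes\Q$ over $\Q[G_{\abf}]$ (note $M_{\abf}=G_{\abf}$). The one new feature is the isolator in $\gamma^{\rat}_{k+1}(M)=\sqrt{[M,\gamma^{\rat}_k(M)]}$: since $\sqrt{H}/H$ is the torsion subgroup of $M'_{\rat}/H$, the inclusion induces an isomorphism $H\otimes\Q\isom\sqrt{H}\otimes\Q$, so $\gamma^{\rat}_{k+1}(M)\otimes\Q=[M,\gamma^{\rat}_k(M)]\otimes\Q$. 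Applying \eqref{eq:massey-dict} rationally turns the latter into $I\cdot(\gamma^{\rat}_k(M)\otimes\Q)$, and the induction proceeds as before, with base case $\gamma^{\rat}_2(M)=M'_{\rat}$.

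Part~\eqref{mm3} is where I expect the main obstacle to lie. Over $\Z_p$ with $M=G/G''_p$, the module $B_p(G)=M'_p=\gamma^p_2(M)$ is an \emph{elementary abelian} $p$-group, so \eqref{eq:massey-dict} holds verbatim over $\Z_p[H_1(G;\Z_p)]$ and the base case $I^0B_p(G)=\gamma^p_2(M)$ is in place. The difficulty is the extra $p$-th power term in $\gamma^p_{k+1}(M)=(\gamma^p_k(M))^p\,[M,\gamma^p_k(M)]$, which must be shown to be absorbed. This is exactly where working inside $G/G''_p$ pays off: for $k\ge2$ one has $\gamma^p_k(M)\subseteq M'_p$, every element of which is killed by $p$, so $(\gamma^p_k(M))^p=1$ and the recursion collapses to the bracket recursion $\gamma^p_{k+1}(M)=[M,\gamma^p_k(M)]$. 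Checking the first instance, $\gamma^p_3(M)=[M,M'_p]=[M,M']=\gamma_3(M)$, uses the commutator identity $[g,h^p]\equiv[g,h]^p\bmod\gamma_3(M)$ together with $[g,h]^p=1$ in $M'_p$; by induction $\gamma^p_k(M)=\gamma_k(M)$ for all $k\ge3$. Thus for $n\ge1$ the right-hand side is the ordinary lower central series term $\gamma_{n+2}(M)$ recorded in~\eqref{mm3}, while at $n=0$ it is the second mod-$p$ term $\gamma^p_2(M)=M'_p=B_p(G)$; feeding the collapsed recursion into \eqref{eq:massey-dict} yields the inductive step $I^{n+1}B_p(G)=[M,\gamma^p_{n+2}(M)]=\gamma^p_{n+3}(M)$. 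The absorption of these $p$-th powers is the one genuinely new point relative to parts~\eqref{mm1}--\eqref{mm2}.
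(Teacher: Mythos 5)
Your proof is correct, and in parts (1) and (2) it is essentially the paper's own argument: your commutator dictionary $I\cdot N=[M,N]$ is exactly Massey's Lemma \ref{lem:massey} (and its rational and modular versions, Lemmas \ref{lem:massey-q} and \ref{lem:massey-p}), applied to the extension $1\to M'\to M\to M/M'\to 1$ with $M$ the relevant metabelian-type quotient, and your inductions coincide with those of Theorems \ref{thm:massey-alexinv} and \ref{thm:massey-al-rat}; your handling of the isolator---$\sqrt{H}\otimes\Q=H\otimes\Q$ because $\sqrt{H}/H$ is torsion---is a mild repackaging of the computation in Lemma \ref{lem:massey-q}. In part (3) you supply more detail than the paper: Theorem \ref{thm:massey-al-p} states the result with the mod-$p$ series, $I^nB_p(G)=\gamma^{p}_{n+2}(G/G''_p)$, and dismisses the $p$-th power terms with ``the conclusion follows as in the proof of Theorem \ref{thm:massey-alexinv}''; your observation that $\gamma^p_k(M)\subseteq M'_p=B_p(G)$ has exponent $p$ for $k\ge 2$, so that the recursion collapses to $\gamma^p_{k+1}(M)=[M,\gamma^p_k(M)]$, is precisely the justification left implicit there. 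Your further identity $\gamma^p_k(M)=\gamma_k(M)$ for $k\ge 3$ (via $[g,h^p]\equiv [g,h]^p \bmod \gamma_3(M)$ and exponent $p$) correctly reconciles the body's $\gamma^p_{n+2}$ with the $\gamma_{n+2}$ appearing in the statement for $n\ge 1$; and you are right that at $n=0$ the statement must be read with $\gamma^p_2$, since $\gamma_2(G/G''_p)$ can be strictly smaller than $B_p(G)$ (already for $G=\Z$, where $B_p(\Z)\cong\Z_p$ while $G/G''_p\cong\Z_{p^2}$ is abelian)---so the missing superscript $p$ in item (3) as quoted is a typo, and your argument proves the corrected statement from the body of the paper.
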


This theorem proves to be very useful, in a variety of contexts. 
For instance, when $G$ is finitely generated, it allows us to express the 
generating series for the three flavors of Chen ranks in terms of the 
Hilbert series for the corresponding Alexander invariants.

We also relate in this work the integral and rational versions of the Alexander 
invariant. To start with, we show in Proposition \ref{prop:bq-tf} that the inclusion 
$G'\inj G'_{\rat}$ induces a functorial morphism, 
$\kappa\colon B(G)\to B_{\rat}(G)$. When 
$\Tors(G_{\ab})$ is finite, the map $\kappa\otimes \Q\colon 
B(G)\otimes \Q \to B_{\rat}(G)\otimes \Q$ is surjective, 
but in general it is not an isomorphism. Nevertheless, 
if $b_1(G)<\infty$, we prove in Theorem \ref{thm:hat-kappa}, 
that $\kappa\otimes \Q$ induces isomorphisms 
on $I$-adic completions and associated graded modules. 
This theorem partly overlaps with a result from \cite{DHP14}, 
where the group $G'_{\rat}$ is called the Johnson kernel of $G$, 
due to the role it plays in D.~Johnson's study of the 
Torelli group of a surface and of the Johnson homomorphism.

\subsection{Group extensions}
\label{intro:ext}
Our main focus in this paper is on how the  algebraic and 
geometric invariants of groups mentioned above 
behave under group extensions. Given a short 
exact sequence of groups, 
\begin{equation}
\label{intro:exact} 
\begin{tikzcd}[column sep=16pt]
1\ar[r] & K\ar[r, "\iota"] & G \ar[r, "\pi"] & Q\ar[r] & 1 ,
\end{tikzcd}
\end{equation}
we relate---under suitable assumptions---the Alexander invariants, the characteristic 
varieties, the associated graded Lie algebras, and the Chen Lie algebras 
of the group $G$ to those of its subgroup $K$. The assumptions we make are 
tailored to the three versions (integral, rational, and modular) under consideration, 
and are basically of two types. 

One type of constraint is on the group $Q$: we require it to be abelian, 
specializing to torsion-free in the $\Q$-version and elementary 
$p$-abelian in the mod-$p$ version. The other type of constraint 
in on the exactness of the sequence obtained from the given one  
by applying the functors sending $G$ to $G_{\ab}$, $G_{\abf}$, and 
$G_{\ab}\otimes \Z_p$, respectively; when that happens, we say \eqref{intro:exact}  
is an $\ab$-exact, $\abf$-exact, or $p$-exact sequence, respectively. 
When the given sequence splits, these conditions amount to the triviality 
of the action of $Q$ on $K_{\ab}$, $K_{\abf}$ (or $K_{\ab}\otimes \Q$ 
if $K_{\abf}$ is finitely generated), and $K_{\ab}\otimes \Z_p$, respectively. 
In the split case, our study relies on work of \cite{BG,FR,GP,Paris} 
and our recent results from \cite{Su-lcs} regarding the several types of 
lower central series and associated graded Lie algebras of split extensions.  

Examples of (split) $\ab$-exact extensions include the right angled Artin 
groups $G$ associated to finite, connected graphs, with subgroup $K$ the 
corresponding Bestvina--Brady group 
(Examples \ref{ex:raag-bb}, \ref{ex:raag-cv}, \ref{ex:raag-res}) 
or certain Artin kernels (Example \ref{ex:artin-ker}); 
the fundamental groups of complements of arrangements of transverse 
planes through the origin of $\R^4$ 
(Examples \ref{ex:planes}, \ref{ex:high-depth-cv}, \ref{ex:high-depth-res}); 
other types of link groups (Example \ref{ex:en}); 
and the pure braid groups $P_n$ (Example \ref{ex:braids}).

\subsection{Alexander invariants in group extensions}
\label{intro:alex-inv-ext}
We are now in a position to summarize our main results connecting 
the aforementioned algebraic invariants of a group $G$ and a normal 
subgroup $K\triangleleft G$. 
The first result deals with the integral case, and is proved in Theorem 
\ref{thm:alex-abex} and Corollary \ref{cor:alex-abex-split}, 
where more detailed statements can be found.

\begin{theorem}
\label{intro:abex}
Suppose $1\to K\xrightarrow{\iota} G\to Q\to 1$ is an 
$\ab$-exact sequence of groups, and $Q$ is abelian. Then,
\begin{enumerate}[itemsep=2pt]
\item \label{b-z-i}
The induced map on Alexander invariants, 
$B(\iota)\colon B(K) \to B(G)$, factors through a 
$\Z[K_{\ab}]$-linear isomorphism, $B(K) \to B(G)_{\iota}$.
\item  \label{gab-theta}
If $G_{\ab}$ is finitely generated, then 
$\theta_n(K)\le \theta_n(G)$ for all $n\ge 1$.
\item \label{g-z-i}
If the sequence is split exact, then $\iota$ induces 
isomorphisms of graded Lie algebras, $\gr_{\ge 2}(K) \isom \gr_{\ge 2}(G)$ and 
$\gr_{\ge 2}(K/K'')  \isom  \gr_{\ge 2}(G/G'')$. Consequently, if $b_1(G)<\infty$, 
then $\phi_n(K)=\phi_n(G)$ and $\theta_n(K)=\theta_n(G)$ for all $n\ge 2$.
\end{enumerate}
\end{theorem}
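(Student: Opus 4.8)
The plan is to reduce all three parts to the single identity $G'=K'$ and then feed it into Massey's correspondence (Theorem~\ref{intro:massey}). For part~(\ref{b-z-i}), I would first note that since $Q$ is abelian the commutator subgroup $G'=[G,G]$ maps to $[Q,Q]=1$, so $G'\subseteq K$; on the other hand $\iota$ injective gives $K'\subseteq G'$. Thus $K'\subseteq G'\subseteq K$, and the induced map $K_{\ab}=K/K'\to G/G'=G_{\ab}$ has kernel exactly $G'/K'$. The $\ab$-exactness hypothesis says this map is injective, forcing $G'=K'$ and hence $G''=[G',G']=[K',K']=K''$. Therefore $B(K)=K'/K''$ and $B(G)=G'/G''$ are literally the same abelian group and $B(\iota)$ is the identity on it. It then remains only to check that this identification is $\Z[K_{\ab}]$-linear, which is immediate: the conjugation action of a class $kK'\in K_{\ab}$ on $K'/K''$ agrees with the action of $\iota(k)G'=kG'\in G_{\ab}$ on $G'/G''$, i.e.\ with the $\Z[K_{\ab}]$-structure on $B(G)_\iota$ obtained by restriction along $\tilde\iota_{\ab}$. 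This yields the factorization through a $\Z[K_{\ab}]$-linear isomorphism $B(K)\isom B(G)_\iota$.

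For part~(\ref{gab-theta}) I would combine part~(\ref{b-z-i}) with Massey's theorem. Write $A=B(G)=B(K)$ and let $I_K$, $I_G$ be the augmentation ideals of $\Z[K_{\ab}]$, $\Z[G_{\ab}]$. By Theorem~\ref{intro:massey} applied to $K$ and to $G$ one has $\gamma_{n+2}(K/K'')=I_K^{\,n}A$ and $\gamma_{n+2}(G/G'')=I_G^{\,n}A$ inside $A$, so for $n\ge 2$ the Chen rank $\theta_n$ is the rank of the corresponding $I$-adic graded piece of $A$. Since $\tilde\iota_{\ab}(I_K)\subseteq I_G$ we have $I_K^{\,n}A\subseteq I_G^{\,n}A$, and the crux is that these coincide. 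The key observation is that $\ab$-exactness forces the monodromy action of $Q=G/K$ on $H_1(K)=K_{\ab}$ to be trivial: the map $K_{\ab}\inj G_{\ab}$ is $G$-equivariant into a trivial $G$-module, so $K_{\ab}$ is a trivial $Q$-module. As $\gr(K/K'')$ is generated in degree $1$ by $K_{\ab}$ and $Q$ acts on it by graded Lie algebra automorphisms, $Q$ then acts trivially on every $\gr_m(K/K'')$, i.e.\ on every graded piece $\gr_{I_K}(A)$. Concretely, for any lift $g\in G$ of an element of $Q$ and any $a\in I_K^{\,j}A$ one gets $(g-1)a\in I_K^{\,j+1}A$; an induction on $n$ then gives $I_G^{\,n}A=I_K^{\,n}A$, whence $\gamma_n(K/K'')=\gamma_n(G/G'')$ and $\theta_n(K)=\theta_n(G)$ for all $n\ge 2$. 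For $n=1$ the inequality is the elementary $\theta_1(K)=b_1(K)\le b_1(G)=\theta_1(G)$ coming from $K_{\ab}\inj G_{\ab}$, so that $\theta_n(K)\le\theta_n(G)$ holds for all $n\ge 1$.

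For part~(\ref{g-z-i}), the metabelian statement $\gr_{\ge 2}(K/K'')\isom\gr_{\ge 2}(G/G'')$ already follows from the identity $\gamma_n(K/K'')=\gamma_n(G/G'')$ ($n\ge 2$) established above, the isomorphism being induced by $\iota$. The full lower-central-series statement $\gr_{\ge 2}(K)\isom\gr_{\ge 2}(G)$ is where the splitting is genuinely used: writing $G=K\rtimes Q$, I would show $\gamma_n(G)=\gamma_n(K)$ for $n\ge 2$ by expanding commutators of the form $[kq,x]$ and using $[Q,\gamma_m(K)]\subseteq\gamma_{m+1}(K)$, which rests on the triviality of the $Q$-action on each $\gr_m(K)$; this is precisely the split-extension analysis of \cite{Su-lcs}, which I would invoke rather than reprove. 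Finally, when $b_1(G)<\infty$ all ranks in sight are finite, and reading off dimensions from the two graded isomorphisms gives $\phi_n(K)=\phi_n(G)$ and $\theta_n(K)=\theta_n(G)$ for $n\ge 2$.

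The step I expect to be the main obstacle is the equality $I_G^{\,n}A=I_K^{\,n}A$ in part~(\ref{gab-theta}): the inclusion is automatic, but the reverse rests on the graded-triviality of the monodromy, and one must verify that the $Q$-action on $\gr_{I_K}(A)$ is well defined independently of the chosen lift $g$ (which uses that conjugation by $K$ acts trivially on the $I_K$-graded pieces). For part~(\ref{g-z-i}) the real work is the full-LCS isomorphism, where the extension class could a priori contribute to $\gr_2(G)$; the splitting is exactly what removes this contribution, and packaging it cleanly is best handled by citing \cite{Su-lcs}.
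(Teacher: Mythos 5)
Your proof is correct. For parts (\ref{b-z-i}) and (\ref{g-z-i}) it follows the paper's route in substance: the paper obtains $K'=G'$ by restricting the sequence to derived subgroups via the Nine Lemma (Lemma \ref{lem:ab-exact-derived}) and using $Q'=\{1\}$, which is the same content as your direct computation $\ker(\iota_{\ab})=G'/K'$; and part (\ref{g-z-i}) is handled in Corollary \ref{cor:alex-abex-split} exactly as you propose, by passing to the split, $\ab$-exact metabelian sequence and invoking the split-extension machinery of Falk--Randell and \cite{Su-lcs} (Corollary \ref{cor:fr-ab}). Part (\ref{gab-theta}) is where you genuinely diverge. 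The paper factors $B(\iota)$ as $B(K)\isom B(G)_{\iota}\to B(G)$, passes to associated graded modules over the $I$-adic filtrations, and deduces injectivity of $\gr(B(\iota))$ from injectivity of $\gr(\tilde\iota_{\ab})$ (Lemma \ref{lem:inj}), whence $\theta_n(K)\le \theta_n(G)$ via \eqref{eq:hilb-b-chen}. You instead prove the filtration equality $I_K^{\,n}B=I_G^{\,n}B$ by propagating the triviality of the monodromy from $K_{\ab}=\gr_1(K/K'')$ to every $\gr_m(K/K'')$; this propagation is valid, since $\gr$ of any group is generated in degree $1$, inner automorphisms act trivially on it, and a graded Lie algebra automorphism that is the identity in degree $1$ is the identity everywhere. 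Combined with Massey's correspondence this yields $\gamma_n(K/K'')=\gamma_n(G/G'')$ and hence the \emph{equality} $\theta_n(K)=\theta_n(G)$ for $n\ge 2$, with no splitting hypothesis---strictly stronger than the inequality stated in the theorem. What the paper's module-theoretic route buys in exchange is uniformity: it is the argument that transfers essentially verbatim to the $\abf$- and mod-$p$ settings of Theorems \ref{thm:alex-lcs-ngq} and \ref{thm:alex-lcs-ngp}, where your graded-triviality bookkeeping would need to be redone with isolators and $p$-power operations.

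One remark on your closing paragraph: the worry that ``the extension class could a priori contribute to $\gr_2(G)$'' in the non-split case is dispelled by your own part (\ref{b-z-i}), since $\gamma_2(G)=G'=K'=\gamma_2(K)$ holds without splitness. Running your graded-triviality induction directly on $\gr(K)$ (with this equality as base case, and using that each conjugation $c_g$ induces the identity on $\gr_1(K)$, hence on all of $\gr(K)$) gives $\gamma_n(G)=\gamma_n(K)$ for all $n\ge 2$ by the same mechanism. So the citation of \cite{Su-lcs} for part (\ref{g-z-i}), while perfectly adequate and matching the paper, is not where the splitting is truly indispensable for the conclusions you need here.
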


When $G=G_{\Gamma}$ is the right-angled Artin group associated to a finite 
simple graph $\Gamma$ and $K=N_{\Gamma}$ is the corresponding 
Bestvina--Brady group from \cite{BB}, the above theorem recovers 
in a slightly stronger form several results from \cite{PS-jlms07}. 
The next result deals with the rational case, and is proved in Theorem 
\ref{thm:alex-lcs-ngq} and Corollary \ref{cor:alex-abfex-split}.

\begin{theorem}
\label{intro:abex-q}
Suppose \eqref{intro:exact} is an $\abf$-exact sequence and 
$Q$ is torsion-free abelian. Then, 
\begin{enumerate}[itemsep=2pt]
\item \label{b-z-ii}
The map $\iota$ induces a $\Z[K_{\abf}]$-linear isomorphism, 
$B_{\rat}(K) \to B_{\rat}(G)_{\iota}$.
\item  \label{gabf-theta}
If $G_{\abf}$ is finitely generated, then 
$\theta_n(K)\le \theta_n(G)$ for all $n\ge 1$.
\item \label{g-z-ii}
If the sequence is split exact, then $\iota$ induces 
isomorphisms of graded Lie algebras, 
$\gr^{\rat}_{\ge 2}(K) \isom \gr^{\rat}_{\ge 2}(G)$ and 
$\gr^{\rat}_{\ge 2}(K/K'')  \isom  \gr^{\rat}_{\ge 2}(G/G'')$. 
Consequently, if $b_1(G)<\infty$, then $\phi_n(K)=\phi_n(G)$ 
and $\theta_n(K)=\theta_n(G)$ for all $n\ge 2$.
\end{enumerate}
\end{theorem}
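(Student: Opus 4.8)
The plan is to run the rational analogue of the integral argument behind Theorem~\ref{intro:abex}, throughout replacing the derived series by its rational refinement and Massey's correspondence by its rational form, Theorem~\ref{intro:massey}\eqref{mm2}. The whole proof hinges on a clean reading of the hypothesis. Since $Q$ is torsion-free abelian, the projection $G\to Q$ factors through $G_{\abf}$, so $G'_{\rat}\subseteq K$; as $K'_{\rat}\subseteq G'_{\rat}$ always holds, the induced map $K_{\abf}\to G_{\abf}$ has kernel $G'_{\rat}/K'_{\rat}$, a subgroup of the torsion-free group $K_{\abf}$. Hence $\abf$-exactness is \emph{equivalent} to the equality $G'_{\rat}=K'_{\rat}$, which I will take as the standing input.

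For part~\eqref{b-z-ii}, put $L:=G'_{\rat}=K'_{\rat}$. The only point with no integral counterpart---where one simply has $G''=[G',G']=[K',K']=K''$---is the verification that $G''_{\rat}=K''_{\rat}$. Here I would use that the isolator is formed in the ambient group: as $[L,L]\subseteq L=G'_{\rat}$ and $G/G'_{\rat}=G_{\abf}$ is torsion-free, any $g\in G$ with $g^m\in[L,L]$ already lies in $G'_{\rat}=L\subseteq K$, so the isolators defining $G''_{\rat}$ and $K''_{\rat}$ coincide. Therefore $B_{\rat}(K)=L/K''_{\rat}=L/G''_{\rat}=B_{\rat}(G)$ as abelian groups, and the conjugation action of $G_{\abf}$ restricts along $K_{\abf}\inj G_{\abf}$ to the intrinsic $K_{\abf}$-action, giving the $\Z[K_{\abf}]$-linear isomorphism $B_{\rat}(K)\isom B_{\rat}(G)_{\iota}$.

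For part~\eqref{gabf-theta}, feed \eqref{b-z-ii} into Theorem~\ref{intro:massey}\eqref{mm2}. With $M:=B_{\rat}(G)\otimes\Q$ one gets $\theta_{n+2}(G)=\dim_{\Q}\gr^{I_G}_n M$ and $\theta_{n+2}(K)=\dim_{\Q}\gr^{I_K}_n M$, both augmentation-gradeds formed on the \emph{same} space $M$ (with $I_G, I_K$ the augmentation ideals of $\Q[G_{\abf}], \Q[K_{\abf}]$ and $I_K\subseteq I_G$ because $K_{\abf}\inj G_{\abf}$). The identity of $M$ is filtered and induces $\psi_n\colon\gr^{I_K}_n M\to\gr^{I_G}_n M$, so the claim for $n\ge 2$ is exactly the injectivity of every $\psi_n$, while $n=1$ is $b_1(K)\le b_1(G)$. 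To get injectivity I would use that an Alexander invariant is generated in degree zero, so each $\gr^{I}M$ is a quotient of $\Sym(\cdot)\otimes\gr^{I}_0 M$ with relations concentrated in degree one (coming from the cup-product/holonomy data). Via the rational splitting $G_{\abf}\otimes\Q\cong(K_{\abf}\otimes\Q)\oplus(Q\otimes\Q)$, injectivity of all $\psi_n$ reduces to a relation-matching statement: the relations of $\gr^{I_G}M$ lying in $\Sym(K_{\abf}\otimes\Q)$ are precisely those of $\gr^{I_K}M$ (in particular $\gr^{I_G}_0 M=\gr^{I_K}_0 M$, forcing $\theta_2(K)=\theta_2(G)$). \textbf{This relation-matching is the crux}: one must show that the extra augmentation directions $q-1$ ($q\in Q$) produce only relations genuinely involving $Q\otimes\Q$ and never collapse the $K_{\abf}$-directional part. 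I expect it to follow from a Fox-derivative analysis on a presentation adapted to the extension, exactly paralleling the integral case.

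Finally, for part~\eqref{g-z-ii} the splitting is decisive. When the sequence splits with $Q$ torsion-free abelian, $\abf$-exactness means $Q$ acts trivially on $K_{\abf}$; since $\gr^{\rat}(K)$ is generated in degree one, $Q$ then acts trivially on all of $\gr^{\rat}(K)$ and on its Chen quotient. The rational split-extension results of \cite{Su-lcs} yield directly the isomorphism $\gr^{\rat}_{\ge 2}(K)\isom\gr^{\rat}_{\ge 2}(G)$. For the Chen statement, triviality of the $Q$-action gives $(q-1)\,I_K^{n}M\subseteq I_K^{n+1}M$ for all $q,n$, whence $I_G^{n}M=I_K^{n}M$ and every $\psi_n$ of \eqref{gabf-theta} is an isomorphism; through Massey this is the asserted $\gr^{\rat}_{\ge 2}(K/K'')\isom\gr^{\rat}_{\ge 2}(G/G'')$. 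The rank equalities $\phi_n(K)=\phi_n(G)$ and $\theta_n(K)=\theta_n(G)$ for $n\ge 2$ (when $b_1(G)<\infty$) follow immediately.
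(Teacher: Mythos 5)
Your parts \eqref{b-z-ii} and \eqref{g-z-ii} are essentially sound and track the paper's route. The hands-on isolator computation ($G'_{\rat}\subseteq K$ since $\pi$ factors through $G_{\abf}$; any $g\in G$ with $g^m\in[L,L]\subseteq G'_{\rat}$ already lies in $G'_{\rat}$, so $G''_{\rat}=K''_{\rat}$) is an unwound version of the paper's argument in Theorem \ref{thm:alex-lcs-ngq}, parts \eqref{ngq1}--\eqref{ngq2}, which instead invokes the Nine-Lemma statement (Lemma \ref{lem:ab-exact-derived-rat}) together with $Q'_{\rat}=\{1\}$. In the split case the paper, like you, reduces to triviality of the $Q$-action on $K_{\abf}$ via Proposition \ref{prop:abf-exact-split}, pushes the splitting and $\abf$-exactness down to $1\to K/K''_{\rat}\to G/G''_{\rat}\to Q\to 1$, and applies Corollary \ref{cor:fr-abf-q}. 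One caveat there: routing the Chen isomorphism through the rational Massey correspondence (Theorem \ref{thm:massey-al-rat}) only produces the statement after tensoring with $\Q$, since that correspondence is a statement about $B_{\rat}(G)\otimes\Q$; the paper instead gets the group-level equality $\gamma^{\rat}_n(K/K''_{\rat})=\gamma^{\rat}_n(G/G''_{\rat})$ first, which yields the graded Lie algebra isomorphism on the nose.

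The genuine gap is exactly where you flag it, in part \eqref{gabf-theta}: the injectivity of $\psi_n\colon \gr^{I_K}_n M\to \gr^{I_G}_n M$, equivalently $I_K^nM\cap I_G^{n+1}M=I_K^{n+1}M$, is the entire content of the claim, and your proposal does not prove it. Worse, the heuristic you offer in its place rests on a false principle: $\gr^I M$ is indeed generated in degree $0$, but its relations are \emph{not} in general concentrated in degree one coming from cup-product data---that would amount to a formality-type statement (compare Corollary \ref{cor:linalex-gr}, where $\gr(B(G)\otimes\Q)\cong\B(G)\otimes\Q$ holds only under a $1$-formality hypothesis, which is not assumed here). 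Nor is the integral case handled by Fox-derivative analysis in the paper. The actual mechanism (Theorem \ref{thm:alex-abex}, part \eqref{ng4}, cloned rationally in Theorem \ref{thm:alex-lcs-ngq}, part \eqref{ngq4}) is structural: by part \eqref{b-z-ii}, the map $B_{\rat}(\iota)$ factors as the $\Z[K_{\abf}]$-linear isomorphism $B_{\rat}(K)\isom B_{\rat}(G)_{\iota}$ followed by the identity of $B_{\rat}(G)$ covering $\tilde\iota_{\abf}$; passing to associated graded modules, $\gr(B_{\rat}(\iota))$ covers the graded ring map $\gr(\tilde\iota_{\abf})$, which is injective by the proof of Lemma \ref{lem:inj} (a linear change of variables between polynomial rings---this is where the hypothesis that $G_{\abf}$ is finitely generated enters), whence $\gr(B_{\rat}(\iota))$ is injective; Corollary \ref{cor:chen-alrat} then converts this into $\theta_n(K)\le\theta_n(G)$ for $n\ge 2$, with $n=1$ coming from injectivity of $\iota_{\abf}$. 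You should replace the unproven ``relation-matching'' step by this factorization-through-restriction-of-scalars argument; as stated, your proposal reduces the theorem to its hardest point and then defers it.
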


One instance where this theorem applies is the case when 
$K$ is the fundamental group of a connected CW-complex $X$ 
with $b_1(X)<\infty$ and $G$ is the fundamental group of 
the mapping torus of a map $f\colon X\to X$ inducing the identity 
on $H_1(X;\Q)$; see Corollary \ref{cor:circle-fibration}.

The last result of this type deals with the modular case, 
and is proved in Theorem \ref{thm:alex-lcs-ngp} and 
Corollary \ref{cor:alex-lcs-ngp-split}.

\begin{theorem}
\label{intro:abex-p}
Suppose \eqref{intro:exact} is a $p$-exact sequence and 
$Q$ is an elementary abelian $p$-group. Then  
\begin{enumerate}[itemsep=2pt]
\item \label{b-z}
The map  $\iota$  induces  a $\Z_p[K_{\ab}\otimes \Z_p]$-linear 
isomorphism, $B_{p}(K) \to B_{p}(G)$.
\item  \label{gp-theta}
If $b_1^p(G)<\infty$, then $\theta^p_n(K)\le \theta^p_n(G)$ for all $n\ge 1$.
\item \label{g-z}
If the sequence is split exact, then $\iota$ induces 
isomorphisms of graded Lie algebras, 
$\gr^{p}_{\ge 2}(K) \isom \gr^{p}_{\ge 2}(G)$ and 
$\gr^{p}_{\ge 2}(K/K'')  \isom  \gr^{p}_{\ge 2}(G/G'')$. 
Moreover, if $b_1^p(G)<\infty$, then 
$\phi^{p}_n(K)=\phi^{p}_n(G)$ and $\theta^{p}_n(K)=\theta^{p}_n(G)$ 
for all $n\ge 2$.
\end{enumerate}
\end{theorem}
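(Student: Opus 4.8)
The plan is to deduce the modular statement by the same three-step architecture already used for the integral and rational cases, transporting everything through the reduction mod-$p$ of Massey's correspondence (part \eqref{mm3} of Theorem \ref{intro:massey}). First I would prove part \eqref{b-z}. The $p$-exactness hypothesis says precisely that $1\to K_{\ab}\otimes\Z_p \to G_{\ab}\otimes\Z_p \to Q\to 1$ is exact; since $Q$ is elementary abelian $p$ and acts trivially on $K_{\ab}\otimes\Z_p$, the inclusion $\iota$ induces an isomorphism of coefficient rings up to restriction of scalars, $\Z_p[K_{\ab}\otimes\Z_p]\isom \Z_p[G_{\ab}\otimes\Z_p]_{\iota}$ after identifying $K_{\ab}\otimes\Z_p$ with a subgroup of $G_{\ab}\otimes\Z_p$. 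The main content is then a five-lemma-style comparison: the mod-$p$ Crowell exact sequence relating $B_p$, the mod-$p$ Alexander module, and the augmentation ideal is natural in the group, so applying $B_p(\iota)$ gives a map of short exact sequences whose flanking maps are isomorphisms by the coefficient-ring comparison, forcing $B_p(\iota)$ to factor through the asserted $\Z_p[K_{\ab}\otimes\Z_p]$-linear isomorphism $B_p(K)\to B_p(G)$.

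Next, part \eqref{gp-theta}. Here I would feed the isomorphism from \eqref{b-z} into the mod-$p$ Massey correspondence. By part \eqref{mm3}, the Chen Lie algebra pieces $\gr_{n+2}(K/K''_p)=I^n B_p(K)$ and likewise for $G$, where $I$ is the respective augmentation ideal. The isomorphism $B_p(K)\isom B_p(G)_{\iota}$ from \eqref{b-z} is a module map over $\Z_p[K_{\ab}\otimes\Z_p]$, so it carries the $I$-adic filtration of $B_p(K)$ into that of $B_p(G)$ (the augmentation ideal of the smaller ring maps into that of the larger), yielding surjections $\gr^{p}_n(G/G''_p)\surj \gr^{p}_n(K/K''_p)$ — or the reverse — in each degree. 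Taking $\Z_p$-dimensions and using $b_1^p(G)<\infty$ to guarantee finiteness gives the inequality $\theta^p_n(K)\le\theta^p_n(G)$ for all $n\ge 1$. The bookkeeping to get the inequality in the correct direction (an injection of associated graded pieces rather than just the module isomorphism, which only controls the associated graded after choosing the filtration carefully) is the place I would be most careful.

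Finally, part \eqref{g-z}, the split case. When the sequence splits, $G\cong K\rtimes Q$ with $Q$ acting trivially mod-$p$ on $K_{\ab}\otimes\Z_p$, so I would invoke the results on mod-$p$ lower central series of split extensions from \cite{Su-lcs} (and the structural input of \cite{BG,FR,GP,Paris}). The splitting promotes the module isomorphism of \eqref{b-z} into an honest isomorphism of graded Lie algebras in degrees $\ge 2$: the degree-$1$ pieces differ by the contribution of $Q$, but in degrees $\ge 2$ the bracket is determined entirely by the $K$-part because $Q$ acts trivially and the semidirect product contributes nothing new to $\gr^p_{\ge 2}$. This gives $\gr^p_{\ge 2}(K)\isom\gr^p_{\ge 2}(G)$, and running the same argument through the maximal metabelian quotients (again via \eqref{mm3}) gives $\gr^p_{\ge 2}(K/K'')\isom\gr^p_{\ge 2}(G/G'')$. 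The numerical consequences $\phi^p_n(K)=\phi^p_n(G)$ and $\theta^p_n(K)=\theta^p_n(G)$ for $n\ge 2$ then follow by taking dimensions, the finiteness being ensured by $b_1^p(G)<\infty$.

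The hard part will be part \eqref{g-z}: verifying that in the mod-$p$ setting the splitting genuinely upgrades the Alexander-invariant isomorphism to a Lie algebra isomorphism in degrees $\ge 2$ requires controlling the interaction between the $p$-power operation in the definition \eqref{lcs3} of $\gamma^p_n$ and the semidirect product structure. The integral and rational arguments rely on torsion-freeness of the successive quotients, which fails here, so I expect the subtlety to be ensuring that the $p$-power terms contributed by $Q$ do not leak into degrees $\ge 2$ — precisely the point where the trivial-action hypothesis and the results of \cite{Su-lcs} must be applied with care.
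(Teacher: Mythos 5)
There is a genuine gap in your treatment of part \eqref{b-z}. The five-lemma comparison through the mod-$p$ Crowell sequence \eqref{eq:crowell-p} cannot close, because the flanking maps are \emph{not} isomorphisms: $p$-exactness gives a short exact sequence $0\to H_1(K;\Z_p)\to H_1(G;\Z_p)\to H_1(Q;\Z_p)\to 0$ with $H_1(Q;\Z_p)=Q\neq 0$, so $\Z_p[H_1(K;\Z_p)]$ is a \emph{proper} subring of $\Z_p[H_1(G;\Z_p)]$ (the latter is free of rank $\abs{Q}$ over the former), and your claimed identification $\Z_p[K_{\ab}\otimes\Z_p]\isom \Z_p[G_{\ab}\otimes\Z_p]_{\iota}$ is false; in particular $I_p(K)\to I_p(G)$ and $A_p(K)\to A_p(G)$ are injective but not surjective, so no five-lemma argument forces $B_p(\iota)$ to be an isomorphism. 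The paper's proof (Theorem \ref{thm:alex-lcs-ngp}) is more elementary and entirely group-theoretic: by the Nine Lemma, $p$-exactness restricts to an exact sequence on $p$-derived subgroups, $1\to K'_p\to G'_p\to Q'_p\to 1$ (Lemma \ref{lem:ab-exact-derived-p}), and since $Q$ is elementary abelian, $Q'_p=\langle Q^p, Q'\rangle=\{1\}$, whence the \emph{literal equalities} $K'_p=G'_p$ and $K''_p=G''_p$; thus $B_p(K)=B_p(G)$ as groups, with module structures matched by restriction of scalars along $\tilde\iota_{*}$. This also resolves the direction-of-inequality hedge in your part \eqref{gp-theta} (``surjections \dots or the reverse''): the paper factors $B_p(\iota)$ as this isomorphism onto $B_p(G)_\iota$ followed by the restriction-of-scalars identity map, passes to $I$-adic associated graded objects, uses injectivity of $\gr(\tilde\iota_{*})$ (as in Lemma \ref{lem:inj}) to conclude that $\gr(B_p(\iota))$ is \emph{injective}, and then reads off $\theta^p_n(K)\le\theta^p_n(G)$ from \eqref{eq:hilb-b-chen-p}.

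For part \eqref{g-z}, your outline points at the right machinery but leaves unresolved exactly the step where the proof lives, and the resolution is concrete. The paper does not ``promote'' the Alexander-invariant isomorphism to a Lie algebra isomorphism; it applies the split-extension machinery twice. By Proposition \ref{prop:p-exact-split}, a split $p$-exact sequence is a mod-$p$ almost direct product, so Theorem \ref{thm:fr-p} (from \cite{BG}, \cite{Su-lcs}) gives $\gamma^p_n(G)=\gamma^p_n(K)\rtimes_{\varphi}\gamma^p_n(Q)$; the $p$-power terms of $Q$ fail to ``leak'' into degrees $\ge 2$ precisely because $\gamma^p_n(Q)=\{1\}$ for $n\ge 2$ when $Q$ is \emph{elementary} abelian (Example \ref{ex:p-abel}) --- note that mere abelianness would not suffice, since $\gamma^p_n(\Z)=p^{n-1}\Z\neq\{1\}$, so this is where the elementary abelian hypothesis does real work. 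The Chen-level isomorphism $\gr^p_{\ge 2}(K/K'')\isom\gr^p_{\ge 2}(G/G'')$ is then obtained not via Theorem \ref{intro:massey}\eqref{mm3}, but by running the same Falk--Randell argument on the second split, $p$-exact extension $1\to K/K''_p\to G/G''_p\to Q\to 1$ furnished by Theorem \ref{thm:alex-lcs-ngp}\eqref{ngp3}, with the rank equalities following from Corollary \ref{cor:fr-ab-p} once $b_1^p(G)<\infty$ guarantees finite dimensionality.
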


\subsection{Characteristic varieties}
\label{intro:cvar}
For a finitely generated group $G$, the set of complex-valued 
characters, $\T_G= \Hom(G,\C^{*})$, is a complex algebraic variety, 
with coordinate ring $\C[G_{\ab}]$, and also an abelian group, with 
multiplication given by $(\rho\cdot \rho' )(g)\coloneqq \rho(g)\rho(g')$ 
for characters $\rho,\rho'\colon G\to \C^*$. In fact, since the group 
operations are regular maps, 
$\T_G$ is a complex algebraic group; as such, it is isomorphic to 
$(\C^*)^{r}\times \Tors(G_{\ab})$, where $r=b_1(G)$. This group 
may be thought of as the moduli space of rank $1$ local systems, $\C_{\rho}$, 
on a connected CW-complex $X$ with fundamental group $G$. Taking 
homology with coefficients in such local systems carves out subvarieties 
$\VV_k(G)\subset \T_G$ where the $\C$-vector space $H_1(X,\C_{\rho})$ has 
dimension at least~$k$.

The geometry of these {\em characteristic varieties}\/ is intimately related to the homological 
and finiteness properties of normal subgroups $K\triangleleft G$ with abelian 
quotient $Q=G/K$ and of regular, abelian covers of spaces $X$ with $\pi_1(X)=G$.  
For instance, the stratification of the character group by the varieties 
$\VV_k(G)$ determines the first Betti number of any finite abelian 
cover $Y$ as above, see e.g.~\cite{Li,MS-imrn}.
The characteristic varieties also carry precise information 
regarding the homological and geometric finiteness properties 
of infinite abelian covers, see e.g.~\cite{DPS-imrn, DF, PS-plms10, Su-imrn}, 
and provide powerful obstructions to formality and quasi-projectivity 
of spaces and groups, see e.g.~\cite{DPS-imrn,DPS-duke,PS-formal}.

The characteristic varieties of a group $G$ are controlled by its Alexander 
invariant in a manner that is crucial to our analysis. 
More precisely, $\VV_k(G)$ coincides (at least away from the 
identity character $1$), with the support of the $k$-th exterior power 
of the $\C[G_{\ab}]$-module $B(G)\otimes \C$. Furthermore, letting  
$\WW_k(G)$ be the intersection of $\V_k(G)$ with the 
identity component of the character group, we have that 
\begin{equation}
\label{eq:cvar-intro}
\WW_k(G)=\supp \!\big(\bwedge^k B_{\rat}(G)\otimes \C\big),
\end{equation}
at least away from~$1$. Although results along these lines have been 
known for a long time (see e.g.~\cite{DPS-imrn,Hi97,Li,MS-imrn}), 
there does not appear to be a complete proof in the literature, at least 
not in the generality posited here; therefore, we supply full details in 
Theorems \ref{thm:cvb} and \ref{thm:cvbq}.

\subsection{Characteristic varieties in group extensions}
\label{intro:cv-ext}
We are now in a position to summarize our main results connecting 
the characteristic varieties of groups $G$ and $K$ as above. 

\begin{theorem}
\label{intro:cv-extensions}
Let $\begin{tikzcd}[column sep=14pt]
\!\!1\ar[r] & K\ar[r, "\iota"]
& G \ar[r] & Q\ar[r] & 1\!\!
\end{tikzcd}$ 
be an exact sequence of finitely generated groups. 

\begin{enumerate}
\item \label{int-cv1}
If the sequence is $\ab$-exact and $Q$ is abelian, 
then the map $\iota^*\colon \TT_{G} \to\TT_{K}$ 
restricts to maps $\iota^* \colon \VV_k(G)\to \VV_k(K)$ 
for all $k\ge 1$; furthermore, 
$\iota^* \colon \VV_1(G)\to \VV_1(K)$ is a surjection.

\item \label{int-cv2}
If the sequence is $\abf$-exact and $Q$ is torsion-free abelian, 
then the map $\iota^*\colon \TT^0_{G} \surj \TT^0_{K}$ 
restricts to maps $\iota^* \colon \WW_k(G)\to \WW_k(K)$ 
for all $k\ge 1$; furthermore, 
$\iota^* \colon \WW_1(G)\to \WW_1(K)$ is a surjection.
\end{enumerate}
\end{theorem}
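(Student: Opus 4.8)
The plan is to reduce both parts to the module-theoretic descriptions of the characteristic varieties and then transport information across the extension using the Alexander-invariant isomorphisms of Theorems~\ref{intro:abex} and \ref{intro:abex-q}.

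First I would pin down the map $\iota^*$. Since $\TT_G=\Hom(G_{\ab},\C^*)$ has coordinate ring $\C[G_{\ab}]$, the map $\iota^*\colon\TT_G\to\TT_K$ is the morphism of affine algebraic groups induced by the ring map $\tilde\iota_{\ab}\colon\C[K_{\ab}]\to\C[G_{\ab}]$, which in turn is dual to $\iota_{\ab}\colon K_{\ab}\to G_{\ab}$. The $\ab$-exactness hypothesis says precisely that $1\to K_{\ab}\xrightarrow{\iota_{\ab}}G_{\ab}\to Q\to 1$ is exact; as $\C^*$ is divisible, hence injective as an abelian group, applying $\Hom(-,\C^*)$ yields a surjection $\iota^*\colon\TT_G\twoheadrightarrow\TT_K$ with kernel $\Hom(Q,\C^*)$. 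In the rational setting I would argue identically on identity components, using $\abf$-exactness to see that $\iota_{\abf}\colon K_{\abf}\to G_{\abf}$ is injective with torsion-free cokernel $Q$, whence $\iota^*\colon\TT^0_G\twoheadrightarrow\TT^0_K$ is the surjection dual to $\iota_{\abf}$.

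Next, recall from Theorems~\ref{thm:cvb} and \ref{thm:cvbq} that, away from the identity character, $\VV_k(G)=\supp\bwedge^k(B(G)\otimes\C)$ and $\WW_k(G)=\supp\bwedge^k(B_{\rat}(G)\otimes\C)$ as in \eqref{eq:cvar-intro}, and likewise for $K$. Writing $A=\C[K_{\ab}]$ and $B=\C[G_{\ab}]$, Theorem~\ref{intro:abex}\eqref{b-z-i} supplies a $\C[K_{\ab}]$-linear isomorphism $B(K)\otimes\C\cong(B(G)\otimes\C)_{\iota}$, the restriction of scalars along $\tilde\iota_{\ab}$. For the inclusion $\iota^*(\VV_k(G))\subseteq\VV_k(K)$ I would argue pointwise: for $\rho\in\TT_G$ with image $\rho'=\iota^*\rho$, the relation $\rho'=\rho\circ\tilde\iota_{\ab}$ gives $\tilde\iota_{\ab}(\m_{\rho'})\subseteq\m_\rho$, which induces a surjection of fibers $(B(K)\otimes\C)\otimes_A\C_{\rho'}\twoheadrightarrow(B(G)\otimes\C)\otimes_B\C_\rho$, and hence the inequality $\dim_\C(B(K)\otimes\C)\otimes_A\C_{\rho'}\ge\dim_\C(B(G)\otimes\C)\otimes_B\C_\rho$. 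Since $\rho\in\supp\bwedge^k(B(G)\otimes\C)$ is equivalent to this fiber dimension being at least $k$, the inequality shows $\rho\in\VV_k(G)\Rightarrow\rho'\in\VV_k(K)$. For the surjectivity of $\iota^*$ on $\VV_1$ I would instead use the change-of-rings identity $(B(G)\otimes\C)\otimes_A\C_{\rho'}\cong(B(G)\otimes\C)\otimes_B\C[(\iota^*)^{-1}(\rho')]$, which shows that $\rho'\in\supp_A(B(K)\otimes\C)$ if and only if the fiber $(\iota^*)^{-1}(\rho')$ meets $\supp_B(B(G)\otimes\C)=\VV_1(G)$; this yields $\VV_1(K)=\iota^*(\VV_1(G))$. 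The rational case runs verbatim, replacing $B$ by $B_{\rat}$, $\TT$ by $\TT^0$, $\C[K_{\ab}]$ by $\C[K_{\abf}]$, $\VV$ by $\WW$, and invoking Theorem~\ref{intro:abex-q}\eqref{b-z-ii}.

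The step I expect to be the main obstacle is the bookkeeping at the trivial character, where the module descriptions of $\VV_k$ and $\WW_k$ break down: a direct Betti-number count gives $b_1(G)=b_1(K)+\rank Q\ge b_1(K)$, so a naive inclusion can fail at the identity for $k\ge 2$, and the statements must be read on the complement of the trivial character, which is exactly where Theorems~\ref{thm:cvb} and \ref{thm:cvbq} apply. I would therefore carry out the support and fiber-dimension arguments on $\TT_G\setminus\{1\}$ and $\TT^0_G\setminus\{1\}$, then check that the trivial character of $K$ is attained as $\iota^*(1)$ so that the surjectivity assertions for $\VV_1$ and $\WW_1$ persist once the identity is reinserted. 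A secondary point needing care is that $B(G)\otimes\C$ be finitely generated over $\C[G_{\ab}]$ (which holds since $G$ is finitely generated, its generators yielding finitely many commutators that generate $B(G)$), so that the supports are closed and the residue-fiber criterion for membership in the support is available.
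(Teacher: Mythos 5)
Your proposal is correct, and it rests on the same two pillars as the paper's proof of Theorem \ref{thm:cv-abf}: the support descriptions of Theorems \ref{thm:cvb} and \ref{thm:cvbq}, transported through the restriction-of-scalars isomorphisms $B(K)\otimes \C\cong (B(G)\otimes\C)_{\iota}$ and $B_{\rat}(K)\otimes \C\cong (B_{\rat}(G)\otimes\C)_{\iota}$ of Theorems \ref{thm:alex-abex} and \ref{thm:alex-lcs-ngq}. The mechanics, however, are genuinely different. The paper argues globally with annihilator ideals: writing $R=\C[K_{\ab}]$ and $S=\C[G_{\ab}]$, the isomorphism lets $\tilde\iota_{\ab}$ carry $\ann_R\big(\bwedge^k B(K)\otimes\C\big)$ into $\ann_S\big(\bwedge^k B(G)\otimes\C\big)$, and taking zero-sets gives the maps $\VV_k(G)\to \VV_k(K)$; for $k=1$ it invokes the injectivity of $B(G)_{\iota}\to B(G)$ and is terse about why the contracted annihilator yields actual surjectivity rather than mere dominance. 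You argue pointwise instead: the Nakayama criterion $\rho\in\supp\bwedge^k M \Leftrightarrow \dim_{\C} M\otimes\C_{\rho}\ge k$ (legitimate, since $B(G)$ and $B(K)$ are finitely generated over Noetherian group rings, as you note), together with the fiber surjection $M\otimes_R\C_{\rho'}\surj M\otimes_S\C_{\rho}$ for $M=B(G)\otimes\C$, gives the inclusions; and your base-change identity $M\otimes_R\C_{\rho'}\cong M\otimes_S\C\big[(\iota^*)^{-1}(\rho')\big]$ shows that $\supp_R(M_{\iota})$ is \emph{exactly} the image of $\supp_S(M)$. This is a real gain in the surjectivity step: images of closed sets under the torus quotient $\iota^*$ need not be closed, so the paper's zero-set formulation only directly gives dominance, whereas your fiber argument produces a genuine preimage for every point of $\VV_1(K)$ and $\WW_1(K)$. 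Your bookkeeping at the trivial character is likewise more careful than the paper's statement (which, read literally at $1$, can fail—already for $k=1$ when $b_1(K)=0<b_1(G)$, e.g. $K=1$, $G=Q=\Z$, not only for $k\ge 2$ as you say); reinserting $1$ for the depth-one surjectivity via $b_1(G)\ge b_1(K)$ is the right fix. One residual point to spell out: your conversion from $\supp$ back to $\VV_k(K)$ requires $\iota^*\rho\ne 1$, so characters $\rho\ne 1$ lying in $(\iota^*)^{-1}(1)$ need the same excision as $1$ itself; this caveat is equally present (and equally unaddressed) in the paper's own proof, and does not affect the away-from-the-identity reading of the theorem.
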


This result is proved in Theorem \ref{thm:cv-abf}.  As a corollary, 
we show the following: If $\VV_1(G)$ is finite (in the first case),  
or $\WW_1(G)$ is finite (in the second case), then the Chen 
ranks $\theta_n(K)$ vanish for all sufficiently large $n$.

In upcoming work \cite{Su-mfmono}, we will apply 
Theorem \ref{intro:cv-extensions} to the study of 
Milnor fibrations of complex hyperplane arrangements. 
Given an arrangement $\A=\{H_1,\dots ,H_n\}$ in $\C^{d}$, 
where each hyperplane $H_i$ is the kernel of a linear form 
$\alpha_i\colon \C^{d} \to \C$, the product of these forms, $f$, 
is a polynomial of degree $n$ in $d$ variables. 
Letting $M=\{f\ne 0\}$ be the complement of the arrangement in $\C^d$, 
the polynomial map $f\colon \C^d\to \C$ restricts to a map $f\colon M\to \C^*$. 
The latter map was shown by Milnor to be a smooth fibration, with fiber $F=\{f=1\}$ 
and monodromy $h\colon F\to F$ given by $h(z)=e^{2\pi \ii/n} z$. 
Setting $G=\pi_1(M)$ and $K=\pi_1(F)$, we may apply parts \eqref{int-cv1} 
or \eqref{int-cv2} of the theorem, under the hypothesis that $h_*$ 
acts trivially on $H_1(K;\Z)$ or $H_1(K;\Q)$, respectively.  Examples from 
\cite{Su-revroum, Su-mfmono} show that the maps $\iota^* \colon \V_k(G)\to\V_k(K)$ 
and $\iota^* \colon \WW_k(G)\to\WW_k(K)$ may fail to be surjective 
for $k>1$, even in this very special context. This phenomenon 
leads to subtle invariants that can distinguish the homotopy types of 
the Milnor fibers of certain arrangements whose complements are 
homotopy equivalent.

\subsection{Holonomy, formality, and resonance}
\label{intro:holo-res}
There are two other important Lie algebras associated to a finitely 
generated group $G$.  The first one is the holonomy Lie algebra, 
$\h(G)$, which was defined by Chen \cite{Chen77} as 
the quotient of the free Lie algebra on $G_{\abf}$ 
by the Lie ideal generated by the image 
of the dual of the cup-product map $\cup_G$. 
This is a quadratic Lie algebra which maps surjectively 
to $\gr(G)$.  The graded ranks of its second derived quotient, 
$\bar\theta_n(G)$---known as the holonomy Chen ranks---are 
bounded below by the usual Chen ranks. 
Following \cite{PS-imrn04}, we use the holonomy Lie algebra  
to construct an infinitesimal version of the Alexander invariant, 
\begin{equation}
\label{eq:infalex-intro}
\B(G)=\h(G)'/\h(G)'',
\end{equation}
which is a graded module over the symmetric 
algebra $\Sym(G_{\abf})$, whose graded ranks coincide with the 
holonomy Chen ranks, after a shift of $2$. 

From a rational homotopy point of view, most important is the Malcev Lie 
algebra, $\m(G)$, defined by Quillen in \cite{Qu} as the (complete, 
filtered) Lie algebra of primitive elements in the $I$-adic completion 
of $\Q[G]$.  The associated graded Lie algebra with respect to this 
filtration, $\gr(\m(G))$, is isomorphic to $\gr(G) \otimes \Q$. 
The group $G$ is said to be graded formal if the 
canonical surjection $\h(G)\otimes \Q \surj \gr(G) \otimes \Q$
is an isomorphism; it is $1$-formal if, in addition, $\m(G)$ 
is isomorphic, as a filtered Lie algebra, to the completion of 
$\gr(G)\otimes \Q$ with respect to the bracket-length filtration.  

The {\em resonance varieties}\/ of $G$ are infinitesimal analogues 
of the characteristic varieties, defined purely in terms of 
cohomological data. More precisely, let 
$H^{\hdot}=H^{\hdot}(G;\C)$ be the cohomology algebra of $G$. 
For each $k\ge 1$, the depth $k$ resonance variety $\RR_k(G)$ 
consists of all elements $a\in H^1$ 
for which there exist $u_1,\dots, u_k \in H^1$ such that $au_i=0$ 
in $H^2$ and $\{a,u_1,\dots, u_k\}$ are linearly independent. 
These sets are homogeneous algebraic subvarieties of the 
affine space $H^1=\C^r$ which are controlled by the 
infinitesimal Alexander invariant; more exactly, 
\begin{equation}
\label{eq:resvar-intro}
\RR_k(G)=\supp \!\big(\bwedge^k \B(G)\otimes \C\big),
\end{equation}
at least away from~$0$. This was proved in \cite{DP-ann, DPS-serre} 
in the case when $G$ is finitely generated; we give in 
Theorem \ref{thm:res-supp} a different proof---% 
valid for all groups with $b_1(G)<\infty$---% 
based on the BGG correspondence and an infinitesimal 
version of the Crowell exact sequence.
 
The resonance varieties capture deep 
information about qualitative properties and numerical 
invariants of a finitely generated group $G$. When 
compared to the characteristic varieties via the 
Tangent Cone theorem of \cite{DPS-duke}, 
they obstruct $G$ from being $1$-formal, or being realizable 
as the fundamental group of a quasi-projective or K\"{a}hler manifold. 
In favorable situations, they do allow for the computation of the 
Chen ranks $\theta_n(G)$ in terms of the dimensions of the 
irreducible components of $\RR_1(G)$; see 
\cite{AFPRW1, AFRS, CSc-adv, Su-conm}. 
Finally, they have applications to the study of homological finiteness 
properties in the Johnson filtration of mapping class groups and 
automorphism groups of free groups, and to Green’s conjecture 
on free resolutions of canonical curves; see 
\cite{ AFPRW2, DHP14, DP-ann,  PS-jtop,  PS-crelle}.

\subsection{Resonance varieties in extensions}
\label{intro:ext-res}

Our final result relates the resonance varieties of a group $G$ to 
those of a normal subgroup $K\triangleleft G$, under suitable 
assumptions on the quotient $Q=G/K$ and its action on the 
first homology of $K$, as well as on the formality properties 
of $G$ and $K$. In Theorems \ref{thm:res-split-abf} 
and \ref{thm:res-abf}, we establish the following result, 
which is one of the main results of this paper.

\begin{theorem}
\label{intro:res}
Let $\begin{tikzcd}[column sep=14pt]
\!\!1\ar[r] & K\ar[r, "\iota"]
& G \ar[r] & Q\ar[r] & 1\!\!
\end{tikzcd}$ be an exact sequence of finitely generated groups. 
Suppose that either one of the following set of conditions is satisfied.
\begin{enumerate}[itemsep=1pt]
\item \label{res1}
The sequence is split exact, $G$ is graded formal, $Q$ is abelian, 
and $Q$ acts trivially on $H_1(K;\Q)$.
\item \label{res2}
The sequence if $\ab$-exact, $G$ and $K$ are $1$-formal, and  
$Q$ is abelian. 
\item \label{res3}
The sequence if $\abf$-exact, $G$ and $K$ are $1$-formal, and 
$Q$ is torsion-free abelian. 
\end{enumerate}
Then the homomorphism $\iota^*\colon H^1(G, \C) \surj H^1(K, \C)$ 
restricts to maps $\iota^* \colon \RR_k(G)\surj \RR_k(K)$ for all $k\ge 1$; 
furthermore, the map $\iota^* \colon \RR_1(G)\surj \RR_1(K)$ is a 
surjection.
\end{theorem}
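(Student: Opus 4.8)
My plan is to deduce the whole statement from a single isomorphism of \emph{infinitesimal} Alexander invariants, namely the infinitesimal counterpart of the Alexander-invariant isomorphisms in Theorems~\ref{intro:abex} and \ref{intro:abex-q}. Recall from \eqref{eq:resvar-intro} that, away from $0$, $\RR_k(H)=\supp(\bwedge^k\B(H)\otimes\C)$, where $\B(H)$ is a finitely generated graded module over $\Sym(H_1(H;\C))$, the coordinate ring of $H^1(H;\C)$. Since the sequence is $\ab$- or $\abf$-exact, the map $\iota_*\colon H_1(K;\C)\inj H_1(G;\C)$ is injective, and the induced ring map $\Sym(\iota_*)\colon\Sym(H_1(K;\C))\to\Sym(H_1(G;\C))$ has, as its associated map of spectra, precisely the surjection $\iota^*\colon H^1(G;\C)\surj H^1(K;\C)$. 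Writing $\B(G)_\iota$ for $\B(G)$ with scalars restricted along $\Sym(\iota_*)$, the crux is the claim $(\star)$: \emph{the map $\iota$ induces a $\Sym(H_1(K;\C))$-linear isomorphism $\B(K)\isom\B(G)_\iota$.}

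To establish $(\star)$ I would pass through the honest Alexander invariant and use formality as the bridge. In the $\ab$-exact case, Theorem~\ref{intro:abex} supplies a $\Z[K_{\ab}]$-linear isomorphism $B(K)\isom B(G)_\iota$; in the $\abf$-exact case, Theorem~\ref{intro:abex-q} supplies the analogous $B_{\rat}(K)\isom B_{\rat}(G)_\iota$ (and by Theorem~\ref{thm:hat-kappa} the two have the same associated graded, so the cases coincide after tensoring with $\C$). Tensoring with $\C$ and passing to associated graded modules for the augmentation-ideal filtration, and using that restriction of scalars commutes with $\gr$, yields $\gr(B(K)\otimes\C)\cong(\gr(B(G)\otimes\C))_\iota$. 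It then remains to identify $\gr(B(H)\otimes\C)$ with $\B(H)$ for $H\in\{K,G\}$, and this is exactly where the $1$-formality hypotheses enter: the module form of the Tangent Cone theorem of \cite{DPS-duke} provides, for a $1$-formal group $H$, a natural isomorphism $\gr(B(H)\otimes\C)\cong\B(H)$ of graded $\Sym(H_1(H;\C))$-modules. Applying this to both $K$ and $G$ and composing produces $(\star)$.

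The split case requires a preliminary step, since there only $G$ is assumed graded formal. As the extension splits and $Q$ acts trivially on $H_1(K;\Q)$, the sequence is $\abf$-exact, and Theorem~\ref{intro:abex-q} gives $\gr^{\rat}_{\ge 2}(K)\isom\gr^{\rat}_{\ge 2}(G)$ together with the corresponding isomorphism of Chen Lie algebras. Running the same analysis of the split extension on the cup-product map yields an isomorphism of holonomy Lie algebras in degrees $\ge 2$, so that graded formality of $G$ forces $\h_{\ge 2}(K)\otimes\C\cong\gr_{\ge 2}(K)\otimes\C$. Because $\B$ is built from $\h'/\h''$, which lives in degrees $\ge 2$, this is enough to recover $\gr(B(K)\otimes\C)\cong\B(K)$, hence $(\star)$, in this case as well.

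Granting $(\star)$, the conclusion follows from two facts about supports under restriction of scalars along $\iota^*$. First, $(\star)$ makes $\B(G)_\iota$ finitely generated over $\Sym(H_1(K;\C))$, so $\iota^*$ restricted to $\RR_1(G)=\supp\B(G)$ is a finite, hence closed, map; since the support of a restricted module is the image of the support, $\iota^*(\RR_1(G))=\supp\B(K)=\RR_1(K)$, which is the asserted surjection for $k=1$. Second, for a point $a$ over $b=\iota^*(a)$ one has $\dim_{\kappa(b)}\B(K)\otimes\kappa(b)=\dim_{\kappa(a)}\B(K)\otimes\kappa(a)\ge\dim_{\kappa(a)}\B(G)\otimes\kappa(a)$, because base change $\kappa(b)\to\kappa(a)$ and the surjection onto the fibre of $\B(G)$ can only increase the minimal number of local generators; thus $a\in\RR_k(G)$ forces $b\in\RR_k(K)$, giving $\iota^*(\RR_k(G))\subseteq\RR_k(K)$ for all $k$, and the homogeneity of resonance varieties disposes of the point $0$. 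The main obstacle is precisely $(\star)$: transporting the honest Alexander-invariant isomorphism to the infinitesimal one requires the formality bridge $\gr(B\otimes\C)\cong\B$ to be natural enough to be compatible with $\iota$ and with restriction of scalars, and in the split case it requires first propagating graded formality from $G$ to $K$ in degrees $\ge 2$. Verifying this compatibility at the level of graded $\Sym(H_1)$-modules, rather than merely at the level of tangent cones of the characteristic varieties, is the delicate heart of the argument.
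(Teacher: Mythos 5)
Your overall strategy is sound, and large pieces of it coincide with the paper's own proof, but the route through the final step is genuinely different in cases (2) and (3). The paper proves your isomorphism $(\star)$ as claim (1) of Theorem \ref{thm:res-abf} (via Theorem \ref{thm:alex-abex} or \ref{thm:alex-lcs-ngq} plus Corollary \ref{cor:linalex-gr}, just as you do), but it does \emph{not} then deduce the resonance statement from $(\star)$; instead it invokes Theorem \ref{thm:cv-abf} to get the maps $\iota^*\colon \WW_k(G)\to \WW_k(K)$ (surjective for $k=1$), passes to tangent cones at the identity, and uses the Tangent Cone formula (Theorem \ref{thm:tcone}) to replace $\TC_1(\WW_k)$ by $\RR_k$. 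Your alternative---conclude directly from $(\star)$ via the support description $\RR_k=\supp(\bwedge^k\B\otimes\C)$ of Theorem \ref{thm:res-supp}, with the fibre-dimension argument for depth $k$ and the finite/closed support map for surjectivity in depth $1$---is exactly how the paper handles the \emph{split} case (Theorem \ref{thm:res-split-abf}, part (2)(c), via Lemma \ref{lem:supp-map}-type reasoning), so your proof in effect runs the split-case mechanism in all three cases. This is legitimate and arguably more uniform; what the paper's tangent-cone detour buys is that the resonance conclusion does not lean on the compatibility you yourself flag as the ``delicate heart.''

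Two soft spots deserve attention. First, your assertion that ``restriction of scalars commutes with $\gr$'' is not a formal fact: on $B(K)=B(G)$ (equal as groups since $K'=G'$, $K''=G''$) you are comparing the $I(K_{\ab})$-adic and $I(G_{\ab})$-adic filtrations, and these agree only because, by Massey's correspondence (Theorem \ref{thm:massey-alexinv}), they equal $\gamma_{n+2}(K/K'')$ and $\gamma_{n+2}(G/G'')$ respectively, and one must check $\gamma_n(K/K'')=\gamma_n(G/G'')$ for $n\ge 2$ even in the non-split case (equivalently, that $Q$ acts trivially on each $\gr_n(K/K'')$; in the split case this is Corollary \ref{cor:fr-ab}). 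This needs an argument---it is precisely the hidden content of the paper's claim (1) in Theorem \ref{thm:res-abf}. Second, in the split case your intermediate claim $\gr(B(K)\otimes\C)\cong \B(K)\otimes \C$ does not follow from graded formality alone (Corollary \ref{cor:linalex-gr} requires $1$-formality, which is unavailable there), but it is also unnecessary: once graded formality is propagated to $K$ (Theorem \ref{thm:abf-gr-formal}) and to $Q$ (as a retract), the split exactness of the holonomy Lie algebras together with $\h_{\ge 2}(Q)\otimes\Q=0$ gives $\h(K)'\otimes\Q\isom \h(G)'\otimes\Q$ directly, hence $(\star)$, with no reference to $B(K)$ at all---this is exactly the paper's argument in Theorem \ref{thm:res-split-abf}. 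With these two repairs, your proof is correct.
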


In particular, if any one of the above conditions is satisfied and if 
$\RR_1(G)\subseteq 0$, then we infer from the above theorem 
that $\RR_1(K)\subseteq 0$, which in turn implies that the 
$\Q$-vector space $\B(K)\otimes \Q$ is finite-dimensional. 
Under the $1$-formality assumptions from either \eqref{res2} or 
\eqref{res3}, we conclude that $\theta_n(G)$ and $\theta_n(K)$ 
vanish for sufficiently large $n$.

When $G=G_{\Gamma}$ and $K=N_{\Gamma}$ are the right-angled 
Artin group and the Bestvina--Brady group associated to a connected 
graph $\Gamma$, the theorem recovers one of the main results 
from \cite{PS-jlms07} and extends it further.
In \cite{Su-mfmono} we will apply Theorem \ref{intro:res} in 
the case when $G$ is a complex arrangement group, $K$ 
is the group of its Milnor fiber, and the monodromy of the 
Milnor fibration acts trivially on $H_1(K;\Q)$. 

\subsection{Organization}
\label{intro:organize}
The paper is organized in four parts of roughly equal length. 

Part \ref{part:alex} deals with the Alexander invariants, the derived series, 
and the lower central series of a group. We concentrate 
in \S\ref{sect:alexinv} on the classical derived series and the 
associated Alexander invariant. In 
\S\ref{sect:derived-alex-q} we turn to the rational version of these objects, 
while in \S\ref{sect:derived-alex-p} we study the mod-$p$ version. 
We conclude in \S\ref{sect:grg} with a quick review of the corresponding 
lower central series, associated graded Lie algebras, and Chen Lie algebras.

Part \ref{part:ext}  explores group extensions of the form $1\to K\to G\to Q\to 1$, 
focussing on the relationship between the Alexander invariants and the lower 
central series of those groups.
We start in \S\ref{sect:ext} with  Massey's correspondence between the filtration 
of the Alexander invariant of $K$ by powers of the augmentation ideal of $Q$ 
and the lower central series of the maximal metabelian quotient of $G$. 
We continue in \S\ref{sect:split} with an overview of the lower central 
series and associated graded algebras of split extensions of groups.  
In \S\ref{sect:ab-exact} we explore ab-exact sequences, 
and establish our results on the way the Alexander invariants and 
the Chen ranks behave under such extensions. 
We prove analogous results for abf-exact sequences 
in \S\ref{sect:abf-exact} and for $p$-exact sequences in 
\S\ref{sect:p-exact}.

Parts \ref{part:cjl} and \ref{part:resonance} contain a detailed study 
of the cohomology jump loci of a finitely generated group, and the way 
these loci behave under the aforementioned types of group extensions. 
We start in \S\ref{sect:cvs} with the 
characteristic varieties, and continue in \S\ref{sect:alex-vars} with 
the Alexander varieties, focussing on the relationship between the two. 
In \S\ref{sect:cv-ext} we establish our structural results relating the 
characteristic varieties of a group $G$ to those of a normal subgroup $K$ 
under appropriate assumptions. After discussing in \S\ref{sect:formal} 
the Malcev and holonomy Lie algebras of a group and the resulting notion 
of $1$-formality, we provide in \S\ref{sect:inf-alex} and \S\ref{sect:res} 
detailed information on the infinitesimal Alexander invariant and the 
resonance varieties. Finally, we establish in \S\ref{sect:res-ext} our 
results on the way resonance behaves in ab- and abf-exact group extensions, 
under suitable formality hypothesis. 

\part{Alexander invariants}
\label{part:alex}

\section{Derived series and the Alexander invariant}
\label{sect:alexinv}

We start with a review of the derived series and the Alexander invariant 
of a group, and discuss some of their basic properties.

\subsection{Derived series}
\label{subsec:derived}

Let $G$ be a group. If $H$ and $K$ are subgroups of $G$, then 
$[H,K]$ denotes the subgroup of $G$ generated by all elements 
of the form $[a,b]=aba^{-1}b^{-1}$ with $a \in H$ and $b \in K$. 
If both $H$ and $K$ are normal subgroups, then their commutator 
$[H,K]$ is again a normal subgroup.  Moreover, if 
$\alpha \colon G\to H$ is a homomorphism, then 
\begin{equation}
\label{eq:comm}
\alpha([H,K])\subseteq [\alpha(H),\alpha(K)]\, .
\end{equation}

The {\em derived series}\/ of $G$, denoted $\{G^{(r)}\}_{r\ge 0}$, is 
defined inductively by \eqref{lcs1}; 
in particular, $G^{(1)}=G'$ is the derived subgroup and  $G^{(2)}=G''$. 
Using \eqref{eq:comm} and 
induction on $r$, it is readily seen that the terms of the derived series are 
fully invariant subgroups; that is, if $\alpha \colon G\to H$ is a group 
homomorphism, then $\alpha(G^{(r)})\subseteq H^{(r)}$, for all $r$. 
Consequently, the derived series is a normal series, i.e., $G^{(r)}\triangleleft G$, 
for all $r$. Moreover, since  $G^{(r-1)}/G^{(r)}=( G^{(r-1)} )_{\ab}$, 
all the successive quotients of the series are abelian groups. 

A group $G$ is said to be {\em solvable}\/ if its derived series of $G$ terminates 
in finitely many steps; that is, $G^{(\ell)}=\{1\}$ for some integer $\ell\ge 0$.  
The smallest such integer, $\ell(G)$, is then called the derived length of $G$. 
Clearly, $\ell(G)\le 1$ if and only if $G$ is abelian, while 
$\ell(G)\le 2$  if and only if $G$ is metabelian.  The maximal  solvable quotient of 
$G$ of length $r$ is $G/G^{(r)}$; in particular, the maximal metabelian quotient 
is $G/G''$. 

\subsection{Alexander invariant and Alexander module}
\label{subsec:alexinv}
Among the successive quotients of the derives series of a group $G$, the second one 
plays a special role.  The {\em Alexander invariant}\/ of $G$ is the abelian group 
\begin{equation}
\label{eq:gprime}
B(G)\coloneqq   G'/G''\, , 
\end{equation}
viewed as a module over the group-ring $\Z[G_{\ab}]$; alternatively, 
$B(G)=G'_{\ab}=H_1(G'_{\ab};\Z)$.  Addition in $B(G)$ 
is induced from multiplication in $G$, to wit, $(xG'')+(yG'')=xy G''$ for all 
$x,y\in G'$, while scalar multiplication is induced from conjugation in the 
maximal metabelian quotient, $G/G''$, via the exact sequence 
\begin{equation}
\label{eq:gprimeprime}
\begin{tikzcd}
1\ar[r]& G'/G'' \ar[r]& G/G'' \ar[r]& G/G' \ar[r]& 1\, .
\end{tikzcd} 
\end{equation}
That is, $gG'\cdot xG'' = gxg^{-1}G''$ for all $g\in G$ and $x\in G'$, 
with the action of $G/G'=G_{\ab}$ 
extended $\Z$-linearly to the whole of $\Z[G_{\ab}]$.

The augmentation map, $\varepsilon\colon \Z[G]\to \Z$, is the linear 
extension to group rings of the trivial homomorphism, $G\to \{1\}$;  
let $I(G)=\ker(\varepsilon)$ be the augmentation ideal. Closely related 
to the Alexander invariant is the {\em Alexander module}\/ of $G$, 
\begin{equation}
\label{eq:alex-mod}
A(G)=\Z[G_{\ab}]\otimes_{\Z[G]} I(G) \, ,
\end{equation}
with $\Z[G_{\ab}]$-module structure coming from multiplication 
on the left factor. 

In order to better understand the $\Z[G_{\ab}]$-modules $A(G)$ and $B(G)$, 
we will look at them next from a topological point of view, 
following the approach of Massey from \cite{Ms-80}.

\subsection{Topological interpretation}
\label{subsec:alexinv-top}

Let $X$ be a connected CW-complex with $\pi_1(X,x_0)=G$. 
(We may assume $X$ has a single $0$-cell, which we then take as the 
basepoint $x_0$.)  Lifting the cell structure of $X$ to the maximal abelian cover, 
$q\colon X^{\ab}\to X$, we obtain an augmented chain complex of free 
$\Z[G_{\ab}]$-modules, 
\begin{equation}
\label{eq:abcover-cc}
\begin{tikzcd}[column sep=18pt]
\cdots \ar[r] & 
C_{2}(X^{\ab};\Z) \ar[r, "\partial^{\ab}_{2}"] &[8pt] 
 C_{1}(X^{\ab};\Z) \ar[r, "\partial^{\ab}_{1}"] &[8pt] 
  C_{0}(X^{\ab};\Z) \ar[r, "\varepsilon"] & \Z  \ar[r] &0, 
\end{tikzcd}
\end{equation}
where $ C_{0}(X^{\ab};\Z) =\Z[G_{\ab}]$ and 
$\varepsilon$ is the augmentation map.
Since $\pi_1(X^{\ab})=G'$, the Alexander invariant $B(G)=(G')_{\ab}$ 
is isomorphic to $H_1(X^{\ab};\Z)$, the first homology group of the chain 
complex \eqref{eq:abcover-cc}, with module structure induced 
by the action of $G_{\ab}$ by deck transformations. In other 
words, $B(G)=H_1(X;\Z[G_{\ab}])$.

\begin{example}
\label{ex:free}
Let $X=\bigvee^n S^1$ be a wedge of $n$ circles. Identify $\pi_1(X)$ 
with the free group $F_n=\langle x_1,\dots ,x_n\rangle$ and $(F_n)_{\ab}$ 
with $\Z^n$. The chain complex $(C_i((T^n)^{\ab};\Z),\partial_i^{\ab})$ 
of the universal (abelian) cover of the $n$-torus $T^n=K(\Z^n,1)$ 
may be viewed as the Koszul complex on elements 
$t_1-1, \dots, t_n-1$ over the ring $\Z[\Z^n]=\Z[t_1^{\pm 1}, \dots , t_n^{\pm 1}]$. 
The Alexander invariant $B(F_n)$, then, equals the $\Z[\Z^n]$-module 
$\coker (\partial_3^{\ab})$; in particular, $B(F_2)=\Z[\Z^2]$, while $B(F_3)=
\coker\big( \!\begin{pmatrix}1-t_3 & t_2-1 &1-t_1\end{pmatrix}\!\colon 
\Z[\Z^3]\to \Z[\Z^3]^3 \big)$.
\end{example}

Consider now the fiber $F=q^{-1}(x_0)$, and fix a basepoint $\tilde{x}_0\in  F$. 
The homology long exact sequence for the pair $(X^{\ab},F)$ yields an 
exact sequence of $\Z[G_{\ab}]$-modules,
\begin{equation}
\label{eq:homology-pair}
\begin{tikzcd}[column sep=18pt]
0\ar[r]& H_1(X^{\ab}; \Z) \ar[r]&H_1(X^{\ab},F; \Z) 
\ar[r]& H_0(F; \Z) \ar[r]& H_0(X^{\ab}; \Z)  \ar[r] & 0 \, .
\end{tikzcd}
\end{equation}
As noted previously, the first term is the Alexander invariant $B(G)$. 
The second term is the Alexander module $A(G)$; indeed, sending an  
element $g-1\in I(G)$ to the path in $X^{\ab}$ from 
$\tilde{x}_0$ to $g\tilde{x}_0$ obtained by lifting the loop $g$ at 
$\tilde{x}_0$ induces an isomorphism 
$A(G)\isom H_1(X^{\ab},F; \Z)$. Finally, 
the homomorphism $H_0(F;\Z)\to H_0(X^{\ab};\Z)$ 
may be identified with the augmentation map, 
$\varepsilon \colon \Z[G_{\ab}]\to \Z$. Therefore, 
\begin{equation}
\label{eq:alexmod}
A(G)=\coker(\partial_2^{\ab})\, ,
\end{equation}
and the sequence \eqref{eq:homology-pair} yields 
an exact sequence of $\Z[G_{\ab}]$-modules,
\begin{equation}
\label{eq:crowell}
\begin{tikzcd}[column sep=18pt]
0\ar[r]& B(G)\ar[r]&A(G)\ar[r]&I(G_{\ab})\ar[r]& 0 \, ,
\end{tikzcd}
\end{equation}
known as the {\em Crowell exact sequence}\/ of the group, cf.~\cite{Cr65,Ms-80}.
When $G_{\ab}$ is finitely generated, the ring $\Z[G_{\ab}]$ is Noetherian. 
In this case, the Alexander module $A(G)$ is finitely generated, and hence 
the presentation \eqref{eq:alexmod} may be reduced to a 
finite presentation. Thus, by \eqref{eq:crowell}, the Alexander 
invariant $B(G)$ may also be finitely presented. 

If $G$ admits a finite presentation, say, 
$G=\langle x_1,\dots , x_m\mid 
r_1, \dots , r_{\ell}\rangle$, the $\Z[G_{\ab}]$-linear map
 $\partial_2^{\ab}\colon \Z[G_{\ab}]^{\ell} \to \Z[G_{\ab}]^{m}$ 
from \eqref{eq:abcover-cc} may be identified with the classical 
Alexander matrix, whose entries are the abelianized Fox derivatives 
of the relators,  $\ab(\partial r_i/\partial x_j)$; hence, the module $A(G)$ 
is presented by the Alexander matrix. When $G_{\ab}$ is torsion-free, 
a method for finding a presentation for $B(G)$ is outlined in 
\cite{Ms-80}; an explicit presentation is not known 
even in the case when $G$ is a link group, but 
there is an algorithm for producing such a presentation in 
the case when $G$ is an arrangement group, see \cite{CS-tams99}.

\subsection{Functoriality properties}
\label{subsec:alexinv-func}
The assignments $G\leadsto G_{\ab}$ and $G\leadsto B(G)$ are functorial 
and compatible with one another, in a sense that we now make precise. 
For more background information on some of this material, we refer 
to \cite[Ch.~III]{Brown}.

First consider a ring map, $\varphi\colon R\to S$.  We say that a 
map $\psi\colon M\to N$ from an $R$-module $M$ to an $S$-module $N$ 
{\em covers}\/ $\varphi$ (or, for short, that $\psi$ is a $\varphi$-morphism) 
if $\psi(rm) = \varphi(r) \psi(m)$ for all $r\in R$ and $m\in M$. 
Such a map $\psi$ can be viewed as the composite
\begin{equation}
\label{eq:cover-factor}
\begin{tikzcd}[column sep=20pt]
M \ar[r] & N_{\varphi} \ar[r] &N \, ,
\end{tikzcd}
\end{equation}
where $N_{\varphi}$ is the $R$-module obtained from $N$ by 
restriction of scalars via $\varphi$, the first arrow is the set map 
$\psi$ viewed as an $R$-linear map, and the second arrow is the 
identity map of $N$, thought of as covering the ring map $\varphi$.  

Now let $\alpha\colon G\to H$ be a group homomorphism. 
Then $\alpha$ extends linearly to a ring map,  
$\tilde\alpha \colon \Z [G]\to \Z [H]$. The assignment 
$G\leadsto \Z[G]$, $\alpha\leadsto \tilde\alpha$ is 
functorial, and takes injections to injections and 
surjections to surjections. 

The map $\alpha$ also restricts 
to homomorphisms $ G'\to H'$ and $G''\to H''$, and thus induces 
homomorphisms $G/G'\to H/H'$ and $G'/G''\to H'/H''$, which we will denote 
by $\alpha_{\ab}\colon G_{\ab}\to H_{\ab}$ and $B(\alpha) \colon 
B(G) \to B(H)$, respectively.  If $\beta\colon H\to K$ is another homomorphism, 
then clearly $\beta_{\ab}\circ \alpha_{\ab}=(\beta\circ \alpha)_{\ab}$ and 
$B(\beta)\circ B(\alpha)=B(\beta\circ \alpha)$. If $\alpha$ is surjective, 
then $B(\alpha)$ is also surjective, but if $\alpha$ is injective, 
$B(\alpha)$ need not be injective.  

\begin{example} 
\label{ex:trefoil}
Let $G=\langle x_1,x_2\mid x_1x_2x_1=x_2x_1x_2\rangle$, so that $G_{\ab}=\Z$. 
We then have $G'=F_2$ and $B(G')=\Z[\Z^2]$, whereas $B(G)=\Z[t^{\pm 1}]/(t^2-t+1)$. 
Thus, if $\alpha\colon G'\inj G$ is the inclusion, the map $B(\alpha)$ 
is not injective.
\end{example}

Given a homomorphism $\alpha\colon G\to H$, 
let $\tilde\alpha_{\ab} \colon \Z [G_{\ab}]\to \Z [H_{\ab}]$ be the 
linear extension of $\alpha_{\ab}$ to group rings. 
The map $B(\alpha) \colon B(G) \to B(H)$ can then be interpreted as 
a map of modules covering $\tilde\alpha_{\ab}$. 
Alternatively, let $B(H)_{\alpha}$ be the $\Z [G_{\ab}]$-module obtained 
from $B(H)$ by restriction of scalars via $\tilde\alpha_{\ab}$. 
The map $B(\alpha)$ can then be viewed as the composite 
$B(G)\to  B(H)_{\alpha} \to B(H)$, where the first arrow is 
a $\Z [G_{\ab}]$-linear map and the second arrow is the identity 
map of $B(H)$, viewed as covering the ring map $\tilde\alpha_{\ab}$.  

Here is a topological interpretation. Let $f\colon X\to Y$ 
be a continuous maps between connected CW-complexes; without loss 
of essential generality, we may assume $f$ is cellular and 
basepoint-preserving. Let $f_{\sharp}\colon \pi_1(X, x_0)\to \pi_1(Y,y_0)$ 
be the induced homomorphism on fundamental groups, and let 
$f^{\ab}\colon X^{\ab} \to Y^{\ab}$ be the lift to universal abelian 
covers. It is readily seen that the morphism 
$B(f_{\sharp})\colon B(\pi_1(X))\to B(\pi_1(Y))$ 
coincides with the induced homomorphism in first homology, 
$f_*\colon H_1(X^{\ab};\Z) \to H_1(Y^{\ab};\Z)$.

Likewise, there is an induced morphism on Alexander modules, 
$A(\alpha) \colon A(G) \to A(H)$, which covers $\tilde\alpha_{\ab}$ 
and admits a similar topological interpretation. The 
restriction of $A(\alpha)$ to $B(G)$ coincides with $B(\alpha)$, 
and induces the map $\tilde\alpha_{\ab}$ on augmentation ideals, 
thereby showing that the Crowell exact sequence \eqref{eq:crowell} 
is natural with respect to group homomorphisms. 

\section{The rational derived series and the rational Alexander invariant}
\label{sect:derived-alex-q}

In this section we discuss the rational versions of the derived series 
and of the Alexander invariant.

\subsection{The rational derived series}
\label{subsec:derived-q}
For a subset $S$ of a group $G$, we let 
$\ssqrt{S}=\{g\in G\mid \text{$g^m \in S$ for some $m\in \N$} \}$ 
be its {\em isolator}. 
Clearly, $S\subseteq \ssqrt{S}$ and 
$\ssqrt{\!\smash[b]{\!\sqrt{S}}}= \ssqrt{S}$; moreover, if 
$\alpha\colon G\to H$ is a homomorphism, then 
$\alpha(\!\sqrt{S})\subseteq \ssqrt{\alpha(S)}$. 
The isolator of a subgroup need not be a subgroup; for instance, 
$\ssqrt{\{1\}}=\Tors(G)$, the set of torsion elements 
in $G$, which is not a subgroup in general. On the 
other hand, if $G$ is nilpotent, then the isolator of 
any subgroup of $G$ is again a subgroup. The theory 
of isolators was initiated by P.~Hall \cite{Hall} in the 
context of $p$-groups; we refer to \cite[\S 2.3]{LR} for 
more on this subject.

The following notion was introduced by Harvey in \cite{Ha05}, and 
further studied by Cochran and Harvey in \cite{Co04,CH05}. 
The {\em rational derived series}\/ of a group $G$, denoted 
$\{G^{(r)}_{\rat}\}_{r\ge 0}$, is defined inductively 
by setting $G^{(0)}_{\rat}=G$ and 
\begin{equation}
\label{eq:gq-series}
G^{(r)}_{\rat}=\sqrt{ \big[G^{(r-1)}_{\rat},G^{(r-1)}_{\rat}\big]}.
\end{equation}
Using induction on $r$ and the fact that homomorphisms preserve 
commutators and isolators, it is readily seen that the terms of this 
series are fully invariant subgroups. 
In particular, the rational derived series is a normal series, a property also 
noted in \cite[Lemma 3.2]{Ha05}. Furthermore, the successive quotients, 
$G^{(r)}_{\rat}/G^{(r+1)}_{\rat}$, are torsion-free abelian groups; 
in fact, as shown in  \cite[Lemma 3.5]{Ha05}, 
\begin{equation}
\label{eq:gq-quotients}
G^{(r)}_{\rat}/G^{(r+1)}_{\rat} \cong \big(G^{(r)}_{\rat}\big)_{\abf} \, .
\end{equation}

In particular, $G/G'_{\rat}$ is equal to $G_{\abf}=G_{\ab}/\Tors(G_{\ab})$, 
the maximal torsion-free abelian quotient of $G$, showing that 
$G'_{\rat}$ is the kernel of the projection map $\abf\colon G\surj G_{\abf}$. 
Since $G'=\ker(\ab\colon G\surj G_{\ab})$, we obtain a short exact sequence, 
\begin{equation}
\begin{tikzcd}[column sep=18pt]
1\ar[r]& G' \ar[r]& G'_{\rat} \ar[r]& \Tors(G_{\ab}) \ar[r]& 1 .
\end{tikzcd}
\end{equation}
In particular, if $G_{\ab}$ is torsion-free, then $G'=G'_{\rat}$.  

\begin{remark}
\label{rem:johnson}
In \cite{DHP14,DP-ann}, the group $G'_{\rat}=\ker (\abf\colon G\surj G_{\abf})$ 
is called the {\em Johnson kernel}\/ of $G$, and is denoted by $K_G$. The 
motivation for this terminology is that, in the case when $G=\mathcal{T}_g$ 
is the Torelli group of a surface of genus $g\ge 3$, the subgroup 
$K_G\triangleleft \mathcal{T}_g$ is the subgroup generated 
by Dehn twists along separating simple closed curves---a subgroup 
introduced and studied by D.~Johnson in the 1980s; see \cite{CEP} 
for more on this.
\end{remark}

It is also known that the quotient groups $G/G^{(r+1)}_{\rat}$ are 
poly-torsion-free-abelian (PTFA) groups, see \cite[Corollary 3.6]{Ha05}.
Cleary, $G^{(r)}\subseteq G^{(r)}_{\rat}$ for all $r$, but the inclusions 
are strict in general. 
Nevertheless, if all the quotients $G^{(r)}/G^{(r+1)}$ are torsion-free---% 
which is the case when $G$ is a finitely generated free group, or a knot group---%
then $G^{(r)}=G^{(r)}_{\rat}$ for all $r$, cf.~ \cite[Corollary 3.7]{Ha05}.  

\subsection{The rational Alexander invariant}
\label{subsec:alexinv-q}
By analogy with the classical definition, we define the 
{\em rational Alexander invariant}\/ of a group $G$ to be the quotient 
\begin{equation}
\label{eq:alex-q}
B_{\rat}(G)\coloneqq G'_{\rat}/G''_{\rat}\, ,  
\end{equation}
viewed as a module over $\Z[G_{\abf}]$, where recall $G'_{\rat}=\ssqrt{G'}$
and $G''_{\rat}=\ssqrt{\left[\ssqrt{G'},\ssqrt{G'}\right]}\triangleleft G'_{\rat}$. 
The module structure on $B_{\rat}(G)$ is induced 
by conjugation in the maximal torsion-free metabelian quotient, $G/G''_{\rat}$, 
via the exact sequence 
\begin{equation}
\label{eq:gprime-rat}
\begin{tikzcd}
1\ar[r]& G'_{\rat}/G''_{\rat} \ar[r]& G/G''_{\rat} \ar[r]& G/G' _{\rat}\ar[r]& 1 .
\end{tikzcd} 
\end{equation}
That is, $gG'_{\rat}\cdot xG''_{\rat} = gxg^{-1}G''_{\rat}$ for all 
$g\in G$ and $x\in G'_{\rat}$, with the action of $G/G' _{\rat}=G_{\abf}$ 
extended $\Z$-linearly to the whole of $\Z[G_{\abf}]$. 

Given a group homomorphism $\alpha\colon G\to H$, the maps 
$\alpha'\colon G'_{\rat}\to H'_{\rat}$ and $\alpha''\colon G''_{\rat}\to H''_{\rat}$
induce a morphism $B_{\rat}(\alpha)\colon B_{\rat}(G)\to B_{\rat}(H)$ 
between rational Alexander invariants. 
It is readily seen that the assignments $G\leadsto G_{\abf}$ 
and $G\leadsto B_{\rat}(G)$ are functorial and compatible 
with one another, in a sense similar to the one described 
in \S\ref{subsec:alexinv}. 

Pursuing our analogy with the classical situation, we also define the 
{\em rational Alexander module}\/ of $G$ to be the  $\Z[G_{\abf}]$-module 
$A_{\rat}(G)\coloneqq \Z[G_{\abf}]\otimes_{\Z[G]} I(G)$. 

\subsection{Topological interpretation}
\label{subsec:alexinv-top-q}
Let $X$ be a connected CW-complex with $\pi_1(X)=G$, 
and let $q_0\colon X^{\abf}\to X$ be the maximal torsion-free 
abelian cover of $X$.  We then have a commuting 
diagram of regular covers,
\begin{equation}
\label{eq:ab-abf-covers}
\begin{tikzcd}[column sep=10pt, row sep=10pt, ]
X^{\ab} \ar[dr, "s"] \ar[dd, "q"]\\
& X^{\abf} \ar[dl, "q_0"] \\
X
\end{tikzcd} 
\end{equation}
where $s\colon X^{\ab}\to X^{\abf}$ is an abelian cover 
with deck group $\Tors(G_{\ab})$. Clearly, if $G_{\ab}$ is finitely 
generated, then $s$ is a finite cover. Note that $H_{\hdot}(X^{\abf};\Z)$ is 
a module over $\Z[G_{\abf}]$, with module structure induced by the 
action of $G_{\abf}$ on $X^{\abf}$  by deck transformations. 

\begin{lemma}
\label{lem:bq}
With notation as above, 
\begin{enumerate}
\item \label{b1}
$B_{\rat}(G)\cong H_1(X^{\abf};\Z)/\Z\dash\Tors$, 
as $\Z[G_{\abf}]$-modules.
\item \label{b2}
$B_{\rat}(G)\otimes \Q\cong H_1(X^{\abf};\Q)$, 
as $\Q[G_{\abf}]$-modules.
\item  \label{b3}
If $G_{\ab}$ is torsion-free, then $B_{\rat}(G)\cong B(G)/\Z\dash\Tors$, 
as $\Z[G_{\ab}]$-modules.
\end{enumerate}
\end{lemma}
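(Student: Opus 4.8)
The plan is to prove part \eqref{b1} by a direct group-theoretic identification, and then to obtain \eqref{b2} and \eqref{b3} as essentially formal consequences.

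First I would record that, since $G/G'_{\rat}=G_{\abf}$, the cover $q_0\colon X^{\abf}\to X$ is precisely the one classified by the subgroup $G'_{\rat}=\ker(\abf\colon G\surj G_{\abf})$, so $\pi_1(X^{\abf})=G'_{\rat}$ and therefore $H_1(X^{\abf};\Z)\cong (G'_{\rat})_{\ab}=G'_{\rat}/[G'_{\rat},G'_{\rat}]$, with $\Z[G_{\abf}]$-module structure induced by the conjugation action of the deck group $G_{\abf}$ on $G'_{\rat}$ (exactly as in the integral case of \S\ref{subsec:alexinv-top}). On the other hand, $B_{\rat}(G)=G'_{\rat}/G''_{\rat}$ with $G''_{\rat}=\ssqrt{[G'_{\rat},G'_{\rat}]}$, so the canonical surjection $(G'_{\rat})_{\ab}\surj B_{\rat}(G)$ has kernel $G''_{\rat}/[G'_{\rat},G'_{\rat}]$. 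The crux of the argument is to identify this kernel with $\Tors\big((G'_{\rat})_{\ab}\big)$.

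To do this, I would first observe that $G'_{\rat}$ is isolated in $G$: if $g^m\in G'_{\rat}$, then the image of $g$ in the torsion-free group $G_{\abf}$ satisfies $\bar g^{\,m}=1$, hence $\bar g=1$ and $g\in G'_{\rat}$. Since $[G'_{\rat},G'_{\rat}]\subseteq G'_{\rat}$, it follows that the isolator $\ssqrt{[G'_{\rat},G'_{\rat}]}$ computed in $G$ already lies in $G'_{\rat}$ and coincides with the isolator computed inside $G'_{\rat}$. An element $\bar x\in(G'_{\rat})_{\ab}$ then lies in the image of $G''_{\rat}$ if and only if $x^m\in[G'_{\rat},G'_{\rat}]$ for some $m>0$, i.e.\ if and only if $\bar x$ is torsion; this simultaneously shows that $G''_{\rat}$ is a subgroup, being the full preimage of $\Tors\big((G'_{\rat})_{\ab}\big)$. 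Hence $B_{\rat}(G)\cong (G'_{\rat})_{\ab}/\Z\dash\Tors=H_1(X^{\abf};\Z)/\Z\dash\Tors$. The two module structures agree because each is induced by the same conjugation action of $G_{\abf}$, and that action, being by automorphisms, preserves the torsion subgroup and so descends compatibly to the quotient.

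Part \eqref{b2} follows by applying $-\otimes_{\Z}\Q$ to the isomorphism of \eqref{b1}: the functor kills the torsion subgroup, giving $B_{\rat}(G)\otimes\Q\cong H_1(X^{\abf};\Z)\otimes\Q$, and since $\Q$ is flat the universal coefficient theorem identifies the right-hand side with $H_1(X^{\abf};\Q)$; the isomorphism of \eqref{b1} is $\Z[G_{\abf}]$-linear, so its tensor product is $\Q[G_{\abf}]$-linear. For part \eqref{b3}, when $G_{\ab}$ is torsion-free we have $G_{\abf}=G_{\ab}$ and $G'_{\rat}=G'$ (as already recorded in \S\ref{subsec:derived-q}), so $X^{\abf}=X^{\ab}$ and $H_1(X^{\abf};\Z)=H_1(X^{\ab};\Z)=B(G)$; substituting into \eqref{b1} yields $B_{\rat}(G)\cong B(G)/\Z\dash\Tors$ as $\Z[G_{\ab}]$-modules. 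I expect the only real obstacle to be the bookkeeping in the middle paragraph---confirming that the isolator $G''_{\rat}$ maps onto exactly the torsion subgroup of $(G'_{\rat})_{\ab}$ and that this identification is $G_{\abf}$-equivariant---while the passage to $\Q$ and the torsion-free specialization are routine once \eqref{b1} is established.
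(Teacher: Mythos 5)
Your proof is correct and follows essentially the same route as the paper's: identify $B_{\rat}(G)$ with $(G'_{\rat})_{\abf}$, combine this with $\pi_1(X^{\abf})=G'_{\rat}$ to get part \eqref{b1}, then tensor with $\Q$ for part \eqref{b2} and specialize via $G'=G'_{\rat}$ for part \eqref{b3}. The only difference is that the paper obtains the identification $B_{\rat}(G)=(G'_{\rat})_{\abf}$ by citing formula \eqref{eq:gq-quotients} (i.e., \cite[Lemma 3.5]{Ha05}), whereas your isolator argument proves that cited ingredient directly, which is a correct, self-contained addition rather than a different method.
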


\begin{proof}
From definition \eqref{eq:alex-q} and formula \eqref{eq:gq-quotients}, 
we have that $B_{\rat}(G)=\big(G'_{\rat}\big)_{\abf}$. Since $G'_{\rat}=\pi_1(X^{\abf})$, 
the first claim follows. Tensoring both sides with $\Q$ yields the second claim.  
The third claim follows at once from the first one.
\end{proof}

\begin{remark}
\label{rem:bbq-map}
In view of Lemma \ref{lem:bq}, part \eqref{b1} and the discussion from 
Remark \ref{rem:johnson}, the rational Alexander invariant $B_{\rat}(G)$ 
may be viewed as the torsion-free abelianization of the Johnson kernel $K_G$. 
In a related vein, we considered in \cite{PS-crelle} the $\Q[G_{\abf}]$-module 
$\tilde{B}(G)\coloneqq H_1(G_{\rat};\Q)$, which was called there 
the reduced Alexander invariant of $G$. In view of 
Lemma \ref{lem:bq}, part \eqref{b2}, this module is 
isomorphic to $B_{\rat}(G)\otimes \Q$.
\end{remark}

Let $(C_{\hdot}(X^{\ab};\Z), \partial^{\ab})$ be the $\Z[G_{\ab}]$-equivariant 
chain complex of $X^{\ab}$ from \eqref{eq:abcover-cc}. 
We denote by $\nu\colon G_{\ab}\surj G_{\abf}$ the projection map, 
and we let $\tilde\nu\colon \Z[G_{\ab}]\surj \Z[G_{\abf}]$ be its 
linear extension to group rings. The $\Z[G_{\abf}]$-equivariant 
chain complex of $X^{\abf}$, denoted $(C_{\hdot}(X^{\abf};\Z), \partial^{\abf})$, 
may be obtained from \eqref{eq:abcover-cc} by extension of scalars, via 
the ring map $\tilde\nu$; in particular, 
$\partial^{\abf}=\partial^{\ab} \otimes_{\Z[G_{\ab}]} \Z[G_{\abf}]$.

Let also $F_0=q^{-1}_0(x_0)$ be the fiber of the cover $q_0\colon X^{\abf}\to X$ 
over a basepoint $x_0\in X$, and let $I_0(G)$ be the kernel of the augmentation 
map $\varepsilon \colon \Z[G_{\abf}] \to \Z$. Proceeding as in \S\ref{subsec:alexinv-top}, 
we obtain the following lemma, which summarizes the properties of the rational 
Alexander module, and its relationship to the rational Alexander invariant.

\begin{lemma}
\label{lem:alexmod-q}
With notation as above,
\begin{enumerate}[itemsep=2pt]
\item \label{amq1}
$A_{\rat}(G)\otimes \Q= H_1(X^{\abf},F_0;\Q)$. 
\item \label{amq2}
$A_{\rat}(G) \otimes \Q$ is the cokernel of the map 
$\partial_2^{\abf}\otimes \Q\colon C_2(X^{\abf};\Q)\to C_1(X^{\abf};\Q)$.
\item \label{amq3}
The homology exact sequence of the pair $(X^{\abf},F_0)$ 
yields a short exact sequence of $\Q[G_{\abf}]$-modules,
\begin{equation}
\label{eq:crowell-rat}
\begin{tikzcd}[column sep=18pt]
0\ar[r]& B_{\rat}(G)\otimes \Q \ar[r]&A_{\rat}(G)\otimes \Q
\ar[r]&I_{0}(G)\otimes \Q \ar[r]& 0 \, .
\end{tikzcd}
\end{equation}
\end{enumerate}
\end{lemma}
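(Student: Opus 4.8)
The plan is to mimic exactly the argument used in the integral case of \S\ref{subsec:alexinv-top}, transporting it across the maximal torsion-free abelian cover $X^{\abf}$ and then tensoring with $\Q$. The three claims are not really independent: claim \eqref{amq1} is the identification of the rational Alexander module with a relative homology group, claim \eqref{amq2} is a presentation of that module as a cokernel, and claim \eqref{amq3} is the rational Crowell sequence. I would establish them in this order, since each feeds the next.

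For \eqref{amq1}, I would first record that the covering $q_0\colon X^{\abf}\to X$ has deck group $G_{\abf}$, so its lifted cellular chain complex $(C_{\hdot}(X^{\abf};\Z),\partial^{\abf})$ is a complex of free $\Z[G_{\abf}]$-modules, and — as already observed in the paragraph preceding the lemma — it is obtained from the $\Z[G_{\ab}]$-complex \eqref{eq:abcover-cc} by extension of scalars along $\tilde\nu\colon\Z[G_{\ab}]\surj\Z[G_{\abf}]$. The fiber $F_0=q_0^{-1}(x_0)$ is a free $G_{\abf}$-set with one point per deck transformation, so $H_0(F_0;\Z)\cong\Z[G_{\abf}]$. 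Lifting the loop representing $g\in G$ to a path from $\tilde x_0$ to $g\cdot\tilde x_0$ sends $g-1\in I(G)$ to a relative $1$-cycle; this is precisely the construction used integrally to identify $A(G)\isom H_1(X^{\ab},F;\Z)$, and it descends to give a natural isomorphism $A_{\rat}(G)\isom H_1(X^{\abf},F_0;\Z)$ of $\Z[G_{\abf}]$-modules (recall $A_{\rat}(G)=\Z[G_{\abf}]\otimes_{\Z[G]}I(G)$). Tensoring with $\Q$ yields \eqref{amq1}, using that $\Q$ is flat over $\Z$ so that $-\otimes\Q$ commutes with relative homology.

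For \eqref{amq2}, I would run the long exact sequence of the pair $(X^{\abf},F_0)$. Since $F_0\hookrightarrow X^{\abf}$ realizes all the $0$-cells, the relative complex has $C_0(X^{\abf},F_0)=0$, whence $H_1(X^{\abf},F_0;\Z)=\coker(\partial_2^{\abf})$. Because $\partial_2^{\abf}=\partial_2^{\ab}\otimes_{\Z[G_{\ab}]}\Z[G_{\abf}]$ and tensoring is right exact, this identifies $A_{\rat}(G)$ with $\coker(\partial_2^{\abf})$ integrally; tensoring with $\Q$ and again invoking right-exactness gives \eqref{amq2}, i.e.\ $A_{\rat}(G)\otimes\Q=\coker(\partial_2^{\abf}\otimes\Q)$. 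For \eqref{amq3}, I would feed the $\Q$-tensored long exact homology sequence of $(X^{\abf},F_0)$: the relevant segment reads
\begin{equation*}
\begin{tikzcd}[column sep=16pt]
0\ar[r]& H_1(X^{\abf};\Q)\ar[r]& H_1(X^{\abf},F_0;\Q)\ar[r]& \tilde H_0(F_0;\Q)\ar[r]& 0,
\end{tikzcd}
\end{equation*}
where the last map $H_0(F_0;\Q)\to H_0(X^{\abf};\Q)$ is the augmentation $\varepsilon\colon\Q[G_{\abf}]\to\Q$, whose kernel $\tilde H_0(F_0;\Q)$ is $I_0(G)\otimes\Q$, and where $X^{\abf}$ is connected so that the map is surjective with connecting term vanishing. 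Now Lemma \ref{lem:bq}\eqref{b2} identifies the left term with $B_{\rat}(G)\otimes\Q$, and \eqref{amq1} identifies the middle term with $A_{\rat}(G)\otimes\Q$, giving the rational Crowell sequence \eqref{eq:crowell-rat}.

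The main obstacle is not any single homological step — each is a routine transcription of the integral argument — but rather the bookkeeping of module structures over $\Q[G_{\abf}]$ versus $\Z[G_{\abf}]$, together with the need to pass to $\Q$-coefficients at the right moment. The key subtlety is that, unlike the integral statement, parts \eqref{amq1} and \eqref{amq3} are asserted only after tensoring with $\Q$; this is exactly because $H_1(X^{\abf};\Z)$ differs from $B_{\rat}(G)$ by $\Z$-torsion (Lemma \ref{lem:bq}\eqref{b1}), so the $\Z$-integral Crowell sequence for $X^{\abf}$ need not be short exact on the nose. I therefore expect the care to lie in verifying that the left-hand term of the tensored long exact sequence is genuinely $B_{\rat}(G)\otimes\Q$ rather than $H_1(X^{\abf};\Z)\otimes\Q$ — a point that is resolved cleanly by Lemma \ref{lem:bq}\eqref{b2}, so that no separate torsion analysis is needed once we work rationally throughout.
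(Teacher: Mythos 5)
Your proof is correct and takes essentially the same route as the paper, whose entire argument for this lemma is the phrase ``proceeding as in \S\ref{subsec:alexinv-top}'': one transports the integral argument (the identification $A_{\rat}(G)=\coker(\partial_2^{\abf})$ via right-exactness of extension of scalars along $\tilde\nu$, and the long exact homology sequence of the pair $(X^{\abf},F_0)$) to the maximal torsion-free abelian cover and tensors with $\Q$. The one point the paper leaves implicit --- that rationalizing is what lets Lemma \ref{lem:bq}\eqref{b2} replace $H_1(X^{\abf};\Z)$ by $B_{\rat}(G)\otimes \Q$ on the left, since the two differ by $\Z$-torsion --- is exactly the subtlety you identify and resolve correctly.
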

Sequence \eqref{eq:crowell-rat} may be viewed as the rational analogue 
of Crowell's exact sequence \eqref{eq:crowell}, and enjoys a 
similar naturality property with respect to group homomorphisms.

\subsection{Relating the Alexander invariants}
\label{subsec:relate-alex}
We conclude this section with a comparison between the two kinds of 
Alexander invariants defined so far: $B(G)$, viewed as a $\Z[G_{\ab}]$-module, 
and $B_{\rat}(G)$, viewed as a $\Z[G_{\abf}]$-module. The comparison is done 
via the natural projection $\nu\colon G_{\ab}\surj G_{\abf}$ and its extension 
to a ring map, $\tilde\nu\colon \Z[G_{\ab}]\surj \Z[G_{\abf}]$.

\begin{proposition}
\label{prop:bq-tf}
For a group $G$, the following hold.
\begin{enumerate}[itemsep=2pt]
\item \label{bbq1}
The inclusion $G'\inj G'_{\rat}$ induces a functorial $\tilde\nu$-morphism, 
$\kappa\colon B(G)\to B_{\rat}(G)$.
\item \label{bbq2}
Suppose $\Tors(G_{\ab})$ is finite. 
Then the map $\kappa\otimes \Q\colon 
B(G)\otimes \Q \to B_{\rat}(G)\otimes \Q$ is surjective.
\item \label{bbq3}
Suppose $G_{\ab}$ is torsion-free. Then the map 
$\kappa\otimes \Q\colon 
B(G)\otimes \Q \to B_{\rat}(G)\otimes \Q$ is 
an isomorphism.
\end{enumerate}
\end{proposition}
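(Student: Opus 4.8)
The plan is to treat the three parts in order, handling the first algebraically and the latter two through the topological model of \S\ref{subsec:alexinv-top-q}.

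For part \eqref{bbq1}, I would first check that the inclusion $\iota\colon G'\inj G'_{\rat}$ carries $G''=[G',G']$ into $G''_{\rat}=\ssqrt{[G'_{\rat},G'_{\rat}]}$, since $[G',G']\subseteq[G'_{\rat},G'_{\rat}]\subseteq\ssqrt{[G'_{\rat},G'_{\rat}]}$; hence $\iota$ descends to a homomorphism of abelian groups $\kappa\colon G'/G''\to G'_{\rat}/G''_{\rat}$, that is, $\kappa\colon B(G)\to B_{\rat}(G)$. To see that $\kappa$ covers $\tilde\nu$, I would use that both module structures are induced by conjugation inside $G$, via the exact sequences \eqref{eq:gprimeprime} and \eqref{eq:gprime-rat}: for $g\in G$ and $x\in G'$ one has $\kappa(gxg^{-1}G'')=gxg^{-1}G''_{\rat}$, which is precisely the image of $\kappa(xG'')$ under the action of $\nu$ of the class of $g$, and this identity extends $\Z$-linearly to all of $\Z[G_{\ab}]$. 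Functoriality is then a diagram chase: for $\alpha\colon G\to H$, the fact (recalled in \S\ref{subsec:derived} and \S\ref{subsec:derived-q}) that homomorphisms preserve commutators and isolators gives $\alpha(G')\subseteq H'$ and $\alpha(G'_{\rat})\subseteq H'_{\rat}$, so the inclusions $G'\inj G'_{\rat}$ and $H'\inj H'_{\rat}$ are compatible with $\alpha$, and passing to quotients produces the commuting square relating $\kappa_G$ and $\kappa_H$.

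For part \eqref{bbq2}, I would pass to the topological picture. Under the identifications $B(G)\otimes\Q\cong H_1(X^{\ab};\Q)$ and $B_{\rat}(G)\otimes\Q\cong H_1(X^{\abf};\Q)$ of Lemma~\ref{lem:bq}\eqref{b2}, the map $\kappa\otimes\Q$ is identified (modulo the isomorphism $H_1(X^{\abf};\Q)\isom B_{\rat}(G)\otimes\Q$ that annihilates the $\Z$-torsion) with the map $s_*\colon H_1(X^{\ab};\Q)\to H_1(X^{\abf};\Q)$ induced by the intermediate cover $s\colon X^{\ab}\to X^{\abf}$ of \eqref{eq:ab-abf-covers}, whose deck group is $\Tors(G_{\ab})$. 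When $\Tors(G_{\ab})$ is finite, $s$ is a finite regular cover, so the transfer homomorphism $\tau\colon H_1(X^{\abf};\Q)\to H_1(X^{\ab};\Q)$ satisfies $s_*\circ\tau=|\Tors(G_{\ab})|\cdot\id$; since this scalar is invertible over $\Q$, the map $s_*$, and hence $\kappa\otimes\Q$, is surjective. I expect the only real obstacle in the whole proof to lie here, in the bookkeeping: verifying that $\kappa\otimes\Q$ genuinely coincides with the covering-induced map $s_*$ after the torsion-killing isomorphism, and installing the transfer with the correct variance, rather than in any deep difficulty.

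For part \eqref{bbq3}, I would observe that when $G_{\ab}$ is torsion-free one has $G_{\ab}=G_{\abf}$ and, as recorded in \S\ref{subsec:derived-q}, $G'=G'_{\rat}$; thus $\kappa$ is the canonical projection $B(G)=G'/G''\surj G'/G''_{\rat}=B_{\rat}(G)$, whose kernel $G''_{\rat}/G''$ consists of the classes $xG''$ with $x^m\in G''$ for some $m$, i.e. $\Tors(B(G))$. This is exactly Lemma~\ref{lem:bq}\eqref{b3}, namely $B_{\rat}(G)\cong B(G)/\Z\dash\Tors$. Tensoring the surjection $B(G)\surj B(G)/\Z\dash\Tors$ with $\Q$ kills the torsion kernel and turns it into an isomorphism, so $\kappa\otimes\Q$ is an isomorphism; equivalently, surjectivity already follows from part \eqref{bbq2} (as $\Tors(G_{\ab})$ is then trivial, hence finite) and injectivity from the vanishing of the torsion kernel after $\otimes\Q$.
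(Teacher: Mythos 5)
Your proposal is correct and follows essentially the same route as the paper: the same algebraic construction of $\kappa$ and verification that it covers $\tilde\nu$ for part \eqref{bbq1}, and the same identification of $\kappa\otimes\Q$ with $s_*$ on the cover $s\colon X^{\ab}\to X^{\abf}$ together with the transfer splitting for part \eqref{bbq2}. Your part \eqref{bbq3} is a harmless rephrasing of the paper's one-line argument (the paper notes $\ker s_*=0$ since $s$ is trivial, while you invoke $G'=G'_{\rat}$ and Lemma \ref{lem:bq}\eqref{b3}, which encode the same fact), and your identification of the kernel $G''_{\rat}/G''$ with $\Tors(B(G))$ is accurate.
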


\begin{proof}
\eqref{bbq1}
The inclusion $G'\inj G'_{\rat}$ restricts to a map $G''\inj G''_{\rat}$, 
and thus induces a  group homomorphism, $G'/G''\to G'_{\rat}/G''_{\rat}$, 
$g G'' \mapsto g G''_{\rat}$, which is functorial in $G$. This homomorphism 
can be viewed as a map $\kappa\coloneqq\kappa^G\colon B(G)\to B_{\rat}(G)$
which covers the ring map $\tilde\nu$ 
and satisfies $\kappa^H\circ B(\alpha)=B_{\rat}(\alpha)\circ \kappa^G$, 
for all homomorphisms $\alpha\colon G\to H$. We may also 
view $\kappa$ as the composite $B(G)\to  B_{\rat}(G)_{\tilde\nu} \to B_{\rat}(G)$,  
where the first arrow is a $\Z [G_{\ab}]$-linear map and the second 
arrow is the identity map of $B_{\rat}(G)$, viewed as covering the  
map $\tilde\nu$. 

Alternatively, recall from \eqref{eq:ab-abf-covers} that 
the cover $q\colon X^{\ab}\to X$ factors 
through a cover $s\colon X^{\ab} \to  X^{\abf}$, so that 
$q_0\circ s=q$. The composite  
\begin{equation}
\label{eq:ab-abf}
\begin{tikzcd}[column sep=26pt]
H_1(X^{\ab};\Z) \ar["s_*"]{r} & H_1(X^{\abf};\Z)
\ar[twoheadrightarrow]{r} & H_1(X^{\abf};\Z)/\Z\dash\Tors 
\end{tikzcd} 
\end{equation}
then coincides with the morphism $\kappa$ defined above.  

\eqref{bbq2}
From the last description, it follows that the homomorphism
$s_*\otimes \Q\colon H_1(X^{\ab};\Q) \to H_1(X^{\abf};\Q)$ coincides with 
$\kappa\otimes \Q$. Now, if $\Tors(G_{\ab})$ is finite, then 
$s\colon X^{\ab}\to X^{\abf}$ is a finite cover, and so the transfer 
map, $\tau\colon H_1(X^{\abf};\Q)\to H_1(X^{\ab};\Q)$, 
provides a splitting for $s_*$. Therefore, $\kappa\otimes \Q$
is surjective.

\eqref{bbq3}
If $G_{\ab}$ is torsion-free, then $\ker s_*=0$, and so 
$\kappa\otimes \Q=s_*\otimes \Q$ is an isomorphism.
\end{proof}

If $G$ is a finitely generated free group, or a knot group, then, by the discussion 
in \S\ref{subsec:derived-q}, the map $\kappa\colon B(G)\to B_{\rat}(G)$ is a 
$\tilde\nu$-isomorphism. On the other hand, as Example 
\ref{ex:dihedral-alex} below shows, the condition that $G_{\ab}$ be 
torsion-free is necessary for part \eqref{bbq3} of Proposition \ref{prop:bq-tf} 
to hold. Moreover, as Example \ref{ex:alex-torsion} shows, the map  
$\kappa\colon B(G)\to B_{\rat}(G)$ itself need not be an 
isomorphism, even when $G_{\ab}$ is torsion-free.

\begin{example}
\label{ex:dihedral-alex}
Let $G=\Z_2*\Z_2=\langle x_1, x_2\mid x_1^2, x_2^2\rangle $. 
Then $G'=\Z=\langle [x_1,x_2]\rangle$ and $G''=\{1\}$, whence 
$B(G)=\Z$, whereas 
$G'_{\rat}=G''_{\rat}=G$, whence $B_{\rat}(G)=0$.
\end{example}

\begin{example}
\label{ex:alex-torsion}
Let $G=\langle x_1, x_2\mid [x_1,x_2]^n\rangle$.  Then 
$B(G)=\Z[x_1^{\pm 1},x_2^{\pm 1}]/(n)\cong \Z_n[x_1^{\pm 1},x_2^{\pm 1}]$ 
is non-zero if $n\ge 2$, whereas $B_{\rat}(G)=0$.
\end{example}

\section{The derived $p$-series and the mod-$p$ Alexander invariant}
\label{sect:derived-alex-p}

We now review the mod-$p$ version of the derived series and introduce 
the corresponding mod-$p$ Alexander invariant.

\subsection{The derived $p$-series }
\label{subsec:derived-p}
Fix a prime $p$. 
Following Stallings \cite{St83}, Cochran and  Harvey \cite{CH08,CH10}, 
and Lackenby \cite{Lk10}, we define the {\em derived $p$-series}\/ of 
$G$, denoted $\big\{G^{(r)}_p\big\}_{r\ge 0}$, by
\begin{equation}
\label{eq:derived-p}
G^{(0)}_p=G, \quad G^{(r)}_p=
\left\langle \big(G^{(r-1)}_p\big)^p ,\big[G^{(r-1)}_p,G^{(r-1)}_p\big]\right\rangle .
\end{equation} 
Using induction on $r$, it is readily seen that the terms of this series 
are fully invariant subgroups. 
Moreover, each subgroup $G^{(r)}_p$ is a normal subgroup of $G$ 
of index a power of $p$, see \cite{St83}, and 
$G^{(r-1)}_p /G^{(r)}_p \cong H_1(G^{(r-1)}_p ;\Z_p)$, 
see \cite{Lk10}.  In particular, $G/G'_p=H_1(G;\Z_p)$ is the 
maximal elementary $p$-abelian quotient of $G$.

\begin{example}
\label{ex:p-derived-abel}
Suppose $G$ is abelian, Then clearly $G^{(r)}_p=G^{p^{r}}$. 
In particular, if $G$ is elementary $p$-abelian, then $G^{(r)}_p=\{1\}$ 
for all $r\ge 1$.
\end{example}

\begin{example}
\label{ex:agemo}
Suppose $G$ is a $p$-group. Then $G^p =\mho(G)$ is 
the agemo subgroup of $G$, while $G_p'=\Phi(G)$ is the Frattini 
subgroup (see \cite{HHR, Leedham-McKay} for a more general context).
\end{example}

The derived $p$-series can be characterized as the 
fastest descending normal (and even subnormal) 
series for which the successive quotients 
are $\Z_p$-vector spaces, cf.~\cite{St83}. 
The next result captures some of the salient 
features of this series.

\begin{lemma}[\cite{CH08}]
\label{lem:ch-p}
For a group $G$, a prime $p$, and an integer $r\ge 1$, the following hold.
\begin{enumerate}
\item \label{ch1}
$G^{(r)}_p = \ker \big(G^{(r-1)}_p \surj
\big(G^{(r-1)}_p\big)_{\ab} \otimes \Z_p\big)$.\\[-4pt]
\item \label{ch2}
$G^{(r-1)}_p /G^{(r)}_p \cong H_1\big(G; \Z_p[G/G^{(r-1)}_p]\big)$, as right 
$\Z_p[G/G^{(r-1)}_p]$-modules.
\item \label{ch3}
If $G$ is finitely generated, then $G /G^{(r)}_p$ is a finite $p$-group, 
with all elements having order dividing $p^r$.
\end{enumerate}
\end{lemma}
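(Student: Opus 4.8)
The plan is to handle the three parts in order, reducing \eqref{ch1} and \eqref{ch3} to short computations and concentrating the real work in \eqref{ch2}. Throughout I write $N=G^{(r-1)}_p$ and $Q=G/N$, so that by the defining recursion $G^{(r)}_p=\langle N^p,[N,N]\rangle$; equivalently, $G^{(r)}_p=N'_p$ is the first derived $p$-term of $N$ itself, since the operation $H\mapsto\langle H^p,[H,H]\rangle$ applied to $N$ is exactly the recursion.

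For part \eqref{ch1} I would simply unwind the definition. The composite $N\surj N_{\ab}\surj N_{\ab}/pN_{\ab}=N_{\ab}\otimes\Z_p$ has kernel equal to the preimage in $N$ of $pN_{\ab}$. Since $N_{\ab}$ is abelian, $pN_{\ab}$ is the subgroup of $p$-th powers, namely the image of $N^p$; hence its preimage in $N$ is $N^p[N,N]=\langle N^p,[N,N]\rangle=G^{(r)}_p$. Thus \eqref{ch1} is essentially a restatement of the definition, and it simultaneously identifies the successive quotient as $G^{(r-1)}_p/G^{(r)}_p\cong N_{\ab}\otimes\Z_p=H_1(N;\Z_p)$.

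The heart of the lemma is part \eqref{ch2}, where I would invoke Shapiro's lemma. Regarding $\Z_p$ as the trivial $N$-module, the induced module $\Z_p[G]\otimes_{\Z_p[N]}\Z_p$ is isomorphic, as a left $\Z_p[G]$-module, to $\Z_p[G/N]=\Z_p[Q]$ (via coset representatives). Shapiro's lemma then supplies a natural isomorphism $H_\ast(G;\Z_p[Q])\cong H_\ast(N;\Z_p)$, which in degree $1$ reads $H_1(G;\Z_p[Q])\cong H_1(N;\Z_p)=N_{\ab}\otimes\Z_p$, and the latter equals $G^{(r-1)}_p/G^{(r)}_p$ by \eqref{ch1}. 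The main obstacle is the bookkeeping of module structures needed to promote this to an isomorphism of \emph{right} $\Z_p[Q]$-modules: right multiplication by $Q$ on $\Z_p[Q]$ commutes with the left $G$-action and so acts on the left-hand homology, whereas on the right-hand side $Q=G/N$ acts on $H_1(N;\Z_p)$ through conjugation in the extension $1\to N\to G\to Q\to 1$ (inner automorphisms of $N$ acting trivially on $H_\ast(N)$, so the action descends to $Q$). Checking that Shapiro's isomorphism intertwines these two actions is the one genuinely technical point, and is where I would spend most of the care.

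Finally, part \eqref{ch3} follows by induction on $r$, using \eqref{ch1}–\eqref{ch2} to control the kernel at each stage. The case $r=0$ is trivial, and $r=1$ reads $G/G'_p=H_1(G;\Z_p)=G_{\ab}\otimes\Z_p$, a finite elementary abelian $p$-group because $G$ is finitely generated. For the inductive step I consider the extension $1\to G^{(r-1)}_p/G^{(r)}_p\to G/G^{(r)}_p\to G/G^{(r-1)}_p\to 1$. By induction $G/G^{(r-1)}_p$ is a finite $p$-group of exponent dividing $p^{r-1}$, so $N=G^{(r-1)}_p$ has finite index in $G$ and is therefore finitely generated; hence the kernel $N_{\ab}\otimes\Z_p$ is a finite elementary abelian $p$-group (exponent dividing $p$). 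An extension of a finite $p$-group by a finite $p$-group is again a finite $p$-group, which gives finiteness, and for the exponent bound any $g\in G/G^{(r)}_p$ satisfies $g^{p^{r-1}}\in G^{(r-1)}_p/G^{(r)}_p$, so $g^{p^r}=1$.
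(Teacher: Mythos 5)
Your proof is correct, but note that the paper offers no internal proof of this lemma to compare against---it is quoted directly from \cite{CH08}---so the relevant benchmark is the standard argument of Cochran--Harvey, which is essentially what you give. Part \eqref{ch1} is indeed just the observation that $N^p[N,N]$ is the preimage in $N=G^{(r-1)}_p$ of $pN_{\ab}$ under $N\surj N_{\ab}$, and your induction for part \eqref{ch3} is sound; the one ingredient you use tacitly there and should name is that a finite-index subgroup of a finitely generated group is finitely generated (Reidemeister--Schreier), which is what makes $N_{\ab}\otimes\Z_p$ finite at each stage. As for the single point you defer in part \eqref{ch2}---that Shapiro's isomorphism intertwines right multiplication on $\Z_p[Q]$ with the conjugation action of $Q=G/N$ on $H_1(N;\Z_p)$---the cleanest way to discharge it is the covering-space formulation the paper itself uses for the case $r=1$ in \S\ref{subsec:alexp-top}: taking $X=K(G,1)$ and $X_N\to X$ the regular cover with $\pi_1(X_N)=N$, the equivariant chain complex gives $H_1(G;\Z_p[Q])\cong H_1(X_N;\Z_p)=N_{\ab}\otimes\Z_p$ with $Q$ acting by deck transformations, and the deck-transformation action on the first homology of a regular cover is precisely the action induced by conjugation in $G$ (inner automorphisms of $N$ acting trivially on $N_{\ab}$, so the action descends to $Q$); this replaces the chain-level bookkeeping you flagged with a standard geometric fact, exactly as in \eqref{eq:alex-p-bis}, leaving only the routine translation between left and right $\Z_p[Q]$-module conventions.
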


\subsection{The mod-$p$ Alexander invariant}
\label{subsec:alexinv-p}
The first two $p$-derived subgroups of $G$ are 
given by $G'_p=\langle G^p, G'\rangle$ 
and $G''_p=\langle G^{p^2}, (G')^p, [G^p,G^p] , [G^p,G'], G''\rangle$; 
moreover, $G''_{p}\triangleleft G'_{p}$.
We define the {\em  mod-$p$ Alexander invariant}\/ of 
$G$ to be the quotient of these two subgroups, 
\begin{equation}
\label{eq:alex-p}
B_{p}(G)\coloneqq G'_{p}/G''_{p}\, ,
\end{equation}
and view it as a module over the group-ring 
$\Lambda_p\coloneqq \Z_p[H_1(G;\Z_p)]$. 
The module structure is induced by conjugation in the maximal 
metabelian $p$-quotient, $G/G''_{p}$, via the exact sequence 
$1\to G'_{p}/G''_{p} \to G/G''_{p}\to G/G'_{p}\to 1$. 
By Lemma \ref{lem:ch-p}, we have that 
\begin{equation}
\label{eq:alex-p-bis}
B_{p}(G)=(G'_p)_{\ab}\otimes \Z_p= H_1(G'_{p};\Z_p)\cong H_1(G;\Lambda_p)
\end{equation}
as a $\Lambda_p$-module. 
Let $b^p_i(G)\coloneqq\dim_{\Z_p} H_i(G;\Z_p)$
be the $i$-th mod-$p$ Betti number of $G$. 
If $G$ is finitely generated, then the $\Z_p$-vector 
space $B_{p}(G)$ is finite-dimensional, of dimension  
equal to $b^p_1(G'_p)$.

We also define the {\em mod-$p$ Alexander module}\/ 
of $G$ as $A_p(G)=\Lambda_p \otimes_{\Z[G]} I(G)$, 
with $\Lambda_p$-module structure given by multiplication 
on the left factor. Given a group homomorphism $\alpha\colon G\to H$, 
we let $B_{p}(\alpha)\colon B_{p}(G)\to B_{p}(H)$ be the morphism 
between mod-$p$ Alexander invariants induced by the restriction 
$\alpha'\colon G'_p\to H'_p$. It is readily seen that the assignments 
$G\leadsto H_1(G;\Z_p)$ and $G\leadsto B_{p}(G)$ are functorial 
and compatible with one another, in a manner similar to the one 
described in \S\ref{subsec:alexinv}. 

\subsection{Topological interpretation}
\label{subsec:alexp-top}
The mod-$p$ Alexander invariant admits the following 
interpretation in terms of covering spaces. Let $X$ be 
a connected CW-complex with $\pi_1(X)=G$, and let 
$q_p\colon X^{(p)}\to X$ be the $p$-congruence cover of $X$;  
that is, the regular $H_1(X;\Z_p)$-cover classified 
by the composite
\begin{equation}
\label{eq:modp-cong}
\begin{tikzcd}[column sep=26pt]
\pi_1(X) \ar[twoheadrightarrow, "\ab"]{r} & H_1(X;\Z) 
\ar[twoheadrightarrow, "\nu_p"]{r} & H_1(X;\Z_p) ,
\end{tikzcd} 
\end{equation}
where $\nu_p$ is the coefficient homomorphism 
induced by the projection $\Z\surj \Z_p$. We then have 
a commuting diagram of regular covers,
\begin{equation}
\label{eq:p-covers}
\begin{tikzcd}[column sep=10pt, row sep=10pt, ]
X^{\ab}\phantom{ \, .}  \ar[dr, "s_p"] \ar[dd, "q"]\\
& X^{(p)} \ar[dl, "q_p"] \\
X \, .
\end{tikzcd} 
\end{equation}

By construction, $\pi_1(X^{(p)})=G'_p$; therefore, 
$B_p(G)\cong H_1(X^{(p)};\Z_p)$, with $\Lambda_p$-module 
structure induced by the action of $H_1(X;\Z_p)$ by deck 
transformations.  Proceeding as in \S\ref{subsec:alexinv-top}, 
we may identify the mod-$p$ Alexander invariant of $G$ with  
the cokernel of the boundary map 
$\partial_2^{p}\colon C_2(X^{(p)};\Z_p)\to C_1(X^{(p)};\Z_p)$.
Moreover, $A_{p}(G)=H_1(X^{(p)},F_p; \Z_p)$, where 
$F_p=q^{-1}_p(x_0)$. The $\Z_p$-homology exact sequence 
of the pair $(X^{(p)},F_p)$ now yields a natural short 
exact sequence of $\Lambda_p$-modules,
\begin{equation}
\label{eq:crowell-p}
\begin{tikzcd}[column sep=18pt]
0\ar[r]& B_{p}(G) \ar[r]&A_{p}(G)
\ar[r]&I_{p}(G) \ar[r]& 0 \, ,
\end{tikzcd}
\end{equation}
where $I_p(G)=\ker (\varepsilon\colon \Lambda_p\to \Z_p)$ 
is the augmentation ideal. 
This sequence is natural, and may be viewed as the 
mod-$p$ analogue of Crowell's exact sequence \eqref{eq:crowell}.

\subsection{A comparison map}
\label{subsec:nup}
The next lemma provides a functorial comparison map between the reduction 
mod-$p$ of the usual Alexander invariant and its mod-$p$ version. 
Let $\tilde\nu_p\colon \Z_p[H_1(G;\Z)]\surj \Z_p[H_1(G;\Z_p)]$ be 
the linear extension of the coefficient homomorphism $\nu_p\colon 
H_1(G;\Z)\surj H_1(G;\Z_p)$ to group rings.  

\begin{lemma}
\label{lem:bp-tf}
The inclusion $G'\inj G'_{p}$ induces a functorial $\tilde\nu_p$-morphism, 
$\kappa_p\colon B(G) \otimes \Z_p \to  B_{p}(G)$. 
\end{lemma}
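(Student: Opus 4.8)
The plan is to mimic the construction of the integral-to-rational comparison map $\kappa$ from Proposition \ref{prop:bq-tf}\eqref{bbq1}, adapting it to the mod-$p$ setting. First I would observe that the inclusion $G'\inj G'_p$ (which holds because $G'_p=\langle G^p,G'\rangle$ contains $G'$) restricts to an inclusion on the second derived-type subgroups, $G''\inj G''_p$. Indeed, from the explicit generators $G''_{p}=\langle G^{p^2}, (G')^p, [G^p,G^p] , [G^p,G'], G''\rangle$ given just before \eqref{eq:alex-p}, one sees that $G''\le G''_p$ directly. Hence there is an induced group homomorphism $G'/G''\to G'_p/G''_p$, sending $gG''\mapsto gG''_p$; this is exactly a map $B(G)\to B_p(G)$. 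Since the target is a $\Z_p$-vector space (being $(G'_p)_{\ab}\otimes\Z_p$ by \eqref{eq:alex-p-bis}), this map factors through $B(G)\otimes\Z_p$, yielding the desired $\kappa_p\colon B(G)\otimes\Z_p\to B_p(G)$.

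Next I would verify that $\kappa_p$ covers the ring map $\tilde\nu_p$. The module structure on $B(G)$ comes from conjugation in $G/G''$ via \eqref{eq:gprimeprime}, while that on $B_p(G)$ comes from conjugation in $G/G''_p$; both are induced by the same conjugation action of $G$, and the scalar actions are compatible under the quotient $H_1(G;\Z)\surj H_1(G;\Z_p)$. Concretely, for $g\in G$ and $x\in G'$, the class of $gxg^{-1}G''$ maps to $gxg^{-1}G''_p$, which is $\nu_p(\bar g)\cdot(xG''_p)$ where $\bar g$ denotes the image of $g$ in $H_1(G;\Z)$. Extending $\Z_p$-linearly, this shows $\kappa_p(\lambda\cdot m)=\tilde\nu_p(\lambda)\cdot\kappa_p(m)$ for all $\lambda\in\Z_p[H_1(G;\Z)]$, which is precisely the statement that $\kappa_p$ is a $\tilde\nu_p$-morphism.

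Finally I would address functoriality. Given a homomorphism $\alpha\colon G\to H$, the fact that the derived and $p$-derived subgroups are fully invariant (as established in \S\ref{subsec:derived-p}) means $\alpha$ restricts compatibly to $G'\to H'$, $G''\to H''$, $G'_p\to H'_p$, and $G''_p\to H''_p$. These restrictions fit into a commuting square relating $B(\alpha)\otimes\Z_p$ and $B_p(\alpha)$, giving $\kappa_p^H\circ(B(\alpha)\otimes\Z_p)=B_p(\alpha)\circ\kappa_p^G$. This is a diagram chase on cosets, entirely parallel to the naturality argument for $\kappa$ in the rational case.

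I do not expect a serious obstacle here; the lemma is essentially a formal consequence of the inclusions of subgroups and the compatibility of the two module structures. The only point requiring genuine care is the containment $G''\le G''_p$, which must be read off correctly from the generating set of $G''_p$; once that is in hand, the remaining verifications (covering the ring map, functoriality) are routine diagram chases mirroring the proof of Proposition \ref{prop:bq-tf}\eqref{bbq1}. If one prefers a topological derivation, the map $s_p\colon X^{\ab}\to X^{(p)}$ from diagram \eqref{eq:p-covers} induces $(s_p)_*\colon H_1(X^{\ab};\Z_p)\to H_1(X^{(p)};\Z_p)$, and under the identifications $B(G)\otimes\Z_p\cong H_1(X^{\ab};\Z_p)$ and $B_p(G)\cong H_1(X^{(p)};\Z_p)$ this recovers $\kappa_p$, giving an alternative construction that makes the compatibility with deck transformations manifest.
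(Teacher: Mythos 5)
Your proposal is correct and follows essentially the same route as the paper's proof: the inclusion $G'\inj G'_p$ restricts to $G''\inj G''_p$, the induced map $G'/G''\to G'_p/G''_p$ factors through $B(G)\otimes\Z_p$ because the target is a $\Z_p$-vector space, and functoriality follows from full invariance of the (p-)derived subgroups. Your added details---reading $G''\le G''_p$ off the explicit generating set, the explicit check that $\kappa_p$ covers $\tilde\nu_p$, and the topological identification with $(s_p)_*$ (which the paper states right after the lemma)---are all consistent with, and merely expand on, the paper's terse argument.
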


\begin{proof}
The inclusion $G'\inj G'_{p}$ restricts to a map $G''\inj G''_{p}$, 
and thus induces a group homomorphism, $G'/G''\to G'_{p}/G''_{p}$. This map 
factors through a homomorphism, $G'/G'' \otimes \Z_p \to G'_{p}/G''_{p}$, 
which can can be viewed as a $\tilde\nu_p$-morphism, 
$\kappa_p\coloneqq\kappa^G_p\colon B(G)\otimes \Z_p\to B_{p}(G)$. 
Clearly, $\kappa^H_p\circ B(\alpha)=B_{p}(\alpha)\circ \kappa^G_p$,  
for all homomorphisms $\alpha\colon G\to H$.
\end{proof}

We may interpret the map $\kappa_p$ as the homomorphism 
$(s_p)_*\colon H_1(X^{\ab}; \Z_p) \to H_1(X^{(p)}; \Z_p)$
induced in first homology by the cover $s_p$ from diagram 
\eqref{eq:p-covers}. As illustrated in the two examples below, the 
map $\kappa_p$ is neither injective nor surjective, in general.

\begin{example}
\label{ex:zp-alex-free}
Let $X=\bigvee^n S^1$ be a wedge of $n\ge 2$ circles. Then 
$X^{(p)}\simeq \bigvee^m S^1$, where $m=p^n(n-1)+1$. Identifying 
$F_n=\pi_1(X)$, we find that $B_p(F_n)=H_1(X^{(p)},\Z_p)=\Z_p^m$.
On the other hand, we infer from Example \ref{ex:z-alex}  that  
$B(F_2)\otimes \Z_p=\Z_p[\Z^2]$; hence, the map 
$\kappa_p\colon B(F_2)\otimes \Z_p\to B_p(F_2)$ is 
not injective. 
\end{example}

\begin{example}
\label{ex:z-alex}
Suppose $G$ is abelian. Then $B(G)=B_{\rat}(G)=0$, yet 
$B_p(G)=G^p/G^{p^2}$, which is non-trivial 
in general; for instance, $B_p(\Z^n)= \Z^n_p$. In fact,  
the mod-$p$ Alexander invariant may be able to distinguish 
groups for which the other two kinds of Alexander invariants 
coincide; for example, $B_p(\Z_p\oplus \Z_p)=0$, yet 
$B_p(\Z_{p^2})=\Z_p$.
\end{example}

\section{Associated graded Lie algebras}
\label{sect:grg} 

In this section we consider three types of graded Lie 
algebras associated to a group $G$---%
the usual one, its  rational version, and its mod-$p$ version---%
and review some of their salient features.

\subsection{Lower central series and associated graded Lie algebra}
\label{subsec:lcs}

The {\em lower central series}\/ (LCS) of a group $G$, 
denoted $\gamma(G)=\{\gamma_n(G)\}_{n\ge 1}$, was 
introduced by P.~Hall in \cite{Hall}. Defined inductively 
by formula \eqref{lcs1}, 
this an N-series, in the sense of Lazard \cite{Lazard}; 
that is to say, $[\gamma_m(G),\gamma_{n}(G)]\subseteq 
\gamma_{m+n}(G)$, for all $m, n \ge 1$, see for instance 
\cite{Leedham-McKay,MKS, Passman}. 
In particular, the LCS is a central series, i.e., 
$[G,\gamma_n (G)]\subseteq \gamma_{n+1} (G)$ for all $n$.  
Moreover, its terms are fully invariant subgroups of $G$. 
By definition, the LCS terminates in finitely many 
steps if the group is nilpotent, and the intersection of the 
terms of the series is trivial if and only if $G$ is residually nilpotent. 

Note that $G^{(n-1)}\subseteq \gamma_{n}(G)$, with 
equality for $n=1$ and $2$. Consequently, every nilpotent 
group is solvable. As another consequence of this observation, 
the Alexander invariant $B(G)=G'/G''$ surjects onto 
the quotient group $\gamma_2(G)/\gamma_3(G)$. 

The {\em associated graded Lie algebra}\/ of $G$ is the direct 
sum of the successive quotients of the lower central series. 
The addition in $\gr(G)$ is induced from the group multiplication, 
while the Lie bracket is induced from the group commutator; 
furthermore, this bracket is compatible with the grading. 
By construction, the Lie algebra $\gr(G)$ is generated by 
its degree $1$ piece, $\gr_1(G)=G_{\ab}$; 
thus, if $G_{\ab}$ is finitely generated, then so are the 
LCS quotients of $G$. Likewise, the $\Q$-Lie algebra $\gr(G)\otimes \Q$ 
is generated in degree $1$ by the $\Q$-vector space 
$G_{\ab}\otimes \Q=H_1(G;\Q)$. Assume now that the 
first Betti number, $b_1(G)=\dim_{\Q} H_1(G;\Q)$, is finite; 
we may then define the {\em LCS ranks}\/ of $G$ as 
\begin{equation}
\label{eq:lcs-ranks}
\phi_n(G)\coloneqq \dim_{\Q} \gr_n(G) \otimes \Q \, .
\end{equation}

If $\alpha\colon G\to H$ is a group homomorphism,
then $\alpha(\gamma_n(G))\subseteq \gamma_n(H)$, 
and thus $\alpha$ induces a map 
$\gr(\alpha)\colon \gr(G)\to \gr(H)$.  It is readily 
seen that this map preserves Lie brackets and that 
the assignment $\alpha \leadsto \gr(\alpha)$ is functorial. 

The associated graded Lie algebra of a group $G$ may be approximated 
by the associated graded Lie algebras of its solvable quotients. 
For each $r\ge 2$, the quotient map, $G\surj G/G^{(r)}$, induces a surjective 
morphism, $\gr_n(G) \surj \gr_n(G/G^{(r)})$, which is an isomorphism for 
$n\le 2^{r}-1$, see \cite{SW-jpaa}. In the case when $r=2$, originally studied by 
K.-T. Chen in \cite{Chen51}, the corresponding Lie algebra, $\gr(G/G'')$, 
is called the {\em Chen Lie algebra}\/ of $G$. Assuming $b_1(G)<\infty$, we   
may define the {\em Chen ranks}\/ of $G$ as 
\begin{equation}
\label{eq:chen-ranks}
\theta_n(G)\coloneqq \phi_n(G/G'') =\dim_{\Q} \gr_n(G/G'') \otimes \Q \, .
\end{equation}
In view of the above discussion, 
we have that $\theta_n(G)\le \phi_n(G)$, with equality for $n\le 3$. We refer 
to \cite{PS-imrn04, SW-jpaa, SW-forum} for detailed treatments 
of this subject.

\subsection{The rational lower central series}
\label{subsec:lcs-stallings}

The rational version of the lower central series of a group $G$, 
denoted $\gamma^{\rat}(G)=\big\{\gamma^{\rat}_n (G)\big\}_{n\ge 1}$, 
was introduced  by Stallings in \cite{St}, and further studied in 
\cite{BL,CH05,Mass,Passman,Su-lcs}. The series in defined inductively by  
\begin{equation}
\label{eq:gamma-q-filtration}
\gamma^{\rat}_1 (G)=G\: \text{ and }\:
\gamma^{\rat}_{n+1} (G)=\ssqrt{ \left[G,\gamma^{\rat}_{n}(G)\right]}\, .
\end{equation}

The terms of this series are fully invariant subgroups of $G$. Moreover, 
as shown in \cite{Su-lcs}, we have that $\gamma^{\rat}_{n} (G)=\ssqrt{\gamma_{n}(G)}$, 
for all $n\ge 1$. This implies that $\gamma^{\rat}(G)$ is an N-series  
(see \cite{Mass,Passman}), and thus, a central series. In fact, the rational  
LCS is the most rapidly descending central series whose 
successive quotients are torsion-free abelian groups.
This series terminates in finitely many steps if the group is 
a torsion-free nilpotent. Moreover, the intersection of the 
terms of the rational LCS is trivial if and only $G$ is residually 
torsion-free nilpotent (see \cite{BL}). 
Also note that we have inclusions $G^{(n-1)}_{\rat}\subseteq \gamma^{\rat}_n(G)$, 
with equality for $n=1$ and $2$.  Consequently, the 
rational Alexander invariant, $B_{\rat}(G)=G'_{\rat}/G''_{\rat}$, 
maps surjectively onto the group $\gamma_2^{\rat}(G)/\gamma^{\rat}_3(G)$. 

The direct sum of the successive quotients of $\gamma^{\rat}(G)$,  
\begin{equation}
\label{eq:grg-q}
\gr^{\rat}(G)= \bigoplus\nolimits_{n\ge 1} 
\gamma^{\rat}_n (G)/ \gamma^{\rat}_{n+1} (G),
\end{equation}
with Lie bracket induced from the group commutator, 
constitutes the {\em rational associated graded Lie algebra}\/ of $G$. 
If $\alpha\colon G\to H$ is a group homomorphism,
then $\alpha$ induces a functorial morphism of graded Lie algebras, 
$\gr^{\rat}(\alpha)\colon \gr^{\rat}(G)\to \gr^{\rat}(H)$. 

Clearly $\gamma_n(G)\le \gamma^{\rat}_n(G)$ for all $n$. 
We thus we have an induced map between associated graded Lie 
algebras, $\Phi=\Phi^G\colon \gr(G)\to \gr^{\rat}(G)$, which is functorial 
with respect to group homomorphisms. In degree $1$, this map 
is simply the projection $G_{\ab}\surj G_{\abf}$, with kernel $\Tors(G_{\ab})$.
In \cite[Proposition 7.2]{BL}, Bass and Lubotzky prove the following result.

\begin{proposition}[\cite{BL}]
\label{prop:bl-grlie}
For a group $G$, the following hold.
\begin{enumerate}[itemsep=2pt]
\item \label{bl-gr1}
The map $\Phi\colon \gr(G)\to \gr^{\rat}(G)$ has torsion kernel 
and cokernel in each degree.

\item \label{bl-gr2}
The map $\Phi\otimes \Q\colon  
\gr(G)\otimes \Q\to \gr^{\rat}(G)\otimes \Q$ 
is an isomorphism of graded Lie algebras.
\end{enumerate}
\end{proposition}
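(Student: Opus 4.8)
The plan is to reduce both claims to the identification $\gamma^{\rat}_n(G)=\ssqrt{\gamma_n(G)}$ recorded above, which turns them into elementary statements about isolators and torsion. Fix $n\ge 1$ and let $\Phi_n\colon \gr_n(G)\to \gr^{\rat}_n(G)$ denote the degree-$n$ component of $\Phi$; it sends the class $x\gamma_{n+1}(G)$ of an element $x\in\gamma_n(G)$ to $x\gamma^{\rat}_{n+1}(G)$, and is well defined because $\gamma_{n+1}(G)\subseteq\ssqrt{\gamma_{n+1}(G)}=\gamma^{\rat}_{n+1}(G)$. I would first note that $\Phi$ is a morphism of graded Lie algebras: the inclusions $\gamma_n(G)\subseteq\gamma^{\rat}_n(G)$ are compatible with group commutators, so additivity and bracket-compatibility of $\Phi$ are inherited directly from these inclusions, with nothing further to verify.

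For the kernel in part \eqref{bl-gr1}, suppose $x\in\gamma_n(G)$ satisfies $\Phi_n(x\gamma_{n+1}(G))=0$, that is, $x\in\gamma^{\rat}_{n+1}(G)=\ssqrt{\gamma_{n+1}(G)}$. By the definition of the isolator there is an $m>0$ with $x^m\in\gamma_{n+1}(G)$, so in the additively written group $\gr_n(G)$ we get $m\cdot(x\gamma_{n+1}(G))=x^m\gamma_{n+1}(G)=0$; hence $\ker\Phi_n$ is torsion. For the cokernel, take any $y\in\gamma^{\rat}_n(G)=\ssqrt{\gamma_n(G)}$ and choose $m>0$ with $y^m\in\gamma_n(G)$; then $m\cdot(y\gamma^{\rat}_{n+1}(G))=y^m\gamma^{\rat}_{n+1}(G)$ lies in $\Image(\Phi_n)$, so the class of $y$ in $\coker\Phi_n$ is killed by $m$. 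Thus $\coker\Phi_n$ is torsion, which proves \eqref{bl-gr1}.

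Part \eqref{bl-gr2} is then formal. For each $n$ the exact sequence
\[
0\to \ker\Phi_n \to \gr_n(G)\xrightarrow{\Phi_n}\gr^{\rat}_n(G)\to\coker\Phi_n\to 0
\]
remains exact after applying the flat functor $-\otimes\Q$, and since $\ker\Phi_n$ and $\coker\Phi_n$ are torsion by \eqref{bl-gr1}, the two outer terms vanish; hence $\Phi_n\otimes\Q$ is an isomorphism of $\Q$-vector spaces. As $\Phi\otimes\Q$ is a morphism of graded Lie algebras that is bijective in each degree, it is an isomorphism of graded Lie algebras.

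I do not expect a genuine obstacle here: once the isolator description $\gamma^{\rat}_n(G)=\ssqrt{\gamma_n(G)}$ is granted, both parts are bookkeeping. The only points needing a little care are keeping the additive notation on the graded pieces consistent with the multiplicative notation on the groups, and noting that flatness of $\Q$ over $\Z$ is precisely what allows the torsion kernel and cokernel to be discarded at once.
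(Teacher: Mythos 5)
Your proof is correct. There is, however, nothing in the paper to compare it with step by step: the proposition is quoted from Bass--Lubotzky \cite{BL} (their Proposition 7.2), where it is established in the broader context of their theory of central filtrations, and the paper gives no proof of its own. What you have supplied is a short, self-contained argument that becomes available once one grants the identity $\gamma^{\rat}_{n}(G)=\ssqrt{\gamma_{n}(G)}$, which the paper records (from \cite{Su-lcs}) just before stating the proposition. Be aware that this identity is doing genuine work in your argument: the recursive definition $\gamma^{\rat}_{n+1}(G)=\ssqrt{[G,\gamma^{\rat}_{n}(G)]}$ alone does not make it obvious that an element of $\gamma^{\rat}_{n+1}(G)$ has a positive power lying in $\gamma_{n+1}(G)$, and it is precisely this consequence of the isolator description that turns your kernel computation (and, on the other side, the cokernel computation via $y^m\in\gamma_n(G)$) into a one-line check. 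The remaining steps are handled correctly: $[x^m]=m\cdot[x]$ in the abelian quotient $\gr_n(G)$, the four-term exact sequence $0\to \ker\Phi_n \to \gr_n(G)\to \gr^{\rat}_n(G)\to\coker\Phi_n\to 0$ together with flatness of $\Q$ over $\Z$, and the observation that bracket-compatibility of $\Phi$ is immediate from the termwise inclusions $\gamma_n(G)\subseteq\gamma^{\rat}_n(G)$. In sum, your route is more elementary and more specific than the cited source: \cite{BL} buys generality, while your argument buys transparency at the cost of invoking the isolator identity from \cite{Su-lcs} as a black box.
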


Recall from \S\ref{subsec:lcs} that the Lie algebra $\gr(G)$ 
is generated by its degree $1$ piece, $\gr_1(G)=G_{\ab}$. It  
follows from Proposition \ref{prop:bl-grlie}, part \eqref{bl-gr2}  
that the $\Q$-Lie algebra $\gr^{\rat}(G)\otimes \Q$ 
is also generated in degree $1$, this time by the $\Q$-vector 
space $G_{\ab}\otimes \Q=H_1(G;\Q)$.  

Assume now that $b_1(G)<\infty$. In this case, we may define the 
rational LCS ranks by 
$\phi^{\rat}_n(G)\coloneqq \dim_{\Q} \gr^{\rat}_n(G) \otimes \Q$ 
and the rational Chen ranks by 
$\theta^{\,\rat}_n(G)\coloneqq \phi^{\rat}_n(G/G''_{\rat})$. 
With this setup, Proposition \ref{prop:bl-grlie}, part \eqref{bl-gr2} 
has the following immediate corollary. 

\begin{corollary}
\label{cor:lcs-chen-q}
Suppose $b_1(G)<\infty$. Then 
$\phi^{\rat}_n(G)=\phi_n(G)$ and $\theta^{\,\rat}_n(G)=\theta_n(G)$ 
for all $n\ge 1$.
\end{corollary}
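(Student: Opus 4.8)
The plan is to derive both equalities from Proposition~\ref{prop:bl-grlie}\eqref{bl-gr2}, which asserts that $\Phi\otimes\Q\colon \gr(G)\otimes\Q\to\gr^{\rat}(G)\otimes\Q$ is an isomorphism of graded Lie algebras.

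For the LCS ranks I would simply restrict this isomorphism to each graded piece. Since $b_1(G)<\infty$ and $\gr(G)\otimes\Q$ is generated in degree~$1$ by $H_1(G;\Q)$, every $\gr_n(G)\otimes\Q$ is finite-dimensional; the degree-$n$ component of $\Phi\otimes\Q$ is then an isomorphism of finite-dimensional $\Q$-vector spaces, so $\phi_n(G)=\dim_\Q\gr_n(G)\otimes\Q=\dim_\Q\gr^{\rat}_n(G)\otimes\Q=\phi^{\rat}_n(G)$. This part is genuinely immediate.

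For the Chen ranks, recall $\theta_n(G)=\phi_n(G/G'')$ and $\theta^{\,\rat}_n(G)=\phi^{\rat}_n(G/G''_{\rat})$. Applying the LCS identity just proved to the group $G/G''$ (which has the same first Betti number as $G$) gives $\theta_n(G)=\phi_n(G/G'')=\phi^{\rat}_n(G/G'')$, so it suffices to produce an isomorphism $\gr^{\rat}(G/G'')\otimes\Q\isom\gr^{\rat}(G/G''_{\rat})\otimes\Q$. Since $G''\subseteq G''_{\rat}$, there is a surjection $p\colon G/G''\surj G/G''_{\rat}$ with kernel $N=G''_{\rat}/G''$; it induces a degreewise-surjective map on $\gr^{\rat}(-)\otimes\Q$, which already yields $\theta^{\,\rat}_n(G)\le\theta_n(G)$. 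The crux is therefore the reverse inequality, that is, the injectivity of this map, which holds precisely when $N$ contributes nothing to the rational associated graded of $G/G''$.

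The main obstacle is to show that $N=G''_{\rat}/G''$ lies in the rational residual $\gamma^{\rat}_\infty(G/G'')=\bigcap_{n}\gamma^{\rat}_n(G/G'')$. Writing $H=G/G''$, one checks $N=H''_{\rat}=\ssqrt{[H'_{\rat},H'_{\rat}]}$, with $H'_{\rat}=\gamma^{\rat}_2(H)$. The key structural input is that, because $H$ is metabelian, its associated graded Lie algebra is metabelian: any bracket $[\bar u,\bar v]$ with $\bar u,\bar v\in[\gr H,\gr H]$ lifts to a commutator of two elements of $H'$, hence lands in $[H',H']=H''=1$. Thus $\gr^{\rat}(H)\otimes\Q\cong\gr(H)\otimes\Q$ is metabelian, so for every $n$ the torsion-free nilpotent quotient $H/\gamma^{\rat}_n(H)$ has metabelian Malcev Lie algebra; its derived subgroup is abelian, forcing $[H'_{\rat},H'_{\rat}]\subseteq\gamma^{\rat}_n(H)$. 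As this holds for all $n$, we get $[H'_{\rat},H'_{\rat}]\subseteq\gamma^{\rat}_\infty(H)$, and since the rational residual is isolated, also $N=\ssqrt{[H'_{\rat},H'_{\rat}]}\subseteq\gamma^{\rat}_\infty(H)$. A short isolator computation then gives $\gamma^{\rat}_n(H/N)=\gamma^{\rat}_n(H)/N$ for all $n$, whence $\gr^{\rat}_n(H/N)=\gr^{\rat}_n(H)$ and $\theta^{\,\rat}_n(G)=\theta_n(G)$. I expect the bookkeeping with isolators and the passage through Malcev completions to be the only delicate points, the conceptual content being carried entirely by Proposition~\ref{prop:bl-grlie}\eqref{bl-gr2} together with the metabelianity of $\gr(G/G'')$.
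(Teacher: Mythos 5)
Your overall architecture is sound, and you have correctly diagnosed where the real content lies. The paper itself gives no argument: it presents Corollary~\ref{cor:lcs-chen-q} as an immediate consequence of Proposition~\ref{prop:bl-grlie}\eqref{bl-gr2}, which it genuinely is for the LCS ranks (your first paragraph matches that). For the Chen ranks, applying Proposition~\ref{prop:bl-grlie}\eqref{bl-gr2} to $G/G''$ only gives $\theta_n(G)=\phi^{\rat}_n(G/G'')$, and one must still compare $\gr^{\rat}(G/G'')\otimes\Q$ with $\gr^{\rat}(G/G''_{\rat})\otimes\Q$; you rightly isolate this as the crux, and your bookkeeping around it is correct: $N=H''_{\rat}$ for $H=G/G''$, the isolatedness of $\gamma^{\rat}_{\infty}(H)$, the computation $\gamma^{\rat}_n(H/N)=\gamma^{\rat}_n(H)/N$ (using $\gamma^{\rat}_n=\ssqrt{\gamma_n}$ from \S\ref{subsec:lcs-stallings}), and the resulting identification $\gr^{\rat}(H/N)\cong\gr^{\rat}(H)$ all check out. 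So your proposal supplies a detailed proof of a step the paper leaves implicit.

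There is, however, one misjustified inference in the middle. You deduce that the Malcev Lie algebra of $P_n=H/\gamma^{\rat}_n(H)$ is metabelian from the metabelianity of $\gr^{\rat}(H)\otimes\Q\cong\gr(H)\otimes\Q$. Metabelianity of the associated graded does not formally transfer to the filtered object: for $x,y$ in the derived subalgebra of a filtered Lie algebra $\mathfrak g$, with symbols $\sigma x\in\gr_a$, $\sigma y\in\gr_b$ ($a,b\ge 2$), the vanishing of $[\sigma x,\sigma y]$ in $\gr_{a+b}$ only pushes $[x,y]$ one step deeper into the filtration; killing it outright would require control of the secondary brackets $\gr_a\otimes\gr_b\to\gr_{a+b+1}$, about which graded metabelianity says nothing. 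Fortunately, the conclusion you need holds for a trivial reason you bypass: $P_n$ is a quotient of the metabelian group $H$, hence is itself metabelian. A second point then needs care: from ``$P_n'$ is abelian'' you cannot directly conclude $[H'_{\rat},H'_{\rat}]\subseteq\gamma^{\rat}_n(H)$, since the image of $H'_{\rat}=\ssqrt{H'}$ lands in $\ssqrt{P_n'}$, not in $P_n'$. What saves you is that in a torsion-free nilpotent group centralizers are isolated ($[x,y^m]=1$ implies $[x,y]=1$), so the isolator of an abelian normal subgroup is abelian; equivalently, $\exp$ of the derived subalgebra of the Malcev completion is root-closed. With these two repairs---replace the graded-to-filtered step by ``quotients of metabelian groups are metabelian,'' and invoke isolated centralizers in the torsion-free nilpotent quotients $P_n$---your argument is complete and correct, and in fact more self-contained than what the paper records.
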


\subsection{The mod $p$ lower central series}
\label{subsec:lcs-p}

Now fix a prime $p$.  The {\em mod-$p$ lower central series}, 
denoted $\{\gamma^{p}_n(G)\}_{n\ge 1}$, was introduced   
by Stallings in \cite{St}, and further studied in many works, including 
\cite{BG,CH05,Lk10,Paris,Su-lcs}. The series is defined inductively, as follows:  
\begin{equation}
\label{eq:gamma-p-filtration}
\gamma^{p}_1 (G)=G \:\text{ and }\: 
\gamma^{p}_{n+1} (G)=\left\langle(\gamma^{p}_{n}(G))^p, 
[G,\gamma^{p}_{n} (G)] \right\rangle .
\end{equation} 

This is a $p$-torsion series, meaning that $(\gamma^{p}_{n}(G))^p \subseteq 
\gamma^{p}_{n+1} (G)$ for all $n\ge 1$; it is also an N-series, cf.~\cite{Paris}.  
Therefore, the successive quotients, 
$\gr_n^{p}(G)\coloneqq \gamma^{p}_{n}(G)/\gamma^{p}_{n+1}(G)$,  
are $p$-torsion abelian groups, and thus can be viewed as $\Z_p$-vector spaces;  
in particular, $G/\gamma^{p}_2(G)=H_1(G;\Z_p)$. 
The mod-$p$ LCS is the fastest-descending central series 
whose successive quotients are $\Z_p$-vector spaces.  
Clearly, the terms of the series are fully invariant subgroups. 

\begin{remark}
\label{rem:Zassenhaus}
A sequence $K=\{K_n\}_{n\ge 1}$ of subgroups of $G$ is said to be an 
{\em $N_p$-series}  if $K$ is an $N$-series and 
$K_n^p\subseteq K_{pn}$ for all $n\ge 1$. An $N_p$-series is 
both an $N$-series and a $p$-torsion series, but the converse does 
not hold.  The Stallings series $\gamma^{p}(G)$ is not in general an 
$N_p$-series. Rather, the canonical $N_p$-series associated to $G$, 
denoted $\gamma^{[p]}(G)$, is the one introduced by H.~Zassenhaus 
in 1939, whose terms are given inductively by $\gamma^{[p]}_1(G)=G$ and 
\begin{equation}
\label{eq:np-series}
\gamma^{[p]}_{n}(G)=\prod_{\substack{mp^{j}\ge n\\m\ge 1, j\ge 0}} 
\big(\gamma^{[p]}_{m}(G) \big)^{p^j} ,
\end{equation}
see for instance \cite[p.~17]{Leedham-McKay}. 
As noted in \cite{Cooper}, the two filtrations, 
$\gamma^{p}(G)$ and $\gamma^{[p]}(G)$, are cofinal, though they may 
differ significantly at a granular level. For instance, if $G$ is abelian, 
then $\gamma^{p}_n(G)=G^{p^{n-1}}$, whereas 
$\gamma^{[p]}_n(G)=G^{p^{j}}$, where $j=\lceil \log_p(n) \rceil$. 
\end{remark}

As observed in \cite{Lk10}, each term $G^{(r)}_p$ of the derived $p$-series 
contains $\gamma^p_n(G)$ as a normal subgroup, for all $n$ sufficiently large.   
The reason for this is that the $p$-lower central series of any finite $p$-group (in 
particular, the quotient $G/G^{(r)}_p$), terminates at $1$.  
Moreover, $G'_{p}=\gamma^{p}_2(G)$ and 
$G''_{p}\subseteq \gamma^{p}_3(G)$. Consequently, we have a 
surjective homomorphism from the mod-$p$ Alexander invariant, 
$B_p(G)=G'_p/G''_p$, to $\gr_2^p(G)=\gamma_2^{p}(G)/\gamma^{p}_3(G)$. 

\begin{example}
\label{ex:p-abel}
If $G$ is abelian, then $\gamma^{p}_n(G)=G^{(n-1)}_p=G^{p^{n-1}}$. 
For instance, if $G=\Z=\langle a\rangle$, then 
$\gamma^p_n(G)=\langle a^{p^{n-1}}\rangle \cong \Z$ for all $n\ge 1$.
On the other hand, if $G=\Z_p^s$ is an elementary $p$-abelian group, then 
$\gamma^p_n(G)=\{1\}$ for all $n\ge 2$. 
\end{example}

The associated graded Lie algebra, 
$\gr^{p}(G)\coloneqq \bigoplus_{n\ge 1} \gr_n^{p}(G)$, 
with addition and Lie bracket defined as before, 
can be viewed as a Lie algebra over the field $\Z_p$. 
The assignment $G\leadsto \gr^{p}(G)$ is functorial. 
The Lie algebra $\gr^{p}(G)$ 
supports additional operations, $\gr^{p}_n(G) \to \gr^{p}_{n+1}(G)$, 
which are induced by the $p$-th power map, $g\mapsto g^p$. 
As observed in \cite[\S{12}]{BL}, this Lie algebra is 
generated through Lie brackets and 
power operations by its degree $1$ piece, 
$\gr^p_1(G)=H_1(G;\Z_p)$.

Assume now that $b^p_1(G)=\dim_{\Z_p} H_1(G;\Z_p)$ is finite. 
By the observation we just made, 
all homogeneous pieces of $\gr^{p}(G)$ are also finite-dimensional. 
Hence, we may define the mod-$p$ LCS ranks of $G$ by setting 
$\phi^{p}_n(G)\coloneqq \dim_{\Z_p} \gr^{p}_n(G)$ 
and the mod-$p$ Chen ranks as 
$\theta^{p}_n(G)\coloneqq \dim_{\Z_p} \gr^{p}_n(G/G''_{p})$. 
Clearly, $\phi^{p}_n(G)\ge \theta^{p}_n(G)$ for all $n\ge 1$. 

Finally, suppose $G$ is finitely generated; a previous remark   
then gives $\dim_{\Z_p} B_p(G)\ge \phi_2^p(G)$. 
If $b^p_2(G)$ is also finite, work of 
Shalen and Wagreich \cite{SW92} (see also Lackenby \cite{Lk09})
shows that 
$\phi_2^p(G)\ge \binom{b_1^{p}(G)}{2} +b_1^p(G)-b_2^p(G)$. 
Putting these inequalities together yields a lower bound 
on the dimension of the $\Z_p$-vector space $B_p(G)$,  
solely in terms of the first two mod-$p$ Betti numbers of $G$. 

\begin{example}
\label{ex:heis-alex}
If $G$ is the fundamental group of a closed $3$-manifold 
(assumed to be orientable if $p$ is odd), then Poincar\'e duality 
gives $\dim_{\Z_p} B_p(G)\ge \binom{b_1^{p}(G)}{2}$. For 
$G=\Z^3$ this is an equality, but in general 
the inequality is strict. For instance, if $G$ is the 
Heisenberg nilmanifold group of $3\times 3$ upper diagonal 
integral matrices with $1$'s down the diagonal, then  $b_1^p(G)=2$, 
yet $B_p(G)=\Z_p\oplus \Z_p$.
\end{example}

\part{Group extensions}
\label{part:ext}
\section{Massey's correspondence}
\label{sect:ext}

We now switch our focus to group extensions. After a brief discussion of the 
monodromy action of an extension $1\to K\to G\to Q\to 1$, we present a detailed 
overview of Massey's correspondence between the filtration of the Alexander 
invariant of $K$ by powers of the augmentation ideal of $Q$ 
and the lower central series of the maximal metabelian quotient of $G$.  
Finally, we extend this correspondence to the rational and mod-$p$ settings.

\subsection{Monodromy action}
\label{subsec:mono}
We start with a quick review of group extensions; for more on this topic, we refer 
to \cite[Ch.~IV]{Brown} and \cite[Ch.~VI]{HS}. Consider a short exact sequence 
of groups, 
\begin{equation}
\label{eq:abc-exact}
\begin{tikzcd}[column sep=20pt]
1\ar[r] &  K \ar[r, "\iota"]
& G \ar[r, "\pi"] & Q \ar[r]  & 1.
\end{tikzcd}
\end{equation}
Let $\Aut(K)$ be the group of {\em right}\/ automorphisms of $K$, 
with group operation $\alpha\cdot \beta = \beta\circ \alpha$. 
Choosing a set-theoretic section of the projection map $\pi$, i.e., a function 
$\sigma\colon Q\to G$ such that $\pi\circ \sigma=\id_Q$, defines a function 
$\varphi \colon Q\to \Aut(K)$ by setting $\varphi(x)(a)=\sigma(x) a \sigma(x)^{-1}$ 
for $x\in Q$ and $a\in K$.  

In general, this function (which does depend on the choice 
of section) is not a homomorphism.  Nevertheless, if we 
post-compose it with the projection from $\Aut(K)$ to the outer 
automorphism group $\Out(K)\coloneqq \Aut(K)/\Inn(K)$, the resulting 
map, $\tilde\varphi\colon Q\to \Out(K)$, \emph{is}\/ a group homomorphism, 
which does not depend on the choice of section used to define it. The map 
sending an automorphism of $K$ to the induced automorphism of $K_{\ab}$ 
factors through a homomorphism, $\upsilon\colon \Out(K)\to\Aut(K_{\ab})$. 
We call the composite $\upsilon\circ \tilde\varphi\colon Q\to \Aut(K_{\ab})$ 
the monodromy representation of the extension \eqref{eq:abc-exact}.

Assume now that the exact sequence \eqref{eq:abc-exact} splits, i.e., 
there is a homomorphism 
$\sigma\colon Q\to G$ such that $\pi\circ \sigma=\id_Q$.  In this case, the 
corresponding function, $\varphi \colon Q\to \Aut(K)$, is a well-defined 
homomorphism.  This approach realizes the group $G$ as a split 
extension, $G=K\rtimes_{\varphi} Q$. That is, $G$ equals $K\times Q$ 
as a set, and its group operation can be expressed as 
$(x_1,y_1)\cdot (x_2,y_2)=(x_1\varphi(y_1)(x_2) , y_1y_2)$. 
In what follows, we will identify the group $Q$ with its image under the 
splitting, $\sigma(Q)\le G$, and thus view $Q$ as a subgroup of $G$; 
likewise, we will identify $K$ with $\iota(K)$, and thus view it as a normal 
subgroup of $G$.  With these identifications, 
the action of $Q$ on $K$ is simply the restriction of the conjugation 
action in $G$; that is, $\varphi(y)(x)=yxy^{-1}$. 

\subsection{Massey's correspondence}
\label{subsec:massey}
Suppose now that the normal subgroup $K\triangleleft G$ from \eqref{eq:abc-exact} 
is abelian. In that case, Massey established in \cite{Ms-80} a simple, 
yet very fruitful connection between $B(K)$, the Alexander invariant of $K$,  
and the lower central series of $G/G''$, the maximal metabelian quotient of $G$. 
Since the original reference contains only sketches of proofs, we provide complete 
details, which will also serve as a blueprint for the rational and modular 
extensions of this correspondence that will be given below.

Since the group $K$ is assumed to be abelian, the monodromy of the extension, 
$\varphi=\upsilon\circ\tilde\varphi \colon Q\to \Aut(K)$, is a well-defined 
homomorphism, which puts the structure of a $\Z[Q]$-module on $K$. 
For a group $G$ and a coefficient ring $\k$, we denote by $I_{\k}(G)$ 
the augmentation ideal of $\k[G]$, that is, the kernel of the ring map 
$\varepsilon\colon \k[G]\to \k$ defined by 
$\varepsilon \big(\sum n_g g\big)=\sum n_g$.

\begin{lemma}[\cite{Ms-80}]
\label{lem:massey}
Let $1\to K\to G\to Q\to 1$ be an extension of groups, and assume 
$K$ is abelian. Define a filtration $\{K_n\}_{n\ge 0}$ on $K$ inductively, 
by setting $K_0=K$ and $K_{n+1}=[G,K_n]$, and let $I=I_{\Z}(Q)$. 
Then $K_n =I^n  K$ for all $n\ge 0$.
\end{lemma}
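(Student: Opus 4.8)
The plan is to prove both inclusions $K_n \subseteq I^n K$ and $I^n K \subseteq K_n$ by induction on $n$, exploiting the fact that the $\Z[Q]$-module structure on $K$ is induced by conjugation in $G$. The base case $n=0$ is trivial, since $K_0 = K = I^0 K$. The crux of the argument is to correctly translate the group-theoretic commutator $[G, K_n]$ into the module-theoretic action of the augmentation ideal, so I would first set up this dictionary carefully.

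\medskip

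First I would recall that since $K$ is abelian and normal in $G$, the conjugation action of $G$ on $K$ factors through $Q = G/K$ (because inner automorphisms of $K$ coming from $K$ itself are trivial), giving $K$ its $\Z[Q]$-module structure via the monodromy $\varphi$. The key computational observation is that for $g \in G$ with image $\bar{g} = \pi(g) \in Q$, and for $a \in K$ written additively as a module element, the group commutator satisfies
\begin{equation}
\label{eq:comm-module}
[g, a] = g a g^{-1} a^{-1} = \bar{g}\cdot a - a = (\bar{g} - 1)\cdot a,
\end{equation}
where the right-hand side is computed in the $\Z[Q]$-module $K$. Since the elements $\bar{g} - 1$ for $\bar{g}$ ranging over $Q$ generate $I = I_{\Z}(Q)$ as an abelian group (equivalently, as an ideal), this identity is the engine of the whole proof.

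\medskip

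For the inductive step, assume $K_n = I^n K$. To show $K_{n+1} = [G, K_n] \subseteq I^{n+1} K$, I would take a generating commutator $[g, a]$ with $g \in G$ and $a \in K_n = I^n K$, and apply \eqref{eq:comm-module} to get $[g,a] = (\bar{g}-1)\cdot a \in I \cdot I^n K = I^{n+1} K$. Since $K_{n+1}$ is generated by such commutators and $I^{n+1} K$ is a subgroup (submodule), this gives one inclusion. Conversely, for $I^{n+1} K \subseteq K_{n+1}$, I would note that $I^{n+1} K = I \cdot (I^n K)$ is generated by elements $(\bar{g}-1)\cdot a$ with $a \in I^n K = K_n$; again by \eqref{eq:comm-module} each such element equals the commutator $[g, a] \in [G, K_n] = K_{n+1}$. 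One subtlety to address is that $K_{n+1} = [G, K_n]$ is a priori generated by commutators $[g, a]$ as a \emph{group}, so I must verify that the module-theoretic span $I^{n+1}K$ (a subgroup of the abelian group $K$) coincides with the subgroup these commutators generate; since $K$ is abelian this group generated by the $[g,a]$ is exactly their additive span, which matches.

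\medskip

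The main obstacle I anticipate is purely notational rather than conceptual: keeping the passage between multiplicative group notation (commutators $[g,a]$, the subgroups $K_n \le K$) and additive module notation (the action of $\Z[Q]$, the ideal powers $I^n K$) completely consistent, and confirming that $[G, K_n]$ as defined — the subgroup generated by all commutators with one entry in $G$ and one in $K_n$ — really is an \emph{honest} $\Z[Q]$-submodule equal to $I \cdot K_n$, not merely sandwiched between $I\cdot K_n$ and something larger. This requires checking that $[G, K_n]$ is closed under the $Q$-action, which follows because for $h \in G$ one has $h[g,a]h^{-1} = [hgh^{-1}, hah^{-1}]$ with $hah^{-1} \in K_n$ (as $K_n = I^nK$ is a submodule by the inductive hypothesis) and $hgh^{-1} \in G$, so the $Q$-conjugate of a generator is again a generator. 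Once this closure is in hand, the two inclusions above combine to give $K_{n+1} = I^{n+1} K$, completing the induction.
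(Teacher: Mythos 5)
Your proof is correct and follows essentially the same route as the paper's: an induction on $n$ whose engine is the dictionary $[g,a]=(\bar{g}-1)\cdot a$ identifying commutators with the action of generators of the augmentation ideal, so that $[G,K_n]=I\,K_n=I^{n+1}K$. The paper compresses your two inclusions into the single observation that the commutator generators of $[G,K_n]$ correspond bijectively to the additive generators $(h-1)k$ of $I\,K_n$; your extra verification that $[G,K_n]$ is a $\Z[Q]$-submodule is harmless but redundant, since equality with $I\,K_n$ (itself a submodule, $I$ being an ideal) already yields it.
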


\begin{proof}
We write the group operation in $K$ multiplicatively when viewing it 
as a subgroup of $G$, and additively when viewing it as a $\Z[Q]$-module. 
Fix a set-section $\sigma\colon Q\to G$ of the projection $\pi\colon G\to Q$, 
and let $\varphi \colon Q\to \Aut(K)$ be the corresponding monodromy. 
An element $h\in Q$ then acts on $K$ by sending an element $k\in K$ to 
$h\cdot k= \varphi(h)(k) = gkg^{-1}\in K$, where $g=\sigma(h)\in G$. 

The claim is now proved by induction on $n$, with the base case $n=0$ 
being obvious. So assume $K_n =I^n K$. Consider a commutator 
$gkg^{-1}k^{-1}\in K_{n+1}=[G,K_n]$ with $g\in G$ and $k\in K_n$. 
In view of the above observations, such a 
commutator corresponds in one-to-one fashion to the element 
$(h -1) k \in I  K_n$, where $h=\pi(g)$. By the induction hypothesis, 
$I  K_n=I^{n+1} K$; thus, $K_{n+1}=I^{n+1}K$, and we are done.
\end{proof}

\begin{theorem}[\cite{Ms-80}]
\label{thm:massey-alexinv}
Let $G$ be a group, and let $I=I_{\Z}(G_{\ab})$. 
Then $I^n B(G) = \gamma_{n+2} (G/G'')$, for all $n\ge 0$. 
\end{theorem}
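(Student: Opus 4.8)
The plan is to apply Lemma \ref{lem:massey} to a judiciously chosen extension, thereby reducing the statement about $B(G)$ and $\gamma_{n+2}(G/G'')$ to the already-established filtration identity $K_n = I^n K$. The natural candidate is the extension obtained from the maximal metabelian quotient of $G$, namely
\begin{equation}
\label{eq:massey-proof-ext}
\begin{tikzcd}[column sep=18pt]
1\ar[r]& G'/G'' \ar[r]& G/G'' \ar[r]& G/G' \ar[r]& 1\, ,
\end{tikzcd}
\end{equation}
which is precisely the sequence \eqref{eq:gprimeprime} used to define the module structure on $B(G)$. Here the normal subgroup is $K = G'/G'' = B(G)$, which is abelian, and the quotient is $Q = G/G' = G_{\ab}$. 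The monodromy action of $Q$ on $K$ is, by the very definition of the Alexander invariant, the conjugation action of $G_{\ab}$ on $B(G)$, so the $\Z[Q] = \Z[G_{\ab}]$-module structure coming from Lemma \ref{lem:massey} agrees with the Alexander invariant module structure. This matching of module structures is the conceptual crux, and I would state it carefully.

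First I would instantiate Lemma \ref{lem:massey} with the extension \eqref{eq:massey-proof-ext}, defining the filtration $K_0 = G'/G''$ and $K_{n+1} = [G/G'', K_n]$, where commutators are taken inside $\widetilde G \coloneqq G/G''$. The lemma then gives $K_n = I^n (G'/G'') = I^n B(G)$ for all $n\ge 0$, with $I = I_{\Z}(G_{\ab})$. Next I would identify this filtration $\{K_n\}$ with terms of the lower central series of $\widetilde G$. The key observation is that $K_0 = G'/G'' = \gamma_2(\widetilde G)$, since $\widetilde G' = (G/G'')' = G'/G''$. I would then prove by induction that $K_n = \gamma_{n+2}(\widetilde G)$: assuming $K_n = \gamma_{n+2}(\widetilde G)$, the recursion gives $K_{n+1} = [\widetilde G, K_n] = [\widetilde G, \gamma_{n+2}(\widetilde G)] = \gamma_{n+3}(\widetilde G)$ by the defining recursion \eqref{lcs1} for the lower central series. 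Combining the two identifications yields $I^n B(G) = \gamma_{n+2}(G/G'')$, as desired.

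The main obstacle I anticipate is verifying that the conjugation action defining the filtration $K_{n+1} = [\widetilde G, K_n]$ genuinely coincides, as a $\Z[G_{\ab}]$-module operation, with scalar multiplication by the augmentation ideal $I$. In Lemma \ref{lem:massey} the subgroup $K$ is abelian and its $\Z[Q]$-structure comes from the monodromy; one must check that for the extension \eqref{eq:massey-proof-ext} this monodromy is exactly the action described after \eqref{eq:gprimeprime}, namely $g G' \cdot x G'' = g x g^{-1} G''$. This requires noting that since $G'/G''$ is central-free only in the metabelian sense, the action of $Q = G/G'$ is well-defined precisely because $[G', G'] = G''$ acts trivially, which is what makes $B(G)$ a module rather than merely a group. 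A second, more routine point is confirming that the base case $K_0 = \gamma_2(\widetilde G)$ and the inductive step use the lower central series recursion in the correct form; these are direct but must be stated to make the identification airtight. Once the module structures are matched, the remainder is a clean double induction that I would present compactly.
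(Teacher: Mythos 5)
Your proposal is correct and follows essentially the same route as the paper's proof: both apply Lemma \ref{lem:massey} to the extension $1\to G'/G''\to G/G''\to G/G'\to 1$ from \eqref{eq:gprimeprime} to get $K_n=I^nB(G)$, then identify $K_n=\gamma_{n+2}(G/G'')$ by the same induction. Your extra care in matching the monodromy action of $Q=G_{\ab}$ with the module structure on $B(G)$ (well-defined since conjugation by $G'$ acts trivially on $G'/G''$) is exactly the point the paper handles implicitly via the definition in \S\ref{subsec:alexinv}.
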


\begin{proof}
Consider the extension $1\to G'/G'' \to G/G''\to G/G'\to 1$ 
from \eqref{eq:gprimeprime}, and recall that the Alexander invariant 
$B(G)$ is the (abelian) group $G'/G''$, viewed as a 
module over $\Z[G_{\ab}]$. Let $\{(G'/G'')_n\}_{n\ge 0}$ 
be the filtration of the subgroup $K=G'/G''$ defined in Lemma \ref{lem:massey}; 
the lemma then gives $(G'/G'')_n= I^n B(G)$ for all $n\ge 0$. 

It remains to show that $(G'/G'')_n= \gamma_{n+2}(G/G'')$ for all $n\ge 0$. 
(Note that $\gamma_{n+2}(G/G'')$ is a subgroup of $\gamma_{2}(G/G'')=G'/G''$, 
and thus is an abelian group.) We prove this claim by induction, with the 
base case $n=0$ clearly holding. For the induction step, we have that 
$(G'/G'')_{n+1}=[G,(G'/G'')_{n}]=[G, \gamma_{n+2}(G/G'')]=\gamma_{n+3}(G/G'')$, 
and the proof is complete.
\end{proof}

\subsection{Completion and associated graded of $B(G)$}
\label{subsec:gr-bg}
Recall that the ring $R=\Z[G_{\ab}]$ admits a filtration by powers 
of the augmentation ideal $I=I_{\Z}(G_{\ab})$. Let 
$\widehat{R}=\varprojlim R/I^n$ be the completion 
of $R$ with respect to this filtration, and let 
$\gr(R)=\gr(\widehat{R})=\bigoplus_{n\ge 0} I^n/I^{n+1}$ 
be the associated graded object. Both $\widehat{R}$ and $\gr(R)$ 
acquire in a natural way a ring structure, which is compatible 
with the filtration, respectively, the grading. 

Let $\widehat{B}=\varprojlim B/I^nB$ be the $I$-adic completion of the 
Alexander invariant $B=B(G)$, viewed as a module over $\widehat{R}$, 
and  let $\gr(\widehat{B})=\gr(B)$ be the associated graded object, 
viewed as a (graded) module over $\gr(R)$.  As such, $\gr(B)$ is 
generated by its degree $0$ piece, $B/IB$. It follows 
from Theorem \ref{thm:massey-alexinv} that the 
$\gr(R)$-generators of $\gr(B)$ correspond to a generating 
set for $\gr_2(G)$; moreover, the theorem has the 
following immediate corollary.

\begin{corollary}[\cite{Ms-80}]
\label{cor:alex-chen}
$\gr_n(B(G))\cong \gr_{n+2} (G/G'')$, for all $n\ge 0$. 
\end{corollary}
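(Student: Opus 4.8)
The plan is to deduce the corollary directly from Theorem~\ref{thm:massey-alexinv}, which identifies the $I$-adic filtration on $B=B(G)$ with the lower central series filtration on $G'/G''$ inside $G/G''$. Specifically, that theorem gives $I^nB(G)=\gamma_{n+2}(G/G'')$ for all $n\ge 0$, where both sides are subgroups of $G'/G''=\gamma_2(G/G'')$. The associated graded module $\gr(B)$ is by definition $\bigoplus_{n\ge 0} I^nB/I^{n+1}B$, so I would simply substitute the two consecutive identities to obtain
\begin{equation*}
\gr_n(B(G))=I^nB/I^{n+1}B=\gamma_{n+2}(G/G'')/\gamma_{n+3}(G/G''),
\end{equation*}
and the right-hand side is precisely $\gr_{n+2}(G/G'')$, the degree $n+2$ piece of the associated graded Lie algebra of the maximal metabelian quotient $G/G''$.

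First I would invoke Theorem~\ref{thm:massey-alexinv} to write down the two equalities $I^nB(G)=\gamma_{n+2}(G/G'')$ and $I^{n+1}B(G)=\gamma_{n+3}(G/G'')$, noting that the filtration of $B(G)$ by powers of $I$ coincides termwise with the restriction of the lower central series of $G/G''$ to the metabelian commutator subgroup $G'/G''$. Then I would take successive quotients: since the inclusion $I^{n+1}B\subseteq I^nB$ corresponds under this identification to the inclusion $\gamma_{n+3}(G/G'')\subseteq\gamma_{n+2}(G/G'')$, the quotient abelian groups are canonically isomorphic. The final step is to recognize that $\gr_{n+2}(G/G'')$ is, by the definition of the associated graded Lie algebra recalled in \S\ref{subsec:lcs}, exactly the quotient $\gamma_{n+2}(G/G'')/\gamma_{n+3}(G/G'')$.

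Honestly, there is no real obstacle here: this corollary is a formal consequence of the theorem, amounting to passing from an identity of filtration terms to an identity of successive quotients. The only point requiring a word of care is the index bookkeeping—matching the degree-$n$ piece of $\gr(B)$ with the degree-$(n+2)$ piece of $\gr(G/G'')$, which is an artifact of the augmentation filtration starting in degree $0$ while the lower central series starts the relevant terms at $\gamma_2$. I would also remark, as the excerpt already anticipates, that this isomorphism is the mechanism by which the Hilbert series of $B(G)$ encodes the Chen ranks $\theta_n(G)=\phi_n(G/G'')$, shifted by $2$.
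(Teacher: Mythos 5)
Your proof is correct and follows exactly the route the paper intends: the paper states Corollary~\ref{cor:alex-chen} as an immediate consequence of Theorem~\ref{thm:massey-alexinv}, and your passage from the termwise identity $I^nB(G)=\gamma_{n+2}(G/G'')$ to the successive quotients $\gr_n(B(G))=\gamma_{n+2}(G/G'')/\gamma_{n+3}(G/G'')=\gr_{n+2}(G/G'')$ is precisely the implicit argument, with the degree-shift bookkeeping handled correctly.
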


Now suppose that $b_1(G)<\infty$. Then $\gr(B(G)\otimes \Q)$ is 
a finitely generated graded module over the graded ring $\gr(\Q[G_{\ab}])$. 
Let $\theta_n(G)=\dim_{\Q} \gr_n(G/G'')\otimes \Q$ be the Chen ranks 
of $G$, starting with $\theta_1(G)=b_1(G)$.  
As a consequence of Corollary \ref{cor:alex-chen}, 
the Hilbert series of the rationalization of $\gr(B(G))$ determines 
the Chen ranks of $G$, as follows,
\begin{equation}
\label{eq:hilb-b-chen}
\Hilb (\gr(B(G) \otimes \Q),t) = \sum_{n\ge 0} \theta_{n+2} (G) t^n .
\end{equation}

Let $\alpha\colon G\to H$ be a group homomorphism. 
Recall from \S\ref{subsec:alexinv-func} that $\alpha$ 
induces a map of modules, $B(\alpha)\colon B(G)\to B(H)$, 
which covers the ring map $\tilde\alpha_{\ab}\colon R \to S$, 
where $S=\Z[H_{\ab}]$.  Clearly, the map $B(\alpha)$ preserves 
$I$-adic filtrations, and thus induces a morphism 
$\widehat{B(\alpha)}\colon \widehat{B(G)}\to \widehat{B(H)}$ 
which covers the ring map 
$\hat{\tilde\alpha}_{\ab}\colon \widehat{R} \to \widehat{S}$. 
Passing to associated graded objects, we obtain the morphism 
$\gr(B(\alpha))\colon \gr(B(G))\to \gr(B(H))$, which covers the ring map 
$\gr(\tilde\alpha_{\ab})\colon \gr (R)\to \gr(S)$.  For future reference, 
we record a fact regarding this last map.

\begin{lemma}
\label{lem:inj}
Suppose $H_{\ab}$ is finitely generated and the map 
$\alpha_{\ab}\colon G_{\ab} \to H_{\ab}$ is injective.  
Then the map $\gr(\tilde\alpha_{\ab})\colon \gr (R)\to \gr(S)$ 
is also injective.
\end{lemma}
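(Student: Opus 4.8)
The plan is to deduce injectivity of the graded ring map $\gr(\tilde\alpha_{\ab})$ from its behavior after tensoring with $\Q$, leveraging the fact that the source graded ring is (in the relevant situation) torsion-free. First I would rationalize. Since $\alpha_{\ab}$ embeds $G_{\ab}$ into the finitely generated group $H_{\ab}$, the group $G_{\ab}$ is itself finitely generated, and over $\Q$ the torsion dies: the augmentation filtration on $\Q[G_{\ab}]$ has associated graded ring the polynomial (symmetric) algebra $\gr(R)\otimes\Q\cong\Sym_{\Q}(G_{\ab}\otimes\Q)=\Sym_{\Q}(H_1(G;\Q))$, with the class of $g-1$ in degree $1$ corresponding to $g\otimes 1$; likewise $\gr(S)\otimes\Q\cong\Sym_{\Q}(H_1(H;\Q))$. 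Under these identifications $\gr(\tilde\alpha_{\ab})\otimes\Q$ becomes the induced map of symmetric algebras $\Sym_{\Q}(\alpha_{\ab}\otimes\Q)$.

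Next I would verify injectivity rationally. The hypothesis that $\alpha_{\ab}$ is injective forces $\alpha_{\ab}\otimes\Q\colon H_1(G;\Q)\to H_1(H;\Q)$ to be an injection of finite-dimensional $\Q$-vector spaces (the rank cannot drop), and choosing a vector-space complement splits it. Since $\Sym_{\Q}(\dash)$ carries a split injection of $\Q$-vector spaces to a split injection of graded $\Q$-algebras, it follows at once that $\gr(\tilde\alpha_{\ab})\otimes\Q$ is injective.

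Finally I would upgrade to the integral statement. It suffices to show that the natural localization map $\gr(R)\to\gr(R)\otimes\Q$ is injective, i.e.\ that $\gr(R)$ is torsion-free: for then $\gr(\tilde\alpha_{\ab})$ factors as $\gr(R)\inj\gr(R)\otimes\Q\xrightarrow{\,\Sym_{\Q}(\alpha_{\ab}\otimes\Q)\,}\gr(S)\otimes\Q$, a composite of injections, whence $\gr(\tilde\alpha_{\ab})$ is injective. The torsion-freeness of $\gr(R)$ is where the genuine work lies and is the step I expect to be the main obstacle. When $G_{\ab}$ is torsion-free it is immediate, because $\gr(\Z[\Z^{r}])$ is the polynomial ring $\Z[x_1,\dots,x_r]$, which is a free $\Z$-module; controlling the contribution of $\Tors(G_{\ab})$ to the associated graded of the integral group ring $\Z[G_{\ab}]$ is the delicate point, and is the part of the argument demanding the most care.
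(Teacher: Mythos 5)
Your first two steps are sound: since $\Q$ is flat over $\Z$, the map $\alpha_{\ab}\otimes\Q$ is injective; the identifications $\gr(R)\otimes\Q\cong\Sym_{\Q}(H_1(G;\Q))$ and $\gr(S)\otimes\Q\cong\Sym_{\Q}(H_1(H;\Q))$ are correct; and $\Sym_{\Q}$ of a (split) injection of finite-dimensional vector spaces is injective, so $\gr(\tilde\alpha_{\ab})\otimes\Q$ is indeed injective. The gap is the deferred third step, and it cannot be closed: $\gr(R)$ is \emph{never} torsion-free when $\Tors(G_{\ab})\neq 0$, because $\gr_1(R)=I/I^2\cong G_{\ab}$ via $g\mapsto g-1$, so all the torsion of $G_{\ab}$ already survives in degree $1$; for instance $\gr(\Z[\Z_n])\cong\Z[x]/(nx)$. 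Hence the localization map $\gr(R)\to\gr(R)\otimes\Q$ annihilates exactly the classes at stake, and your factorization proves the lemma only when $G_{\ab}$ is torsion-free---the case you yourself called immediate. In fact the step you flagged as delicate is not merely hard: the integral statement is false in general. Take $G=\Z_2=\langle t\rangle$, $H=\Z_4=\langle s\rangle$, and $\alpha$ the inclusion $t\mapsto s^2$, and write $x,y$ for the degree-one classes of $t-1$ and $s-1$. Then $t-1\mapsto s^2-1=2(s-1)+(s-1)^2$, so $\gr_1(\tilde\alpha_{\ab})$ sends $x$ to $2y$, injectively; but in degree $2$ the element $x^2$ is nonzero in $\gr_2(\Z[\Z_2])\cong\Z_2$ (here $(t-1)^2=-2(t-1)$ and $I^k=2^{k-1}I$), while its image $4y^2$ vanishes, since multiplying the relation $4(s-1)+6(s-1)^2+4(s-1)^3+(s-1)^4=0$ by $s-1$ shows $4(s-1)^2\in I_{\Z}(\Z_4)^3$. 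So $\gr(\tilde\alpha_{\ab})$ has nontrivial kernel.

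For comparison, the paper argues integrally rather than rationally: it presents $\gr(R)$ and $\gr(S)$ as quotients of $\Z[x_1,\dots,x_r]$ and $\Z[y_1,\dots,y_s]$ by ideals determined by the torsion, lifts $\gr(\tilde\alpha_{\ab})$ to the linear change of variables $\mu$ given by a matrix lift $\Z^r\to\Z^s$ of $\alpha_{\ab}$, and concludes from the injectivity of $\mu$. That last inference silently requires that $\mu^{-1}$ of the target ideal lie in the source ideal, and the example above ($\mu(x)=2y$, with $x^2\notin(2x)$ yet $\mu(x^2)=4y^2\in(4y)$) is precisely where this descent fails; so the paper's argument, too, is only complete in the torsion-free case. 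What is true---and what your rational argument correctly establishes---is the injectivity of $\gr(\tilde\alpha_{\ab})\otimes\Q$, and that is all the later applications need: in Theorem \ref{thm:alex-lcs-ngq} the groups $K_{\abf}$ and $G_{\abf}$ are torsion-free, and in Theorem \ref{thm:alex-abex}, part \eqref{ng4}, only ranks, i.e., dimensions after tensoring with $\Q$, are compared. In short, you have proved the correct rational statement by a legitimate route, but your strategy cannot yield the integral lemma as stated, and no strategy can.
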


\begin{proof}
Our assumptions imply that $G_{\ab}$ is also finitely generated. 
Letting $r$ and $s$ denote the minimum number of generators 
of $G_{\ab}$ and $H_{\ab}$, respectively, the map 
$\alpha_{\ab}$ lifts to an injective $\Z$-linear map, 
$\Z^r\to \Z^s$, given by multiplication with a matrix $M$. 

The  ring $\gr(R)$ can be described as the quotient of the 
polynomial ring $\Z[x_1,\dots, x_r]$ by a monomial ideal 
determined by $\Tors(G_{\ab})$, and likewise for $\gr(S)$. 
The ring map $\gr(\tilde\alpha_{\ab})\colon \gr (R)\to \gr(S)$ 
lifts to a map between polynomial rings, 
$\mu\colon \Z[x_1,\dots, x_r] \to \Z[y_1,\dots ,y_s]$, 
which may be identified with the linear change of variables defined by $M$. 
Clearly, the map $\mu$ is injective, and thus the map 
$\gr(\tilde\alpha_{\ab})$ is also injective. 
\end{proof}

\subsection{A rational Massey correspondence}
\label{subsec:massey-rat}
The next two results are rational analogues of Massey's correspondence. 
Both the statements and the proofs are similar to the integral case, though 
they do require some modifications, which we record below.

\begin{lemma}
\label{lem:massey-q}
Let $1\to K\to G\to Q\to 1$ be an extension of groups, and assume 
$K$ is abelian. Define a filtration $\{K^{\rat}_n\}_{n\ge 0}$ on $K$ inductively, 
by setting $K^{\rat}_0=K$ and $K^{\rat}_{n+1}=\ssqrt{[G,K^{\rat}_n]}$.  
Letting $I=I_{\Q}(Q)$, we have 
$K^{\rat}_n\otimes \Q =I^n (K \otimes \Q)$, for all $n\ge 0$.
\end{lemma}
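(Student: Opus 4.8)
The plan is to mimic the proof of the integral Lemma \ref{lem:massey} as closely as possible, working throughout with the rationalized module $K\otimes \Q$ and being careful about how the isolator interacts with tensoring by $\Q$. The key point is that the filtration $\{K^{\rat}_n\}$ differs from $\{K_n\}$ only by taking isolators, and isolators become invisible after tensoring with $\Q$: for any subgroup $S\le K$ of the abelian group $K$, the inclusion $S\inj \ssqrt{S}$ induces an isomorphism $S\otimes \Q \isom \ssqrt{S}\otimes \Q$, since every element of $\ssqrt{S}$ has a positive power lying in $S$, so the quotient $\ssqrt{S}/S$ is torsion and dies upon rationalization. This is the one new ingredient beyond Massey's argument, and I expect it to be the crux.

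First I would record that observation precisely: if $K$ is abelian and $S\le K$, then $\ssqrt{S}\otimes \Q = S\otimes \Q$ inside $K\otimes \Q$ (both regarded as $\Q$-subspaces of the latter). Applying this to $S=[G,K^{\rat}_n]$ gives $K^{\rat}_{n+1}\otimes \Q = \ssqrt{[G,K^{\rat}_n]}\otimes \Q = [G,K^{\rat}_n]\otimes \Q$. Thus after rationalizing, the recursion defining $K^{\rat}_{n+1}$ collapses to the same commutator recursion governing the integral filtration, the only difference being that we start from $K^{\rat}_n$ rather than $K_n$.

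Next I would argue by induction on $n$, exactly paralleling Lemma \ref{lem:massey}. The base case $n=0$ is immediate since $K^{\rat}_0=K$ and $I^0=\Q[Q]$. For the inductive step, assume $K^{\rat}_n\otimes \Q = I^n(K\otimes \Q)$. Using the monodromy description of the conjugation action, a commutator $gkg^{-1}k^{-1}$ with $g\in G$, $k\in K^{\rat}_n$ corresponds to the element $(h-1)k\in I\cdot (K^{\rat}_n\otimes\Q)$ where $h=\pi(g)$; hence $[G,K^{\rat}_n]\otimes \Q = I\cdot (K^{\rat}_n\otimes \Q)$. Combining this with the isolator remark from the first step and the induction hypothesis yields
\[
K^{\rat}_{n+1}\otimes \Q = [G,K^{\rat}_n]\otimes \Q = I\cdot(K^{\rat}_n\otimes\Q) = I\cdot I^n(K\otimes\Q)=I^{n+1}(K\otimes\Q),
\]
which closes the induction.

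The main obstacle, as noted, is justifying the interchange of the isolator operation with rationalization, and in particular making sure that the identification $[G,K^{\rat}_n]\otimes\Q = I\cdot(K^{\rat}_n\otimes\Q)$ is legitimate when $K^{\rat}_n$ is itself an isolator rather than a ``plain'' subgroup. The cleanest way to handle this is to verify that the bijective correspondence between commutators $gkg^{-1}k^{-1}$ and elements $(h-1)k$ used in Lemma \ref{lem:massey} is $\Z[Q]$-linear and so passes to the rationalization verbatim; since $K\otimes\Q$ is a $\Q[Q]$-module and $I\cdot(K^{\rat}_n\otimes\Q)$ is by definition the $\Q$-span of such elements $(h-1)k$, no additional torsion subtlety arises at this stage. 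All the care is concentrated in the single lemma that isolators are rationally trivial, after which the proof is a routine transcription of the integral case.
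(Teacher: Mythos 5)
Your proof is correct and takes essentially the same approach as the paper: the paper's proof likewise rationalizes Massey's commutator-to-$(h-1)k$ correspondence and absorbs the isolator by noting that $x^m \in [G,K^{\rat}_n]$ gives $x\otimes 1=\frac{1}{m}\sum_i (h_i-1)\cdot k_i \in I\,(K^{\rat}_n\otimes \Q)$, then closes the induction exactly as in Lemma \ref{lem:massey}. Your only difference is presentational: you package that $\frac{1}{m}$ manipulation as a standalone observation that $\ssqrt{S}\otimes\Q = S\otimes\Q$ (torsion quotient plus flatness of $\Q$), which the paper carries out inline.
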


\begin{proof}
Recall that the action of $h\in Q$ on $k\in K$ is given by 
$h\cdot k= gkg^{-1}$, where $g=\sigma(h)\in G$. 
By definition, an element $x\in K$ belongs to $\ssqrt{[G,K]}$ 
if there is an integer $m>0$ such that $x^m$ can be 
written as a product of commutators of the form 
$gkg^{-1}k^{-1}\in  [G,K]$. Viewing now the $\Q$-vector space 
$\ssqrt{[G,K]} \otimes \Q$ as a module over $\Q[Q]$, the element 
$x$ corresponds to a sum of elements of the form 
$\frac{1}{m} (h -1)\cdot k \in I (K\otimes \Q)$. 
This shows that 
$\ssqrt{[G,K]} \otimes \Q=I  (K\otimes \Q)$, 
proving the claim for $n=1$. The general case 
follows by induction on $n$, as in the proof of 
Lemma \ref{lem:massey}.
\end{proof}

\begin{theorem}
\label{thm:massey-al-rat}
Let $G$ be a group and let $I=I_{\Q}(G_{\abf})$. 
Then $I^n (B_{\rat}(G) \otimes \Q) = 
\gamma^{\rat}_{n+2} (G/G''_{\rat})\otimes \Q$, 
for all $n\ge 0$. 
\end{theorem}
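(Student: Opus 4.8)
The plan is to mirror the integral argument used for Theorem~\ref{thm:massey-alexinv}, substituting the rational Massey lemma (Lemma~\ref{lem:massey-q}) for Lemma~\ref{lem:massey} and the rational lower central series for the ordinary one. Writing $\overline{G}=G/G''_{\rat}$, I would first apply Lemma~\ref{lem:massey-q} to the extension \eqref{eq:gprime-rat}, whose kernel $K\coloneqq G'_{\rat}/G''_{\rat}=B_{\rat}(G)$ is abelian (because $[G'_{\rat},G'_{\rat}]\subseteq\ssqrt{[G'_{\rat},G'_{\rat}]}=G''_{\rat}$) and whose quotient $Q=G/G'_{\rat}=G_{\abf}$ acts on $K$ through precisely the module structure defining $B_{\rat}(G)$. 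With $I=I_{\Q}(G_{\abf})$ and $\{K^{\rat}_n\}_{n\ge 0}$ the filtration given by $K^{\rat}_0=K$ and $K^{\rat}_{n+1}=\ssqrt{[\overline{G},K^{\rat}_n]}$, the lemma yields $K^{\rat}_n\otimes\Q=I^n(B_{\rat}(G)\otimes\Q)$ for all $n\ge 0$.

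It then remains to identify $K^{\rat}_n$ with $\gamma^{\rat}_{n+2}(\overline{G})$ as subgroups of $\overline{G}$, which I would establish by induction on $n$. For the base case I would show $\gamma^{\rat}_2(\overline{G})=K$. Since $G''_{\rat}\subseteq G'_{\rat}=\ker(\abf\colon G\surj G_{\abf})$, the torsion-free abelianization factors through $\overline{G}$ and identifies $\overline{G}_{\abf}$ with $G_{\abf}$; hence the rational derived subgroup $\overline{G}'_{\rat}=\ker(\abf\colon\overline{G}\surj\overline{G}_{\abf})$ equals $G'_{\rat}/G''_{\rat}=K$. Since $\gamma^{\rat}_2(\overline{G})=\overline{G}'_{\rat}$ (the second terms of the rational lower central and rational derived series coincide), this gives $\gamma^{\rat}_2(\overline{G})=\overline{G}'_{\rat}=K=K^{\rat}_0$, as needed.

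The inductive step is then purely formal: both the Lemma~\ref{lem:massey-q} filtration and the rational lower central series are obtained by iterating the operator $\ssqrt{[\overline{G},-]}$ (with the isolator taken in $\overline{G}$ in both cases), so assuming $K^{\rat}_n=\gamma^{\rat}_{n+2}(\overline{G})$ gives
\[
K^{\rat}_{n+1}=\ssqrt{\big[\overline{G},K^{\rat}_n\big]}=\ssqrt{\big[\overline{G},\gamma^{\rat}_{n+2}(\overline{G})\big]}=\gamma^{\rat}_{n+3}(\overline{G}).
\]
Tensoring the resulting identity $K^{\rat}_n=\gamma^{\rat}_{n+2}(G/G''_{\rat})$ with $\Q$ and combining it with the first paragraph finishes the proof. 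I expect the only real obstacle to be the base case---specifically, verifying that passing to the maximal torsion-free metabelian quotient leaves $G_{\abf}$ unchanged and that the isolators defining $K^{\rat}_n$ and $\gamma^{\rat}_n$ are computed in the same ambient group $\overline{G}$---after which the recursion matches the integral case verbatim.
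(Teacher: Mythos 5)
Your proposal is correct and follows essentially the same route as the paper: apply Lemma~\ref{lem:massey-q} to the extension \eqref{eq:gprime-rat} to get $K^{\rat}_n\otimes\Q=I^n(B_{\rat}(G)\otimes\Q)$, then identify $K^{\rat}_n=\gamma^{\rat}_{n+2}(G/G''_{\rat})$ by induction on $n$. The only difference is that you spell out the base case (that $\gamma^{\rat}_2(G/G''_{\rat})=G'_{\rat}/G''_{\rat}$, via the identification of $(G/G''_{\rat})_{\abf}$ with $G_{\abf}$), which the paper's proof simply declares immediate; your verification is accurate.
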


\begin{proof}
Consider the extension $1\to G'_{\rat}/G''_{\rat} \to G/G''_{\rat}\to G/G'_{\rat}\to 1$ 
from \eqref{eq:gprime-rat}, and recall that the rational Alexander invariant 
$B_{\rat}(G)$ is equal to $G'_{\rat}/G''_{\rat}$, viewed as a module 
over $\Z[G_{\abf}]$.  Let $\{(G'_{\rat}/G''_{\rat})^{\rat}_n\}_{n\ge 0}$ 
be the filtration on $G'_{\rat}/G''_{\rat}$ defined in Lemma \ref{lem:massey-q}; 
the lemma then gives 
$(G'_{\rat}/G''_{\rat})^{\rat}_n \otimes \Q= I^n ( B_{\rat}(G)\otimes \Q)$ 
for all $n\ge 0$. 

To complete the proof, we show by induction that 
$(G'_{\rat}/G''_{\rat})^{\rat}_n= 
\gamma^{\rat}_{n+2}(G/G''_{\rat})$ for all $n\ge 0$. 
(Note that $\gamma^{\rat}_{n+2}(G/G''_{\rat})$ is a subgroup of 
$\gamma^{\rat}_{2}(G/G''_{\rat})=G'_{\rat}/G''_{\rat}$, 
and thus is a torsion-free abelian group.) The base case $n=0$ is   
immediate. For the induction step, we have that 
\[
(G'_{\rat}/G''_{\rat})^{\rat}_{n+1}
=\ssqrt{\left[G,(G'_{\rat}/G''_{\rat})^{\rat}_{n}\right]} 
=\ssqrt{\left[G, \gamma^{\rat}_{n+2}(G/G''_{\rat})\right]}
=\gamma^{\rat}_{n+3}(G/G''_{\rat})\, ,
\] 
and we are done.
\end{proof}

\begin{corollary}
\label{cor:alex-rat-chen}
$\gr_n(B_{\rat}(G) \otimes \Q)\cong \gr_{n+2} (G/G''_{\rat}) \otimes \Q$, 
for all $n\ge 0$. 
\end{corollary}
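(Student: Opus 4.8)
The plan is to deduce the corollary directly from Theorem \ref{thm:massey-al-rat} by passing to associated graded objects, mirroring the way Corollary \ref{cor:alex-chen} was obtained from Theorem \ref{thm:massey-alexinv} in the integral setting. First I would unwind the definition of the associated graded module on the left-hand side: since $B_{\rat}(G)\otimes \Q$ carries the $I$-adic filtration by the powers of the augmentation ideal $I=I_{\Q}(G_{\abf})$, its degree-$n$ graded piece is by definition the quotient
\[
\gr_n(B_{\rat}(G)\otimes \Q)=I^n(B_{\rat}(G)\otimes \Q)\big/I^{n+1}(B_{\rat}(G)\otimes \Q).
\]
Substituting the identification $I^n(B_{\rat}(G)\otimes \Q)=\gamma^{\rat}_{n+2}(G/G''_{\rat})\otimes \Q$ from Theorem \ref{thm:massey-al-rat} into both numerator and denominator reduces the statement to an isomorphism between $(\gamma^{\rat}_{n+2}\otimes \Q)/(\gamma^{\rat}_{n+3}\otimes \Q)$ and the graded piece $\gr_{n+2}(G/G''_{\rat})\otimes \Q$.

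Next I would exploit that $\Q$ is flat over $\Z$, so that $-\otimes \Q$ is exact; applying it to the inclusion $\gamma^{\rat}_{n+3}(G/G''_{\rat})\hookrightarrow \gamma^{\rat}_{n+2}(G/G''_{\rat})$ and to the resulting quotient yields a natural isomorphism
\[
\frac{\gamma^{\rat}_{n+2}(G/G''_{\rat})\otimes \Q}{\gamma^{\rat}_{n+3}(G/G''_{\rat})\otimes \Q}\;\cong\;\big(\gamma^{\rat}_{n+2}(G/G''_{\rat})/\gamma^{\rat}_{n+3}(G/G''_{\rat})\big)\otimes \Q=\gr^{\rat}_{n+2}(G/G''_{\rat})\otimes \Q.
\]
Finally, I would invoke Proposition \ref{prop:bl-grlie}\eqref{bl-gr2} of Bass and Lubotzky, which gives $\gr^{\rat}(G/G''_{\rat})\otimes \Q\cong \gr(G/G''_{\rat})\otimes \Q$, in order to match the notation of the statement; this identifies $\gr^{\rat}_{n+2}(G/G''_{\rat})\otimes \Q$ with $\gr_{n+2}(G/G''_{\rat})\otimes \Q$ and closes the chain of isomorphisms for all $n\ge 0$.

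There is essentially no serious obstacle here, as the content is already carried by Theorem \ref{thm:massey-al-rat} and the corollary is a formal consequence. The only point requiring a little care—and the step I would watch most closely—is the bookkeeping of which filtration quotients are being tensored with $\Q$ and in which order, so that flatness is applied correctly and the equality $I^n(B_{\rat}(G)\otimes \Q)=\gamma^{\rat}_{n+2}(G/G''_{\rat})\otimes \Q$ is genuinely an equality of subspaces of $B_{\rat}(G)\otimes \Q$. This is indeed the case, since $\gamma^{\rat}_{n+2}(G/G''_{\rat})$ sits inside $\gamma^{\rat}_2(G/G''_{\rat})=G'_{\rat}/G''_{\rat}$ and flatness preserves this inclusion; once that is verified, the three displayed identifications compose to give the claim.
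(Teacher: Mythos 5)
Your proposal is correct and follows essentially the same route as the paper, which also deduces the corollary from Theorem \ref{thm:massey-al-rat} by passing to associated graded objects with respect to the $I$-adic filtration on $B_{\rat}(G)\otimes\Q$ and the $\gamma^{\rat}$-filtration on $G/G''_{\rat}$. Your extra steps---verifying via flatness of $\Q$ that the filtration quotients commute with $-\otimes\Q$, and invoking Proposition \ref{prop:bl-grlie}\eqref{bl-gr2} to reconcile $\gr^{\rat}_{n+2}(G/G''_{\rat})\otimes\Q$ with the $\gr_{n+2}(G/G''_{\rat})\otimes\Q$ appearing in the statement---are sound clarifications of points the paper leaves implicit.
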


\begin{proof}
Follows from Theorem \ref{thm:massey-al-rat} by taking associated graded 
groups with respect to the $I$-adic filtration on $B_{\rat}(G) \otimes \Q$ and 
the $\gamma^{\rat} (G/G''_{\rat})$ filtration on $G/G''_{\rat}$, respectively.
\end{proof}

\begin{corollary}
\label{cor:chen-alrat}
If $b_1(G)<\infty$, then, for all $n\ge 2$, 
\[
\theta_n(G)=\theta^{\rat}_n(G)=
\dim_{\Q} \gr_{n-2}(B(G) \otimes \Q)=
\dim_{\Q} \gr_{n-2}(B_{\rat}(G) \otimes \Q)  .
\] 
\end{corollary}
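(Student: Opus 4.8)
The plan is to read off all four quantities from the correspondences already established, so that the corollary reduces to assembling definitions. The first equality, $\theta_n(G)=\theta^{\rat}_n(G)$, is immediate: it is exactly the content of Corollary \ref{cor:lcs-chen-q}, which holds for all $n\ge 1$ and in particular for $n\ge 2$. It then remains to identify each of these two numbers with the corresponding graded piece of the (rationalized) Alexander invariant.

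For the integral identity I would start from the definition $\theta_n(G)=\dim_{\Q}\gr_n(G/G'')\otimes\Q$ and invoke Corollary \ref{cor:alex-chen}, which gives $\gr_{n-2}(B(G))\cong \gr_n(G/G'')$ for $n\ge 2$. Tensoring this isomorphism with $\Q$ then yields $\dim_{\Q}\gr_{n-2}(B(G))\otimes\Q=\theta_n(G)$. The one point needing justification is the interchange of associated graded with rationalization, namely $\gr_{n-2}(B(G))\otimes\Q\cong \gr_{n-2}(B(G)\otimes\Q)$. This follows from flatness of $\Q$ over $\Z$: writing $I=I_{\Z}(G_{\ab})$, one has $I_{\Q}^k(B(G)\otimes\Q)=(I^kB(G))\otimes\Q$ since $I_{\Q}=I\otimes_{\Z}\Q$, and flatness preserves the defining short exact sequences $0\to I^{k+1}B(G)\to I^kB(G)\to \gr_k(B(G))\to 0$.

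For the rational identity no such base change is needed, because Corollary \ref{cor:alex-rat-chen} is already phrased rationally: it gives $\gr_{n-2}(B_{\rat}(G)\otimes\Q)\cong \gr^{\rat}_n(G/G''_{\rat})\otimes\Q$ for $n\ge 2$. Taking $\Q$-dimensions and recalling the definition $\theta^{\rat}_n(G)=\phi^{\rat}_n(G/G''_{\rat})=\dim_{\Q}\gr^{\rat}_n(G/G''_{\rat})\otimes\Q$ then gives $\dim_{\Q}\gr_{n-2}(B_{\rat}(G)\otimes\Q)=\theta^{\rat}_n(G)$, closing the chain of equalities.

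I do not anticipate a genuine obstacle: the substance of the corollary is already carried by Massey's correspondence and its rational analogue (Corollaries \ref{cor:alex-chen} and \ref{cor:alex-rat-chen}) together with the Bass--Lubotzky comparison (Corollary \ref{cor:lcs-chen-q}). The only care required is the flatness bookkeeping in the integral case, and keeping track that the shift by $2$ sends degree $n-2$ of the Alexander invariant to degree $n$ of the Chen Lie algebra, which is exactly why the identity is asserted only for $n\ge 2$.
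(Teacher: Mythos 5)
Your proof is correct and follows essentially the same route as the paper: the first equality via Corollary \ref{cor:lcs-chen-q}, the integral identity via Massey's correspondence (the paper cites formula \eqref{eq:hilb-b-chen}, which is just Corollary \ref{cor:alex-chen} rationalized and packaged as a Hilbert series), and the rational identity via Corollary \ref{cor:alex-rat-chen}. Your explicit flatness argument identifying $\gr_{n-2}(B(G))\otimes\Q$ with $\gr_{n-2}(B(G)\otimes\Q)$ is a correct justification of a base-change step the paper leaves implicit in deriving \eqref{eq:hilb-b-chen}.
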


\begin{proof}
The first equality follows from Corollary \ref{cor:lcs-chen-q}. 
By formula \eqref{eq:hilb-b-chen} we have $\theta_n(G)=\dim_{\Q}
\gr_{n-2}(B(G) \otimes \Q)$,  while 
by Corollary \ref{cor:alex-rat-chen} we have $\theta^{\rat}_n(G)=\dim_{\Q}
\gr_{n-2}(B_{\rat}(G) \otimes \Q)$ for all $n\ge 2$, and we are done.
\end{proof}

\subsection{An isomorphism between completions}
\label{subsec:complete-aiq}
Set $R_0=\Z[G_{\abf}]$ and $I_0=I_\Z(G_{\abf})$. 
Let $\widehat{R_0}$ be the completion of $R_0$ 
with respect to the $I_0$-adic filtration, and 
let $\widehat{B_{\rat}}$ be the $I_{0}$-adic completion of the 
rational Alexander invariant $B_{\rat}=B_{\rat}(G)$, 
viewed as a module over $\widehat{R_0}$.

The projection map $\nu\colon G_{\ab}\surj G_{\abf}$ induces a 
surjective ring map, $\tilde\nu\colon R\surj R_0$. 
By Proposition \ref{prop:bq-tf}, the inclusion 
$G'\inj G'_{\rat}$ induces a $\tilde\nu$-morphism,  
$\kappa\colon B\to B_{\rat}$, which becomes 
a surjection upon tensoring with $\Q$, but not necessarily 
an isomorphism. The next proposition shows that, upon completion, 
the map $\kappa\otimes \Q$ does become an isomorphism. 
This result overlaps with \cite[Proposition 2.4]{DHP14}, which 
is both slightly more general (in that it replaces $G'_{\rat}$ with 
an arbitrary subgroup $H\le G$ containing $G'$ as a finite-index 
normal subgroup), and less general (in that it assumes $G$ to be 
finitely generated, which we don't). Our proof, though, is very 
much different.

\begin{theorem}
\label{thm:hat-kappa}
Let $G$ be a group with $b_1(G)<\infty$. Then the map 
$\kappa\colon B(G)\to B_{\rat}(G)$ yields 
\begin{enumerate}[itemsep=2pt]
\item An isomorphism 
$\hat{\kappa} \otimes \Q\colon \widehat{B(G)\otimes \Q} \isom 
\widehat{B_{\rat}(G)\otimes \Q}$ of filtered modules covering the 
filtered ring map $\widehat{\tilde\nu}\otimes \Q\colon \widehat{R}\otimes \Q\surj  
\widehat{R_0}\otimes \Q$.
\item 
An isomorphism 
 $\gr(\kappa)\otimes \Q\colon \gr(B(G) \otimes \Q) 
\isom \gr(B_{\rat}(G)\otimes \Q)$ covering the map of graded rings 
$\gr(\tilde\nu)\colon \gr(R) \otimes \Q\surj \gr(R_0)\otimes \Q$.
\end{enumerate}
\end{theorem}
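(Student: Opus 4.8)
The plan is to prove part (2) first and then obtain part (1) from it by the standard fact that a filtration-preserving map inducing an isomorphism on associated graded objects induces an isomorphism on completions. The crux of part (2) is to recognize $\gr(\kappa)\otimes\Q$, through the two Massey correspondences, as a concrete comparison map between the rationalized Chen Lie algebras of $G/G''$ and $G/G''_{\rat}$, whose surjectivity and bijectivity can then be checked separately.

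Concretely, I would begin from the fact that $B(G)=G'/G''$ is the piece $\gamma_2(G/G'')$ inside $G/G''$, and that the inclusion $G'\inj G'_{\rat}$ realizes $\kappa$ as the restriction to $\gamma_2$ of the projection $\rho\colon G/G''\surj G/G''_{\rat}$ induced by $G''\subseteq G''_{\rat}$. Using the identifications $I^nB(G)=\gamma_{n+2}(G/G'')$ from Theorem \ref{thm:massey-alexinv} and $I_0^n\bigl(B_{\rat}(G)\otimes\Q\bigr)=\gamma^{\rat}_{n+2}(G/G''_{\rat})\otimes\Q$ from Theorem \ref{thm:massey-al-rat}, together with the inclusion of ordinary into rational lower central series, $\kappa$ is filtration-preserving, and in degree $n$ the map $\gr_n(\kappa)\otimes\Q$ is the composite
\[
\gr_{n+2}(G/G'')\otimes\Q \xrightarrow{\,\gr_{n+2}(\rho)\otimes\Q\,} \gr_{n+2}(G/G''_{\rat})\otimes\Q \xrightarrow{\,\Phi\otimes\Q\,} \gr^{\rat}_{n+2}(G/G''_{\rat})\otimes\Q ,
\]
where $\Phi$ is the Bass--Lubotzky comparison map. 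The first arrow is surjective because $\rho$ is onto, hence maps $\gamma_m(G/G'')$ onto $\gamma_m(G/G''_{\rat})$ for every $m$; the second is an isomorphism by Proposition \ref{prop:bl-grlie}\eqref{bl-gr2}. Thus $\gr_n(\kappa)\otimes\Q$ is surjective for all $n$.

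To promote surjectivity to bijectivity I would use the hypothesis $b_1(G)<\infty$ and Corollary \ref{cor:chen-alrat}, which yields $\dim_\Q\gr_n(B(G)\otimes\Q)=\dim_\Q\gr_n(B_{\rat}(G)\otimes\Q)=\theta_{n+2}(G)<\infty$ for every $n\ge 0$ (the equality of the two flavors of Chen ranks coming from Corollary \ref{cor:lcs-chen-q}). A surjection of finite-dimensional vector spaces of equal dimension is an isomorphism, so $\gr(\kappa)\otimes\Q$ is an isomorphism in each degree, proving part (2). Part (1) then follows formally: since $\kappa\otimes\Q$ preserves the $I$-adic and $I_0$-adic filtrations and induces an isomorphism on associated graded modules, the completed map $\widehat{\kappa}\otimes\Q$ is an isomorphism, and it manifestly covers the completed ring surjection $\widehat{\tilde\nu}\otimes\Q$.

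The step I expect to be the main obstacle is the compatibility assertion in the second paragraph, namely that $\gr(\kappa)\otimes\Q$ genuinely equals $(\Phi\otimes\Q)\circ(\gr(\rho)\otimes\Q)$ under the integral and rational Massey identifications. Establishing this is a naturality check: one must verify that the filtration of $B(G)$ by powers of $I$ and that of $B_{\rat}(G)\otimes\Q$ by powers of $I_0$ are matched by $\kappa$ exactly as prescribed by $\rho$ together with the inclusion $\gamma_{m}\subseteq\gamma^{\rat}_{m}$, which amounts to tracking the functoriality of Lemmas \ref{lem:massey} and \ref{lem:massey-q} along the quotient $G/G''\surj G/G''_{\rat}$. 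Once this bookkeeping is settled, the remainder is precisely surjectivity together with the dimension count above.
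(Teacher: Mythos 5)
Your proposal is correct and follows essentially the same route as the paper: both arguments translate $\gr(\kappa)\otimes\Q$ through the integral and rational Massey correspondences (Corollaries \ref{cor:alex-chen} and \ref{cor:alex-rat-chen}), invoke the Bass--Lubotzky isomorphism of Proposition \ref{prop:bl-grlie}, and conclude part (1) by the standard fact that a filtration-preserving map inducing an isomorphism on associated graded modules induces an isomorphism on completions. The only cosmetic difference is that the paper obtains bijectivity of $\gr_n(\kappa)\otimes\Q$ in one step from the commuting square \eqref{eq:map-brb-grq}, whereas you factor it as $(\Phi\otimes\Q)\circ(\gr(\rho)\otimes\Q)$ to get surjectivity and then finish with the dimension count of Corollary \ref{cor:chen-alrat} --- a rearrangement of the same ingredients, with your explicit factorization through $\gr(G/G''_{\rat})$ making precise the identification that the paper's diagram leaves implicit.
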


\begin{proof}
The ring epimorphism $\tilde\nu\colon R\surj  R_0$ preserves augmentation 
ideals; therefore, it induces a filtration-preserving ring map, $\widehat{\tilde\nu} 
\colon \widehat{R}\surj  \widehat{R_0}$. Likewise, the map 
$\kappa\colon B\to B_{\rat}$ is compatible with the 
$I$-adic and $I_0$-adic filtrations in source and target, and thus 
induces a filtration-preserving $\widehat{\tilde\nu}$-morphism 
on completions, $\hat{\kappa}\colon \widehat{B} \to \widehat{B_{\rat}}$.

This morphism induces a morphism between the 
corresponding associated graded modules, 
$\gr(\kappa)\colon \gr(B) \to \gr(B_{\rat})$, covering the ring map 
$\gr(\tilde\nu)\colon \gr(R)\to \gr(R_0)$. 
On the other hand, we have by Proposition \ref{prop:bl-grlie} a morphism 
of graded Lie algebras, $\Phi\colon \gr(G)\to \gr^{\rat}(G)$, which 
induces an isomorphism $\Phi\otimes \Q\colon  
\gr(G)\otimes \Q\to \gr^{\rat}(G)\otimes \Q$.

For each $n\ge 0$ we have a commuting diagram, 
\begin{equation}
\label{eq:map-brb-grq}
\begin{tikzcd}[column sep=46 pt]
\gr_n(B(G)\otimes \Q) 
\ar[r, "\gr_n(\kappa)\otimes \Q"] \ar[d, "\cong"]
&\gr_n(B_{\rat}(G)\otimes \Q)  \ar[d, "\cong"]  
\\
\gr_{n+2}(G)\otimes \Q\ar{r}{\Phi_{n+2}\otimes \Q}[swap]{\cong}
& \gr^{\rat}_{n+2}(G)\otimes \Q\, ,
\end{tikzcd}
\end{equation}
where the vertical arrows are the isomorphisms provided by Corollaries 
\ref{cor:alex-chen} and \ref{cor:alex-rat-chen}.
It follows that the top arrow is an isomorphism, for each $n\ge 0$, 
Hence, the map $\gr(\kappa)\otimes \Q\colon \gr(B \otimes \Q) 
\to \gr(B_{\rat}\otimes \Q)$ is an isomorphism. A standard argument 
(see e.g.~\cite[Lemma 2.4]{SW-ejm}) now implies that 
the map $\hat{\kappa} \otimes \Q\colon \widehat{B\otimes \Q} \to 
\widehat{B_{\rat}\otimes \Q}$ is an isomorphism, too.
\end{proof}

\begin{remark}
\label{rem:grobner}
Setting $r=b_{1}(G)$ and picking a generating set $t_{1},\dots, t_{r}$ 
for $G_{\abf}$ allows us to identify the ring $\Z[G_{\abf}]$ with the ring 
of Laurent polynomials $\Z[t_{1}^{\pm 1}, \dots , t_{r}^{\pm 1}]$ and 
the ring $\gr(\Z[G_{\abf}])$ with the polynomial ring 
$\Z[x_{1},\dots ,x_{r}]$, where $x_i$ corresponds to $t_i-1$. 
This permits direct computation of the Hilbert series 
of $\gr(B (G)\otimes \Q)\cong \gr(B_{\rat} (G)\otimes \Q)$, 
and thus of the Chen ranks $\theta_n(G)=\theta^{\rat}_n(G)$, too, 
via standard methods of commutative algebra, based on the 
use of Gr\"{o}bner bases. We refer to \cite{CSc-adv,CS-tams99,Su-conm, 
SW-aam} for detailed explanations and examples on 
how this algorithm works in various settings.
\end{remark}

\subsection{A modular Massey correspondence}
\label{subsec:massey-p}
For the rest of this section, we fix a prime $p$. The next two results 
are mod-$p$ analogues of Massey's correspondence.

\begin{lemma}
\label{lem:massey-p}
Let $1\to K\to G\to Q\to 1$ be an extension of groups, and assume 
$K$ is an elementary abelian $p$-group. 
Define a filtration $\{K_n\}_{n\ge 0}$ on $K$ inductively, 
by setting $K_0=K$ and $K_{n+1}=[G,K_n]$, and  
let $I=I_{\Z_p}(H_1(Q;\Z_p))$. Then 
$K_n=I^n K$ for all $n\ge 0$.
\end{lemma}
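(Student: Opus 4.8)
The plan is to adapt the proof of the integral Massey lemma (Lemma~\ref{lem:massey}) line by line, reducing all coefficients modulo $p$ and replacing the group ring of $Q$ by that of $H_1(Q;\Z_p)$. First I would fix the module structure that makes the symbol $I^nK$ meaningful. Since $K$ is an elementary abelian $p$-group it is a $\Z_p$-vector space, and, $K$ being abelian, the monodromy $\varphi\colon Q\to\Aut(K)$ of the extension is a genuine homomorphism that makes $K$ into a $\Z_p[Q]$-module. The projection $Q\surj H_1(Q;\Z_p)$ induces a surjection of group rings $\Z_p[Q]\surj\Lambda$, where $\Lambda\coloneqq\Z_p[H_1(Q;\Z_p)]$, carrying the augmentation ideal $I_{\Z_p}(Q)$ onto $I\coloneqq I_{\Z_p}(H_1(Q;\Z_p))$; in the setting in which this lemma is applied the group $Q$ is itself elementary abelian $p$ (it will be $H_1(G;\Z_p)$), so this surjection is an isomorphism and $K$ is a $\Lambda$-module, exactly as needed for $I^nK$ to be defined.

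Next I would establish the one-step identity $[G,M]=I\,M$ for every $\Lambda$-submodule $M\subseteq K$. Writing $K$ multiplicatively inside $G$ and additively as a $\Lambda$-module, and fixing a set-section $\sigma\colon Q\to G$ of $\pi$, an element $h\in Q$ acts on $k\in K$ by $h\cdot k=\sigma(h)\,k\,\sigma(h)^{-1}$. Thus for $g\in G$ and $k\in M$ the commutator $gkg^{-1}k^{-1}$ corresponds to $(h-1)\cdot k$ with $h=\pi(g)$, and these lie in $M$ because $M$ is $\Z_p[Q]$-stable. As $M$ is a $\Z_p$-subspace, the subgroup $[G,M]$ generated by these commutators is their $\Z_p$-span; since the classes $h-1$ span $I_{\Z_p}(Q)$ as $h$ ranges over $Q$, and the image of $I_{\Z_p}(Q)$ in $\Lambda$ is $I$, this span equals $I\,M$. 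Hence $[G,M]=I\,M$.

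Finally, the claim $K_n=I^nK$ follows by induction on $n$. The base case $n=0$ is the identity $I^0K=\Lambda K=K$. For the inductive step, the previous paragraph applied to the submodule $M=K_n=I^nK$ gives $K_{n+1}=[G,K_n]=I\,K_n=I\cdot I^nK=I^{n+1}K$, completing the induction. I expect the only genuinely new ingredient, compared with Lemma~\ref{lem:massey}, to be the bookkeeping in the first paragraph: one must check that the mod-$p$ reduction replaces the acting ring $\Z_p[Q]$ by $\Lambda=\Z_p[H_1(Q;\Z_p)]$ and that the augmentation ideal is correctly identified, after which the commutator computation and the induction are formally identical to the integral argument.
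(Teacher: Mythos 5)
Your proposal is correct and takes essentially the same approach as the paper, whose entire proof consists of noting that the $\Z_p$-vector space $K$ becomes a $\Z_p[H_1(Q;\Z_p)]$-module via the monodromy and then invoking the argument of Lemma \ref{lem:massey}. If anything, your first paragraph is more careful than the paper's: the bare assertion that the monodromy action factors through $H_1(Q;\Z_p)$ fails for arbitrary $Q$ (e.g., $Q=\Z_2$ acting on $K=\Z_3$ by inversion, where $H_1(Q;\Z_3)=0$ yet $[G,K]=K$, so $I^nK$ is not even defined), and your observation that the lemma is invoked only with $Q$ elementary abelian $p$ --- where $\Z_p[Q]=\Z_p[H_1(Q;\Z_p)]$ and the module structure is automatic --- supplies exactly the justification the paper leaves implicit.
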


\begin{proof}
By assumption, $K$ is the underlying additive group of a $\Z_p$-vector 
space. Thus, the monodromy action of the extension defines the structure of 
a $\Z_p[H_1(Q;\Z_p)]$-module on $K$. The argument now proceeds as 
in the proof for Lemma \ref{lem:massey}.
\end{proof}

\begin{theorem}
\label{thm:massey-al-p}
Let $G$ be a group, and let $I=I_{\Z_p}(H_1(G;\Z_p))$. 
Then $I^n B_{p}(G) = \gamma^{p}_{n+2} (G/G''_{p})$, 
for all $n\ge 0$. 
\end{theorem}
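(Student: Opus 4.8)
The plan is to mirror the proof of Theorem~\ref{thm:massey-alexinv} (the integral Massey correspondence) in the mod-$p$ setting, now drawing on Lemma~\ref{lem:massey-p} instead of Lemma~\ref{lem:massey}, and on the mod-$p$ lower central series relations recorded in \S\ref{subsec:lcs-p}. First I would apply Lemma~\ref{lem:massey-p} to the extension
\begin{equation*}
\begin{tikzcd}[column sep=16pt]
1\ar[r]& G'_{p}/G''_{p} \ar[r]& G/G''_{p} \ar[r]& G/G'_{p} \ar[r]& 1,
\end{tikzcd}
\end{equation*}
which is the mod-$p$ analogue of \eqref{eq:gprimeprime}. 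The normal subgroup here is $K=G'_{p}/G''_{p}=B_{p}(G)$, which by the definition in \eqref{eq:alex-p} and the isomorphism \eqref{eq:alex-p-bis} is an elementary abelian $p$-group (indeed a $\Z_p$-vector space), so the hypothesis of Lemma~\ref{lem:massey-p} is met. The quotient is $Q=G/G'_{p}=H_1(G;\Z_p)$, so $I_{\Z_p}(H_1(Q;\Z_p))$ matches the augmentation ideal $I=I_{\Z_p}(H_1(G;\Z_p))$ named in the statement. Lemma~\ref{lem:massey-p} then yields $(B_{p}(G))_n=I^n B_{p}(G)$ for all $n\ge 0$, where $\{(B_p(G))_n\}$ is the filtration defined by $(B_p(G))_0=B_p(G)$ and $(B_p(G))_{n+1}=[G,(B_p(G))_n]$.

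The second step is to identify this filtration with the mod-$p$ lower central series of $G/G''_p$, that is, to prove by induction that $(B_{p}(G))_n=\gamma^{p}_{n+2}(G/G''_{p})$ for all $n\ge 0$. The base case $n=0$ asserts $B_p(G)=G'_p/G''_p=\gamma^{p}_2(G/G''_{p})$, which follows from the relation $G'_{p}=\gamma^{p}_2(G)$ noted in \S\ref{subsec:lcs-p}, applied to the group $G/G''_p$. For the inductive step I would compute
\begin{equation*}
(B_{p}(G))_{n+1}=[G,(B_{p}(G))_{n}]=[G,\gamma^{p}_{n+2}(G/G''_{p})]=\gamma^{p}_{n+3}(G/G''_{p}),
\end{equation*}
where the last equality uses the recursion \eqref{eq:gamma-p-filtration}. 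Combining the two steps gives $I^n B_{p}(G)=\gamma^{p}_{n+2}(G/G''_{p})$, as required.

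The one genuine subtlety, and where I expect the main obstacle to lie, is the inductive step of the identification: the defining recursion for $\gamma^{p}$ in \eqref{eq:gamma-p-filtration} is $\gamma^{p}_{n+1}(\cdot)=\langle(\gamma^{p}_{n})^p,[G,\gamma^{p}_{n}]\rangle$, which carries an extra $p$-th-power factor $(\gamma^{p}_{n})^p$ that is absent from the filtration $K_{n+1}=[G,K_n]$ of Lemma~\ref{lem:massey-p}. I must check that this factor contributes nothing once we have passed to $G/G''_{p}$. The point is that for $n\ge 2$ the terms $\gamma^{p}_{n}(G/G''_{p})$ already lie inside $\gamma^{p}_2(G/G''_p)=G'_p/G''_p=B_p(G)$, which is an elementary abelian $p$-group; hence every $p$-th power in it is trivial, so $(\gamma^{p}_{n+2}(G/G''_{p}))^p=\{1\}$ and the recursion collapses to $\gamma^{p}_{n+3}(G/G''_{p})=[G,\gamma^{p}_{n+2}(G/G''_{p})]$. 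This is exactly the relation used above, and it is legitimate because $B_p(G)$ being a $\Z_p$-vector space is precisely what makes the $p$-power operation vanish. Once this observation is in place, the induction runs smoothly and the proof concludes just as in the integral case.
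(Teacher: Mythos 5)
Your proof is correct and takes essentially the same route as the paper: apply Lemma \ref{lem:massey-p} to the extension $1\to G'_{p}/G''_{p}\to G/G''_{p}\to G/G'_{p}\to 1$, then identify the resulting filtration with $\gamma^{p}_{n+2}(G/G''_{p})$ by induction, exactly as in Theorem \ref{thm:massey-alexinv}. Your explicit check that the $p$-power term $(\gamma^{p}_{n+2}(G/G''_{p}))^p$ is trivial---because these subgroups lie in $\gamma^{p}_{2}(G/G''_{p})=B_p(G)$, an elementary abelian $p$-group---is precisely the detail the paper leaves implicit in its one-line appeal to the integral argument, so your write-up is, if anything, more complete.
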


\begin{proof}
Consider the extension $1\to G'_{p}/G''_{p} \to G/G''_{p}\to G/G'_{p}\to 1$, 
and recall that the mod-$p$ Alexander invariant 
$B_{p}(G)$ is equal to $G'_{p}/G''_{p}$, an elementary abelian $p$-group. 
Using now Lemma \ref{lem:massey-p}, the conclusion 
follows as in the proof of Theorem \ref{thm:massey-alexinv}. 
\end{proof}

Finally, assume that $\dim_{\Z_p} H_1(G;\Z_p)$ is finite, and let 
$\theta^p_n(G)=\dim_{\Z_p} \gr^p_n(G/G''_{p})$ be the mod-$p$ 
Chen ranks of $G$. Also  
let $S=\gr(\Z_p[H_1(G;\Z_p)])$ be the associated graded ring 
with respect to the $I$-adic filtration on $\Z_p[H_1(G;\Z_p)]$, and let 
$\gr(B_{p}(G))$ be the associated graded $S$-module with respect 
to the $I$-adic filtration on $B_{p}(G)$. As a corollary to 
Theorem \ref{thm:massey-al-p}, we obtain the following 
formula relating the Hilbert series of this graded module 
to the generating series for the mod-$p$ Chen ranks of $G$:
\begin{equation}
\label{eq:hilb-b-chen-p}
\Hilb (\gr(B_{p} (G)),t) = \sum_{n\ge 0} \theta^p_{n+2} (G) t^n\, .
\end{equation}

\section{Split extensions and lower central series}
\label{sect:split}
In this section we restrict our attention to split extensions, 
$G=K\rtimes_{\varphi} Q$, and discuss the relationship 
between the lower central series and the associated graded 
Lie algebras of the factors and those of the extension.

\subsection{A generalized Falk--Randell theorem}
\label{subsec:fr}
We start with a recent result of 
Guaschi and Pereira \cite{GP}, which expresses the lower 
central series of a split extension in terms of the lower central series 
of the factors, as well as the extension data.  

\begin{theorem}[\cite{GP}]
\label{thm:gu-pe}
Let $G=K\rtimes_{\varphi} Q$ be a split extension of groups.  
For each $n\ge 1$ there is a split extension, 
$\gamma_n(G)=L_n\rtimes_{\varphi} \gamma_n(Q)$, 
where $L_1=K$ and $L_n\le K$ is the subgroup generated by 
$[K,L_{n-1}]$, $[K, \gamma_{n-1}(Q)]$, and $[L_{n-1},Q]$.
\end{theorem}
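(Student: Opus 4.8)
The plan is to argue by induction on $n$, after separating out the portion of the statement valid for an arbitrary split extension. Write $\pi\colon G\to Q$ for the projection and $\sigma\colon Q\to G$ for the splitting homomorphism, and identify $Q=\sigma(Q)$ and $K=\iota(K)$ as in \S\ref{subsec:mono}. First I would record a general fact: for every $n$, the group $\gamma_n(G)$ is a split extension of $\gamma_n(Q)$. Indeed, since $\pi$ is surjective it carries $\gamma_n(G)$ onto $\gamma_n(Q)$, while $\sigma$, being a homomorphism, maps $\gamma_n(Q)$ into $\gamma_n(\sigma(Q))\subseteq\gamma_n(G)$; hence the sequence $1\to\gamma_n(G)\cap K\to\gamma_n(G)\xrightarrow{\pi}\gamma_n(Q)\to 1$ is split exact. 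Setting $L_n\coloneqq\gamma_n(G)\cap K$ therefore gives $\gamma_n(G)=L_n\rtimes_{\varphi}\gamma_n(Q)$ automatically, and reduces the theorem to the purely algebraic identification of this kernel with the recursively defined subgroup. A bonus of this formulation is that each $L_n=\gamma_n(G)\cap K$ is then normal in $G$, a fact I would carry along in the induction.

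For the inductive step, assuming $\gamma_n(G)\cap K=L_n$ (so $L_n\trianglelefteq G$), I would compute $\gamma_{n+1}(G)=[G,\gamma_n(G)]$ using the standard commutator-generation lemma. As $G=\langle K,Q\rangle$ and, by hypothesis, $\gamma_n(G)=\langle L_n,\gamma_n(Q)\rangle$, the subgroup $[G,\gamma_n(G)]$ is the normal closure in $G$ of the four families $[K,L_n]$, $[K,\gamma_n(Q)]$, $[Q,L_n]$, and $[Q,\gamma_n(Q)]$. The last equals $\gamma_{n+1}(Q)$ and lies in $Q$; the first three lie in $K$ (each such commutator has a factor in one of the normal subgroups $K$ or $L_n$) and generate precisely $L_{n+1}$ by definition. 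This already yields the easy inclusion $L_{n+1}\rtimes\gamma_{n+1}(Q)\subseteq\gamma_{n+1}(G)$.

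The crux, and the step I expect to be the main obstacle, is the reverse inclusion $\gamma_{n+1}(G)\cap K\subseteq L_{n+1}$: a priori the normal closure could exceed $L_{n+1}\cdot\gamma_{n+1}(Q)$, and pinning it down amounts to proving that this product is normal in $G$, i.e.\ that $[K,L_{n+1}]\subseteq L_{n+1}$. Checking normality head-on stumbles on the three-subgroup term $[K,[K,\gamma_n(Q)]]$, which the Hall--Witt identity does not visibly reduce. To break the circularity I would pass to the quotient $\bar G=G/[G,L_n]$: the subgroup $[G,L_n]$ is normal in $G$, is contained in $L_{n+1}$ (being generated by $[K,L_n]$ and $[Q,L_n]$), and satisfies $L_{n+1}=\langle[G,L_n],[K,\gamma_n(Q)]\rangle$. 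In $\bar G$ the image of $L_n$ is central, and since $[K,\gamma_n(Q)]\subseteq\gamma_n(G)\cap K=L_n$ its image is central as well; consequently all the conjugation computations that obstructed normality collapse, the normal closure of the generating families becomes the plain subgroup they generate, and one reads off $\gamma_{n+1}(G)\subseteq L_{n+1}\cdot\gamma_{n+1}(Q)$. Intersecting with $K$ and using $\gamma_{n+1}(Q)\cap K=1$ gives $\gamma_{n+1}(G)\cap K=L_{n+1}$, which closes the induction. The delicate point throughout is this centrality argument modulo $[G,L_n]$, which is what tames the commutator bookkeeping that would otherwise be intractable.
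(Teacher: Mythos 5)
Your proof is correct, but it takes a genuinely different route from the one the paper relies on. The paper does not reproduce a proof at all: it defers to Guaschi--Pereiro \cite{GP} and to the streamlined argument of \cite{Su-lcs}, whose key input is that $L=\{L_n\}_{n\ge 1}$ is an N-series for $K$, i.e.\ $[L_m,L_n]\subseteq L_{m+n}$. Observe that the case $m=1$ of that property is precisely the inclusion $[K,L_{n+1}]\subseteq L_{n+2}\subseteq L_{n+1}$ which you correctly single out as the crux of the induction; so the paper's approach tames the same obstruction by first proving a stronger, global commutator estimate, whereas you defuse it locally. Your reduction via the splitting to the single identity $\gamma_n(G)\cap K=L_n$ is sound (and gives $L_n\trianglelefteq G$ for free), the commutator-generation lemma applies as you say, and the central-quotient device works: $[G,L_n]\subseteq L_{n+1}$ follows from $[kq,l]=k[q,l]k^{-1}[k,l]$ together with $[q,l]\in L_n$, the induction hypothesis gives $[K,\gamma_n(Q)]\subseteq \gamma_{n+1}(G)\cap K\subseteq L_n$, so in $\bar{G}=G/[G,L_n]$ the image $\bar{L}_{n+1}=\langle\, \overline{[K,\gamma_n(Q)]}\,\rangle$ is central, and $\bar{L}_{n+1}\bar{\gamma}_{n+1}(Q)$ is then a normal subgroup containing the images of all four generating families --- the one computation still worth writing out is that conjugates of $\bar{\gamma}_{n+1}(Q)$ by $\bar{K}$ land in this product rather than in $\bar{\gamma}_{n+1}(Q)$ itself, via $\bar{k}\bar{q}\bar{k}^{-1}=\overline{[k,q]}\,\bar{q}$ with $[k,q]\in[K,\gamma_{n+1}(Q)]\subseteq L_{n+1}$. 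Pulling back and intersecting with $K$ (using $K\cap Q=1$) closes the induction exactly as you indicate. As for what each approach buys: yours is more elementary and self-contained, avoiding Hall--Witt/three-subgroup bookkeeping entirely; the N-series formulation of \cite{Su-lcs} is a stronger structural statement that the paper reuses downstream --- notably, the fact that the isolator series $\ssqrt{L}$ is again an N-series is the engine behind the rational analogue, Theorem \ref{thm:fr-rational}, which your quotient trick does not by itself provide.
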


In \cite{Su-lcs} we give a more streamlined proof of this theorem, which 
exploits the fact (proved there) that the series $L=\{L_n\}_{n\ge 1}$ 
is an N-series for the group $K$.

Following Falk and Randell \cite{FR}, we say that a split extension 
$G=K\rtimes_{\varphi} Q$ is an {\em almost direct product}\/ if 
$Q$ acts trivially on $K_{\ab}=H_1(K;\Z)$. This condition  is equivalent to 
$\varphi(x)(a) \cdot a^{-1}  \in K'$, for all $x\in Q$ and $a\in K$. 
Viewing $K$ and $Q$ as subgroups of $G$ as explained in \S\ref{subsec:mono}, 
the condition can be written as $[K,Q]\subseteq \gamma_2(K)$.  
As shown in \cite[Proposition 6.3]{BGG11}, the property of being 
an almost direct product does not depend on the choice of a 
splitting for the extension.  

When $Q$ acts trivially on $K_{\ab}$, we prove in \cite{Su-lcs} 
that $L_n=\gamma_n(K)$. In view  of Theorem \ref{thm:gu-pe}, 
this recovers the following well-known result of Falk and Randell \cite{FR}.

\begin{theorem}[\cite{FR}]
\label{thm:falk-ran}
Suppose $G=K\rtimes_{\varphi} Q$ is an almost direct product of 
groups. Then,
\begin{enumerate}
\item \label{fr1}
$\gamma_n(G)=\gamma_n(K)\rtimes_{\varphi} \gamma_n(Q)$, for all $n\ge 1$.
\\[-10pt]
\item \label{fr2}
$\gr(G)=\gr(K) \rtimes_{\bar\varphi} \gr(Q)$.
\end{enumerate}
\end{theorem}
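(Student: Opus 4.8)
The plan is to derive both parts from the Guaschi--Pereira description in Theorem \ref{thm:gu-pe}. That theorem already supplies a split extension $\gamma_n(G)=L_n\rtimes_{\varphi}\gamma_n(Q)$ for every $n$, so part \eqref{fr1} reduces entirely to identifying the normal factor: I would show $L_n=\gamma_n(K)$ for all $n\ge 1$. Once part \eqref{fr1} is established, part \eqref{fr2} will follow by passing to associated graded objects, since the semidirect decomposition of $\gamma_n(G)$ is compatible with the lower central filtrations on both factors.

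The heart of the matter is the identity $L_n=\gamma_n(K)$, which (as noted just before the statement) is proved in \cite{Su-lcs}; let me indicate the mechanism. The inclusion $\gamma_n(K)\subseteq L_n$ is immediate, since one generator of $L_n$ is $[K,L_{n-1}]=[K,\gamma_{n-1}(K)]=\gamma_n(K)$ under the inductive hypothesis $L_{n-1}=\gamma_{n-1}(K)$. For the reverse inclusion I must control the other two generators, $[L_{n-1},Q]=[\gamma_{n-1}(K),Q]$ and $[K,\gamma_{n-1}(Q)]$, and show both lie in $\gamma_n(K)$. Both are instances of the containment
\[
[\gamma_i(K),\gamma_j(Q)]\subseteq\gamma_{i+j}(K)\qquad(i\ge 1,\ j\ge 1),
\]
whose base case $[K,Q]\subseteq\gamma_2(K)$ is exactly the almost-direct-product hypothesis. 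I would prove it by induction on the total weight $i+j$, expanding either $\gamma_i(K)=[\gamma_{i-1}(K),K]$ or $\gamma_j(Q)=[\gamma_{j-1}(Q),Q]$ and invoking the three subgroups lemma (legitimate because $\gamma_{i+j}(K)$ is characteristic in $K$, hence normal in $G$) to reduce each case to containments of smaller weight, together with the N-series inequality $[\gamma_a(K),\gamma_b(K)]\subseteq\gamma_{a+b}(K)$.

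The subtle point, and the step I expect to be the main obstacle, is that this weight induction is not quite straightforward: when $i=1$ the three-subgroups reduction produces a term of the \emph{same} total weight, namely $[\gamma_2(K),\gamma_{j-1}(Q)]$, so a naive induction on $i+j$ is circular. The resolution is to order the induction so that within each fixed weight $w$ one first disposes of all cases with $i\ge 2$ (these genuinely reduce to weight $<w$) and only afterwards treats the single remaining case $i=1$, which may now legitimately invoke the already-settled case $(2,w-2)$. Arranging this ordering correctly, rather than any individual commutator manipulation, is where the care lies.

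Finally, for part \eqref{fr2} I would pass to associated graded Lie algebras. Part \eqref{fr1} gives $\gamma_n(K)=\gamma_n(G)\cap K$ and $\gamma_n(Q)=\gamma_n(G)\cap\sigma(Q)$, which makes the natural maps $\gr(K)\to\gr(G)$ and $\gr(Q)\to\gr(G)$ injective and yields a degreewise direct-sum decomposition $\gr_n(G)\cong\gr_n(K)\oplus\gr_n(Q)$. Since $K$ is normal in $G$, one has $[\gamma_m(G),\gamma_n(K)]\subseteq\gamma_{m+n}(G)\cap K=\gamma_{m+n}(K)$, so $\gr(K)$ embeds as a graded Lie ideal while $\gr(Q)$ is a subalgebra; the cross brackets $[\gr(Q),\gr(K)]\subseteq\gr(K)$ are exactly the derivations induced by $\varphi$, giving the semidirect structure $\gr(G)=\gr(K)\rtimes_{\bar\varphi}\gr(Q)$. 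The almost-direct hypothesis reappears here as the vanishing of $\bar\varphi$ in lowest degree, i.e.\ $\gr_1(Q)=Q_{\ab}$ brackets trivially with $\gr_1(K)=K_{\ab}$, which matches the fact that the extension data enters only through higher-weight commutators.
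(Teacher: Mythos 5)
Your proposal is correct and follows the same route as the paper: Theorem \ref{thm:gu-pe} reduces everything to the identification $L_n=\gamma_n(K)$ (which the paper outsources to \cite{Su-lcs}), and part \eqref{fr2} then follows by passing to associated graded objects. Your in-line proof of the key containment $[\gamma_i(K),\gamma_j(Q)]\subseteq \gamma_{i+j}(K)$ via the three subgroups lemma is sound, and you correctly identified and resolved the only delicate point, the same-weight circularity at $i=1$: the case $(1,j)$ calls on $(2,j-1)$ and $(j,1)$ at the same total weight, both with first index at least $2$, so disposing of all $i\ge 2$ cases first (by expanding the $K$-side, which genuinely drops the weight) makes the induction well-founded.

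One aside in your final paragraph is false, although it carries no weight in the argument: it is not true that $\gr_1(Q)$ brackets trivially with $\gr_1(K)$. The almost-direct-product hypothesis guarantees $[Q,K]\subseteq\gamma_2(K)$ rather than merely $[Q,K]\subseteq K\cap\gamma_2(G)$---which is what makes the degreewise splitting $\gr_2(G)=\gr_2(K)\oplus\gr_2(Q)$ possible---but the resulting bracket is typically nonzero. For instance, take $G=F_2\rtimes_{\varphi}\Z$ with $\Z=\langle t\rangle$ acting by the partial conjugation $\varphi(x_1)=x_2x_1x_2^{-1}$, $\varphi(x_2)=x_2$; this is an almost direct product, yet $[t,x_1]=[x_2,x_1]$ has nonzero class in $\gr_2(F_2)$, so $[\bar{t},\bar{x}_1]\ne 0$ in $\gr_2(G)$. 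The same phenomenon occurs for the pure-braid-monodromy extensions of Example \ref{ex:planes}. What the hypothesis actually kills is the degree-\emph{zero} part of the action, namely the action of $Q$ on $\gr_1(K)=K_{\ab}$ (hence on all of $\gr(K)$, since $\gr(K)$ is generated in degree $1$); the degree-raising bracket action $\bar\varphi$ survives and is precisely the extension data. Since your proof of part \eqref{fr2} uses only the direct-sum decomposition, the ideal property $[\gamma_m(G),\gamma_n(K)]\subseteq\gamma_{m+n}(G)\cap K=\gamma_{m+n}(K)$, and the splitting, it stands as written once this remark is deleted.
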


Under additional assumptions, we have the following corollary, 
which will be needed in the proof of Theorem \ref{thm:alex-abex}.

\begin{corollary}
\label{cor:fr-ab}
Let $G=K\rtimes_{\varphi} Q$ be an almost direct product, and 
assume $Q$ is abelian.  Then,
\begin{enumerate}[itemsep=1.5pt]
\item \label{aa1}
$\gamma_n(K)=\gamma_n(G)$ for all $n\ge 2$ and $\gr_{\ge 2}(K)=\gr_{\ge 2}(G)$. 
\item \label{aa2}
If, moreover, $b_1(G)<\infty$, then $\phi_n(K)=\phi_n(G)$ for all $n\ge 2$.
\end{enumerate}
\end{corollary}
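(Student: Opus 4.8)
The plan is to deduce the corollary directly from the Falk--Randell description of the lower central series of an almost direct product (Theorem \ref{thm:falk-ran}), using the hypothesis that $Q$ is abelian to kill the contribution of $Q$ in all degrees $\ge 2$. First I would record the trivial observation that, since $Q$ is abelian, $\gamma_2(Q)=[Q,Q]=\{1\}$, and hence $\gamma_n(Q)=\{1\}$ for every $n\ge 2$. Substituting this into the isomorphism $\gamma_n(G)=\gamma_n(K)\rtimes_{\varphi}\gamma_n(Q)$ of Theorem \ref{thm:falk-ran}\eqref{fr1}, the second factor collapses, so that---under the identification of $K$ with its image $\iota(K)\triangleleft G$ made in \S\ref{subsec:mono}---one obtains $\gamma_n(G)=\gamma_n(K)$ for all $n\ge 2$. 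This is the first assertion of part \eqref{aa1}.

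For the graded statement I would pass to successive quotients. Fix $n\ge 2$; then both $\gamma_n(G)=\gamma_n(K)$ and $\gamma_{n+1}(G)=\gamma_{n+1}(K)$ (the latter since $n+1\ge 3\ge 2$), so the map $\gr(\iota)$ induced by the inclusion restricts in degree $n$ to an isomorphism $\gr_n(K)=\gamma_n(K)/\gamma_{n+1}(K)\isom\gamma_n(G)/\gamma_{n+1}(G)=\gr_n(G)$. Assembling these over $n\ge 2$ gives $\gr_{\ge 2}(K)\isom\gr_{\ge 2}(G)$. Alternatively, this is immediate from Theorem \ref{thm:falk-ran}\eqref{fr2}, since the degree $\ge 2$ part of $\gr(Q)$ vanishes when $Q$ is abelian (as $\gr_1(Q)=Q_{\ab}$ is then the whole of $\gr(Q)$).

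For part \eqref{aa2} the only supplementary point is that $b_1(G)<\infty$ forces $b_1(K)<\infty$, so that the ranks $\phi_n(K)$ are defined in the sense of \S\ref{subsec:lcs}. Here I would invoke the almost direct product hypothesis, which makes the monodromy act trivially on $K_{\ab}$; a standard computation (equivalently, the five-term exact sequence of the extension) then yields $G_{\ab}\cong K_{\ab}\times Q_{\ab}$, whence $b_1(G)=b_1(K)+b_1(Q)$ and in particular $b_1(K)\le b_1(G)<\infty$. Granting this, the degreewise equalities $\gr_n(K)=\gr_n(G)$ from part \eqref{aa1} give $\phi_n(K)=\dim_{\Q}\gr_n(K)\otimes\Q=\dim_{\Q}\gr_n(G)\otimes\Q=\phi_n(G)$ for all $n\ge 2$.

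The argument is essentially formal once Theorem \ref{thm:falk-ran} is in hand, so I do not expect a genuine obstacle; the only places calling for a little care are the bookkeeping of the semidirect-product identifications---making sure the degree $\ge 2$ isomorphism is precisely the one induced by $\iota$, since the corollary will be used this way in the proof of Theorem \ref{thm:alex-abex}---and the finiteness reduction $b_1(K)\le b_1(G)$, which rests on the triviality of the action on $K_{\ab}$ rather than on any finiteness assumption about $Q$.
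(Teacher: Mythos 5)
Your proof is correct and follows essentially the same route as the paper's: both deduce part \eqref{aa1} from Theorem \ref{thm:falk-ran} together with the observation that $\gamma_n(Q)=\{1\}$ (hence $\gr_{\ge 2}(Q)=0$) for abelian $Q$, and part \eqref{aa2} then follows from the degreewise identification $\gr_n(K)\otimes\Q\cong\gr_n(G)\otimes\Q$ and the definition \eqref{eq:lcs-ranks} of the LCS ranks. Your supplementary remark that $b_1(K)\le b_1(G)$ (via the split, $\ab$-exact sequence $0\to K_{\ab}\to G_{\ab}\to Q_{\ab}\to 0$) is a slightly more careful justification that the ranks $\phi_n(K)$ are even defined, a point the paper leaves implicit.
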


\begin{proof}
Since $Q$ is abelian, we have that $\gamma_n(K)=\{1\}$ for all $n\ge 2$. 
Thus, $\gr_1(Q)=Q$ and $\gr_{\ge 2}(Q)=0$. The first claim now follows from 
Theorem \ref{thm:falk-ran}. 

By part \eqref{aa1}, we have that $\gr_n(K)\otimes \Q\cong \gr_n(G)\otimes \Q$ 
for $n\ge 2$. Since $b_1(G)<\infty$, the discussion from \S\ref{subsec:lcs} shows 
that  all these vector spaces are finite-dimensional. The second claim now follows from 
the definition \eqref{eq:lcs-ranks} of the LCS ranks. 
\end{proof} 

\subsection{A rational Falk--Randell theorem}
\label{subsec:rat-fr}
Returning to the general case of a split extension, $G=K\rtimes_{\varphi} Q$, 
we exploit in \cite{Su-lcs} the fact that the sequence of subgroups $L=\{L_n\}_{n\ge 1}$ 
defined above is an N-series for $K$ in order to show (using \cite{Mass})  that 
the series $\ssqrt{L}=\{\ssqrt{L_n}\}_{n\ge 1}$ is also an N-series for $K$. 
Building on this observation, and adapting the method of proof of 
Theorems \ref{thm:gu-pe} and \ref{thm:falk-ran} to this situation, 
we establish in \cite{Su-lcs} a rational version of the aforementioned results, 
as follows.

\begin{theorem}[\cite{Su-lcs}]
\label{thm:fr-rational}
Let $G=K\rtimes_{\varphi} Q$ be a split extension of groups. 
\begin{enumerate}[itemsep=2pt]
\item \label{fr1-q}
For each $n\ge 1$, there is a split extension  
$\gamma^{\rat}_n(G)=\ssqrt{L_n} \rtimes_{\varphi} \gamma^{\rat}_n(Q)$.

\item \label{fr2-q} 
If $Q$ acts trivially on $K_{\abf}$, then $\ssqrt{L} =\gamma^{\rat}(K)$, and 
$\gamma^{\rat}_n(G)=\gamma^{\rat}_n(K)\rtimes_{\varphi} \gamma^{\rat}_n(Q)$, 
for all $n\ge 1$; moreover, $\gr^{\rat}(G)=\gr^{\rat}(K) \rtimes_{\bar\varphi} \gr^{\rat}(Q)$.
\end{enumerate}
\end{theorem}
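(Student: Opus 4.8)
The plan is to bootstrap both parts from the integral decomposition of Theorem~\ref{thm:gu-pe} together with the identity $\gamma^{\rat}_n(\cdot)=\ssqrt{\gamma_n(\cdot)}$ recorded in \S\ref{subsec:lcs-stallings}, applied to each of $G$, $K$, and $Q$, and the fact that homomorphisms preserve the rational lower central series.

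For part~\eqref{fr1-q}, I would start from $\gamma^{\rat}_n(G)=\ssqrt{\gamma_n(G)}$ and substitute $\gamma_n(G)=L_n\rtimes_{\varphi}\gamma_n(Q)$. Applying $\pi$: since $\pi$ is onto and the section $\sigma$ is a homomorphism, both preserving the rational LCS, one gets $\pi(\gamma^{\rat}_n(G))=\gamma^{\rat}_n(Q)$ together with a restricted section $\gamma^{\rat}_n(Q)\to\gamma^{\rat}_n(G)$. The kernel of $\pi|_{\gamma^{\rat}_n(G)}$ is $\gamma^{\rat}_n(G)\cap K=\ssqrt{\gamma_n(G)}\cap K$; since every power of an element of $K$ again lies in $K$, this intersection equals $\{a\in K:a^m\in \gamma_n(G)\cap K=L_n\}=\ssqrt{L_n}$ (the isolator taken in $K$), which is in particular a subgroup. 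This produces the split extension $\gamma^{\rat}_n(G)=\ssqrt{L_n}\rtimes_{\varphi}\gamma^{\rat}_n(Q)$.

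For part~\eqref{fr2-q}, by part~\eqref{fr1-q} it suffices to prove $\ssqrt{L_n}=\gamma^{\rat}_n(K)$. One inclusion is free: a short induction on $n$ through the defining generators of $L_n$ gives $\gamma_n(K)\subseteq L_n$, whence $\gamma^{\rat}_n(K)=\ssqrt{\gamma_n(K)}\subseteq\ssqrt{L_n}$. For the reverse inclusion it is enough to show $L_n\subseteq\gamma^{\rat}_n(K)$ and then isolate, using $\ssqrt{\gamma^{\rat}_n(K)}=\gamma^{\rat}_n(K)$. Assuming inductively that $L_{n-1}\subseteq\gamma^{\rat}_{n-1}(K)$, the generators $[K,L_{n-1}]\subseteq[K,\gamma^{\rat}_{n-1}(K)]\subseteq\gamma^{\rat}_n(K)$ and $[L_{n-1},Q]\subseteq[\gamma^{\rat}_{n-1}(K),Q]$ are controlled as soon as one knows the mixed estimate $[\gamma^{\rat}_m(K),Q]\subseteq\gamma^{\rat}_{m+1}(K)$, while the third generator $[K,\gamma_{n-1}(Q)]$ needs its iterate.

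The hard part is exactly this family of mixed commutator estimates. The key observation is that, because $Q$ acts trivially on $K_{\abf}=\gr^{\rat}_1(K)$ and each $\varphi(q)$ induces a graded Lie algebra automorphism of $\gr^{\rat}(K)$ (the rational LCS terms being fully invariant), triviality on the degree-one generators forces $\varphi(q)$ to act trivially on all of $\gr^{\rat}(K)$: one passes to $\otimes\,\Q$, invokes generation in degree one (Proposition~\ref{prop:bl-grlie}), and descends again using that each $\gr^{\rat}_m(K)$ is torsion-free. This is precisely $[\gamma^{\rat}_m(K),Q]\subseteq\gamma^{\rat}_{m+1}(K)$. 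I would then prove the general estimate $[\gamma^{\rat}_m(K),\gamma_l(Q)]\subseteq\gamma^{\rat}_{m+l}(K)$ by a double induction on $l$: writing $\gamma_l(Q)=[Q,\gamma_{l-1}(Q)]$ and applying the three subgroups lemma with the subgroup $N=\gamma^{\rat}_{m+l}(K)$, which is normal in $G$, reduces the step to the two instances already in hand; since the three subgroups lemma requires only $N$ to be normal, the non-normality of $\gamma_l(Q)$ in $G$ is harmless. Taking $m=1$ yields $[K,\gamma_{n-1}(Q)]\subseteq\gamma^{\rat}_n(K)$, completing $L_n\subseteq\gamma^{\rat}_n(K)$ and hence $\ssqrt{L}=\gamma^{\rat}(K)$. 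Finally, combining with part~\eqref{fr1-q} gives $\gamma^{\rat}_n(G)=\gamma^{\rat}_n(K)\rtimes_{\varphi}\gamma^{\rat}_n(Q)$ for all $n$, and passing to the associated graded of these compatibly split filtrations delivers $\gr^{\rat}(G)=\gr^{\rat}(K)\rtimes_{\bar\varphi}\gr^{\rat}(Q)$, just as in the integral Falk--Randell theorem.
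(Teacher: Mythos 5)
Your argument is correct, but it should be said up front that the paper itself contains no proof of Theorem \ref{thm:fr-rational}: the result is imported from \cite{Su-lcs}, and \S\ref{subsec:rat-fr} only indicates the strategy used there, namely that $L=\{L_n\}_{n\ge 1}$ is an N-series for $K$, that its isolator series $\ssqrt{L}$ is again an N-series by a lemma of Massuyeau \cite{Mass}, and that one then adapts the inductive proofs of Theorems \ref{thm:gu-pe} and \ref{thm:falk-ran}. You take a genuinely different route in both halves, and both halves check out. For part~\eqref{fr1-q} you bypass any rational re-run of the proof of Theorem \ref{thm:gu-pe}: granting the identity $\gamma^{\rat}_n(G)=\ssqrt{\gamma_n(G)}$ recorded in \S\ref{subsec:lcs-stallings} (which in \cite{Su-lcs} is proved independently of, and prior to, the split-extension results, so there is no circularity in using it), you intersect with $K$ and use $\gamma_n(G)\cap K=L_n$, which is immediate from the semidirect decomposition of Theorem \ref{thm:gu-pe}; this is slicker than the template argument, and it yields as a by-product that the isolator of $L_n$ in $K$ is a subgroup, with no N-series machinery at all --- and you were right to insist the isolator is taken in $K$, since the isolator in $G$ can be strictly larger when $Q$ has torsion. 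For part~\eqref{fr2-q}, in place of adapting the Falk--Randell induction you derive the mixed estimates $[\gamma^{\rat}_m(K),\gamma_l(Q)]\subseteq\gamma^{\rat}_{m+l}(K)$ from two sound ingredients: a rationalize-and-descend rigidity argument (each $\varphi(q)$ induces a graded Lie algebra automorphism of $\gr^{\rat}(K)$ trivial in degree $1$; the $\Q$-Lie algebra $\gr^{\rat}(K)\otimes\Q$ is generated in degree $1$ by Proposition \ref{prop:bl-grlie}; and the graded pieces are torsion-free, so triviality descends from $\gr^{\rat}(K)\otimes\Q$ to $\gr^{\rat}(K)$), followed by the three subgroups lemma, where you correctly observe that only $N=\gamma^{\rat}_{m+l}(K)$ need be normal in $G$ (it is, being fully invariant in $K\trianglelefteq G$). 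Two cosmetic points: what you run is a single induction on $l$, uniform in $m$ --- the step at $(m,l)$ invokes the cases $(m,l-1)$ and $(m+1,l-1)$ --- rather than a ``double induction on $l$''; and the final passage to $\gr^{\rat}(G)=\gr^{\rat}(K)\rtimes_{\bar\varphi}\gr^{\rat}(Q)$ tacitly uses $\gamma^{\rat}_{n+1}(G)\cap\gamma^{\rat}_n(K)=\gamma^{\rat}_{n+1}(K)$, which your level-$n$ and level-$(n+1)$ decompositions do supply, so it is worth one line. As for what each approach buys: the N-series formalism of \cite{Su-lcs} runs uniformly across the integral, rational, and mod-$p$ settings (compare Theorem \ref{thm:gr-ext-p}), whereas your argument is more self-contained relative to the facts this paper records, at the price of a rationality-specific descent step (torsion-freeness of the graded quotients) that has no verbatim mod-$p$ analogue.
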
 

Under additional assumptions, we have the following corollary, 
which will be needed in the proof of Theorem \ref{thm:alex-lcs-ngq}.

\begin{corollary}
\label{cor:fr-abf-q}
Let $G=K\rtimes_{\varphi} Q$ be a split extension. Assume 
$Q$ is torsion-free abelian and acts trivially on $K_{\abf}$.  
Then,
\begin{enumerate}[itemsep=1.5pt]
\item \label{aq1}
$\gamma^{\rat}_n(K)=\gamma^{\rat}_n(G)$ 
for all $n\ge 2$ and $\gr^{\rat}_{\ge 2}(K)=\gr^{\rat}_{\ge 2}(G)$. 
\item \label{aq2}
If, moreover, $b_1(G)<\infty$, then $\phi_n(K)=\phi_n(G)$ for all $n\ge 2$.
\end{enumerate}
\end{corollary}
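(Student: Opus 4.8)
The plan is to transcribe the proof of Corollary \ref{cor:fr-ab} into the rational setting, using Theorem \ref{thm:fr-rational} in place of Theorem \ref{thm:falk-ran}.

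First I would pin down the rational lower central series of the quotient. Since $Q$ is abelian, $\gamma_n(Q)=\{1\}$ for all $n\ge 2$; combined with the identity $\gamma^{\rat}_n(Q)=\ssqrt{\gamma_n(Q)}$ recorded in \S\ref{subsec:lcs-stallings}, this gives $\gamma^{\rat}_n(Q)=\ssqrt{\{1\}}=\Tors(Q)$ for $n\ge 2$. Because $Q$ is torsion-free, it follows that $\gamma^{\rat}_n(Q)=\{1\}$ for all $n\ge 2$, so that $\gr^{\rat}_1(Q)=Q$ and $\gr^{\rat}_{\ge 2}(Q)=0$.

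Next, since $Q$ acts trivially on $K_{\abf}$, part \eqref{fr2-q} of Theorem \ref{thm:fr-rational} applies and produces, for each $n\ge 1$, the split extension $\gamma^{\rat}_n(G)=\gamma^{\rat}_n(K)\rtimes_{\varphi}\gamma^{\rat}_n(Q)$, together with the decomposition $\gr^{\rat}(G)=\gr^{\rat}(K)\rtimes_{\bar\varphi}\gr^{\rat}(Q)$ of graded Lie algebras. Feeding in $\gamma^{\rat}_n(Q)=\{1\}$ for $n\ge 2$ collapses the first extension to $\gamma^{\rat}_n(G)=\gamma^{\rat}_n(K)$ for all $n\ge 2$, the two subgroups being identified via $\iota$; on the associated graded level, the degreewise splitting $\gr^{\rat}_n(G)=\gr^{\rat}_n(K)\oplus\gr^{\rat}_n(Q)$ together with $\gr^{\rat}_{\ge 2}(Q)=0$ yields $\gr^{\rat}_{\ge 2}(K)=\gr^{\rat}_{\ge 2}(G)$. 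This proves part \eqref{aq1}.

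For part \eqref{aq2}, I would first read off from the degree-$1$ piece of the last decomposition that $G_{\abf}=K_{\abf}\oplus Q$, whence $b_1(G)=b_1(K)+b_1(Q)$ and in particular $b_1(G)<\infty$ forces $b_1(K)<\infty$. With both first Betti numbers finite, Corollary \ref{cor:lcs-chen-q} gives $\phi_n=\phi^{\rat}_n$ for $K$ and for $G$; combining this with the isomorphism $\gr^{\rat}_{\ge 2}(K)\cong\gr^{\rat}_{\ge 2}(G)$ from part \eqref{aq1} gives $\phi_n(K)=\phi^{\rat}_n(K)=\phi^{\rat}_n(G)=\phi_n(G)$ for all $n\ge 2$. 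The only steps requiring a little care are the isolator bookkeeping that kills $\gamma^{\rat}_{\ge 2}(Q)$ in the first paragraph, and the descent of finiteness of $b_1$ from $G$ to $K$; the rest is a direct translation of the integral argument.
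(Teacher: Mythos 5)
Your proposal is correct and follows essentially the same route as the paper's proof: kill $\gamma^{\rat}_{\ge 2}(Q)$ using torsion-freeness, apply Theorem \ref{thm:fr-rational} to collapse the split extensions, and invoke Corollary \ref{cor:lcs-chen-q} to pass from $\phi^{\rat}_n$ to $\phi_n$. Your only addition is making explicit, via the degree-one splitting $G_{\abf}=K_{\abf}\oplus Q$, that $b_1(G)<\infty$ forces $b_1(K)<\infty$ -- a hypothesis the paper uses implicitly when applying Corollary \ref{cor:lcs-chen-q} to $K$, so this extra care is welcome.
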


\begin{proof}
Since $Q$ is torsion-free abelian, $\gamma^{\rat}_2(Q)=\{1\}$, 
and so $\gamma^{\rat}_n(Q)=\{1\}$ for all $n\ge 2$. 
The first claim now follows from Theorem \ref{thm:fr-rational}. 

Next, assume that $b_1(G)<\infty$. Then, by the discussion 
in \S\ref{subsec:lcs-stallings}, all the graded pieces of $\gr^{\rat}(G)\otimes \Q$
are finite-dimensional. Moreover, by Corollary \ref{cor:lcs-chen-q}, 
$\phi_n(G)=\phi^{\rat}_n(G)= \dim_{\Q} \gr^{\rat}_{n}(G)\otimes \Q$, and 
the second claim now follows from the first one.
\end{proof} 

\subsection{A mod-$p$ Falk--Randell theorem}
\label{subsec:p-fr}
Finally, we also prove in \cite{Su-lcs} mod-$p$ versions of the above 
theorems, recovering in the process a result of Bellingeri and Gervais 
from \cite{BG}. Given a split extension of groups, $G=K\rtimes_{\varphi} Q$, 
and a prime $p$, we define a sequence of subgroups of $K$, denoted 
$\big\{L^{\p}_n\big\}_{n\ge 1}$, by setting $L^{\p}_1=K$ and letting 
\begin{equation}
\label{eq:subgroup-p}
L^{\p}_{n+1}=\big\langle \big(L^{\p}_{n}\big)^p, \big[K,L^{\p}_{n}\big], 
\big[K, \gamma^{\p}_n(Q)\big], \big[L^{\p}_{n},Q\big]\big\rangle \, .
\end{equation}
Clearly, the series $L^{\p}=\big\{L^{\p}_n\big\}_{n\ge 1}$ is a {\em $p$-torsion 
series}, in the sense that $\big(L^{\p}_{n}\big)^p\subseteq L^{\p}_{n+1}$ for 
all $n\ge 1$. Moreover, we show in \cite{Su-lcs} that this is an N-series for $K$.

\begin{theorem}[\cite{Su-lcs}]
\label{thm:gr-ext-p}
Let $G=K\rtimes_{\varphi} Q$ be a split extension of groups  
and let $p$ be a prime. For each $n\ge 1$, there is then a split extension, 
$\gamma^{p}_n(G)=L^p_n \rtimes_{\varphi} \gamma^{p}_n(Q)$.
\end{theorem}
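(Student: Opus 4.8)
The plan is to argue by induction on $n$, following the pattern of the integral and rational Falk--Randell theorems (Theorems \ref{thm:gu-pe} and \ref{thm:fr-rational}), the one genuinely new ingredient being the $p$-th power operation built into the mod-$p$ lower central series \eqref{eq:gamma-p-filtration}. The base case $n=1$ is immediate, since $\gamma^p_1(G)=G=K\rtimes_\varphi Q$ while $L^p_1=K$ and $\gamma^p_1(Q)=Q$. Assuming $\gamma^p_n(G)=L^p_n\rtimes_\varphi \gamma^p_n(Q)$, I first dispose of the formal part of the semidirect-product structure. Because $\gamma^p_{n+1}(G)$ is fully invariant (hence normal) in $G$ and $\pi\colon G\surj Q$ is surjective, functoriality of the mod-$p$ lower central series gives $\pi(\gamma^p_{n+1}(G))=\gamma^p_{n+1}(Q)$, while the inclusion $Q\hookrightarrow G$ carries $\gamma^p_{n+1}(Q)$ into $\gamma^p_{n+1}(G)$, furnishing a splitting. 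The kernel of $\pi$ restricted to $\gamma^p_{n+1}(G)$ is $\gamma^p_{n+1}(G)\cap K$, normal in $G$ as an intersection of normal subgroups. Thus $\gamma^p_{n+1}(G)=(\gamma^p_{n+1}(G)\cap K)\rtimes_\varphi \gamma^p_{n+1}(Q)$ as an internal semidirect product, and the whole theorem reduces to the identification $\gamma^p_{n+1}(G)\cap K=L^p_{n+1}$.

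The inclusion $L^p_{n+1}\subseteq \gamma^p_{n+1}(G)\cap K$ is routine. Each of the four families of generators in \eqref{eq:subgroup-p} visibly lands in $K$, and lands in $\gamma^p_{n+1}(G)=\langle(\gamma^p_n(G))^p,[G,\gamma^p_n(G)]\rangle$ using the inductive inclusions $L^p_n\subseteq\gamma^p_n(G)$ and $\gamma^p_n(Q)\subseteq\gamma^p_n(G)$: indeed $(L^p_n)^p\subseteq(\gamma^p_n(G))^p$, while $[K,L^p_n]$, $[K,\gamma^p_n(Q)]$, and $[L^p_n,Q]$ are all contained in $[G,\gamma^p_n(G)]$.

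The reverse inclusion $\gamma^p_{n+1}(G)\cap K\subseteq L^p_{n+1}$ is the heart of the argument. Writing a typical element of $\gamma^p_n(G)$ as $\ell q$ with $\ell\in L^p_n$ and $q\in\gamma^p_n(Q)$, I would show that the $K$-component of each generator of $\gamma^p_{n+1}(G)$ lies in $L^p_{n+1}$. For a $p$-th power, the expansion $(\ell q)^p=\big(\prod_{i=0}^{p-1}\varphi(q^i)(\ell)\big)\,q^p$, together with $\varphi(q^i)(\ell)=\ell\cdot[\ell^{-1},q^{i}]$ and $[\ell^{-1},q^{i}]\in[L^p_n,Q]\subseteq L^p_{n+1}$, collapses the $K$-component modulo $L^p_{n+1}$ to $\ell^p\in(L^p_n)^p\subseteq L^p_{n+1}$, while the $Q$-component $q^p$ lies in $(\gamma^p_n(Q))^p\subseteq\gamma^p_{n+1}(Q)$. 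For a commutator $[ay,\ell q]$ with $a\in K$ and $y\in Q$, a commutator-calculus expansion produces exactly terms of the types $[K,L^p_n]$, $[K,\gamma^p_n(Q)]$, and $[L^p_n,Q]$, whose product lies in $L^p_{n+1}$, together with a $Q$-factor landing in $[Q,\gamma^p_n(Q)]\subseteq\gamma^p_{n+1}(Q)$. Throughout, the collection of these correction terms into $L^p_{n+1}$ relies on the N-series property $[L^p_m,L^p_n]\subseteq L^p_{m+n}$ of the series $L^p=\{L^p_n\}$ established in \cite{Su-lcs} and on the normality of $L^p_{n+1}$ in $G$, which I would carry along as part of the same induction.

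The hard part will be precisely this $p$-th power computation: unlike the integral and rational cases of Theorems \ref{thm:gu-pe} and \ref{thm:fr-rational}, where only brackets occur, here one must control $(\ell q)^p$, whose $K$-component is a $\varphi$-twisted product rather than a naive power. The two $p$-torsion mechanisms built into \eqref{eq:subgroup-p} are exactly what is needed: the discrepancy between the twisted product and $\ell^p$ is absorbed by the bracket generators $[L^p_n,Q]$, while $\ell^p$ itself is absorbed by the power generators $(L^p_n)^p$. Once this is in place, combining the two inclusions yields $\gamma^p_{n+1}(G)\cap K=L^p_{n+1}$, completing the induction and recovering in particular the result of Bellingeri--Gervais \cite{BG}.
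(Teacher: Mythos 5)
Your proposal is correct and follows essentially the same route as the actual proof: the paper defers this theorem to \cite{Su-lcs}, where it is established exactly as you outline---by induction on $n$, using functoriality of $\gamma^p$ and the section to split $\gamma^p_{n+1}(G)$ over $\gamma^p_{n+1}(Q)$, reducing everything to the identification $\gamma^p_{n+1}(G)\cap K=L^p_{n+1}$, which is proved via the N-series property of $\{L^p_n\}$ and its normality in $G$, carried along in the induction. Your handling of the genuinely new mod-$p$ ingredient---the twisted power expansion $(\ell q)^p=\big(\prod_{i=0}^{p-1}\varphi(q^i)(\ell)\big)q^p$, with the deviation from $\ell^p$ absorbed by the bracket generators $[L^p_n,Q]$ and $\ell^p$ itself by $(L^p_n)^p$---matches the mechanism by which the generators in \eqref{eq:subgroup-p} are used there.
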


When $Q$ acts trivially on $H_1(K;\Z_p)$, we show in \cite{Su-lcs} that 
$L^p=\gamma^{p}(K)$. The next result (originally proved in \cite{BG}) 
then follows from Theorem \ref{thm:gr-ext-p}.

\begin{theorem}[\cite{BG}]
\label{thm:fr-p}
Let $G=K\rtimes_{\varphi} Q$ be a split extension, and assume 
$Q$ acts trivially on $H_1(K;\Z_p)$. Then,
\begin{enumerate}
\item \label{fr1-p}
$\gamma^{p}_n(G)=\gamma^{p}_n(K)\rtimes_{\varphi} \gamma^{p}_n(Q)$, 
for all $n\ge 1$.
\\[-10pt]
\item \label{fr2-p}
$\gr^{p}(G)=\gr^{p}(K) \rtimes_{\bar\varphi} \gr^{p}(Q)$.
\end{enumerate}
\end{theorem}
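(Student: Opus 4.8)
The plan is to deduce this theorem from the general split-extension result, Theorem \ref{thm:gr-ext-p}, by showing that the triviality hypothesis collapses the auxiliary series $L^p=\{L^p_n\}_{n\ge 1}$ from \eqref{eq:subgroup-p} onto the mod-$p$ lower central series of $K$. Concretely, the heart of the matter is the claim that, when $Q$ acts trivially on $H_1(K;\Z_p)$, one has $L^p_n=\gamma^p_n(K)$ for every $n\ge 1$. Granting this, part \eqref{fr1-p} is immediate: substituting $L^p_n=\gamma^p_n(K)$ into the split extension $\gamma^p_n(G)=L^p_n\rtimes_\varphi \gamma^p_n(Q)$ supplied by Theorem \ref{thm:gr-ext-p} yields $\gamma^p_n(G)=\gamma^p_n(K)\rtimes_\varphi \gamma^p_n(Q)$.

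To establish $L^p_n=\gamma^p_n(K)$, I would argue by induction on $n$, the case $n=1$ being the equality $L^p_1=K=\gamma^p_1(K)$. Assuming $L^p_n=\gamma^p_n(K)$, the recursion \eqref{eq:subgroup-p} reads $L^p_{n+1}=\langle (\gamma^p_n(K))^p,\, [K,\gamma^p_n(K)],\, [K,\gamma^p_n(Q)],\, [\gamma^p_n(K),Q]\rangle$. The first two generators already generate $\gamma^p_{n+1}(K)$ by definition \eqref{eq:gamma-p-filtration}, so the inclusion $\gamma^p_{n+1}(K)\subseteq L^p_{n+1}$ is clear, and the task reduces to absorbing the two mixed terms, i.e., to proving $[\gamma^p_n(K),Q]\subseteq \gamma^p_{n+1}(K)$ and $[K,\gamma^p_n(Q)]\subseteq \gamma^p_{n+1}(K)$. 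Both would be handled by a parallel induction whose base case is the hypothesis $[K,Q]\subseteq \gamma^p_2(K)$, which is exactly the assertion that $Q$ acts trivially on $H_1(K;\Z_p)=K/\gamma^p_2(K)$. The inductive step propagates these containments up the series by means of the three-subgroup lemma together with the fact (established in \cite{Su-lcs}) that $\gamma^p(K)$ is an N-series and a $p$-torsion series; the power generators $(\gamma^p_n(K))^p$ are dealt with through the commutator expansion $[a^p,q]\equiv [a,q]^p$ modulo $\gamma^p_{n+2}(K)$, whose right-hand side lies in $(\gamma^p_{n+1}(K))^p\subseteq \gamma^p_{n+2}(K)$. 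Since this bookkeeping is carried out in \cite{Su-lcs}, I would cite the resulting equality $L^p=\gamma^p(K)$ from there rather than reproduce it in full.

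For part \eqref{fr2-p}, I would pass to associated graded Lie algebras. From part \eqref{fr1-p}, for each $n$ the group $\gamma^p_n(G)$ equals $\gamma^p_n(K)\rtimes_\varphi \gamma^p_n(Q)$, and since as a set this is the direct product $\gamma^p_n(K)\times\gamma^p_n(Q)$ inside $G=K\times Q$, one gets $\gamma^p_{n+1}(G)\cap K=\gamma^p_{n+1}(K)$. Consequently, for each $n$ there is a short exact sequence of $\Z_p$-vector spaces
\[
0 \to \gr^p_n(K) \to \gr^p_n(G) \to \gr^p_n(Q) \to 0 ,
\]
in which the first map is induced by the inclusion $K\hookrightarrow G$ and the second by $\pi\colon G\to Q$, split by the inclusion $Q\hookrightarrow G$ coming from the splitting. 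These splittings are compatible across degrees and respect the Lie bracket and the $p$-power operations, so assembling them identifies $\gr^p(G)$ with the semidirect product $\gr^p(K)\rtimes_{\bar\varphi}\gr^p(Q)$, where $\bar\varphi$ is the action induced by $\varphi$ on associated graded Lie algebras.

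The main obstacle is the equality $L^p_n=\gamma^p_n(K)$ itself---specifically, the commutator calculus showing that the mixed terms $[\gamma^p_n(K),Q]$ and $[K,\gamma^p_n(Q)]$ are absorbed into $\gamma^p_{n+1}(K)$. This is where the triviality hypothesis is genuinely used and where one must control the interaction of commutators with $p$-th powers; everything downstream is formal manipulation of split extensions and associated graded objects.
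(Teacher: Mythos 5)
Your proposal is correct and follows essentially the same route as the paper: the paper likewise deduces the theorem from Theorem \ref{thm:gr-ext-p} by citing \cite{Su-lcs} for the equality $L^{p}=\gamma^{p}(K)$ under the triviality hypothesis, with part \eqref{fr2-p} then following formally on passing to associated graded Lie algebras. Your extra sketch of the inductive commutator calculus behind $L^{p}_n=\gamma^{p}_n(K)$ is consistent with what is carried out in \cite{Su-lcs}, so deferring to that reference is exactly what the paper does.
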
 

Under additional assumptions, we have the following corollary, 
which will be needed in the proof of Theorem \ref{thm:alex-lcs-ngp}.

\begin{corollary}
\label{cor:fr-ab-p}
Let $G=K\rtimes_{\varphi} Q$ be a split extension of groups. Assume 
$Q$ is an elementary abelian $p$-group which acts trivially on $H_1(K;\Z_p)$.  
Then, 
\begin{enumerate}[itemsep=1.5pt]
\item \label{ap1}
 $\gamma^{p}_n(K)=\gamma^{p}_n(G)$ for all $n\ge 2$ and 
$\gr^{p}_{\ge 2}(K)=\gr^{p}_{\ge 2}(G)$.
\item \label{ap2}
If, moreover, $b^p_1(G)<\infty$, then
$\phi^{p}_n(K)=\phi^{p}_n(G)$ for all $n\ge 2$.
\end{enumerate}
\end{corollary}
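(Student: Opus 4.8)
The plan is to mimic the proofs of Corollaries~\ref{cor:fr-ab} and~\ref{cor:fr-abf-q}, replacing their integral and rational inputs with the mod-$p$ machinery developed in \S\ref{subsec:p-fr}. The structural heart of the argument is Theorem~\ref{thm:fr-p}, which applies precisely because $Q$ is assumed to act trivially on $H_1(K;\Z_p)$; everything else is a matter of exploiting the elementary abelian hypothesis on $Q$ and keeping track of ranks.

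First I would record what that hypothesis buys us for the quotient factor. Since $Q$ is an elementary abelian $p$-group, Example~\ref{ex:p-abel} gives $\gamma^p_n(Q)=\{1\}$ for all $n\ge 2$; equivalently, $\gr^p_1(Q)=Q=H_1(Q;\Z_p)$ while $\gr^p_{\ge 2}(Q)=0$. (Concretely, $\gamma^p_2(Q)=\big\langle Q^p,[Q,Q]\big\rangle$ is trivial because $Q$ is abelian of exponent $p$.) Feeding this into Theorem~\ref{thm:fr-p}, part~\eqref{fr1-p}, the split extension $\gamma^p_n(G)=\gamma^p_n(K)\rtimes_{\varphi}\gamma^p_n(Q)$ collapses for each $n\ge 2$ to the equality of subgroups $\gamma^p_n(K)=\gamma^p_n(G)$; passing to associated graded objects via part~\eqref{fr2-p} of the same theorem, the decomposition $\gr^p(G)=\gr^p(K)\rtimes_{\bar\varphi}\gr^p(Q)$ together with $\gr^p_{\ge 2}(Q)=0$ yields $\gr^p_{\ge 2}(K)=\gr^p_{\ge 2}(G)$. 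This settles part~\eqref{ap1}.

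For part~\eqref{ap2} I would assume $b^p_1(G)<\infty$. Reading off the degree-$1$ summand of the decomposition $\gr^p(G)=\gr^p(K)\rtimes_{\bar\varphi}\gr^p(Q)$ gives $H_1(G;\Z_p)=H_1(K;\Z_p)\oplus H_1(Q;\Z_p)$, so finiteness of $b^p_1(G)$ forces $b^p_1(K)<\infty$, guaranteeing that the mod-$p$ LCS ranks $\phi^p_n(K)$ are defined. By the discussion in \S\ref{subsec:lcs-p}, $b^p_1(G)<\infty$ makes every homogeneous piece $\gr^p_n(G)$ finite-dimensional over $\Z_p$; combining the isomorphisms $\gr^p_n(K)\cong\gr^p_n(G)$ from part~\eqref{ap1} with the definition $\phi^p_n(\cdot)=\dim_{\Z_p}\gr^p_n(\cdot)$ then gives $\phi^p_n(K)=\phi^p_n(G)$ for all $n\ge 2$.

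I do not anticipate a genuine obstacle here: once Theorem~\ref{thm:fr-p} is invoked the argument is a direct transcription of the earlier corollaries. The only point meriting care is the rank bookkeeping in part~\eqref{ap2}---namely, that $b^p_1(G)<\infty$ must be seen to propagate to $b^p_1(K)<\infty$ before $\phi^p_n(K)$ can even be spoken of---and this is exactly what the degree-$1$ splitting of $\gr^p(G)$ provides.
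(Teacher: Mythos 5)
Your proof is correct and follows essentially the same route as the paper's: both use Example~\ref{ex:p-abel} to get $\gamma^{p}_n(Q)=\{1\}$ for $n\ge 2$, collapse the split extensions of Theorem~\ref{thm:fr-p} to prove part~\eqref{ap1}, and deduce part~\eqref{ap2} from the finite-dimensionality of the graded pieces of $\gr^{p}(G)$ when $b^p_1(G)<\infty$. Your additional remark that the degree-$1$ splitting of $\gr^{p}(G)$ forces $b^p_1(K)<\infty$ is a harmless refinement that the paper leaves implicit (for $n\ge 2$ the equality $\gr^{p}_n(K)=\gr^{p}_n(G)$ already makes $\phi^{p}_n(K)$ well defined).
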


\begin{proof}
Since $Q$ is an elementary abelian $p$-group, Example \ref{ex:p-abel} 
shows that $\gamma^{p}_n(Q)=\{1\}$ for all $n\ge 2$. The first claim 
now follows from Theorem \ref{thm:fr-p}. 

Next, assume that $b^p_1(G)<\infty$. Then, by the discussion from \S\ref{subsec:lcs-p}, 
all the graded pieces of $\gr^{p}(G)$ are finite-dimensional $\Z_p$-vector spaces. 
Recalling that $\phi^{p}_n(G)= \dim_{\Z_p} \gr^{p}_{n}(G)$, the second 
claim now follows from claim \eqref{ap1}.
\end{proof} 

\section{Ab-exact sequences}
\label{sect:ab-exact}

We now return to the general setting of not necessarily split extensions, 
and extend the notion of almost direct product to this broader context. 
For the resulting extensions, we prove one of our main results, 
which relates the Alexander invariant of a normal subgroup 
$K\triangleleft G$ to that of $G$, under suitable assumptions 
on the quotient group, $Q=G/K$. 

\subsection{Ab-exact sequences}
\label{subsec:ab-exact}
Let $H_*(G;\Z)$ denote the integral homology groups of $G$. 
As is well-known, $H_1(G,\Z)\cong G_{\ab}$, 
while $H_2(G;\Z)$, also known as the Schur multiplier of $G$, 
may be identified via the Hopf formula with 
$R\cap [F,F]/[R,F]$, where $G=F/R$ is a free presentation for $G$; 
see \cite{Brown, HS, Karpilovsky, Leedham-McKay}. 

A useful tool in the homological study of group extensions 
is the $5$-term exact sequence of Stallings \cite{St}, which 
may be derived from the exact sequence of low-degree terms 
in the Lyndon--Hochschild--Serre spectral sequence. 
Given an extension such as \eqref{eq:abc-exact}, there is an 
associated exact sequence, 
\begin{equation}
\label{eq:stallings-5}
\begin{tikzcd}[column sep=17.5pt]
 H_2(G;\Z)\ar[r, "\pi_*"]& H_2(Q;\Z) \ar[r, "\delta"]  &  
 H_1(K;\Z)_{Q} \ar[r, "\iota_*"] 
& H_1(G;\Z) \ar[r, "\pi_*"] & H_1(Q;\Z) \ar[r]  & 0\, ,
\end{tikzcd}
\end{equation}
where  $H_1(K;\Z)_{Q}=K/[G,K]$ denotes the coinvariants under the  
action of $Q$ on $K_{\ab}$ described in \S\ref{subsec:mono}. When this action 
is trivial, the group in the middle coincides with $H_1(K;\Z)$, and $\iota_*$ may 
be identified with $\iota_{\ab}\colon K_{\ab}\to G_{\ab}$. 
Since conjugation by an element of $G$ acts trivially 
on $G_{\ab}$, and therefore on $K_{\ab}$ if $\iota_{\ab}$ 
is injective, the next lemma follows.

\begin{lemma}
\label{lem:trivial action}
For a group extension, $\begin{tikzcd}[column sep=18pt]
\!\!1\ar[r] &  K \ar[r, "\iota"]
& G \ar[r, "\pi"] & Q \ar[r]  & 1,\!
\end{tikzcd}$ the following two conditions are equivalent. 
\begin{enumerate}
\item \label{ab1}
The group $Q$ acts trivially on $K_{\ab}$ and the 
map $\delta\colon H_2(Q;\Z)\to H_1(K;\Z)$ is zero.
\item \label{ab2}
The sequence
$\begin{tikzcd}[column sep=20pt]
0\ar[r] & K_{\ab} \ar[r, "\iota_{\ab}"]
& G_{\ab} \ar[r, "\pi_{\ab}"] & Q_{\ab} \ar[r] & 0
\end{tikzcd}$ is exact.
\end{enumerate}
\end{lemma}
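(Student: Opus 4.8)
The plan is to establish the equivalence of the two conditions in Lemma \ref{lem:trivial action} by extracting the relevant information from the $5$-term exact sequence \eqref{eq:stallings-5}. The central observation is that both conditions describe precisely what is needed to convert the tail of Stallings' sequence into the short exact sequence of abelianizations. First I would analyze the coinvariants term $H_1(K;\Z)_Q = K_{\ab}/[G,K]$ appearing in the middle of \eqref{eq:stallings-5}. The action of $Q$ on $K_{\ab}$ is trivial exactly when $[G,K]\subseteq K'$, i.e.\ when $\varphi(x)(a)\cdot a^{-1}\in K'$ for all $x\in Q$, $a\in K$; in that case the quotient $K_{\ab}/[G,K]$ collapses to $K_{\ab}$ itself, so the coinvariants term becomes $H_1(K;\Z)$ and the map labeled $\iota_*$ in \eqref{eq:stallings-5} becomes identified with $\iota_{\ab}\colon K_{\ab}\to G_{\ab}$.

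The proof then proceeds by tracing both implications through this identification. For the direction \eqref{ab1}$\Rightarrow$\eqref{ab2}, I would assume $Q$ acts trivially on $K_{\ab}$ and $\delta=0$. Triviality of the action gives the middle-term identification above, turning the relevant portion of \eqref{eq:stallings-5} into
\begin{equation*}
\begin{tikzcd}[column sep=16pt]
H_2(Q;\Z) \ar[r, "\delta"] & K_{\ab} \ar[r, "\iota_{\ab}"] & G_{\ab} \ar[r, "\pi_{\ab}"] & Q_{\ab} \ar[r] & 0.
\end{tikzcd}
\end{equation*}
The hypothesis $\delta=0$ then forces $\iota_{\ab}$ to be injective, and exactness at the remaining two spots is inherited directly from \eqref{eq:stallings-5}; this yields precisely the short exact sequence of \eqref{ab2}. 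For the converse \eqref{ab2}$\Rightarrow$\eqref{ab1}, exactness of the sequence in \eqref{ab2} forces $\iota_{\ab}\colon K_{\ab}\to G_{\ab}$ to be injective. Injectivity of $\iota_{\ab}$ is the key, since conjugation by any element of $G$ acts trivially on $G_{\ab}$; hence the induced action on the image $\iota_{\ab}(K_{\ab})\subseteq G_{\ab}$ is trivial, and injectivity transports this back to show $Q$ acts trivially on $K_{\ab}$. Once triviality of the action is established, the middle term of \eqref{eq:stallings-5} again identifies with $K_{\ab}$, and comparing with \eqref{ab2}---where $\iota_{\ab}$ is already injective---forces $\im(\delta)=\ker(\iota_{\ab})=0$, so $\delta=0$.

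The step I expect to be the main obstacle is making rigorous the identification of the coinvariants $H_1(K;\Z)_Q$ with $H_1(K;\Z)$ and, simultaneously, the map $\iota_*$ with $\iota_{\ab}$ under the triviality hypothesis, since one must check that these identifications are genuinely compatible with the differentials in the spectral sequence rather than merely abstractly isomorphic. The subtlety is that triviality of the $Q$-action is what both kills the coinvariant quotient \emph{and} ensures that the spectral-sequence edge map reduces to the naively expected $\iota_{\ab}$; the remaining exactness assertions are then formal consequences of the exactness of \eqref{eq:stallings-5}. The circularity between ``$\delta=0$'' and ``$\iota_{\ab}$ injective'' in the two directions must be handled carefully, but as sketched above it resolves cleanly because exactness at $K_{\ab}$ in the identified sequence reads $\im(\delta)=\ker(\iota_{\ab})$, so the vanishing of either side is equivalent to the injectivity claim on the other.
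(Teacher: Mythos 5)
Your proof is correct and follows essentially the same route as the paper, which derives the lemma from the Stallings $5$-term sequence \eqref{eq:stallings-5} by noting that trivial $Q$-action identifies the coinvariants $H_1(K;\Z)_Q$ with $K_{\ab}$ and $\iota_*$ with $\iota_{\ab}$, and that conversely injectivity of $\iota_{\ab}$ forces the conjugation action on $K_{\ab}$ to be trivial since conjugation acts trivially on $G_{\ab}$. The paper records this argument only as a brief remark preceding the lemma; your write-up fills in the same steps in more detail.
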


Following \cite{DSY17}, we say that a sequence $1\to K\to G\to Q\to 1$ is 
{\em $\ab$-exact}\/ if any one of the two equivalent conditions of Lemma 
\ref{lem:trivial action} is satisfied.

\begin{remark}
\label{rem:central ex}
Suppose the extension is central, i.e, 
$K$ lies in the center of $G$. Then $Q$ acts trivially on $K=K_{\ab}$, and the 
connecting homomorphism $\delta\colon H_2(Q;\Z)\to K$ corresponds via 
the universal coefficients theorem to the cohomology class $\bar\delta\in H^2(Q;K)$ 
that classifies the central extension. Consequently, if a central extension is 
non-trivial, then the map $\delta$ is not zero, and so the extension is not $\ab$-exact. 
For instance, if $G$ is the Heisenberg group from Example \ref{ex:heis-alex}, 
then $G=F_2/\gamma_3 F_2$, and so $G$ is a central extension of 
$G_{\ab}\cong \Z^2$ by $G'\cong \Z$, with extension class 
$\bar\delta$ a generator of $H^2(\Z^2;\Z)= \Z$. 
\end{remark}

Next, we give an example of a non-central, non-split, ab-exact extension. 
We thank Thomas Koberda for help with finding this example.

\begin{example}
\label{ex:double-heis}
Let $K_i=\langle x_i,y_i,z_i\mid [x_i,y_i]=z_i, [x_i,z_i]=[y_i,z_i]=1\rangle$ 
be two copies of the Heisenberg group, and let $G=K_1\circ K_2$ 
be their central product, obtained from 
$K_1\times K_2$ by identifying the corresponding 
centers, $Z(K_i)=\langle z_i\rangle$. 
We then have a short exact sequence, $1\to K_1\to G\to \Z^2\to 1$.
It is readily seen that this is a non-central, non-split extension. 
Moreover,  $H_2(\Z^2;\Z)=\bwedge^2 \Z^2=\Z$, generated by 
$x_2\wedge y_2$, and $H_1(K_1;\Z)=\Z^2$, generated by $x_1, y_1$;  
thus, the map $\delta\colon H_2(\Z^2;\Z)\to H_1(K_1;\Z)$ is the zero map.
\end{example}

For split exact sequences, we have the following criterion for determining 
$\ab$-exactness. 

\begin{proposition}
\label{prop:ab-exact-split}
A split exact sequence $1\to K \to  G \to Q\to 1$ is $\ab$-exact if 
and only if $Q$ acts trivially on $K_{\ab}$; that is, $G=K\rtimes Q$ 
is an almost direct product.
\end{proposition}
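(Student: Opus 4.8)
The plan is to reduce everything to the two equivalent characterizations of $\ab$-exactness provided by Lemma \ref{lem:trivial action}: that $Q$ act trivially on $K_{\ab}$ \emph{and} that the connecting homomorphism $\delta\colon H_2(Q;\Z)\to H_1(K;\Z)$ vanish. The forward implication is then immediate and uses nothing about the splitting: if the sequence is $\ab$-exact, then condition \eqref{ab1} of Lemma \ref{lem:trivial action} holds, so in particular $Q$ acts trivially on $K_{\ab}$, which is exactly the statement that $G=K\rtimes Q$ is an almost direct product.

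For the reverse implication I would argue that, once the sequence splits, the triviality of the action already forces $\delta=0$, so that \eqref{ab1} is satisfied in full. First I would recall that the splitting $\sigma\colon Q\to G$ satisfies $\pi\circ\sigma=\id_Q$; applying the functor $H_2(-;\Z)$ yields $\pi_*\circ\sigma_*=\id$ on $H_2(Q;\Z)$, so $\sigma_*$ is a section of $\pi_*$ and hence $\pi_*\colon H_2(G;\Z)\to H_2(Q;\Z)$ is surjective. Feeding this into the $5$-term exact sequence \eqref{eq:stallings-5}, exactness at $H_2(Q;\Z)$ gives $\ker\delta=\im\pi_*=H_2(Q;\Z)$, whence $\delta=0$. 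Combined with the standing hypothesis that $Q$ acts trivially on $K_{\ab}$, this establishes condition \eqref{ab1} of Lemma \ref{lem:trivial action}, so the sequence is $\ab$-exact.

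There is essentially no hard step here; the only point requiring a moment of care is to confirm that the homological section $\sigma_*$ splits precisely the map $\pi_*$ appearing as the leftmost arrow of \eqref{eq:stallings-5}. This is guaranteed by the naturality of the $5$-term exact sequence (equivalently, of the Lyndon--Hochschild--Serre spectral sequence) with respect to the morphism of extensions determined by $\sigma$, so nothing beyond functoriality of $H_2(-;\Z)$ is needed. It is worth emphasizing in the write-up that splitting is genuinely used only in this reverse direction, and only to kill $\delta$; the triviality-of-action half of $\ab$-exactness is forced by hypothesis in either direction.
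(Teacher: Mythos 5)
Your proof is correct, but it takes a genuinely different route from the paper's. For the backward implication, the paper invokes the Falk--Randell theorem (Theorem \ref{thm:falk-ran}, part \eqref{fr2}): an almost direct product satisfies $\gr(G)=\gr(K)\rtimes_{\bar\varphi}\gr(Q)$, whose degree-$1$ piece is exactly the short exact sequence $0\to K_{\ab}\to G_{\ab}\to Q_{\ab}\to 0$ of condition \eqref{ab2} of Lemma \ref{lem:trivial action}. You instead verify condition \eqref{ab1}: the splitting $\sigma$ gives $\pi_*\circ\sigma_*=\id$ on $H_2(Q;\Z)$ by functoriality, so $\pi_*\colon H_2(G;\Z)\to H_2(Q;\Z)$ is surjective, and exactness of \eqref{eq:stallings-5} at $H_2(Q;\Z)$ forces $\delta=0$; combined with the hypothesized trivial action, this is \eqref{ab1}. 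Both arguments are sound (your one point of care---that the leftmost arrow of the $5$-term sequence is the edge map induced by $\pi$, hence split by $\sigma_*$---is indeed standard). Your route is more elementary and self-contained: it bypasses the lower-central-series machinery of Theorems \ref{thm:gu-pe} and \ref{thm:falk-ran} entirely, and it isolates a slightly sharper fact, namely that $\delta$ vanishes for \emph{every} split extension regardless of the action, so that for split extensions $\ab$-exactness reduces purely to triviality of the monodromy on $K_{\ab}$. It also transfers verbatim to the rational and mod-$p$ settings (Propositions \ref{prop:abf-exact-split} and \ref{prop:p-exact-split}), since the same section argument kills $\delta_0$, $\delta_{\Q}$, and $\delta_p$ with the appropriate coefficients. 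What the paper's approach buys in exchange is uniformity with its overall framework: the Falk--Randell splitting of $\gr(G)$ is already needed elsewhere (e.g., Corollary \ref{cor:fr-ab} and the proofs in \S\ref{sect:ab-exact}), so deducing the proposition from it costs nothing extra there, whereas your argument would be the preferred one in a standalone treatment.
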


\begin{proof}
The forward implication follows at once from 
Lemma \ref{lem:trivial action}, part \eqref{ab1}.  
For the backwards implication, 
Theorem \ref{thm:falk-ran}, part \eqref{fr2} yields a (split) exact 
sequence of abelian groups, $0\to \gr_1(K) \to \gr_1(G) \to \gr_1(Q) \to 0$, 
and this sequence clearly coincides with the one from 
Lemma \ref{lem:trivial action}, part \eqref{ab2}.
\end{proof}

It is easy to build split extensions which are not $\ab$-exact; 
the fundamental group of the Klein bottle, 
$G=\langle t,a\mid tat^{-1} =a^{-1} \rangle$,  
is of this sort. Here are two constructions that produce large  
classes of extensions which are both split exact and $\ab$-exact.

\begin{example}
\label{ex:raag-bb}
Let $G_{\Gamma}=\langle v\in V \mid \text{$[v,w] = 1$ if $\{v,w\} \in E$}\rangle$ 
be the {\em right-angled Artin group}\/ associated to a finite (simple) graph $\Gamma$ 
on vertex set $V$ and edge set $E$.  To avoid trivialities, we will always assume 
that $\abs{V}>1$. Consider the homomorphism $\pi\colon 
G_{\Gamma}\surj \Z$ that sends each generator $v\in V$ to $1\in \Z$, and let 
$N_{\Gamma}=\ker (\pi)$ be the corresponding {\em Bestvina--Brady group}.  
We then have a split exact sequence, 
\begin{equation}
\label{eq:raag-bb}
\begin{tikzcd}[column sep=24pt]
1\ar[r] & N_{\Gamma}\ar[r, "\iota"]
& G_{\Gamma} \ar[r, "\pi"] & \Z\ar[r] & 1 . 
\end{tikzcd}
\end{equation} 
As shown in \cite{BB}, the group $N_{\Gamma}$ is finitely generated 
if and only if $\Gamma$ is connected; likewise, $N_{\Gamma}$ is finitely presented 
if and only if the flag complex $\Delta_{\Gamma}$ is simply connected. 
Furthermore, as shown in \cite[Proposition 5.3]{PS-jlms07}, if $\Gamma$ 
is connected, then the group $\Z$ acts trivially on $H_1(N_{\Gamma};\Z)$, 
and so this sequence is $\ab$-exact.
\end{example}

\begin{example}
\label{ex:planes}
Let $\A=\{H_1,\dots, H_n\}$ be an arrangement of transverse planes through 
the origin of $\R^4$. Its complement, $X=\R^4\setminus \bigcup_{i=1}^{n} H_i$, 
deform-retracts onto the complement of the singularity link of $\A$ (a link 
of $n$ great circles in $S^3$). 
Moreover, as noted in \cite[Proposition 4.4]{MS-top}, $X$ is homotopy 
equivalent to the total space of a bundle over $S^1$ with fiber  
$D^2 \setminus \{\text{$n-1$ points}\}$ and monodromy given by a pure braid 
automorphism. Thus, the arrangement group, $G=\pi_1(X)$, fits into a split extension,  
$1\to F_{n-1} \to G \to \Z\to 1$, with monodromy given by an element   
of the pure braid group $P_{n-1}$.  Since pure braids act trivially 
on $H_1(F_{n-1},\Z)$, the extension is $\ab$-exact. 
\end{example}

\begin{example}
\label{ex:en}
Another class of links in $S^3$ with trivial algebraic monodromy is given 
by Eisenbud and Neumann in \cite[\S16]{EN}. The simplest example is the 
$3$-component singularity link of the analytic function 
$f\colon \C^2\to \C$, $f(z,w)=\bar{z}\bar{w}(z^3+y^2)$. The  
link is fibered, with fiber a thrice punctured sphere; since the monodromy 
acts trivially on $H_1$, the link group  is an $\ab$-exact split 
extension of the form $G=F_2\rtimes \Z$.
\end{example}

\subsection{Ab-exact sequences and Alexander invariants}
\label{subsec:ab-exact-alex}

We now relate the Alexander invariant and the derived subalgebra 
of the associated graded Lie algebra of a group $G$ with those of 
a normal subgroup $K\triangleleft G$, under suitable assumptions. 
We start with a lemma. 

\begin{lemma}
\label{lem:ab-exact-derived}
If $\begin{tikzcd}[column sep=16pt]
\!1\ar[r] & K\ar[r, "\iota"]
& G \ar[r, "\pi"] & Q\ar[r] & 1\!
\end{tikzcd}$ is an $\ab$-exact sequence, then its restriction 
to derived subgroups, $\begin{tikzcd}[column sep=16pt]
\!1\ar[r] & K'\ar[r, "\iota'"]
& G' \ar[r, "\pi'"] & Q'\ar[r] & 1\!\!
\end{tikzcd}$, is an exact sequence.
\end{lemma}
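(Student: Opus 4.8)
The plan is to check the three requirements for exactness of $1\to K'\xrightarrow{\iota'} G'\xrightarrow{\pi'} Q'\to 1$ one at a time, with the only real work occurring at the middle term. First I would note that, since the derived subgroup is fully invariant, the maps $\iota$ and $\pi$ restrict to homomorphisms $\iota'\colon K'\to G'$ and $\pi'\colon G'\to Q'$, so the displayed sequence makes sense. Injectivity of $\iota'$ is inherited at once from that of $\iota$. For surjectivity of $\pi'$, I would use that $Q'=[Q,Q]$ is generated by commutators $[q_1,q_2]$: lifting $q_i$ to elements $g_i\in G$ via the surjection $\pi$, the commutator $[g_1,g_2]$ lies in $G'$ and satisfies $\pi([g_1,g_2])=[q_1,q_2]$, so every generator of $Q'$, and hence all of $Q'$, lies in the image of $\pi'$.

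The substantive step is exactness in the middle, which amounts to the identity $G'\cap K=K'$ once we identify $K$ with $\iota(K)\triangleleft G$. Indeed, $\ker(\pi')=G'\cap\ker(\pi)=G'\cap K$, while $\im(\iota')=K'$ under this identification. The inclusion $K'\subseteq G'\cap K$ is immediate, since $K'\subseteq K$ and, by full invariance, $K'\subseteq G'$. The hard direction is $G'\cap K\subseteq K'$, and this is precisely where the $\ab$-exactness hypothesis is needed. Given $x\in G'\cap K$, its image in $G_{\ab}$ vanishes because $G'=\ker(G\surj G_{\ab})$; but that image equals $\iota_{\ab}(\bar x)$, where $\bar x$ denotes the class of $x$ in $K_{\ab}$. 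By condition \eqref{ab2} of Lemma \ref{lem:trivial action}, $\ab$-exactness makes $\iota_{\ab}\colon K_{\ab}\to G_{\ab}$ injective, so $\bar x=0$, i.e.\ $x\in K'$. This yields $G'\cap K=K'$, hence $\im(\iota')=\ker(\pi')$, completing the argument.

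The main (indeed, the only) obstacle is this middle-exactness, and it is resolved entirely by the injectivity of $\iota_{\ab}$ that is built into the $\ab$-exactness assumption; the full strength of the five-term sequence \eqref{eq:stallings-5} is not needed here, only the exactness of $0\to K_{\ab}\to G_{\ab}$. I expect no difficulty in the surjectivity and injectivity checks, which are formal consequences of the corresponding properties of $\pi$ and $\iota$ together with the full invariance of the derived subgroup.
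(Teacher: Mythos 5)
Your proof is correct, but it takes a genuinely different route from the paper's. The paper disposes of Lemma \ref{lem:ab-exact-derived} in one line, by applying the Nine Lemma in the category of groups to the commuting $3\times 3$ diagram whose columns are the abelianization sequences $1\to K'\to K\to K_{\ab}\to 1$ (and likewise for $G$ and $Q$), whose middle row is the given extension, and whose bottom row is the exact abelianized sequence from Lemma \ref{lem:trivial action}, part \eqref{ab2}; exactness of the top row is then formal. You instead carry out, by hand, the element chase that a proof of the Nine Lemma would perform in this special case: surjectivity of $\pi'$ by lifting commutators (which uses only surjectivity of $\pi$), injectivity of $\iota'$ by restriction, and the identification $\ker(\pi')=G'\cap K$ together with the key inclusion $G'\cap K\subseteq K'$, which is exactly where injectivity of $\iota_{\ab}$ enters. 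Your version is self-contained and correctly isolates the only ingredient of $\ab$-exactness actually used, at the cost of a few more lines; the paper's version buys brevity at the cost of invoking a nonabelian diagram lemma. One remark sharpening your closing observation: for \emph{any} extension, the abelianized sequence is automatically exact at $G_{\ab}$ and surjective onto $Q_{\ab}$ (both follow from surjectivity of $\pi$ and the fact that commutators lift), so $\ab$-exactness is in fact \emph{equivalent} to injectivity of $\iota_{\ab}$; your proof therefore uses the hypothesis in full, just in its minimal formulation.
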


\begin{proof}
Follows from the exactness of the sequence 
$\begin{tikzcd}[column sep=18.3pt]
\!0\ar[r] & K_{\ab} \ar[r, "\iota_{\ab}"]
& G_{\ab} \ar[r, "\pi_{\ab}"] & Q_{\ab} \ar[r] & 0
\end{tikzcd}$ and the Nine Lemma in the category of groups.
\end{proof}

For a homomorphism $\psi\colon G\to H$, we let 
$\bar\psi\colon G/G'' \to H/H''$ be the induced 
homomorphism on maximal metabelian quotients.

\begin{theorem}
\label{thm:alex-abex}
Let $\begin{tikzcd}[column sep=16pt]
\!1\ar[r] & K\ar[r, "\iota"]
& G \ar[r, "\pi"] & Q\ar[r] & 1\!
\end{tikzcd}$ 
be an $\ab$-exact sequence. Assuming $Q$ is abelian, the 
following hold.
\begin{enumerate}[itemsep=2pt]
\item \label{ng1}
The inclusion $\iota\colon K\inj G$ restricts to an equality, $K'=G'$.%
\item \label{ng2}
The induced map on Alexander invariants, 
$B(\iota)\colon B(K) \to B(G)$, gives rise to a 
$\Z[K_{\ab}]$-linear isomorphism, $B(K) \to B(G)_{\iota}$.
\item \label{ng3}
The sequence
$\begin{tikzcd}[column sep=16pt]
\!1\ar[r] & K/K'' \ar[r, "\bar\iota"]
& G/G'' \ar[r, "\bar\pi"] & Q\ar[r] & 1\!
\end{tikzcd}$
is also $\ab$-exact.
\item  \label{ng4}
If, moreover, $G_{\ab}$ is finitely generated, then 
$\theta_n(K)\le \theta_n(G)$ for all $n\ge 1$.
\end{enumerate}
\end{theorem}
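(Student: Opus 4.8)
The plan is to treat the degree-one case separately from the higher degrees, and in both to exploit the triviality of the monodromy on the associated graded Lie algebra of $K$. For $n=1$, recall from \eqref{eq:chen-ranks} that $\theta_1(K)=b_1(K)$ and $\theta_1(G)=b_1(G)$. Since the sequence is $\ab$-exact, Lemma~\ref{lem:trivial action}\eqref{ab2} makes $\iota_{\ab}\colon K_{\ab}\to G_{\ab}$ injective; as $G_{\ab}$ is finitely generated, so is $K_{\ab}$ (whence all the Chen ranks involved are defined). Tensoring this injection with $\Q$ gives $b_1(K)\le b_1(G)$, an inequality that may be strict (e.g.\ whenever $Q_{\ab}\ne 0$).

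For $n\ge 2$ I would first isolate a self-contained statement: \emph{for any $\ab$-exact sequence $1\to K\xrightarrow{\iota} G\to Q\to 1$ with $Q$ abelian, one has $\gamma_m(K)=\gamma_m(G)$ for all $m\ge 2$.} The base case $m=2$ is exactly part~\eqref{ng1}, namely $K'=G'$. For the inductive step, assume $\gamma_m(K)=\gamma_m(G)$; note this subgroup lies in $K$, since $\gamma_m(G)\subseteq G'=K'\subseteq K$ for $m\ge 2$. A generator of $\gamma_{m+1}(G)=[G,\gamma_m(G)]$ has the form $[g,c]$ with $g\in G$ and $c\in\gamma_m(K)$; writing $g=k_0\sigma(q)$ for a set-section $\sigma$ of $\pi$, the identity $[k_0\sigma(q),c]=k_0[\sigma(q),c]k_0^{-1}\,[k_0,c]$ reduces the claim to showing $[\sigma(q),c]\in\gamma_{m+1}(K)$, because $[k_0,c]\in[K,\gamma_m(K)]=\gamma_{m+1}(K)$. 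Here conjugation by $\sigma(q)$ is an automorphism $\alpha_q$ of $K$ (as $K\triangleleft G$) whose class in $\Out(K)$ is the monodromy $\tilde\varphi(q)$; since inner automorphisms act trivially on $\gr(K)$, the induced map $(\alpha_q)_{\ast}$ on $\gr(K)$ depends only on $\tilde\varphi(q)$. By $\ab$-exactness $(\alpha_q)_{\ast}=\upsilon(\tilde\varphi(q))$ is the identity on $\gr_1(K)=K_{\ab}$, and because $\gr(K)$ is generated in degree $1$ (\S\ref{subsec:lcs}), it is the identity on all of $\gr(K)$. Thus $\alpha_q(c)\equiv c \bmod \gamma_{m+1}(K)$, i.e.\ $[\sigma(q),c]=\alpha_q(c)c^{-1}\in\gamma_{m+1}(K)$, giving $\gamma_{m+1}(G)\subseteq\gamma_{m+1}(K)$; the reverse inclusion is automatic, completing the induction.

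To finish, I apply this statement to the $\ab$-exact sequence $1\to K/K''\to G/G''\to Q\to 1$ provided by part~\eqref{ng3}, whose quotient is still the abelian group $Q$ and whose kernel $K/K''$ has finitely generated abelianization $(K/K'')_{\ab}=K_{\ab}$. It yields $\gamma_m(K/K'')=\gamma_m(G/G'')$ for all $m\ge 2$, hence $\gr_m(K/K'')\cong\gr_m(G/G'')$, and after $\otimes\,\Q$ we obtain $\theta_m(K)=\theta_m(G)$ for all $m\ge 2$ by \eqref{eq:chen-ranks}. Together with the degree-one bound this gives $\theta_n(K)\le\theta_n(G)$ for all $n\ge 1$ (indeed with equality for $n\ge 2$). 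The main obstacle is the inductive step, and specifically its passage to the possibly non-split setting: there is no homomorphism $Q\to\Aut(K)$, only the outer action $\tilde\varphi\colon Q\to\Out(K)$, so one must argue through a set-section and lean on the triviality of inner automorphisms on $\gr(K)$ together with degree-$1$ generation to upgrade triviality of the monodromy on $K_{\ab}$ to triviality on every $\gr_m(K)$.
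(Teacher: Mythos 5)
Your proposal, as written, proves only part \eqref{ng4}: parts \eqref{ng1} and \eqref{ng3} are invoked (as the base case of your induction and as the input for your final application, respectively) but never established, and part \eqref{ng2} is not addressed at all. This is not a cosmetic omission, because the base case is precisely where the full strength of $\ab$-exactness enters: triviality of the monodromy on $K_{\ab}$ alone does \emph{not} give $K'=G'$. Indeed, for the Heisenberg group $G$ of Remark \ref{rem:central ex}, the central extension $1\to G'\to G\to \Z^2\to 1$ has abelian quotient and trivial monodromy on the kernel, yet $K'=\{1\}\ne G'$; what fails there is the vanishing of the connecting map $\delta$, i.e., the other half of $\ab$-exactness. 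In the paper, part \eqref{ng1} is obtained from Lemma \ref{lem:ab-exact-derived} (the Nine Lemma applied to the abelianized sequence) together with $Q'=\{1\}$; once $K'=G'$ one gets $K''=G''$, which yields \eqref{ng2} immediately and the injectivity of $\bar\iota$ needed for \eqref{ng3}. These steps are short, but a complete proof must contain them. (A small side remark: your parenthetical claim that $b_1(K)<b_1(G)$ ``whenever $Q_{\ab}\ne 0$'' is inaccurate—tensoring $0\to K_{\ab}\to G_{\ab}\to Q\to 0$ with $\Q$ shows the inequality is strict exactly when $Q$ has positive rank, and is an equality when $Q$ is finite.)

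For part \eqref{ng4} itself, your argument is correct and takes a genuinely different route from the paper's. The paper works on the module side: it factors $B(\iota)$ through the isomorphism $B(K)\isom B(G)_{\iota}$ of part \eqref{ng2}, passes to associated graded modules, invokes Lemma \ref{lem:inj} to conclude that $\gr(\tilde\iota_{\ab})$—and hence $\gr(B(\iota))$—is injective, and reads off the inequality from Massey's correspondence via \eqref{eq:hilb-b-chen}. You instead prove, by induction with a set-section, that $\gamma_m(K)=\gamma_m(G)$ for all $m\ge 2$; the key upgrade—conjugation by $\sigma(q)$ fixes $\gr_1(K)=K_{\ab}$ pointwise, inner automorphisms act trivially on $\gr(K)$, and $\gr(K)$ is generated in degree $1$, so $\alpha_q(c)c^{-1}\in \gamma_{m+1}(K)$ for $c\in\gamma_m(K)$—is sound, and the commutator identity $[k_0\sigma(q),c]=k_0[\sigma(q),c]k_0^{-1}[k_0,c]$ checks out. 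Applied to the metabelianized sequence from part \eqref{ng3}, this yields $\theta_n(K)=\theta_n(G)$ for $n\ge 2$, i.e., the \emph{equality} that the paper establishes only for split extensions (Corollary \ref{cor:alex-abex-split}, via the Falk--Randell-type Corollary \ref{cor:fr-ab}), now with no splitness hypothesis; this is in the spirit of the paper's own proof of Lemma \ref{lem:massey}, which handles a non-split extension by the same section-and-commutator technique. So, once parts \eqref{ng1}--\eqref{ng3} are supplied, your route is more elementary (no Alexander invariants, no Lemma \ref{lem:inj}) and strictly stronger in its conclusion, whereas the paper's module-theoretic route has the advantage of running uniformly across the integral, rational, and mod-$p$ settings of Theorems \ref{thm:alex-lcs-ngq} and \ref{thm:alex-lcs-ngp}.
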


\begin{proof}
\eqref{ng1} 
By Lemma \ref{lem:ab-exact-derived}, we have an exact sequence, 
$\begin{tikzcd}[column sep=16pt]
\!1\ar[r] & K'\ar[r, "\iota'"]
& G' \ar[r, "\pi'"] & Q'\ar[r] & 1\!\!\end{tikzcd}$. But 
$Q'=\{1\}$ by assumption, and so $K'=G'$.
\vs

\eqref{ng2} Since $K'=G'$, we must also have $(K')'=(G')'$. Hence, 
$K'/K''=G'/G''$, showing that the map $B(K)\to  B(G)_{\iota}$ is 
indeed a $\Z[K_{\ab}]$-linear isomorphism.
\vs

\eqref{ng3}  As we just saw, $K''=G''$; hence, the map $\bar\iota$ 
is injective, and so the sequence in question is exact.  Under the  
identifications $(K/K'')_{\ab}=K_{\ab}$,  $(G/G'')_{\ab}=G_{\ab}$, 
and $Q_{\ab}=Q$, the maps $\bar\iota_{\ab}$ and $\bar\pi_{\ab}$ 
coincide with $\iota_{\ab}$ and $\pi_{\ab}$, respectively, and the 
claim follows.

\eqref{ng4} 
By our $\ab$-exactness assumption, the homomorphism 
$\iota_{\ab}\colon K_{\ab}\to G_{\ab}$ is injective; 
therefore, $\theta_1(K)\le \theta_1(G)$ and $\tilde\iota_{\ab}$ 
is also injective. It follows from part \eqref{ng2} 
that the map $B(\iota)\colon B(K)\to B(G)$ factors as 
an isomorphism of $\Z[K_{\ab}]$-modules, $B(K)\isom B(G)_{\iota}$, 
followed by the identity map of $B(G)$, viewed as covering 
the ring map $\tilde\iota_{\ab}$. 
Passing to associated graded modules, the morphism 
$\gr(B(\iota))\colon \gr(B(K))\to \gr(B(G))$ factors as 
an isomorphism of $\gr(\Z[K_{\ab}])$-modules, 
followed by the identity map of $\gr(B(G))$, viewed as covering 
the ring map $\gr(\tilde\iota_{\ab})$. By Lemma \ref{lem:inj}, 
$\gr(\tilde\iota_{\ab})$ is injective; therefore, $\gr(B(\iota))$ 
is also injective, and so the rank of $\gr_n(B(K))$ 
is less or equal to the rank of $\gr_n(B(G))$ for $n\ge 0$. 
Formula \eqref{eq:hilb-b-chen} now implies 
that $\theta_n(K)\le \theta_n(G)$ for $n\ge 2$, and we are done.
\end{proof}

When the above $\ab$-exact sequence splits, more can be said.

\begin{corollary}
\label{cor:alex-abex-split}
Let $\begin{tikzcd}[column sep=16pt]
\!1\ar[r] & K\ar[r, "\iota"]
& G \ar[r, "\pi"] & Q\ar[r] & 1\!
\end{tikzcd}$ 
be a split, $\ab$-exact sequence, and assume $Q$ is abelian. Then 
\begin{enumerate}
\item \label{aas1}
The map $\iota$ induces isomorphisms of graded Lie algebras, 
$\gr_{\ge 2}(K) \isom \gr_{\ge 2}(G)$ and 
$\gr_{\ge 2}(K/K'')  \isom  \gr_{\ge 2}(G/G'')$. 
\item  \label{aas2}
If, moreover, $b_1(G)<\infty$, then $\phi_n(K)=\phi_n(G)$ and 
$\theta_n(K)=\theta_n(G)$ for all $n\ge 2$.
\end{enumerate}
\end{corollary}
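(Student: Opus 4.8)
The plan is to deduce everything from the Falk--Randell-type results of \S\ref{sect:split}, applied twice: once to the given extension, and once to its maximal metabelian quotient.

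First I would observe that, since the sequence is split and $\ab$-exact, Proposition \ref{prop:ab-exact-split} identifies $G=K\rtimes_{\varphi} Q$ as an almost direct product. As $Q$ is abelian, Corollary \ref{cor:fr-ab}\eqref{aa1} then yields the equalities $\gamma_n(K)=\gamma_n(G)$ for all $n\ge 2$, and hence the isomorphism $\gr_{\ge 2}(K)\isom \gr_{\ge 2}(G)$ induced by $\iota$. This settles the first half of \eqref{aas1}; and, under the hypothesis $b_1(G)<\infty$, Corollary \ref{cor:fr-ab}\eqref{aa2} gives $\phi_n(K)=\phi_n(G)$ for all $n\ge 2$, which is the first half of \eqref{aas2}.

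For the Chen Lie algebra statements, the idea is to run the same argument on the maximal metabelian quotients. By Theorem \ref{thm:alex-abex}\eqref{ng1} we have $K'=G'$, whence $K''=G''$; this makes the sequence $1\to K/K''\to G/G''\to Q\to 1$ meaningful, with $\bar\iota$ injective. By Theorem \ref{thm:alex-abex}\eqref{ng3} this sequence is $\ab$-exact, and it is also split: composing the given splitting $\sigma\colon Q\to G$ with the projection $G\surj G/G''$ produces a homomorphism $\bar\sigma\colon Q\to G/G''$, and since $Q$ is abelian the induced map $\bar\pi\colon G/G''\to Q$ satisfies $\bar\pi\circ\bar\sigma=\pi\circ\sigma=\id_Q$. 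Applying Proposition \ref{prop:ab-exact-split} once more identifies $G/G''=(K/K'')\rtimes Q$ as an almost direct product with abelian quotient.

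At this point I would invoke Corollary \ref{cor:fr-ab} a second time, now for the split extension $G/G''=(K/K'')\rtimes Q$. Part \eqref{aa1} gives the isomorphism $\gr_{\ge 2}(K/K'')\isom \gr_{\ge 2}(G/G'')$ induced by $\bar\iota$, completing \eqref{aas1}. For \eqref{aas2}, note that $b_1(G/G'')=b_1(G)<\infty$, since $(G/G'')_{\ab}=G_{\ab}$; thus part \eqref{aa2} gives $\phi_n(K/K'')=\phi_n(G/G'')$ for all $n\ge 2$, which by the definition \eqref{eq:chen-ranks} of the Chen ranks reads $\theta_n(K)=\theta_n(G)$ for all $n\ge 2$. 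The only point requiring care is the verification that the metabelian-quotient sequence is again split and $\ab$-exact---equivalently, an almost direct product---so that Corollary \ref{cor:fr-ab} may legitimately be reapplied; this is exactly where the equality $K''=G''$ and the abelianness of $Q$ enter.
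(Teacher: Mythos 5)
Your proposal is correct and follows essentially the same route as the paper's own proof: pass to the metabelian quotients via Theorem \ref{thm:alex-abex}\eqref{ng3}, obtain a splitting by composing $\sigma$ with the projection $G\surj G/G''$, identify both extensions as almost direct products via Proposition \ref{prop:ab-exact-split}, and conclude with Corollary \ref{cor:fr-ab} (you merely spell out the bookkeeping $K''=G''$, $b_1(G/G'')=b_1(G)$, and $\theta_n=\phi_n(\cdot/\cdot'')$ that the paper leaves implicit). One trivial quibble: the splitting identity $\bar\pi\circ\bar\sigma=\id_Q$ does not actually use that $Q$ is abelian, since $\pi(G'')\subseteq Q''=\{1\}$ for any $Q$; abelianness of $Q$ enters only through Theorem \ref{thm:alex-abex} and Corollary \ref{cor:fr-ab}.
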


\begin{proof}
Let $\sigma\colon Q\to G$ be a splitting of $\pi$.
By Theorem \ref{thm:alex-abex}, part \eqref{ng3}, 
the extension $1\to K/K''\to G/G'' \to Q\to 1$ 
is $\ab$-exact; it also admits a splitting, obtained by 
composing the projection $G\to G/G''$ with $\sigma$.  
Thus, by Proposition \ref{prop:ab-exact-split}, both extensions 
are almost direct products.  Since $Q$ is assumed to be abelian, 
both claims now follow from Corollary \ref{cor:fr-ab}.
\end{proof}

\begin{remark}
\label{rem:bb-artin}
For extensions of the form $1\to N_{\Gamma} \to G_{\Gamma} \to \Z\to 1$, 
with $G_{\Gamma}$ the right-angled Artin group and 
 $N_{\Gamma}$ the Bestvina--Brady group associated 
to a finite connected graph $\Gamma$ as in 
Example \ref{ex:raag-bb}, Theorem \ref{thm:alex-abex}, 
parts \eqref{ng1}--\eqref{ng3}
and Corollary \ref{cor:alex-abex-split} recover 
Propositions 4.2 and 5.4  and Theorem 5.6 from \cite{PS-jlms07}. 
\end{remark}

\section{Abf-exact sequences}
\label{sect:abf-exact}

In this section we give analogues of the above results for the rational lower central 
series, the rational derived series, and the rational Alexander invariant.

\subsection{Abf-exact sequences}
\label{subsec:split-q}
We start with a lemma/definition, the proof of which is exactly similar to 
that of Lemma \ref{lem:trivial action}. 

\begin{lemma}
\label{lem:trivial action-q}
For an exact sequence $\begin{tikzcd}[column sep=16pt]
\!1\ar[r] & K\ar[r, "\iota"]
& G \ar[r, "\pi"] & Q\ar[r] & 1 ,\!
\end{tikzcd}$ the following two conditions are equivalent.
\begin{enumerate}
\item \label{abf1}
The group $Q$ acts trivially on $K_{\abf}$ and the 
composite $\delta_0\colon H_2(Q;\Z)\xrightarrow{\delta} 
H_1(K;\Z) \surj H_1(K;\Z)/\Tors$ is zero.
\item \label{abf2}
The sequence
$\begin{tikzcd}[column sep=22pt]
\!0\ar[r] & K_{\abf} \ar[r, "\iota_{\abf}"]
& G_{\abf} \ar[r, "\pi_{\abf}"] & Q_{\abf} \ar[r] & 0\!
\end{tikzcd}$ is exact.
\end{enumerate}
\end{lemma}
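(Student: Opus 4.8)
The plan is to follow the template of the integral Lemma~\ref{lem:trivial action} essentially verbatim, feeding the Stallings five-term exact sequence \eqref{eq:stallings-5} through the torsion-free abelianization functor and rationalizing where torsion gets in the way. Throughout, write $T_A=\Tors(A)$ for an abelian group $A$, and recall that $A_{\abf}=A/T_A$ embeds into $A\otimes\Q$; consequently, a homomorphism of abelian groups whose source is torsion-free is injective if and only if it remains injective after tensoring with $\Q$.

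First I would treat the implication \eqref{abf2}$\Rightarrow$\eqref{abf1}, arguing as in the integral case. If $\iota_{\abf}\colon K_{\abf}\to G_{\abf}$ is injective, then for $x\in Q$ with lift $g\in G$ and for $a\in K$ the commutator $[g,a]$ lies in $G'$, so its image in $G_{\abf}$ is trivial; as this image equals $\iota_{\abf}\big((x\cdot\bar a)-\bar a\big)$, where $\bar a$ is the class of $a$ in $K_{\abf}$, injectivity forces $(x\cdot\bar a)-\bar a=0$, i.e.\ $Q$ acts trivially on $K_{\abf}$. Granting trivial action, the augmentation submodule generated by the elements $x\cdot a-a$ lies in $T_{K_{\ab}}$, so the coinvariants $H_1(K;\Z)_Q$ acquire the same torsion-free quotient $K_{\abf}$ as $K_{\ab}$; the five-term sequence then identifies the kernel of $\iota_{\abf}$ (after $\otimes\Q$) with the image of $\delta$, and injectivity of $\iota_{\abf}$ yields $\delta_0=0$.

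For the converse \eqref{abf1}$\Rightarrow$\eqref{abf2} I would reverse the argument. Trivial action on $K_{\abf}$ means $Q$ acts trivially on $H_1(K;\Q)=K_{\abf}\otimes\Q$, so rationalizing \eqref{eq:stallings-5}---legitimate since $\Q$ is flat and $H_2(Q;\Q)=H_2(Q;\Z)\otimes\Q$---produces an exact sequence $H_2(Q;\Q)\xrightarrow{\delta_{\Q}}H_1(K;\Q)\xrightarrow{\iota_{*,\Q}}H_1(G;\Q)\xrightarrow{\pi_{*,\Q}}H_1(Q;\Q)\to 0$ in which $\delta_{\Q}=\delta_0\otimes\Q$. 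The hypothesis $\delta_0=0$ forces $\delta_{\Q}=0$, whence $\iota_{*,\Q}$ is injective and therefore so is $\iota_{\abf}$, since its source is torsion-free and it factors through $\iota_{*,\Q}$. Surjectivity of $\pi_{\abf}$ is automatic from that of $\pi$, and it remains only to verify exactness in the middle.

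The main obstacle, and the only genuine departure from the integral proof, is this last point of torsion bookkeeping. The five-term sequence natively produces the coinvariants $H_1(K;\Z)_Q$ together with the abelianizations $G_{\ab}$ and $Q_{\ab}$, whereas the statement is phrased in terms of torsion-free quotients; the passage between them is harmless on the level of kernels (all that the injectivity of $\iota_{\abf}$ requires, via rationalization), but exactness at $G_{\abf}$ is delicate, because $G_{\abf}/\im(\iota_{\abf})$ coincides with $Q_{\abf}$ precisely when $\pi_{\ab}$ carries $T_{G_{\ab}}$ onto $T_{Q_{\ab}}$. This is the step I would write out most carefully, invoking the structure of the extension; in the intended applications, where $Q$ is torsion-free abelian so that $T_{Q_{\ab}}=0$, the comparison is immediate and the middle exactness follows at once.
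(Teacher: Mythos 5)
Your two written-out halves are correct and follow the template the paper itself intends (its entire proof is the phrase ``exactly similar to that of Lemma~\ref{lem:trivial action}''): the argument that injectivity of $\iota_{\abf}$ forces the trivial action and kills $\delta_0$, and the rationalization of the five-term sequence \eqref{eq:stallings-5} giving injectivity of $\iota_{\abf}$ under condition~(1), are both sound. The gap is exactly the step you deferred---exactness at $G_{\abf}$ in the implication (1)$\Rightarrow$(2)---and you should know that no amount of ``invoking the structure of the extension'' can close it, because at this level of generality the implication is \emph{false}. Consider $0\to 2\Z\to \Z\to \Z_2\to 0$: here $Q=\Z_2$ acts trivially on $K_{\abf}\cong \Z$ (the ambient group is abelian) and $\delta_0=0$ since $H_2(\Z_2;\Z)=0$, so condition~(1) holds; yet the induced sequence is $0\to \Z \xrightarrow{\,2\,} \Z \to 0\to 0$, which is not exact at $G_{\abf}$. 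Note that $\iota_{\abf}$ \emph{is} injective in this example, so the failure occurs precisely at your open step. Your diagnosis of the obstruction is correct: since $\im \iota_{\ab}=\ker\pi_{\ab}$ always (by the five-term sequence), one gets $G_{\abf}/\im\iota_{\abf}\cong Q_{\ab}/\pi_{\ab}(\Tors(G_{\ab}))$, so middle exactness holds if and only if $\pi_{\ab}(\Tors(G_{\ab}))=\Tors(Q_{\ab})$---an equality that condition~(1) does not supply (in the example the left-hand side is $0$ and the right-hand side is $\Z_2$); condition~(1) only yields that $\ker\pi_{\abf}/\im\iota_{\abf}$ is torsion.

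Your hedging is thus more careful than the source, whose proof-by-analogy silently skips the same point: the integral argument does not transfer here because passing to torsion-free quotients destroys exactness at the middle term. Your fallback hypothesis---$Q$ torsion-free abelian, so $\Tors(Q_{\ab})=0$ and the criterion is vacuous---is exactly the right repair, and it covers every place the paper actually invokes the implication (1)$\Rightarrow$(2) (Theorem~\ref{thm:alex-lcs-ngq} and Theorem~\ref{thm:cv-abf} assume $Q$ torsion-free abelian); in the split case, Proposition~\ref{prop:abf-exact-split} obtains exactness by the entirely different route of Theorem~\ref{thm:fr-rational}, which is unavailable for non-split extensions such as the counterexample above. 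To turn your outline into a complete and correct proof, either add to condition~(1) a hypothesis guaranteeing $\pi_{\ab}(\Tors(G_{\ab}))=\Tors(Q_{\ab})$ (for instance, $Q_{\ab}$ torsion-free), after which your rationalization argument finishes as you describe, or else record only the implication (2)$\Rightarrow$(1), which your proposal does fully establish.
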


We say that the sequence $1\to K\to G\to Q\to 1$ is {\em $\abf$-exact}\/ if 
any one of the above two conditions is satisfied. Evidently, $\ab$-exactness implies 
$\abf$-exactness, though the converse is not true, as illustrated by the 
infinite dihedral group, $D_{\infty}=\Z\rtimes \Z_2$. 

Let $\delta_{\Q}\colon H_2(Q;\Q)\to H_1(K;\Q)$ be the connecting 
homomorphism in the $5$-term exact sequence 
\eqref{eq:stallings-5} with $\Q$-coefficients. In the case when 
$K_{\abf}$ has finite rank, we have the following, more convenient 
criterion for $\abf$-exactness. 

\begin{lemma}
\label{lem:trivial-rat-action}
Let $1\to K\to G\to Q\to 1$ be a group extension, and 
suppose $K_{\abf}$ is finitely generated. Then the extension 
is $\abf$-exact if and only if $Q$ acts trivially on $H_1(K;\Q)$ 
and the map $\delta_{\Q} \colon H_2(Q;\Q)\to H_1(K;\Q)$ is zero.
\end{lemma}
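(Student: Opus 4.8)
The plan is to reduce everything to condition~\eqref{abf1} of Lemma~\ref{lem:trivial action-q}, which characterizes $\abf$-exactness by two requirements: that $Q$ act trivially on $K_{\abf}$, and that the composite $\delta_0\colon H_2(Q;\Z)\xrightarrow{\delta} H_1(K;\Z)\surj H_1(K;\Z)/\Tors=K_{\abf}$ vanish. Under the standing hypothesis that $K_{\abf}$ is finitely generated, I would show that each of these two requirements matches the corresponding requirement in the lemma: triviality of the action on $K_{\abf}$ is equivalent to triviality of the action on $H_1(K;\Q)$, and $\delta_0=0$ is equivalent to $\delta_{\Q}=0$. Combining the two equivalences then yields the statement in both directions.

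First I would handle the action. Since $K_{\abf}$ is finitely generated and torsion-free, it is isomorphic to $\Z^r$ with $r=\rank K_{\abf}$, so that $\Aut(K_{\abf})\cong\GL_r(\Z)$. The monodromy action on $K_{\ab}$ preserves torsion and hence descends to $K_{\abf}$, and the $Q$-action on $H_1(K;\Q)=K_{\ab}\otimes\Q=K_{\abf}\otimes\Q$ is its $\Q$-linear extension, i.e.\ the composite $Q\to\GL_r(\Z)\inj\GL_r(\Q)$. Because the natural map $K_{\abf}\inj K_{\abf}\otimes\Q$ is injective and $Q$-equivariant (equivalently, because $\GL_r(\Z)\inj\GL_r(\Q)$ is injective), the action of $Q$ on $K_{\abf}$ is trivial if and only if its action on $H_1(K;\Q)$ is trivial.

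Next I would compare the connecting homomorphisms. The key input is that the five-term exact sequence \eqref{eq:stallings-5} comes from the Lyndon--Hochschild--Serre spectral sequence, which is natural in the (trivial) coefficient module. Applying the exact functor $-\otimes_{\Z}\Q$ to the integral sequence, and using flatness of $\Q$ to identify $H_n(-;\Z)\otimes\Q\cong H_n(-;\Q)$ together with the right-exact compatibility $H_1(K;\Z)_Q\otimes\Q\cong H_1(K;\Q)_Q$, shows that the rational connecting map $\delta_{\Q}$ is precisely $\delta\otimes\Q$. Assuming the action is trivial, both connecting maps may be viewed as landing in $K_{\abf}$ and $H_1(K;\Q)$ respectively, and the projection $H_1(K;\Z)\surj K_{\abf}$ becomes an isomorphism after $-\otimes\Q$; under this identification $\delta_{\Q}$ corresponds to $\delta_0\otimes\Q$. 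Since $K_{\abf}$ is torsion-free, the inclusion $K_{\abf}\inj K_{\abf}\otimes\Q$ is injective, so $\delta_0\otimes\Q=0$ forces $\delta_0=0$, the reverse implication being trivial.

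The main obstacle will be the bookkeeping in this last step: verifying that $\delta_{\Q}$ is genuinely the rationalization of $\delta_0$. This rests on the naturality of the LHS spectral sequence under the coefficient change $\Z\to\Q$ and on the interaction of $Q$-coinvariants with $-\otimes\Q$, and requires care because the integral $\delta$ a priori lands in the coinvariants $H_1(K;\Z)_Q$ rather than in $H_1(K;\Z)$ itself—a subtlety that dissolves once triviality of the action has been established in the first step. With that identification in hand, the equivalence $\delta_0=0\iff\delta_{\Q}=0$ follows at once from the injectivity of $K_{\abf}\inj K_{\abf}\otimes\Q$, completing the proof.
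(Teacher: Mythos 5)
Your proposal is correct and takes essentially the same route as the paper's proof: both reduce $\abf$-exactness to condition~\eqref{abf1} of Lemma~\ref{lem:trivial action-q}, identify the $Q$-action on $H_1(K;\Q)$ and the map $\delta_{\Q}$ as the scalar extensions of the action on $K_{\abf}$ and of $\delta_0$, and then use the ($Q$-equivariant) embedding of the torsion-free group $K_{\abf}$ as a full-rank lattice in $K_{\abf}\otimes \Q$ to descend triviality of the action and vanishing of the connecting map. Your verification that $\delta_{\Q}=\delta\otimes\Q$ via naturality of the Lyndon--Hochschild--Serre spectral sequence and flatness of $\Q$ merely spells out what the paper asserts in a single line, so no further comparison is needed.
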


\begin{proof}
First note that $H_1(K;\Q)=K_{\abf}\otimes \Q$ and the action of $Q$ on 
$H_1(K;\Q)$ is obtained by extension of scalars from the action of 
$Q$ on $K_{\abf}$. Likewise, the map $\delta_{\Q}$ is obtained by 
extension of scalars from $\delta_0$. The forward implication follows 
at once (for any $K$). 

For the reverse implication, note that our assumption on $K_{\abf}$  
implies that this group is a (maximal rank) lattice in the finite-dimensional 
$\Q$-vector space $H_1(K;\Q)=K_{\abf}\otimes \Q$.  Thus, if $Q$ acts 
trivially on $H_1(K;\Q)$, it must also act trivially on $K_{\abf}$, and likewise, 
if $\delta_{\Q}=0$, then $\delta_0=0$.
\end{proof}

Without the finite generation assumption on $K_{\abf}$,  
the triviality of the action of $Q$ on $H_1(K;\Q)$ does not insure 
triviality of the action on $K_{\abf}$.  We illustrate this phenomenon 
with an example.
 
\begin{example}
\label{ex:baumslag-solitar}
For each $n\ge 2$, let $G=\text{BS}(1,n)$ be the metabelian Baumslag--Solitar 
group with presentation $G=\langle t,a\mid tat^{-1}=a^n\rangle$. In 
the extension $1\to G' \to G\to G_{\ab} \to 1$, the abelianization is isomorphic 
to $\Z$,  generated by the image of $t$, while the derived subgroup is 
isomorphic to $\Z[1/n]$, normally generated by $a$. Thus, the 
extension is split exact, with monodromy given by $a\mapsto a^n$. 
Clearly, $\Z$ acts trivially on $\Z[1/n]\otimes \Q=\Q$, though it acts 
non-trivially on the torsion-free, yet non-finitely generated abelian 
group $(G')_{\abf}=\Z[1/n]$. 
\end{example}

Nevertheless, we have the following criteria that insure $\abf$-exactness 
of a split exact sequence.

\begin{proposition}
\label{prop:abf-exact-split}
Let $1\to K\to G \to Q\to 1$ be a split exact sequence.
\begin{enumerate}
\item \label{ssq1} 
The sequence is $\abf$-exact if and only if $Q$ acts trivially on $K_{\abf}$.
\item \label{ssq2} 
If $K_{\abf}$ is finitely generated, then the sequence is 
$\abf$-exact if and only if $Q$ acts trivially on $H_1(K;\Q)$. 
\end{enumerate}
\end{proposition}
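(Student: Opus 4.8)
The plan is to reduce everything to a single observation: for a split extension the connecting homomorphism $\delta$ in the Stallings $5$-term sequence \eqref{eq:stallings-5} vanishes. Indeed, a splitting $\sigma\colon Q\to G$ of $\pi$ induces $\sigma_*\colon H_2(Q;\Z)\to H_2(G;\Z)$ with $\pi_*\circ \sigma_*=(\pi\circ\sigma)_*=\id$, so $\pi_*\colon H_2(G;\Z)\to H_2(Q;\Z)$ is (split) surjective. By exactness of \eqref{eq:stallings-5} at $H_2(Q;\Z)$ we then get $\ker\delta=\im\pi_*=H_2(Q;\Z)$, whence $\delta=0$. This plays the role that Theorem \ref{thm:falk-ran} plays in the proof of Proposition \ref{prop:ab-exact-split}, but is cleaner here, since it applies verbatim in the integral setting and hence in all the settings we need.

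For part \eqref{ssq1} I would argue as follows. The forward implication is immediate, since $\abf$-exactness already includes, via condition \eqref{abf1} of Lemma \ref{lem:trivial action-q}, the triviality of the $Q$-action on $K_{\abf}$. Conversely, assume $Q$ acts trivially on $K_{\abf}$. The only remaining requirement in condition \eqref{abf1} is that the composite $\delta_0\colon H_2(Q;\Z)\xrightarrow{\delta}H_1(K;\Z)\surj H_1(K;\Z)/\Tors$ vanish; but this is automatic, since $\delta=0$ by the observation above. Hence the sequence is $\abf$-exact.

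For part \eqref{ssq2} I would invoke Lemma \ref{lem:trivial-rat-action}, which applies because $K_{\abf}$ is finitely generated: the extension is $\abf$-exact precisely when $Q$ acts trivially on $H_1(K;\Q)$ and the map $\delta_{\Q}\colon H_2(Q;\Q)\to H_1(K;\Q)$ is zero. Since $\delta_{\Q}$ is obtained from $\delta$ by extension of scalars and $\delta=0$, the second condition holds for free, leaving exactly the triviality of the action on $H_1(K;\Q)$. Equivalently, one may combine part \eqref{ssq1} with the remark that a finitely generated torsion-free abelian group embeds in its rationalization, so that a $Q$-action that is trivial on $K_{\abf}\otimes\Q$ is already trivial on $K_{\abf}$.

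I do not anticipate a genuine obstacle: the entire content is concentrated in the vanishing of $\delta$ for split extensions. The only point requiring a little care is the bookkeeping around the target of $\delta$, namely the coinvariants $H_1(K;\Z)_Q$ appearing in \eqref{eq:stallings-5} versus $H_1(K;\Z)$ and its torsion-free quotient in the formulation of $\delta_0$; but since $\delta$ itself is the zero map, every composite built from it vanishes, so these identifications are immaterial to the argument.
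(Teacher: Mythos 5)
Your proof is correct, but it takes a genuinely different route from the paper's at the crucial step. For the backward implication in part \eqref{ssq1}, the paper appeals to Theorem \ref{thm:fr-rational}, part \eqref{fr2-q} (the rational Falk--Randell-type theorem from \cite{Su-lcs}): triviality of the action on $K_{\abf}$ yields a splitting $\gr^{\rat}(G)=\gr^{\rat}(K)\rtimes \gr^{\rat}(Q)$, whose degree-$1$ piece is precisely the exact sequence in condition \eqref{abf2} of Lemma \ref{lem:trivial action-q}. You instead verify condition \eqref{abf1} directly, by observing that a splitting $\sigma$ makes $\pi_*\colon H_2(G;\Z)\to H_2(Q;\Z)$ a split surjection, so exactness of \eqref{eq:stallings-5} at $H_2(Q;\Z)$ forces $\delta=0$; your handling of the bookkeeping is also right, since $\delta=0$ as a map into the coinvariants $H_1(K;\Z)_Q$ kills every composite built from it, in particular $\delta_0$. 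Your argument is more elementary and uniform: it needs nothing beyond the five-term sequence, applies verbatim over $\Z$, $\Q$, and $\Z_p$, and hence also reproves Propositions \ref{prop:ab-exact-split} and \ref{prop:p-exact-split} without invoking the graded-Lie-algebra splitting theorems the paper quotes in each setting. It moreover makes explicit a point the paper's one-line proof of part \eqref{ssq2} leaves implicit: reducing Lemma \ref{lem:trivial-rat-action} to the action condition requires knowing $\delta_{\Q}=0$ in the split case, which your observation supplies (either by extension of scalars from $\delta$, as the paper itself asserts in the proof of that lemma, or by rerunning the splitting argument with $\Q$-coefficients). One small remark: your alternative derivation of part \eqref{ssq2} from part \eqref{ssq1} uses only that $K_{\abf}$ is torsion-free, hence embeds $Q$-equivariantly into $K_{\abf}\otimes\Q$, so finite generation plays no role in that particular implication; it enters only through the hypothesis of Lemma \ref{lem:trivial-rat-action} itself. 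What the paper's route buys is economy within its broader program, where Theorem \ref{thm:fr-rational} is developed and needed anyway; yours buys self-containedness and a single argument covering all three settings.
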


\begin{proof}
The proof of the first claim is similar to the one of 
Proposition \ref{prop:ab-exact-split}, using 
Theorem \ref{thm:fr-rational}, part  \eqref{fr2-q}, instead. 
The second claim now follows from Lemma \ref{lem:trivial-rat-action}.
\end{proof}

We call semidirect products $G=K\rtimes_{\varphi} Q$ that satisfy condition 
\eqref{ssq1} from above, {\em rational almost-direct products}. Clearly, 
any split extension $G=K\rtimes Q$ with $K$ finite is of this type.
Using Proposition \ref{prop:bl-grlie} and Theorem \ref{thm:fr-rational}, 
we obtain the following corollary.

\begin{corollary}
\label{cor:fr-abf-rat}
Let $G=K\rtimes_{\varphi} Q$ be a split extension such that 
$K_{\abf}$ is finitely generated and $Q$ acts trivially on $H_1(K;\Q)$. Then 
$\gr(G)\otimes \Q =\gr(K)\otimes \Q \rtimes_{\bar\varphi} \gr(Q) \otimes \Q$.
\end{corollary}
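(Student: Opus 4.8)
The plan is to deduce this corollary by combining the rational Falk--Randell theorem with the Bass--Lubotzky comparison result, following exactly the template used in the integral and modular Corollaries \ref{cor:fr-ab} and \ref{cor:fr-ab-p}. First I would invoke Proposition \ref{prop:abf-exact-split}, part \eqref{ssq2}: since $K_{\abf}$ is finitely generated and $Q$ acts trivially on $H_1(K;\Q)$, the split extension is $\abf$-exact; a fortiori $Q$ acts trivially on $K_{\abf}$ by Proposition \ref{prop:abf-exact-split}, part \eqref{ssq1}. This is precisely the hypothesis needed to apply Theorem \ref{thm:fr-rational}, part \eqref{fr2-q}, which yields the semidirect-product decomposition of the \emph{rational} associated graded Lie algebra,
\[
\gr^{\rat}(G)=\gr^{\rat}(K) \rtimes_{\bar\varphi} \gr^{\rat}(Q).
\]

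Next I would tensor this decomposition with $\Q$. The point is that $\rtimes_{\bar\varphi}$ is a semidirect product of Lie algebras, whose underlying additive structure is a direct sum in each degree, so tensoring with the flat $\Z$-module $\Q$ commutes with the construction degreewise and preserves both the bracket and the action $\bar\varphi$. This gives
\[
\gr^{\rat}(G)\otimes \Q =\bigl(\gr^{\rat}(K)\otimes \Q\bigr) \rtimes_{\bar\varphi} \bigl(\gr^{\rat}(Q)\otimes \Q\bigr).
\]

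Finally I would replace each rational associated graded Lie algebra $\gr^{\rat}(-)\otimes \Q$ by the ordinary one $\gr(-)\otimes \Q$ using Proposition \ref{prop:bl-grlie}, part \eqref{bl-gr2}, which supplies a natural isomorphism of graded Lie algebras $\Phi\otimes \Q\colon \gr(-)\otimes \Q \isom \gr^{\rat}(-)\otimes \Q$ for each of $G$, $K$, and $Q$. By the functoriality of $\Phi$ with respect to the homomorphisms $\iota$ and $\pi$ (noted in \S\ref{subsec:lcs-stallings}), these three isomorphisms are compatible with the inclusion of $\gr(K)\otimes \Q$ and the projection onto $\gr(Q)\otimes \Q$, and in particular intertwine the two actions $\bar\varphi$. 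Transporting the decomposition across them yields the asserted isomorphism $\gr(G)\otimes \Q =\gr(K)\otimes \Q \rtimes_{\bar\varphi} \gr(Q) \otimes \Q$.

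The main obstacle is the last step: one must check that the three separate isomorphisms $\Phi\otimes\Q$ assemble into an isomorphism of \emph{semidirect products}, i.e.\ that they respect not only the direct-sum decomposition but also the $Q$-action $\bar\varphi$ that twists it. This requires the naturality of the Bass--Lubotzky map $\Phi$ under the morphisms induced by $\iota$ and by conjugation, so that the square relating $\bar\varphi$ on the $\gr^{\rat}$ side to $\bar\varphi$ on the $\gr$ side commutes. Everything else is formal, and indeed this corollary is stated precisely so that it can be quoted in the proof of Theorem \ref{thm:alex-lcs-ngq}, parallel to the roles of Corollaries \ref{cor:fr-ab} and \ref{cor:fr-ab-p} in the integral and modular cases.
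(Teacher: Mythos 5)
Your proof is correct and takes essentially the same route as the paper, which derives this corollary in one line from Theorem \ref{thm:fr-rational} and Proposition \ref{prop:bl-grlie}---precisely your ingredients, with Proposition \ref{prop:abf-exact-split} supplying the trivial action on $K_{\abf}$ exactly as you say. Your final check, that the three isomorphisms $\Phi\otimes\Q$ intertwine the actions $\bar\varphi$ via the functoriality of $\Phi$ under the conjugation automorphisms, is a sound filling-in of a detail the paper leaves implicit.
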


\begin{example}
\label{ex:artin-ker}
Let $\Gamma$ be a connected, finite simple graph, and 
let $\chi\colon G_{\Gamma} \surj \Z$ be an arbitrary epimorphism. 
The subgroup $N_{\chi}\coloneqq \ker (\chi)$ 
is called an {\em Artin kernel}; it generalizes the 
Bestvina--Brady construction, and fits into a 
split exact sequence, 
\begin{equation}
\label{eq:artin-ker}
\begin{tikzcd}[column sep=24pt]
1\ar[r] & N_{\chi}\ar[r, "\iota"]
& G_{\Gamma} \ar[r, "\chi"] & \Z\ar[r] & 1 . 
\end{tikzcd}
\end{equation}
Suppose $\Z$ acts trivially on $H_1(N_{\chi};\Q)$---a condition which can 
efficiently be tested by means of  \cite[Theorem 6.2]{PS-adv09}. 
Then, as shown in \cite[Lemma 9.1(1)]{PS-adv09}, the group $N_{\chi}$ is 
finitely generated; thus, by Proposition \ref{prop:abf-exact-split}, 
part \eqref{ssq2}, the above sequence is $\abf$-exact.  
Furthermore, as shown in \cite[Lemma 9.1(3)]{PS-adv09}, the group $\Z$ also acts 
trivially on $H_1(N_{\chi};\Z)$, and so the sequence \eqref{eq:artin-ker} 
is actually $\ab$-exact. Applying Corollary \ref{cor:alex-abex-split} in 
this setting recovers Proposition 9.2 from \cite{PS-adv09}. 
\end{example}

As we just saw, for extensions of type \eqref{eq:artin-ker}, there is 
no difference between $\ab$-exactness and $\abf$-exactness. 
In general, though, the two concepts are different, even when the 
group $K$ is torsion-free. An example is provided by $G=K\rtimes \Z$ with 
$K=\langle t,a\mid tat^{-1} =a^{-1} \rangle$ and $\Z=\langle u \rangle$ 
acting by $utu^{-1}=ta$ and $uau^{-1}=a$, which is a rational almost direct 
product, but not an almost direct product. 

\subsection{Abf-exact sequences and Alexander invariants}
\label{subsec:abf-exact-alex}

We now relate the rational Alexander invariant and the derived rational 
associated graded Lie algebra of a group to those of a normal 
subgroup, under suitable assumptions. We start with a lemma, 
whose proof is similar to that of Lemma \ref{lem:ab-exact-derived}.

\begin{lemma}
\label{lem:ab-exact-derived-rat}
If $\begin{tikzcd}[column sep=16pt]
\!\!1\ar[r] & K\ar[r, "\iota"]
& G \ar[r, "\pi"] & Q\ar[r] & 1\!\!
\end{tikzcd}$ is an $\abf$-exact sequence, then its restriction 
to $\Q$-derived subgroups, $\begin{tikzcd}[column sep=16pt]
\!\!1\ar[r] & K'_{\rat}\ar[r, "\iota'"]
& G'_{\rat} \ar[r, "\pi'"] & Q'_{\rat}\ar[r] & 1\!\!
\end{tikzcd}$, is an exact sequence.
\end{lemma}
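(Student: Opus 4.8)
The plan is to follow the one-line argument for Lemma~\ref{lem:ab-exact-derived} essentially verbatim, with the abelianization functor $G\leadsto G_{\ab}$ replaced throughout by the maximal torsion-free abelianization $G\leadsto G_{\abf}$. The single structural input that legitimizes this substitution is the identification, recalled in \S\ref{subsec:derived-q}, of the rational derived subgroup $G'_{\rat}$ with the kernel of the projection $\abf\colon G\surj G_{\abf}$; the same description holds for $K'_{\rat}=\ker(K\surj K_{\abf})$ and $Q'_{\rat}=\ker(Q\surj Q_{\abf})$. This yields three vertical short exact sequences, $1\to G'_{\rat}\to G\to G_{\abf}\to 1$ together with its analogues over $K$ and $Q$.

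First I would assemble the commuting $3\times 3$ diagram of groups whose columns are these three vertical sequences, whose middle row is the given extension $1\to K\xrightarrow{\iota} G\xrightarrow{\pi} Q\to 1$, and whose bottom row is the sequence $0\to K_{\abf}\xrightarrow{\iota_{\abf}} G_{\abf}\xrightarrow{\pi_{\abf}} Q_{\abf}\to 0$, which is exact precisely by the $\abf$-exactness hypothesis via Lemma~\ref{lem:trivial action-q}. Commutativity is automatic from the naturality of the $\abf$ construction. The maps $\iota'$ and $\pi'$ in the top row are the restrictions of $\iota$ and $\pi$; these are well defined because the terms of the rational derived series are fully invariant, so $\iota(K'_{\rat})\subseteq G'_{\rat}$ and $\pi(G'_{\rat})\subseteq Q'_{\rat}$.

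With the diagram in place, I would invoke the Nine Lemma in the category of groups---exactly as in the integral case---to deduce exactness of the top row $1\to K'_{\rat}\xrightarrow{\iota'} G'_{\rat}\xrightarrow{\pi'} Q'_{\rat}\to 1$ from the exactness of the three columns together with the middle and bottom rows. The normality hypotheses required by the Nine Lemma hold because every subgroup in the diagram is normal---indeed the rational derived subgroups are fully invariant, hence normal in $G$ and a fortiori in $G'_{\rat}$---and no finite generation or torsion-freeness assumption on $K$, $G$, or $Q$ is needed. The only genuine departure from the proof of Lemma~\ref{lem:ab-exact-derived} is the identification $G'_{\rat}=\ker(\abf)$ supplying the exact columns, so that is the one point I expect to deserve care; once it is granted, the Nine Lemma yields the conclusion with no further work.
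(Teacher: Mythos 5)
Your proof is correct and follows essentially the same route as the paper: the paper proves this lemma by noting its proof is "similar to that of Lemma \ref{lem:ab-exact-derived}," i.e., the Nine Lemma applied to the $3\times 3$ diagram whose columns come from the identification $G'_{\rat}=\ker(\abf\colon G\surj G_{\abf})$ (and its analogues for $K$ and $Q$) and whose bottom row is exact by the $\abf$-exactness hypothesis. Your explicit attention to full invariance of the rational derived series (so that $\iota$ and $\pi$ restrict) and to the normality hypotheses of the Nine Lemma simply spells out what the paper leaves implicit.
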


\begin{theorem}
\label{thm:alex-lcs-ngq}
Let $\begin{tikzcd}[column sep=16pt]
1\ar[r] & K\ar[r, "\iota"]
& G \ar[r, "\pi"] & Q\ar[r] & 1
\end{tikzcd}$  be an $\abf$-exact sequence, and assume 
$Q$ is a torsion-free abelian group. Then,
\begin{enumerate}[itemsep=2pt]
\item \label{ngq1}
The inclusion $\iota\colon K\inj G$ restricts to an equality 
$K'_{\rat}=G'_{\rat}$.
\item \label{ngq2}
The induced map on rational Alexander invariants, 
$B_{\rat}(\iota)\colon B_{\rat}(K) \to B_{\rat}(G)$, 
gives rise to a $\Z[K_{\abf}]$-linear isomorphism,  
$B_{\rat}(K) \to B_{\rat}(G)_{\iota}$.

\item \label{ngq3}
The sequence
$
\begin{tikzcd}[column sep=16pt]
\!\!1\ar[r] & K/K''_{\rat} \ar[r, "\bar\iota"]
& G/G''_{\rat} \ar[r, "\bar\pi"] & Q \ar[r] & 0\!
\end{tikzcd}
$
is also $\abf$-exact.

\item  \label{ngq4}
If, moreover, $G_{\abf}$ is finitely generated, then 
$\theta_n(K)\le \theta_n(G)$ for all $n\ge 1$.
\end{enumerate}
\end{theorem}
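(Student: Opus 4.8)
The plan is to mirror the proof of Theorem \ref{thm:alex-abex}, systematically replacing the derived series by the rational derived series and Massey's integral correspondence by its rational counterpart (Theorem \ref{thm:massey-al-rat} and Corollary \ref{cor:alex-rat-chen}). Parts \eqref{ngq1}--\eqref{ngq3} are group-theoretic identifications carried out inside $G$, while part \eqref{ngq4} is the numerical consequence obtained by passing to rationalized associated graded modules.

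For part \eqref{ngq1}, I would invoke Lemma \ref{lem:ab-exact-derived-rat} to obtain the exact sequence $1\to K'_{\rat}\to G'_{\rat}\to Q'_{\rat}\to 1$, and then observe that $Q'_{\rat}=\ssqrt{[Q,Q]}=\Tors(Q)=\{1\}$, since $Q$ is torsion-free abelian; hence $K'_{\rat}=G'_{\rat}$ as subgroups of $G$. For part \eqref{ngq2}, the key point is to promote this to the equality $K''_{\rat}=G''_{\rat}$. Here I would use that $G'_{\rat}=\ssqrt{G'}$ is isolated in $G$ (as $\ssqrt{\ssqrt{S}}=\ssqrt{S}$), so that the isolator of $[G'_{\rat},G'_{\rat}]$ is the same whether formed in $G$, in $K$, or in $G'_{\rat}=K'_{\rat}$; consequently $K''_{\rat}=\ssqrt{[K'_{\rat},K'_{\rat}]}=\ssqrt{[G'_{\rat},G'_{\rat}]}=G''_{\rat}$. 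This gives an equality of abelian groups $B_{\rat}(K)=K'_{\rat}/K''_{\rat}=G'_{\rat}/G''_{\rat}=B_{\rat}(G)$, which upgrades to the asserted $\Z[K_{\abf}]$-linear isomorphism $B_{\rat}(K)\isom B_{\rat}(G)_{\iota}$ covering $\tilde\iota_{\abf}$.

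For part \eqref{ngq3}, since $K''_{\rat}=G''_{\rat}$ sits inside $K$, the map $\bar\iota$ is injective with cokernel $(G/G''_{\rat})/(K/G''_{\rat})\cong G/K\cong Q$, so the displayed sequence is exact. I would then note that $(G/G''_{\rat})_{\abf}=G_{\abf}$ (because the rational derived subgroup of $G/G''_{\rat}$ is $G'_{\rat}/G''_{\rat}$, whose quotient is $G/G'_{\rat}=G_{\abf}$), and similarly $(K/K''_{\rat})_{\abf}=K_{\abf}$ and $Q_{\abf}=Q$; under these identifications $\bar\iota_{\abf}$ and $\bar\pi_{\abf}$ become $\iota_{\abf}$ and $\pi_{\abf}$, so $\abf$-exactness of the new sequence follows from that of the original via Lemma \ref{lem:trivial action-q}.

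Part \eqref{ngq4} is where the numerical estimate is extracted, and it is the step I expect to require the most care. From $\abf$-exactness, $\iota_{\abf}\colon K_{\abf}\to G_{\abf}$ is injective, so $b_1(K)\le b_1(G)<\infty$ (which also guarantees $\theta_n(K)$ is defined and yields $\theta_1(K)\le\theta_1(G)$). For $n\ge 2$, I would tensor the isomorphism from \eqref{ngq2} with $\Q$, factor $B_{\rat}(\iota)\otimes\Q$ as an isomorphism of $\Q[K_{\abf}]$-modules followed by the identity of $B_{\rat}(G)\otimes\Q$ covering $\tilde\iota_{\abf}$, and pass to associated graded modules for the $I$-adic filtrations. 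The resulting map $\gr(B_{\rat}(\iota)\otimes\Q)$ is an isomorphism followed by the map induced by $\gr(\tilde\iota_{\abf})$, which is injective by the argument of Lemma \ref{lem:inj} (simplified, since $K_{\abf}$ and $G_{\abf}$ are torsion-free, so the graded rings are genuine polynomial rings and $\gr(\tilde\iota_{\abf})$ is a linear change of variables along an injective matrix). Hence $\dim_{\Q}\gr_n(B_{\rat}(K)\otimes\Q)\le\dim_{\Q}\gr_n(B_{\rat}(G)\otimes\Q)$ for all $n$, and Corollary \ref{cor:chen-alrat} translates this into $\theta_n(K)\le\theta_n(G)$ for $n\ge 2$. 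The main obstacle is the bookkeeping in parts \eqref{ngq1}--\eqref{ngq2}: ensuring the isolator operations defining $K''_{\rat}$ and $G''_{\rat}$ are computed consistently, which is exactly what the isolation of $G'_{\rat}$ in $G$ secures.
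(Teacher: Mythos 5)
Your proposal is correct and takes essentially the same route as the paper's proof: Lemma \ref{lem:ab-exact-derived-rat} together with $Q'_{\rat}=\{1\}$ for part \eqref{ngq1}, the equality $K''_{\rat}=G''_{\rat}$ and the resulting identifications for parts \eqref{ngq2}--\eqref{ngq3}, and the factorization of the map on associated graded modules through the injective ring map $\gr(\tilde\iota_{\abf})$ (via the proof of Lemma \ref{lem:inj}) combined with Corollary \ref{cor:chen-alrat} for part \eqref{ngq4}. Your isolator argument---that the isolation of $G'_{\rat}=K'_{\rat}$ in $G$ forces the isolator of $[G'_{\rat},G'_{\rat}]$ to be the same whether formed in $G$, in $K$, or in $K'_{\rat}$---is a correct elaboration of a step the paper leaves implicit when it asserts that $K''_{\rat}=G''_{\rat}$ follows from $K'_{\rat}=G'_{\rat}$.
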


\begin{proof}
\eqref{ngq1}  
Lemma \ref{lem:ab-exact-derived-rat}, together with the assumption 
that $Q'_{\rat}=\{1\}$ shows that $K'_{\rat}=G'_{\rat}$.
\vs

\eqref{ngq2}  
It follows that $K''_{\rat}=G''_{\rat}$, too, and so 
$K'_{\rat}/K''_{\rat}=G'_{\rat}/G''_{\rat}$, whence the claim.
\vs

\eqref{ngq3}  
Since $K''_{\rat}=G''_{\rat}$, the map $\bar\iota$ is 
injective, and so the sequence in question is exact.  Under the  
identifications $(K/K''_{\rat})_{\abf}=K_{\abf}$ and 
$(G/G''_{\rat})_{\abf}=G_{\abf}$, the map $\bar\iota_{\abf}$ 
coincides with $\iota_{\abf}$, and the claim follows.

\eqref{ngq4} 
By assumption, the map $\iota_{\abf}\colon K_{\abf}\to G_{\abf}$ 
is injective; therefore, $\tilde\iota_{\abf}\colon \Z[K_{\abf}]\to \Z[G_{\abf}]$, 
is also injective, and $\theta_1(K)\le \theta_1(G)$.  It follows from part \eqref{ngq2} 
that the map $B_{\rat}(\iota)\colon B_{\rat}(K)\to B_{\rat}(G)$ factors as 
an isomorphism of $\Z[K_{\abf}]$-modules, $B_{\rat}(K)\isom B_{\rat}(G)_{\iota}$, 
followed by the identity map of $B_{\rat}(G)$, viewed as covering 
the ring map $\tilde\iota_{\abf}$. Hence, the morphism 
$\gr(B_{\rat}(\iota))\colon \gr(B_{\rat}(K))\to \gr(B_{\rat}(G))$ factors as 
an isomorphism of $\gr(\Z[K_{\abf}])$-modules, 
followed by the identity map of $\gr(B_{\rat}(G))$, viewed as covering 
the ring map $\gr(\tilde\iota_{\abf})$. The proof of Lemma \ref{lem:inj}  
shows that $\gr(\tilde\iota_{\abf})$ is injective; thus, $\gr(B_{\rat}(\iota))$ 
is also injective. Corollary \ref{cor:chen-alrat} now implies 
that $\theta_n(K)\le \theta_n(G)$ for $n\ge 2$, and we are done.
\end{proof}

\begin{corollary}
\label{cor:alex-abfex-split}
Let $\begin{tikzcd}[column sep=16pt]
\!1\ar[r] & K\ar[r, "\iota"]
& G \ar[r, "\pi"] & Q\ar[r] & 1\!
\end{tikzcd}$ 
be a split, $\abf$-exact sequence, and assume   
$Q$ is torsion-free abelian. Then 
\begin{enumerate}[itemsep=2pt]
\item \label{ngq4a} 
The map $\iota$ induces isomorphisms of graded Lie algebras,
$\gr^{\rat}_{\ge 2}(K) \isom \gr^{\rat}_{\ge 2}(G)$ and 
$\gr^{\rat}_{\ge 2}(K/K'') \isom  \gr^{\rat}_{\ge 2}(G/G'')$. 
\item  \label{ngq4b} 
If $b_1(G)<\infty$,  then $\phi_n(K)=\phi_n(G)$ and 
$\theta_n(K)=\theta_n(G)$ for all $n\ge 2$.
\end{enumerate}
\end{corollary}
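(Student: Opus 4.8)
The plan is to reduce Corollary \ref{cor:alex-abfex-split} to the rational Falk--Randell machinery developed in \S\ref{subsec:rat-fr}, exactly paralleling how Corollary \ref{cor:alex-abex-split} was deduced from Corollary \ref{cor:fr-ab} in the integral case. First I would fix a splitting $\sigma\colon Q\to G$ of $\pi$, which realizes $G$ as a semidirect product $G=K\rtimes_{\varphi} Q$. Since the sequence is assumed $\abf$-exact and split, Proposition \ref{prop:abf-exact-split}, part \eqref{ssq1}, tells us that $Q$ acts trivially on $K_{\abf}$; that is, $G=K\rtimes_{\varphi} Q$ is a rational almost-direct product. With $Q$ torsion-free abelian by hypothesis, this places us squarely in the setting of Corollary \ref{cor:fr-abf-q}.

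For part \eqref{ngq4a}, the first isomorphism $\gr^{\rat}_{\ge 2}(K)\isom \gr^{\rat}_{\ge 2}(G)$ is then immediate from Corollary \ref{cor:fr-abf-q}, part \eqref{aq1}. For the second isomorphism, $\gr^{\rat}_{\ge 2}(K/K'')\isom \gr^{\rat}_{\ge 2}(G/G'')$, I would pass to maximal metabelian quotients: by Theorem \ref{thm:alex-lcs-ngq}, part \eqref{ngq3}, the induced sequence $1\to K/K''_{\rat}\to G/G''_{\rat}\to Q\to 1$ is again $\abf$-exact, and it inherits a splitting by composing $\sigma$ with the projection $G\surj G/G''_{\rat}$. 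Applying Proposition \ref{prop:abf-exact-split} once more shows this metabelian extension is also a rational almost-direct product, and a second invocation of Corollary \ref{cor:fr-abf-q}, part \eqref{aq1}, delivers the desired isomorphism of rational associated graded Lie algebras. I should be careful to reconcile the two notations $K/K''$ and $K/K''_{\rat}$ appearing in the statement versus the proof; since $\gr^{\rat}$ only sees the rational structure, the relevant metabelian quotient is the rational one, and the statement's $\gr^{\rat}_{\ge 2}(K/K'')$ is to be read accordingly.

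For part \eqref{ngq4b}, under the additional hypothesis $b_1(G)<\infty$, I would simply feed the conclusion of \eqref{ngq4a} into the rank definitions. The equality $\phi_n(K)=\phi_n(G)$ for $n\ge 2$ follows from Corollary \ref{cor:fr-abf-q}, part \eqref{aq2}, which already records exactly this (using Corollary \ref{cor:lcs-chen-q} to identify $\phi^{\rat}_n$ with $\phi_n$). For the Chen ranks, I would apply the same reasoning to the metabelian quotients: the isomorphism $\gr^{\rat}_{\ge 2}(K/K''_{\rat})\isom \gr^{\rat}_{\ge 2}(G/G''_{\rat})$ from \eqref{ngq4a}, together with the definition $\theta^{\rat}_n(G)=\phi^{\rat}_n(G/G''_{\rat})$ and the identity $\theta_n=\theta^{\rat}_n$ from Corollary \ref{cor:chen-alrat}, yields $\theta_n(K)=\theta_n(G)$ for all $n\ge 2$.

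The main obstacle I anticipate is not any single hard estimate but rather the bookkeeping of exactly which hypotheses of Corollary \ref{cor:fr-abf-q} are in force at the metabelian level. That corollary requires both that $Q$ be torsion-free abelian (clear from the hypothesis) \emph{and} that $Q$ act trivially on the torsion-free abelianization of the \emph{kernel} of the metabelian extension; verifying the latter is precisely what Proposition \ref{prop:abf-exact-split} accomplishes once $\abf$-exactness of the quotient sequence is known, which is why part \eqref{ngq3} of Theorem \ref{thm:alex-lcs-ngq} is the linchpin. The only subtlety worth double-checking is that the finiteness hypothesis for \eqref{ngq4b} transfers correctly: $b_1(G)<\infty$ forces $b_1(G/G''_{\rat})<\infty$ since these groups share the same abelianization, so all relevant graded pieces are finite-dimensional and the rank comparisons are legitimate.
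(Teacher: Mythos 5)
Your argument is correct and takes essentially the same route as the paper's own proof: fix a splitting, use Theorem \ref{thm:alex-lcs-ngq}, part \eqref{ngq3}, together with Proposition \ref{prop:abf-exact-split} to see that both the original extension and the quotient extension $1\to K/K''_{\rat}\to G/G''_{\rat}\to Q\to 1$ are rational almost-direct products, and then apply Corollary \ref{cor:fr-abf-q} to both (with Corollary \ref{cor:chen-alrat} handling the Chen ranks). One small nitpick on your final paragraph: $(G/G''_{\rat})_{\ab}$ is a quotient of $G_{\ab}$ by a torsion subgroup (since $G''_{\rat}\subseteq G'_{\rat}$, not $G'$) rather than literally the same abelianization, but this still yields $b_1(G/G''_{\rat})=b_1(G)$, which is all the finiteness transfer requires.
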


\begin{proof}
By assumption, the given extension 
is $\abf$-exact and admits a splitting, say, $\sigma\colon Q\to G$.
By Theorem \ref{thm:alex-lcs-ngq}, part \eqref{ngq3}, 
the extension $1\to K/K''_{\rat} \to G/G''_{\rat} \to Q\to 1$ 
is $\abf$-exact; it is also split exact, with splitting obtained 
by composing the projection $G\to G/G''_{\rat}$ with $\sigma$.  
Thus, by Proposition \ref{prop:abf-exact-split}, both extensions 
are $\Q$-almost direct products.  Since the group $Q$ is assumed 
to be torsion-free abelian, both claims now follow from 
Corollary \ref{cor:fr-abf-q}. 
\end{proof}

The above corollary has the following topological consequence. 
Given a space $X$ and a map $f\colon X\to X$, we let 
$T_f=X\times [0,1]/(x,0)\sim (f(x),1)$ be the mapping torus of $f$. 

\begin{corollary}
\label{cor:circle-fibration}
Let $X$ be a connected CW-complex such that $b_1(X)<\infty$, 
let $f\colon X\to X$ be a map inducing the identity on $H_1(X;\Q)$. 
Then $\phi_n(\pi_1(X))=\phi_n(\pi_1(T_f))$ and 
$\theta_n(\pi_1(X))=\theta_n(\pi_1(T_f))$ for all $n\ge 2$.
\end{corollary}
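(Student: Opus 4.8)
The plan is to exhibit $\pi_1(T_f)$ as the total group of a split, $\abf$-exact extension of $\Z$ by $\pi_1(X)$, and then invoke Corollary \ref{cor:alex-abfex-split}. Write $K=\pi_1(X)$ and $G=\pi_1(T_f)$. Projection of $T_f$ onto its circle coordinate realizes $G$ as a semidirect product $G=K\rtimes_{f_*}\Z$: the generator of $\Z$ acts on $K$ by the monodromy automorphism induced by $f$, and a splitting $\sigma\colon\Z\to G$ is given by the class of a loop projecting to a generator of $\pi_1(S^1)$. Thus there is a split exact sequence
\[
1 \to K \xrightarrow{\iota} G \to \Z \to 1 .
\]

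First I would check the hypotheses of Corollary \ref{cor:alex-abfex-split} with $Q=\Z$. The quotient $\Z$ is torsion-free abelian. Since $b_1(X)<\infty$, the group $K_{\abf}=H_1(X;\Z)/\Tors$ is free abelian of finite rank, hence finitely generated; and the action of $\Z$ on $H_1(K;\Q)=H_1(X;\Q)$ is the map induced by $f$, which is the identity by hypothesis. Proposition \ref{prop:abf-exact-split}, part \eqref{ssq2}, then shows that the sequence is $\abf$-exact. The remaining finiteness condition is immediate: $\abf$-exactness together with Lemma \ref{lem:trivial action-q}, part \eqref{abf2}, gives a short exact sequence $0\to K_{\abf}\to G_{\abf}\to\Z\to 0$, so $b_1(G)=b_1(X)+1<\infty$. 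Corollary \ref{cor:alex-abfex-split}, part \eqref{ngq4b}, now applies verbatim and yields $\phi_n(K)=\phi_n(G)$ and $\theta_n(K)=\theta_n(G)$ for all $n\ge 2$, which is the assertion.

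I expect the only delicate point to be the structural claim underlying the first paragraph: that $\iota\colon\pi_1(X)\to\pi_1(T_f)$ is injective, with conjugation by the circle loop recovering the monodromy $f_*$ and with a genuine splitting onto $\Z$. This is standard when $f$ is a homotopy equivalence---which covers the intended applications, where the monodromy is a homeomorphism---since then $T_f\to S^1$ has the homotopy type of a fibration with fiber $X$, and the displayed sequence is the low-degree part of its homotopy exact sequence. Once this identification is in place, the rest is routine bookkeeping with the results already established for $\abf$-exact split extensions; no new computation is required.
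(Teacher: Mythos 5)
Your proposal is correct and takes essentially the same route as the paper: the paper's proof likewise reads off the split exact sequence $1\to \pi_1(X)\to \pi_1(T_f)\to \Z\to 1$ from the mapping-torus fibration $X\to T_f\to S^1$ and then invokes Corollary \ref{cor:alex-abfex-split} (your verification of $\abf$-exactness via Proposition \ref{prop:abf-exact-split}, part \eqref{ssq2}, is just a more explicit spelling-out of the paper's one-line appeal to the hypotheses). The delicate point you flag---that identifying $T_f\to S^1$ with a fibration with fiber $X$, and hence the injectivity of $\iota$, requires $f$ to be at least a homotopy equivalence---is glossed over in the paper's proof as well, so your remark is an honest caveat rather than a deviation.
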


\begin{proof}
The projection map $\pr_1\colon X\times [0,1]\to X$ induces 
a bundle map, $T_f\to S^1$, with fiber $X$ and monodromy $f$. 
The homotopy long exact sequence for the fibration 
$X\to T_f\to S^1$ yields a short exact sequence, 
$1\to \pi_1(X)\to \pi_1(T_f)\to \Z\to 1$. Clearly, all 
the assumptions of Corollary \ref{cor:alex-abfex-split} 
are satisfied for this sequence, and the claim follows.
\end{proof}

\section{$p$-Exact sequences}
\label{sect:p-exact}

In this section we give analogues of the results from the previous two sections 
for the mod-$p$ lower central series, the derived $p$-series, and the mod-$p$ 
Alexander invariant. We start with a lemma/definition. Let 
$\delta_p\colon H_2(Q;\Z_p)\to H_1(K;\Z_p)$ be the connecting 
homomorphism in the $5$-term Stallings exact sequence 
\eqref{eq:stallings-5} with $\Z_p$-coefficients.

\begin{lemma}
\label{lem:trivial action-p}
For an exact sequence $\begin{tikzcd}[column sep=16pt]
\!1\ar[r] & K\ar[r, "\iota"]
& G \ar[r, "\pi"] & Q\ar[r] & 1\!
\end{tikzcd}$ and a prime $p$, 
the following two conditions are equivalent.
\begin{enumerate}
\item \label{abp1}
The group $Q$ acts trivially on $H_1(K;\Z_p)$ and the 
homomorphism $\delta_p\colon H_2(Q;\Z_p)\to H_1(K;\Z_p)$ is zero.
\item \label{abp2}
The sequence
$\begin{tikzcd}[column sep=18pt]
\!0\ar[r] & H_1(K;\Z_p)\ar[r, "\iota_{*}"]
&H_1(G;\Z_p) \ar[r, "\pi_{*}"] & H_1(Q;\Z_p) \ar[r] & 0\!\!
\end{tikzcd}$ is exact.
\end{enumerate}
\end{lemma}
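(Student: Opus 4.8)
The plan is to mirror the proof of Lemma \ref{lem:trivial action}, replacing the integral Stallings sequence \eqref{eq:stallings-5} by its mod-$p$ counterpart. The Lyndon--Hochschild--Serre spectral sequence of the extension with trivial coefficient module $\Z_p$ produces a five-term exact sequence
\[
H_2(G;\Z_p)\xrightarrow{\pi_*} H_2(Q;\Z_p)\xrightarrow{\delta_p}
H_1(K;\Z_p)_Q \xrightarrow{\iota_*} H_1(G;\Z_p)\xrightarrow{\pi_*} H_1(Q;\Z_p)\to 0,
\]
where $H_1(K;\Z_p)_Q$ denotes the coinvariants of the monodromy action of $Q$ on $H_1(K;\Z_p)$. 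The whole argument then rests on the interplay between this exactness, the identification of the coinvariants when the action is trivial, and the vanishing of $\delta_p$.

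For the forward implication I would argue as follows. If $Q$ acts trivially on $H_1(K;\Z_p)$, then $H_1(K;\Z_p)_Q=H_1(K;\Z_p)$, so the middle map of the displayed sequence is the honest homomorphism $\iota_*\colon H_1(K;\Z_p)\to H_1(G;\Z_p)$. The hypothesis $\delta_p=0$ forces $\ker(\iota_*)=\im(\delta_p)=0$, so $\iota_*$ is injective; combined with the exactness at $H_1(G;\Z_p)$ and the surjectivity at $H_1(Q;\Z_p)$ already present in the five-term sequence, this yields precisely the short exact sequence of condition \eqref{abp2}.

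For the reverse implication, suppose the sequence in \eqref{abp2} is exact, so in particular $\iota_*$ is injective. First I would show the action of $Q$ on $H_1(K;\Z_p)$ is trivial. This is the crux of the argument, and it is the same mechanism used just before Lemma \ref{lem:trivial action}: conjugation by an element $g\in G$ induces the identity on $H_1(G;\Z_p)$, since inner automorphisms act trivially on homology. Because $\iota_*$ intertwines the conjugation action on $H_1(K;\Z_p)$ with the (trivial) conjugation action on $H_1(G;\Z_p)$, injectivity of $\iota_*$ propagates triviality back to $H_1(K;\Z_p)$. Once triviality is known, the coinvariants again coincide with $H_1(K;\Z_p)$, and exactness at the middle term gives $\im(\delta_p)=\ker(\iota_*)=0$, i.e.\ $\delta_p=0$, establishing condition \eqref{abp1}.

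The only point requiring care---and the main obstacle, such as it is---is the equivariance step in the reverse direction: one must verify that the monodromy $Q$-action on $H_1(K;\Z_p)$ is genuinely compatible with $\iota_*$ and the inner $G$-action, so that injectivity of $\iota_*$ can be leveraged. Everything else is a transcription of the integral case with $\Z$ replaced by $\Z_p$ throughout, and no new coefficient-theoretic input beyond the naturality of the five-term sequence is needed.
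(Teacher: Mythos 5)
Your proposal is correct and follows essentially the same route as the paper, which disposes of this lemma with ``entirely similar arguments as before,'' i.e., by transcribing the proof of Lemma \ref{lem:trivial action} to the mod-$p$ five-term Stallings sequence: triviality of the action identifies the coinvariants $H_1(K;\Z_p)_Q$ with $H_1(K;\Z_p)$ so that $\delta_p=0$ yields exactness, while conversely injectivity of $\iota_*$ combined with the triviality of inner automorphisms on $H_1(G;\Z_p)$ forces the $Q$-action on $H_1(K;\Z_p)$ to be trivial. The equivariance point you flag as needing care is precisely the mechanism the paper invokes, so there is no gap.
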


We say that the sequence $1\to K\to G\to Q\to 1$ 
is {\em $p$-exact }\/ if any one of the above two 
conditions is satisfied. A non-example is given by 
the central, non-split extension $0\to \Z_p\to \Z_{p^2}\to \Z_p\to 0$; 
the split exact sequence $1\to \Z_3\to S_3\to \Z_2\to 1$ is 
$2$-exact but not $3$-exact. Entirely similar arguments 
as before yield the following results.

\begin{lemma}
\label{lem:ab-exact-derived-p}
If $1\to K\to G\to Q\to 1$ is a $p$-exact sequence, then its restriction 
to $p$-derived subgroups, $1\to K'_{p} \to 
G'_{p} \to Q'_{p} \to 1$, is an exact sequence.
\end{lemma}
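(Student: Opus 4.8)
The plan is to mirror the proof of Lemma~\ref{lem:ab-exact-derived}, deducing the exactness of the sequence of $p$-derived subgroups from the Nine Lemma in the category of groups. First I would note that, since the terms of the derived $p$-series are fully invariant, the maps $\iota$ and $\pi$ restrict to homomorphisms $\iota'\colon K'_p\to G'_p$ and $\pi'\colon G'_p\to Q'_p$; moreover $K'_p$ is characteristic in $K$ and $K\triangleleft G$, so $K'_p\triangleleft G$ and hence $K'_p\triangleleft G'_p$, which guarantees that the top row below is a legitimate sequence of groups. Injectivity of $\iota'$ is inherited from $\iota$, and surjectivity of $\pi'$ onto $Q'_p$ follows since $\pi$ is onto and $\pi(\langle G^p,G'\rangle)=\langle Q^p,Q'\rangle=Q'_p$.

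Next I would assemble the commutative diagram
\[
\begin{tikzcd}[column sep=16pt, row sep=15pt]
1\ar[r] & K'_p\ar[r, "\iota'"]\ar[d] & G'_p\ar[r, "\pi'"]\ar[d] & Q'_p\ar[r]\ar[d] & 1 \\
1\ar[r] & K\ar[r, "\iota"]\ar[d] & G\ar[r, "\pi"]\ar[d] & Q\ar[r]\ar[d] & 1 \\
0\ar[r] & H_1(K;\Z_p)\ar[r, "\iota_*"] & H_1(G;\Z_p)\ar[r, "\pi_*"] & H_1(Q;\Z_p)\ar[r] & 0,
\end{tikzcd}
\]
whose columns are the defining short exact sequences $1\to K'_p\to K\to H_1(K;\Z_p)\to 1$ (recall that $K/K'_p\cong H_1(K;\Z_p)$), and likewise for $G$ and $Q$. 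The squares commute because the bottom horizontal arrows are precisely the maps induced on first mod-$p$ homology by $\iota$ and $\pi$.

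With this diagram in hand, the two hypotheses feeding the Nine Lemma are available: the middle row is exact by assumption, while the bottom row is exact precisely because the sequence is $p$-exact, this being condition \eqref{abp2} of Lemma~\ref{lem:trivial action-p}. Since all three columns are exact and two of the three rows are exact, the Nine Lemma yields exactness of the top row, which is the desired conclusion. The main obstacle is the bookkeeping required to legitimize the group-theoretic Nine Lemma: one must confirm that every subgroup appearing is normal in the relevant ambient group, so that all quotients and induced maps are well defined. Full invariance of the derived $p$-series terms settles these normality points, after which the argument is identical to the integral case of Lemma~\ref{lem:ab-exact-derived}.
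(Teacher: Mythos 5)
Your proof is correct and matches the paper's approach: the paper proves the mod-$p$ lemma by the "entirely similar argument" to Lemma~\ref{lem:ab-exact-derived}, namely the Nine Lemma applied to the $3\times 3$ diagram whose columns are the sequences $1\to K'_p\to K\to H_1(K;\Z_p)\to 1$ (and likewise for $G$, $Q$), with the bottom row exact by $p$-exactness. Your additional bookkeeping on full invariance and normality is a sound elaboration of details the paper leaves implicit.
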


\begin{proposition}
\label{prop:p-exact-split}
A split exact sequence $1\to K \to G\to Q\to 1$ is 
$p$-exact if and only if $Q$ acts trivially on $K_{\ab}\otimes\Z_p=H_1(K;\Z_p)$.
\end{proposition}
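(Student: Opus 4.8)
The plan is to mirror exactly the proof structure of Proposition \ref{prop:ab-exact-split}, since the mod-$p$ setting is formally parallel to the integral one. The statement is an ``if and only if,'' so I would establish the two implications separately, using the equivalence of conditions \eqref{abp1} and \eqref{abp2} packaged in Lemma \ref{lem:trivial action-p} as the bridge between the homological formulation of $p$-exactness and the module-theoretic triviality of the $Q$-action.

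For the forward implication, suppose the split sequence $1\to K\to G\to Q\to 1$ is $p$-exact. Then by definition condition \eqref{abp1} of Lemma \ref{lem:trivial action-p} holds, and in particular $Q$ acts trivially on $H_1(K;\Z_p)=K_{\ab}\otimes \Z_p$. This direction is immediate and requires no use of the splitting. For the reverse implication, I would assume $Q$ acts trivially on $H_1(K;\Z_p)$ and invoke Theorem \ref{thm:fr-p}, part \eqref{fr2-p}, which applies precisely under this hypothesis to a split extension $G=K\rtimes_{\varphi} Q$. That theorem gives $\gr^p(G)=\gr^p(K)\rtimes_{\bar\varphi}\gr^p(Q)$; extracting the degree-$1$ component yields a (split) short exact sequence of $\Z_p$-vector spaces
\begin{equation*}
\begin{tikzcd}[column sep=18pt]
0\ar[r]& \gr^p_1(K)\ar[r]& \gr^p_1(G)\ar[r]& \gr^p_1(Q)\ar[r]& 0\, .
\end{tikzcd}
\end{equation*}
Since $\gr^p_1(-)=H_1(-;\Z_p)$ by the discussion in \S\ref{subsec:lcs-p}, this sequence coincides with the one in condition \eqref{abp2} of Lemma \ref{lem:trivial action-p}, and its exactness establishes $p$-exactness via that lemma.

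The only point demanding slight care is verifying that the degree-$1$ piece of the semidirect product decomposition of graded Lie algebras really does reproduce the maps $\iota_*$ and $\pi_*$ of the Stallings-type sequence in \eqref{abp2}; this is the mod-$p$ analogue of what happens in the proof of Proposition \ref{prop:ab-exact-split}, where the decomposition of $\gr_1$ is identified with the sequence of Lemma \ref{lem:trivial action}. I expect this identification to be the main (though still routine) obstacle, and it is handled as before by noting that the inclusion of $\gr^p_1(K)$ and the projection onto $\gr^p_1(Q)$ in the semidirect product are induced by $\iota$ and $\pi$ respectively, so that functoriality of $\gr^p_1=H_1(-;\Z_p)$ forces the match. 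With this in hand, both implications close and the proof is complete.
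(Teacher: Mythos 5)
Your proof is correct and matches the paper's intended argument: the paper disposes of this proposition with the remark that ``entirely similar arguments as before yield the following results,'' meaning precisely the proof of Proposition \ref{prop:ab-exact-split} transposed to the mod-$p$ setting, which is exactly what you carry out (forward direction from Lemma \ref{lem:trivial action-p}, part \eqref{abp1}; reverse direction by extracting the degree-$1$ piece of the decomposition $\gr^{p}(G)=\gr^{p}(K)\rtimes_{\bar\varphi}\gr^{p}(Q)$ from Theorem \ref{thm:fr-p} and identifying it with the sequence in condition \eqref{abp2}). Your closing remark about matching the maps with $\iota_*$ and $\pi_*$ via functoriality of $\gr^p_1=H_1(-;\Z_p)$ is the same routine identification the paper glosses over with ``clearly coincides'' in the integral case.
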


We call semidirect products $G=K\rtimes Q$ such as these 
{\em mod-$p$ almost-direct products}. Every almost direct 
product is a mod-$p$ almost-direct product (for any prime $p$), 
but the converse is not true. For instance, take again the 
Klein bottle group, $G=\langle t,a\mid tat^{-1} =a^{-1} \rangle$; 
then $\Z=\langle t\rangle$ acts non-trivially on $\Z=\langle a\rangle$, 
but acts trivially on $\Z\otimes \Z_2=\Z_2$. 

\begin{theorem}
\label{thm:alex-lcs-ngp}
Let $1\to K \to G\to Q\to 1$ be a $p$-exact sequence. Assume 
$Q$ is an elementary abelian $p$-group. Then,
\begin{enumerate}[itemsep=2pt]
\item \label{ngp1}
The inclusion $\iota\colon K\inj G$ restricts to an equality 
$K'_{p}=G'_{p}$.

\item \label{ngp2}
The induced map on $p$-Alexander invariants, $B_{p}(K) \to B_{p}(G)$, 
gives rise to a $\Z_p[H_1(K;\Z_p)]$-linear isomorphism, 
$B_{p}(K) \to B_{p}(G)_{\iota}$.

\item \label{ngp3}
The sequence $\begin{tikzcd}[column sep=16pt]
\!\!1\ar[r] & K/K''_{p} \ar[r, "\bar\iota"]
& G/G''_{p} \ar[r, "\bar\pi"] & Q \ar[r] & 0\!
\end{tikzcd}$ is also $p$-exact.

\item \label{ngp4} 
If, moreover, $b_1^p(G)<\infty$, then 
$\theta^p_n(K)\le \theta^p_n(G)$ for all $n\ge 1$.
\end{enumerate}
\end{theorem}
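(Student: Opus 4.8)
The plan is to mirror the integral proof of Theorem~\ref{thm:alex-abex} step by step, replacing the classical derived series and Alexander invariant by their mod-$p$ analogues. The structure of the $p$-exact setting is entirely parallel: we have the mod-$p$ derived series $\{G^{(r)}_p\}$, the mod-$p$ Alexander invariant $B_p(G)=G'_p/G''_p$ viewed as a $\Lambda_p$-module, Massey's mod-$p$ correspondence (Theorem~\ref{thm:massey-al-p}), and the mod-$p$ Falk--Randell results (Theorem~\ref{thm:fr-p} and its Corollary~\ref{cor:fr-ab-p}). So I expect each of the four parts to follow by the same argument used integrally, with the appropriate substitutions.

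For part~\eqref{ngp1}, I would invoke Lemma~\ref{lem:ab-exact-derived-p}, which gives an exact sequence $1\to K'_p\to G'_p\to Q'_p\to 1$ on first $p$-derived subgroups. Since $Q$ is an elementary abelian $p$-group, Example~\ref{ex:p-derived-abel} gives $Q'_p=\{1\}$, forcing $K'_p=G'_p$. For part~\eqref{ngp2}, from $K'_p=G'_p$ one wants to deduce $K''_p=G''_p$ so that the quotients $G'_p/G''_p$ agree; here a small subtlety arises, since $G''_p$ is built not just from $(G'_p)'$ but also incorporates $p$-powers via $G''_p=\langle (G'_p)^p,[G'_p,G'_p]\rangle$. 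Because $G''_p$ is expressed entirely in terms of the subgroup $G'_p$ through the defining recursion \eqref{eq:derived-p}, the equality $K'_p=G'_p$ propagates to $K''_p=G''_p$, giving the desired $\Z_p[H_1(K;\Z_p)]$-linear isomorphism $B_p(K)\to B_p(G)_\iota$ exactly as in the integral case.

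For part~\eqref{ngp3}, I would argue that $K''_p=G''_p$ makes $\bar\iota$ injective, hence the sequence $1\to K/K''_p\to G/G''_p\to Q\to 1$ is exact; then, under the identifications $(K/K''_p)\otimes\Z_p$ with $H_1(K;\Z_p)$ and similarly for $G$, the homology maps coincide with $\iota_*$ and $\pi_*$, so $p$-exactness is inherited directly from the hypothesis via Lemma~\ref{lem:trivial action-p}. For part~\eqref{ngp4}, I would run the associated-graded argument: $p$-exactness makes $\iota_*\colon H_1(K;\Z_p)\to H_1(G;\Z_p)$ injective, hence $\tilde\iota\colon\Lambda_p^K\to\Lambda_p^G$ injective, giving $\theta_1^p(K)\le\theta_1^p(G)$; passing to associated graded modules and using the mod-$p$ analogue of Lemma~\ref{lem:inj} (the injectivity of the induced map on $\gr$ of the group rings, which holds by the same polynomial-ring change-of-variables argument over $\Z_p$) shows $\gr(B_p(\iota))$ is injective, so $\dim_{\Z_p}\gr_n(B_p(K))\le\dim_{\Z_p}\gr_n(B_p(G))$; formula~\eqref{eq:hilb-b-chen-p} then yields $\theta_n^p(K)\le\theta_n^p(G)$ for $n\ge2$.

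The main obstacle I anticipate is purely bookkeeping rather than conceptual: verifying that the equality $K'_p=G'_p$ genuinely forces $K''_p=G''_p$, given that the second $p$-derived subgroup involves the extra $p$-power generators. One must check that every generator listed in the explicit description $G''_p=\langle G^{p^2},(G')^p,[G^p,G^p],[G^p,G'],G''\rangle$ lies inside the subgroup $G'_p$ and is determined by it, so that the recursion is genuinely a function of $G'_p$ alone; this is where the mod-$p$ argument diverges most from the classical one and where I would spend the most care, though it should go through since all these generators are manifestly built from $G'_p$. The rest of the proof is a faithful transcription of the integral argument, so I would phrase it tersely, pointing to Theorem~\ref{thm:alex-abex}, Theorem~\ref{thm:massey-al-p}, and Lemma~\ref{lem:inj} at the corresponding steps.
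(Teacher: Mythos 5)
Your proposal is correct and follows essentially the same route as the paper's proof: Lemma~\ref{lem:ab-exact-derived-p} plus $Q'_p=\{1\}$ for part~\eqref{ngp1}, the observation that $G''_p=\langle (G'_p)^p,[G'_p,G'_p]\rangle$ depends only on the subgroup $G'_p$ (so $K'_p=G'_p$ forces $K''_p=G''_p$, which the paper treats as immediate) for parts~\eqref{ngp2}--\eqref{ngp3}, and the injectivity of $\tilde\iota_*$ on group rings together with the associated-graded argument and formula~\eqref{eq:hilb-b-chen-p} for part~\eqref{ngp4}. The one point you flag as delicate --- the mod-$p$ analogue of Lemma~\ref{lem:inj}, where $\gr$ of the group ring is a truncated rather than a full polynomial ring --- does go through as you say, since an injection of $\Z_p$-vector spaces splits, so the induced map of truncated polynomial algebras is injective; the paper glosses over this with ``proceeding as before.''
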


\begin{proof}
\eqref{ngp1}
By Lemma \ref{lem:ab-exact-derived-p}, the given sequence induces 
an exact sequence at the level of $p$-derived subgroups, 
$1\to K'_{p} \to G'_{p} \to Q'_{p} \to 1$.  Since $Q'_p=\langle Q^p, Q'\rangle$, 
our assumption on $Q$ implies that $ Q'_{p}=\{1\}$, and the claim is proved. 

\eqref{ngp2}
It follows from part \eqref{ngp1} that $K''_{p}=G''_{p}$, too; hence, 
$K'_{p}/K''_{p}=G'_{p}/G''_{p}$.

\eqref{ngp3}  
Since $K''_p=G''_p$, the map $\bar\iota$ is injective, and so the 
sequence in question is exact.  Upon identifying 
$H_1(K/K'';\Z_p)=H_1(K;\Z_p)$, $H_1(G/G'';\Z_p)=H_1(G,\Z_p)$, and 
$H_1(Q;\Z_p)=Q$, the maps $\bar\iota$ and $\bar\pi$ coincide with 
$\iota_{*}$ and $\pi_*$, respectively, and the claim follows.

\eqref{ngp4} 
By assumption, the map $\iota_{*}\colon H_1(K;\Z_p)\to H_1(G;\Z_p)$ 
is injective; therefore, its extensions to groups rings, $\tilde\iota_{*}$, 
is also injective and $\theta^p_1(K)\le \theta^p_1(G)$.  By part \eqref{ngq2}, 
the map $B_{p}(\iota)\colon B_{p}(K)\to B_{p}(G)$ factors as 
an isomorphism of $\Z_p[H_1(K;\Z_p)]$-modules, $B_{p}(K)\isom B_{p}(G)_{\iota}$, 
followed by the identity map of $B_{p}(G)$, viewed as covering 
the ring map $\tilde\iota_{*}$. Proceeding as before, 
we infer from  \eqref{eq:hilb-b-chen-p} that $\theta^p_n(K)\le \theta^p_n(G)$ 
for $n\ge 2$, and we are done.
\end{proof}

\begin{corollary}
\label{cor:alex-lcs-ngp-split}
Let $1\to K \to G\to Q\to 1$ be a split, $p$-exact sequence. Assume 
$Q$ is an elementary abelian $p$-group. Then,
\begin{enumerate}[itemsep=3pt]
\item \label{ngp4a} 
The inclusion $\iota\colon K\inj G$ induces isomorphisms of graded 
Lie algebras over $\Z_p$, 
$\gr^{p}_{\ge 2}(K) \isom \gr^{p}_{\ge 2}(G)$ and 
$\gr^{p}_{\ge 2}(K/K'') \isom  \gr^{p}_{\ge 2}(G/G'')$.
\item \label{ngp4b} 
If $b_1^p(G)<\infty$, then 
$\phi^{p}_n(K)=\phi^{p}_n(G)$ and $\theta^{p}_n(K)=\theta^{p}_n(G)$ 
for all $n\ge 2$.
\end{enumerate}
\end{corollary}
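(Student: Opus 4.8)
The plan is to transcribe, step for step, the argument already used in the split integral and rational cases (Corollaries \ref{cor:alex-abex-split} and \ref{cor:alex-abfex-split}), replacing each ingredient by its mod-$p$ avatar. The three results I will lean on are Theorem \ref{thm:alex-lcs-ngp} (the mod-$p$ behaviour of the maximal metabelian quotient under a $p$-exact extension), Proposition \ref{prop:p-exact-split} (for a split sequence, $p$-exactness is equivalent to $Q$ acting trivially on $H_1(-;\Z_p)$), and Corollary \ref{cor:fr-ab-p} (the mod-$p$ Falk--Randell computation for a mod-$p$ almost-direct product whose quotient is an elementary abelian $p$-group).

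First I would fix a splitting $\sigma\colon Q\to G$ of $\pi$. Since the sequence is split and $p$-exact and $Q$ is elementary abelian, Proposition \ref{prop:p-exact-split} shows that $Q$ acts trivially on $H_1(K;\Z_p)$, so $G=K\rtimes_{\varphi}Q$ is a mod-$p$ almost-direct product. As $Q$ is an elementary abelian $p$-group, Corollary \ref{cor:fr-ab-p} applies directly and yields the first isomorphism $\gr^p_{\ge 2}(K)\isom\gr^p_{\ge 2}(G)$, together with the equalities $\phi^p_n(K)=\phi^p_n(G)$ for all $n\ge 2$ under the finiteness hypothesis $b_1^p(G)<\infty$.

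Next I would repeat the argument one level up, on the maximal metabelian $p$-quotients. By Theorem \ref{thm:alex-lcs-ngp}, part \eqref{ngp3}, the induced sequence $1\to K/K''_p\to G/G''_p\to Q\to 1$ is again $p$-exact, and it splits via the composite of $\sigma$ with the projection $G\to G/G''_p$; Proposition \ref{prop:p-exact-split} then exhibits it as a mod-$p$ almost-direct product as well. A second application of Corollary \ref{cor:fr-ab-p} gives $\gr^p_{\ge 2}(K/K''_p)\isom\gr^p_{\ge 2}(G/G''_p)$, and, since $H_1(-;\Z_p)$ is unchanged on passing to the metabelian $p$-quotient (so that $b_1^p(G/G''_p)=b_1^p(G)<\infty$ and the finiteness hypothesis transfers), also $\phi^p_n(K/K''_p)=\phi^p_n(G/G''_p)$ for $n\ge 2$. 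Unwinding the definition $\theta^p_n(H)=\dim_{\Z_p}\gr^p_n(H/H''_p)$ converts this into $\theta^p_n(K)=\theta^p_n(G)$ for $n\ge 2$, the remaining assertion.

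I do not expect a genuine obstacle here: the statement is the mod-$p$ counterpart of results already established, and the proof reduces to invoking Corollary \ref{cor:fr-ab-p} twice. The only points that warrant a line of care are verifying that the metabelian-quotient sequence retains \emph{both} its splitting and its $p$-exactness before the second invocation (handled respectively by the explicit splitting through $\sigma$ and by Theorem \ref{thm:alex-lcs-ngp}, part \eqref{ngp3}), and the bookkeeping that identifies $\gr^p_{\ge 2}$ of the metabelian $p$-quotients $G/G''_p$ with the mod-$p$ Chen Lie algebra carrying the ranks $\theta^p_n$, exactly as in the rational case of Corollary \ref{cor:alex-abfex-split}.
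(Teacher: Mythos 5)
Your proposal is correct and follows essentially the same route as the paper's proof: both pass to the metabelian $p$-quotient sequence via Theorem \ref{thm:alex-lcs-ngp}\eqref{ngp3}, split it by composing the projection $G\surj G/G''_p$ with the given splitting, identify both extensions as mod-$p$ almost-direct products via Proposition \ref{prop:p-exact-split}, and conclude by two applications of Corollary \ref{cor:fr-ab-p}. Your extra remark that $b_1^p(G/G''_p)=b_1^p(G)$ (since $G''_p\subseteq G'_p$) is a point the paper leaves implicit, and is a worthwhile clarification.
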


\begin{proof}
By Theorem \ref{thm:alex-lcs-ngp}, part \eqref{ngp3}, 
the extension $1\to K/K''_p\to G/G''_p \to Q\to 1$ 
is $p$-exact; it is also split exact, with splitting obtained by 
composing the projection $G\surj G/G''_p$ with a splitting 
$Q\to G$ of the extension $1\to K \to G\to Q\to 1$. 
Thus, by Proposition \ref{prop:p-exact-split}, 
both extensions  are $p$-almost direct products.  
Since $Q$ is an elementary 
abelian $p$-group and acts trivially on $H_1(K;\Z_p)$, both 
claims now follow from Corollary \ref{cor:fr-ab-p}.
\end{proof}

\part{Characteristic varieties}
\label{part:cjl}

\section{Jump loci for rank $1$ local systems}
\label{sect:cvs}

We now switch our attention to the cohomology jump loci associated 
to a finitely generated group. We start with a quick review 
of some of the basic theory of the characteristic varieties.

\subsection{A stratification of the character group}
\label{subsec:char-strat}

Throughout this section, $G$ will be a finitely generated group. 
The character group, $\TT_G=\Hom(G,\C^*)$, is an abelian, 
complex  algebraic group, with identity $1$ the trivial representation. The  
coordinate ring of $\TT_G$ is the group algebra $\C[G_{\ab}]$;  
thus, we may identify $\TT_G$ with $\Spec(\C[G_{\ab}])$, 
the maximal spectrum of this $\C$-algebra. Since each 
character $\rho\colon G\to \C^*$ factors through the abelianization 
$G_{\ab}$, the map $\ab\colon G\surj G_{\ab}$ induces an isomorphism, 
$\ab^*\colon \TT_{G_{\ab}} \isom \TT_{G}$.  

Let $X$ be a connected CW-complex with finite $1$-skeleton 
and with $\pi_1(X)=G$. 
Upon identifying a point $\rho \in \TT_G$ with a rank one local 
system $\C_{\rho}$ on $X$, we define for each $k\ge 1$ the 
{\em depth $k$ characteristic variety}\/ of $G$ as 
\begin{equation}
\label{eq:cvar}
\VV_k(G)\coloneqq \{ \rho \in \TT_G \mid  
\dim_{\C} H_1(X, \C_{\rho}) \ge k \} .  
\end{equation}
Clearly, $1\in \VV_k(G)$ if and only if $b_1(G)\ge k$. Furthermore, 
we have a descending filtration of the character group, 
\begin{equation}
\label{eq:cvar-strat}
\TT_G \supseteq \VV_1(G)  \supseteq \VV_2(G)  
\supseteq \cdots \supseteq \VV_k(G)  \supseteq \cdots  .
\end{equation}

Since a classifying space $K(G,1)$ may be constructed by attaching to $X$ 
cells of dimension $3$ and higher, it is straightforward to verify that the 
sets $\VV_k(G)$ do not depend on the choice of a space $X$ as above. 
Furthermore, since $H_1(X, \C_{\rho}) \cong H^1(X, \C_{\rho^{-1}})$, we 
may replace in \eqref{eq:cvar} homology with cohomology and obtain 
the same sets.

Denoting by $\TT_G^0$ the identity component of  $\TT_G$, we have 
an isomorphism $\abf^*\colon \TT_{G_{\abf}} \isom \TT_{G}^0$.  It is readily 
seen that $\TT^0_G$ is a complex affine torus of dimension $r=\rank G_{\ab}$, 
and that $\T_G$ is a disjoint union of such tori, indexed by the finite group 
$\Tors (G_{\ab})$. We define the {\em restricted characteristic varieties}\/ 
to be the traces of the depth-$k$ characteristic varieties on this complex torus,
\begin{equation}
\label{eq:cvar-restricted}
\WW_k(G)\coloneqq \V_k(G)\cap \T_G^0 \, .
\end{equation}
By construction, we have an inclusion $\WW_k(G)\subseteq \VV_k(G)$, 
which becomes an equality if $G_{\ab}$ is torsion-free. 
The inclusion may be strict, in general.

\begin{example}
\label{ex:b1=0}
Suppose $b_1(G)=0$; then $\TT_G$ is a finite set, 
in bijection with $G_{\ab}$, while  $\TT^0_G=\{1\}$. 
Although in this case $1\notin \VV_1(G)$, 
and so $\WW_1(G)=\emptyset$, the set 
$\VV_1(G)$ may be non-empty.  For instance, 
if $G=\Z_2*\Z_2$, then $\TT_G=\{(\pm 1,\pm 1)\}$ 
and $\VV_1(G)=\{(-1,-1)\}$.
\end{example}

\subsection{Alexander matrices and Fitting ideals}
\label{subsec:fitting}
In Lemma 2.2.3 and Corollary 2.4.3 from \cite{Hi97}, E.~Hironaka 
showed that the characteristic varieties of a finitely presented group $G$ 
are Zariski closed subsets of the character group $\T_G$. For completeness, 
we provide a shorter proof of this important result, 
in the more general context that we have adopted here 
(see also \cite[Proposition 2.4]{DPS-imrn} for a related
argument). 

Given a finitely generated module $M$ over a commutative ring $R$, 
we let $\Fitt_k(M)$ denote the Fitting ideal of codimension 
$k-1$ minors in a presentation matrix for $M$.  In the next 
lemma, $M$ will be the Alexander module 
$A(G)=\Z[G_{\ab}]\otimes_{\Z[G]} I(G)$ from \eqref{eq:alex-mod}, 
viewed as a module over the ring $R=\Z[G_{\ab}]$. For an ideal 
$\mathfrak{a}$ of $\C[G_{\ab}]=R\otimes \C$, we denote by 
$V(\mathfrak{a})\subset \T_G$ its zero-set.
 
\begin{lemma}[\cite{Hi97}]
\label{lem:hironaka}
Let $G$ be a finitely generated group. Then, for all $k\ge 1$,
\[
\V_{k}(G)=V(\Fitt_{k+1}(A(G)\otimes \C)) \, ,
\]
at least away from $1\in \T_G$, with equality at $1$ for $k<b_1(G)$. 
\end{lemma}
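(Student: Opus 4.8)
The plan is to specialize the equivariant chain complex of the maximal abelian cover at each character and read off the dimension of the twisted homology from the Fitting ideals of the Alexander module. First I would fix a connected CW-complex $X$ with finite $1$-skeleton and $\pi_1(X)=G$ (available since $G$ is finitely generated), together with the free $\Z[G_{\ab}]$-chain complex $(C_{\hdot}(X^{\ab}),\partial^{\ab})$ from \eqref{eq:abcover-cc}, normalized so that $C_0=\Z[G_{\ab}]$. For a character $\rho\in\TT_G$, corresponding to the maximal ideal $\m_\rho\subset R\coloneqq \C[G_{\ab}]$ with residue module $\C_\rho=R/\m_\rho$, the twisted homology $H_{\hdot}(X,\C_\rho)$ is computed by $C_{\hdot}(X^{\ab})\otimes_R\C_\rho$, since $\rho$ factors through $G_{\ab}$ and $C_{\hdot}(X^{\ab})=C_{\hdot}(\X)\otimes_{\Z[G]}\Z[G_{\ab}]$.

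The key algebraic step is to relate $H_1(X,\C_\rho)$ to the specialization of the Alexander module. Using $A(G)=\coker(\partial_2^{\ab})$ from \eqref{eq:alexmod} and right-exactness of $\otimes$, the map $\partial_1^{\ab}$ descends to $\bar\partial_1\colon A(G)\otimes_R\C_\rho\to C_0\otimes_R\C_\rho=\C$ and yields a four-term exact sequence
\[
0\to H_1(X,\C_\rho)\to A(G)\otimes_R\C_\rho \xrightarrow{\ \bar\partial_1\ } \C \to H_0(X,\C_\rho)\to 0 .
\]
For $\rho\ne 1$ some $\rho(x_i)\ne 1$, so $\partial_1^{\ab}\otimes 1$ is onto, forcing $H_0(X,\C_\rho)=0$ and hence $\dim_\C H_1(X,\C_\rho)=\dim_\C\!\big(A(G)\otimes_R\C_\rho\big)-1$.

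Next I would invoke the standard correspondence between fiber dimensions of a finitely generated module and its Fitting ideals. Since $G_{\ab}$ is finitely generated, $R$ is Noetherian and $A(G)\otimes\C$ is finitely presented, so the ideals $\Fitt_j(A(G)\otimes\C)$ are well defined and localize correctly. With the indexing convention of \S\ref{subsec:fitting}, the condition $\m_\rho\in V(\Fitt_{k+1}(A(G)\otimes\C))$ holds exactly when the specialized presentation matrix drops enough rank, i.e.\ when $\dim_\C(A(G)\otimes_R\C_\rho)\ge k+1$. Combining with the previous paragraph, for $\rho\ne 1$ I obtain $\rho\in\V_k(G)\iff \dim_\C H_1(X,\C_\rho)\ge k\iff \dim_\C(A(G)\otimes_R\C_\rho)\ge k+1\iff \rho\in V(\Fitt_{k+1})$, which is the asserted equality away from $1$.

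Finally I would treat the identity separately. At $\rho=1$ the map $\bar\partial_1$ vanishes (each $x_i-1\mapsto 0$ in $R/I$), so the four-term sequence gives $H_0(X,\C_1)=\C$ and $H_1(X,\C_1)=A(G)\otimes_R\C_1$, whence $\dim_\C(A(G)\otimes_R\C_1)=b_1(G)$. Thus $1\in\V_k(G)\iff b_1(G)\ge k$, while the Fitting-ideal criterion gives $1\in V(\Fitt_{k+1})\iff b_1(G)\ge k+1$; the two therefore agree at $1$ precisely when $k<b_1(G)$. The hard part will be bookkeeping rather than conceptual: correctly matching the paper's ``codimension $k-1$ minors'' convention for $\Fitt_{k+1}$ with the rank-drop of the specialized matrix (equivalently, the minimal number of generators of $(A(G)\otimes\C)_{\m_\rho}$), and inserting the $+1$ shift coming from $H_0$ consistently, since it is exactly this shift that produces both the index $k+1$ and the exceptional behavior at the identity.
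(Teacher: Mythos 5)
Your argument is correct and takes essentially the same route as the paper: both proofs specialize the equivariant chain complex $(C_{\hdot}(X^{\ab}),\partial^{\ab})$ at a character $\rho$, use $A(G)=\coker(\partial^{\ab}_2)$ together with the observation that $\partial^{\ab}_1(\rho)=0$ only for $\rho=1$, and convert the resulting rank drop into the vanishing of $\Fitt_{k+1}(A(G)\otimes\C)$ under the paper's indexing convention; your four-term sequence $0\to H_1(X,\C_\rho)\to A(G)\otimes_R\C_\rho\to\C\to H_0(X,\C_\rho)\to 0$ is a module-theoretic repackaging of the paper's direct count $\rank\partial^{\ab}_2(\rho)+\rank\partial^{\ab}_1(\rho)\le m-k$. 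One harmless nitpick: the two sets also agree at $1$ vacuously when $k>b_1(G)$ (disagreement occurs only at $k=b_1(G)$), so your phrase ``precisely when $k<b_1(G)$'' slightly overstates the failure, though this does not affect the lemma as stated.
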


\begin{proof}
Pick a presentation for $G$ with generators $x_1,\dots, x_m$; 
let $X$ be the corresponding presentation $2$-complex, and let  
$(C_{\hdot}(X^{\ab};\Z), \partial^{\ab})$ be the $\Z[G_{\ab}]$-equivariant 
chain complex of the maximal abelian cover $X^{\ab}$, as displayed   
in \eqref{eq:abcover-cc}.
By definition, a character $\rho\colon G_{\ab} \to \C^*$ belongs to 
$\VV_{k}(G)$ precisely when 
$\rank \partial^{\ab}_{2}(\rho) + \rank \partial^{\ab}_{1}(\rho) \le m-k$, 
where the evaluation of $\partial^{\ab}_i$ at 
$\rho$ is obtained by applying the ring morphism $\Z[G_{\ab}]\to \C$, 
$g\mapsto \rho(g)$ to each entry.  Hence, $\VV_k(X)$ is the 
zero-set of the ideal of minors of size $m-k+1$ of the block-matrix 
$\partial^{\ab}_{2} \oplus \partial^{\ab}_{1}$.  

Now, $\partial^{\ab}_{1}(\rho)=0$ if and only if $\rho=1$. Therefore, 
if $\rho$ is a non-trivial character, then $\rho\in \V_k(G)$ if and only if 
$\rank \partial^{\ab}_{2}(\rho) \le m-k-1$, or, equivalently, all codimension 
$k$ minors of $\partial^{\ab}_{2}$ vanish when evaluated at $\rho$. 
But we know from \eqref{eq:alexmod} that
$A(G)$ is the cokernel of $\partial_2^{\ab}$, and so the claim is proved 
for $\rho\ne 1$. Finally, the evaluation of the chain 
complex \eqref{eq:abcover-cc} at $\rho=1$ is simply $C_{\hdot}(X;\Z)$, and 
the last claim follows.
\end{proof}

A similar result holds for the restricted characteristic varieties, 
with the Alexander module replaced by its rational counterpart, 
$A_{\rat}(G)=\Z[G_{\abf}]\otimes_{\Z[G]} I(G)$.

\begin{lemma}
\label{lem:hironaka-q}
Let $G$ be a finitely generated group. Then, for all $k\ge 1$, 
\[
\WW_{k}(G)=V(\Fitt_{k+1}(A_{\rat}(G)\otimes \C)) \, ,
\]
at least away from $1\in \T^0_G$, with equality at $1$ for $k<b_1(G)$. 
\end{lemma}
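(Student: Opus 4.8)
The plan is to mirror the proof of Lemma~\ref{lem:hironaka} almost verbatim, replacing the maximal abelian cover $X^{\ab}$ with the maximal torsion-free abelian cover $X^{\abf}$, and the Alexander module $A(G)$ with its rational counterpart $A_{\rat}(G)$. First I would pick a finite presentation for $G$ with generators $x_1,\dots,x_m$, let $X$ be the corresponding presentation $2$-complex, and consider the $\Z[G_{\abf}]$-equivariant chain complex $(C_{\hdot}(X^{\abf};\Z),\partial^{\abf})$. As recorded in \S\ref{subsec:alexinv-top-q}, this complex is obtained from the $\Z[G_{\ab}]$-equivariant chain complex of $X^{\ab}$ by extension of scalars along the ring epimorphism $\tilde\nu\colon \Z[G_{\ab}]\surj \Z[G_{\abf}]$; in particular $\partial^{\abf}=\partial^{\ab}\otimes_{\Z[G_{\ab}]}\Z[G_{\abf}]$.

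The key point is to reinterpret the defining condition for $\WW_k(G)$ in terms of this complex. A character $\rho\in\TT^0_G$ corresponds under $\abf^*\colon\TT_{G_{\abf}}\isom\TT^0_G$ to a character of $G_{\abf}$, hence to a $\C$-algebra map $\Z[G_{\abf}]\to\C$. By the very definition \eqref{eq:cvar} of the characteristic varieties, together with \eqref{eq:cvar-restricted}, we have $\rho\in\WW_k(G)$ precisely when $\dim_\C H_1(X,\C_\rho)\ge k$, and the Betti number $\dim_\C H_1(X,\C_\rho)$ is computed by the rank drop of the block matrix $\partial^{\abf}_2\oplus\partial^{\abf}_1$ evaluated at $\rho$; namely $\rho\in\WW_k(G)$ if and only if $\rank\partial^{\abf}_2(\rho)+\rank\partial^{\abf}_1(\rho)\le m-k$.

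I would then separate the trivial character from the rest, exactly as in the integral case. For $\rho\ne 1$ one has $\partial^{\abf}_1(\rho)\ne 0$, so $\rank\partial^{\abf}_1(\rho)=1$, and the condition becomes $\rank\partial^{\abf}_2(\rho)\le m-k-1$, i.e.\ the vanishing at $\rho$ of all codimension-$k$ minors of $\partial^{\abf}_2$. Now Lemma~\ref{lem:alexmod-q}, part \eqref{amq2}, identifies $A_{\rat}(G)\otimes\Q$ as the cokernel of $\partial^{\abf}_2\otimes\Q$, so $\partial^{\abf}_2$ is a presentation matrix for $A_{\rat}(G)\otimes\C$ over $\C[G_{\abf}]=\C[\TT^0_G]$. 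Thus away from $1$ the set $\WW_k(G)$ is exactly the zero-locus $V(\Fitt_{k+1}(A_{\rat}(G)\otimes\C))$, the Fitting ideal of codimension-$k$ minors. Finally, evaluating \eqref{eq:abcover-cc} (extended along $\tilde\nu$) at $\rho=1$ recovers the ordinary chain complex $C_{\hdot}(X;\C)$, giving the stated equality at $1$ for $k<b_1(G)$.

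The only genuine subtlety, and the step I would be most careful about, is justifying that $\dim_\C H_1(X,\C_\rho)$ is computed by evaluating the $\Z[G_{\abf}]$-complex rather than the full $\Z[G_{\ab}]$-complex: one must check that for $\rho\in\TT^0_G$ the local system $\C_\rho$ factors through $G_{\abf}$, so that tensoring $C_{\hdot}(X^{\abf};\C)$ down to $\C_\rho$ yields the same twisted chain complex $C_{\hdot}(X;\C_\rho)$ as tensoring from $C_{\hdot}(X^{\ab};\C)$. This holds because $\rho$ kills $\Tors(G_{\ab})$ by construction, so that the evaluation of $\partial^{\ab}$ at $\rho$ factors through $\tilde\nu$ and agrees with the evaluation of $\partial^{\abf}$; the rest is the routine minor-rank bookkeeping already carried out in Lemma~\ref{lem:hironaka}.
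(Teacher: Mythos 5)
Your proposal is correct and follows essentially the same route as the paper's proof: the paper likewise invokes $\partial^{\abf}=\partial^{\ab} \otimes_{\Z[G_{\ab}]} \Z[G_{\abf}]$ to conclude that $\partial^{\abf}(\rho)=\partial^{\ab}(\rho)$ for a character $\rho\colon G_{\abf}\to\C^*$ (precisely the ``genuine subtlety'' you isolate), then uses Lemma \ref{lem:alexmod-q}, part \eqref{amq2} to identify $A_{\rat}(G)\otimes \Q$ with $\coker(\partial_2^{\abf}\otimes\Q)$, and repeats the rank and Fitting-ideal bookkeeping of Lemma \ref{lem:hironaka}. One small correction: the lemma assumes $G$ only finitely generated, so as in the proof of Lemma \ref{lem:hironaka} you should take a presentation with finitely many generators but possibly infinitely many relators (the ranks of $\partial^{\abf}_i(\rho)$ and the Fitting ideals still make sense), rather than a finite presentation.
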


\begin{proof} 
Recall from \S\ref{subsec:alexinv-top-q} that 
$\partial^{\abf}=\partial^{\ab} \otimes_{\Z[G_{\ab}]} \Z[G_{\abf}]$.
Therefore, for a character $\rho\colon G_{\abf} \to \C^*$, we have that 
$\partial^{\abf}(\rho)=\partial^{\ab}(\rho)$. Hence, $\rho$ belongs to 
$\WW_{k}(G)$ precisely when 
$\rank \partial^{\abf}_{2}(\rho) + \rank \partial^{\abf}_{1}(\rho) \le m-k$. 

On the other hand, we know from Lemma \ref{lem:alexmod-q} 
that $A_{\rat}(G)\otimes \Q=\coker (\partial_2^{\abf} \otimes \Q)$. 
Proceeding as in the proof Lemma \ref{lem:hironaka} yields the 
desired conclusions. 
\end{proof}

\section{Alexander varieties}
\label{sect:alex-vars}

We now relate the characteristic varieties of a finitely generated group $G$ 
with the support loci of the exterior powers of the Alexander invariant of $G$. 

\subsection{Support loci of Alexander invariants}
\label{subsec:supp-vars} 
We start with some basic notions from commutative algebra.
Let $R$ be a commutative ring, and let $M$ be an $R$-module. 
The  {\em support}\/ of $M$, denoted $\supp(M)$, consists of 
those maximal ideals $\m\in \Spec(R)$ for which the 
localization $M_{\m}$ is non-zero. Supports are additive, 
in the following sense: If $0\to M\to N\to P\to 0$ 
is an exact sequence of $R$-modules, then 
$\supp(N)=\supp(M)\cup \supp(P)$. 
We denote by $\ann_R(M)$ the annihilator ideal of $M$. 
The following lemma is well-known; see e.g. \cite{Eisenbud}. 

\begin{lemma}
\label{lem:ann-supp}
Suppose $M$ is a finitely generated $R$-module. Then
\begin{enumerate}
\item \label{ann1}
$\supp(M)=V(\ann_R(M))$.
\item \label{ann2}
$V\big(\!\ann_R \big(\bwedge^k M\big)\big)=V(\Fitt_k(M))$.
\end{enumerate}
\end{lemma}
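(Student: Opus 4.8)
The plan is to prove the two assertions separately, deriving \eqref{ann2} from \eqref{ann1} together with a single base-change computation. Throughout, for a maximal ideal $\mathfrak{m}$ I write $\kappa=R/\mathfrak{m}$ for the residue field and $M\otimes\kappa = M/\mathfrak{m}M$ for the reduction; since the paper's $\supp$ and $V(\cdot)$ are interpreted over the maximal spectrum $\Spec(R)$, every $\mathfrak{m}$ in sight is prime and the residue field is available, so no Jacobson or Nullstellensatz hypothesis is needed.

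First I would prove \eqref{ann1} by the standard localization criterion. Choose generators $m_1,\dots,m_s$ of $M$, so that $\ann_R(M)=\bigcap_i \ann_R(m_i)$. For a maximal ideal $\mathfrak{m}$, the module $M_{\mathfrak{m}}$ is generated by the images of the $m_i$, hence $M_{\mathfrak{m}}=0$ precisely when each $m_i$ dies in $M_{\mathfrak{m}}$, i.e. when $\ann_R(m_i)\not\subseteq\mathfrak{m}$ for every $i$. Since $\prod_i \ann_R(m_i)\subseteq \ann_R(M)\subseteq \ann_R(m_i)$ and $\mathfrak{m}$ is prime, one has $\ann_R(M)\subseteq\mathfrak{m}$ if and only if $\ann_R(m_i)\subseteq\mathfrak{m}$ for some $i$. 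Combining the two observations gives $M_{\mathfrak{m}}\neq 0 \iff \ann_R(M)\subseteq\mathfrak{m}$, which is exactly $\supp(M)=V(\ann_R(M))$.

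For \eqref{ann2} the key step is one local rank computation. Fix a presentation $R^q \xrightarrow{\phi} R^n \to M\to 0$ with presentation matrix $\phi$. Tensoring with $\kappa$ is right exact, so $\dim_\kappa(M\otimes\kappa)=n-\rank\bar\phi$, where $\bar\phi$ is $\phi$ with entries reduced mod $\mathfrak{m}$. I then read off both sides at $\mathfrak{m}$. (i) By the paper's convention, $\mathfrak{m}\in V(\Fitt_k(M))$ means that all size-$(n-k+1)$ minors (the codimension $k-1$ minors) of $\phi$ vanish at $\mathfrak{m}$, i.e. $\rank\bar\phi\le n-k$, i.e. $\dim_\kappa(M\otimes\kappa)\ge k$. (ii) Since exterior powers commute with base change, $\bwedge^k M\otimes\kappa\cong \bwedge^k_\kappa(M\otimes\kappa)$, so Nakayama's lemma applied to the finitely generated module $\bwedge^k M$ gives $(\bwedge^k M)_{\mathfrak{m}}\neq 0 \iff \bwedge^k M\otimes\kappa\neq 0 \iff \dim_\kappa(M\otimes\kappa)\ge k$. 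Thus $V(\Fitt_k(M))$ and $\supp(\bwedge^k M)$ both coincide with $\{\mathfrak{m}: \dim_\kappa(M\otimes\kappa)\ge k\}$. Applying \eqref{ann1} to the finitely generated module $\bwedge^k M$ rewrites $\supp(\bwedge^k M)$ as $V(\ann_R(\bwedge^k M))$, and \eqref{ann2} follows.

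The arguments are essentially formal, so the only real obstacle is bookkeeping rather than substance: the point that requires care is matching the paper's indexing — "codimension $k-1$ minors" means minors of size $n-(k-1)$ in an $n$-rowed presentation matrix — with the rank drop $\rank\bar\phi\le n-k$ that detects $\dim_\kappa(M\otimes\kappa)\ge k$, since an off-by-one there would misalign the Fitting-ideal description from the exterior-power description. I would sanity-check this bookkeeping against Lemma \ref{lem:hironaka}, where $A(G)=\coker(\partial_2^{\ab})$ and $\V_k(G)=V(\Fitt_{k+1}(A(G)\otimes\C))$ corresponds to $\dim_\C(A(G)\otimes\C_\rho)\ge k+1$, exactly the specialization of the formula above.
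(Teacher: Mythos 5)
Your proof is correct, and the paper offers no argument of its own to compare against---it simply points to \cite{Eisenbud}---so your write-up supplies exactly the standard argument that citation stands for: the localization criterion via $\ann_R(M)=\bigcap_i\ann_R(m_i)$ and primeness of $\mathfrak{m}$ for part \eqref{ann1}, and for part \eqref{ann2} the chain $\dim_\kappa(M\otimes\kappa)=n-\rank\bar\phi$, base change of exterior powers, and Nakayama, with your translation of the paper's ``codimension $k-1$ minors'' convention into the rank bound $\rank\bar\phi\le n-k$ being the correct one (your cross-check against Lemma \ref{lem:hironaka}, where $\Fitt_{k+1}$ detects fiber dimension $\ge k+1$, confirms the alignment). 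One cosmetic point: by fixing a presentation $R^q\xrightarrow{\phi}R^n\to M\to 0$ you tacitly assume $M$ is finitely presented, which a finitely generated module over an arbitrary commutative ring need not be; the argument is unchanged if you allow the relation matrix infinitely many columns (only the $n$ rows enter the minors and the rank of $\bar\phi$), and in every application in the paper the ring $R$ is Noetherian, so this costs nothing.
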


In order to analyze in more depth the characteristic varieties 
$\VV_k(G)$, it is useful to consider the complexified Alexander invariant 
$B(G)\otimes \C$, viewed as a module over the ring $\C[G_{\ab}]$, 
and its exterior powers, $\bwedge^k B(G)\otimes \C$. The 
support loci of these modules, 
\begin{equation}
\label{eq:alex-vars}
\YY_k(G)= \supp \big( \bwedge^k B(G)\otimes \C \big) ,
\end{equation}
are called the {\em Alexander varieties}\/ of $G$. By construction, 
these sets form a descending filtration by Zariski closed subsets 
of the character group, $\T_{G}=\Spec(\C[G_{\ab}])$. 

Likewise, the study of the restricted characteristic varieties $\WW_k(G)$ 
is related to the $\C[G_{\abf}]$-module $B_{\rat}(G)\otimes \C$ 
and its exterior powers. The support loci of these modules, 
\begin{equation}
\label{eq:alex-vars-bis}
\ZZ_k(G)= \supp \big( \bwedge^k B_{\rat}(G)\otimes \C \big) ,
\end{equation}
are subvarieties of the character torus, $\T^0_{G}=\Spec(\C[G_{\abf}])$. 
The next lemma will be useful in analyzing these support loci.

\begin{lemma}
\label{lem:supp-map}
Let $M$ and $N$ be modules over commutative rings $R$ and $S$, respectively, 
and let $\psi\colon M\surj N$ be a surjective morphism covering 
a surjective ring map, $\varphi\colon R\surj S$.  Then the induced morphism 
on maximal spectra, $\varphi^*\colon \Spec(S) \inj \Spec(R)$, restricts to 
embeddings $\supp(\bwedge^k N) \inj \supp(\bwedge^k M)$ for all $k\ge 1$.
\end{lemma}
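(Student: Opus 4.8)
The plan is to factor the statement through restriction of scalars along $\varphi$, reducing it to two elementary facts about supports: surjections shrink supports, and restriction of scalars along a surjection moves supports forward along $\varphi^*$. First I would record the easy half: since $\varphi\colon R\surj S$ is surjective, the induced map $\varphi^*\colon \Spec(S)\to \Spec(R)$, $\mathfrak{n}\mapsto \varphi^{-1}(\mathfrak{n})$, is injective and identifies $\Spec(S)$ with the closed set $V(\ker\varphi)\subseteq \Spec(R)$; indeed $\varphi(\varphi^{-1}(\mathfrak{n}))=\mathfrak{n}$ gives injectivity, and $R/\varphi^{-1}(\mathfrak{n})\cong S/\mathfrak{n}$ shows $\varphi^{-1}(\mathfrak{n})$ is maximal. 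Thus \emph{embedding} in the conclusion simply means this injection, restricted to the relevant support loci.

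The key algebraic point is to compare $\bwedge^k N$, an exterior power of $S$-modules, with an exterior power over $R$. Viewing $N$ as the $R$-module $N_{\varphi}$ obtained by restriction of scalars, the defining relation $\psi(rm)=\varphi(r)\psi(m)$ makes $\psi$ an $R$-linear surjection $M\surj N_{\varphi}$. I would then show $\bwedge^k_R(N_{\varphi})\cong \bwedge^k_S N$ as $R$-modules: the ideal $\ker\varphi$ annihilates $\bwedge^k_R(N_{\varphi})$ (each $r\in\ker\varphi$ kills every factor $n_i$), so the $R$-action descends to an $R/\ker\varphi=S$-action, and the resulting $S$-multilinear alternating map $N^k\to \bwedge^k_R(N_{\varphi})$ supplies an inverse to the canonical comparison map $\bwedge^k_R(N_{\varphi})\to \bwedge^k_S N$. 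This is precisely the step where surjectivity of $\varphi$ is genuinely used, and it is the main obstacle, since the identification fails for general ring maps.

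With this identification in hand, two standard localization computations finish the argument. On one side, since $\bwedge^k_R(\cdot)$ of the surjection $M\surj N_{\varphi}$ is again a surjection and localization is exact, I get $\supp(\bwedge^k_R N_{\varphi})\subseteq \supp(\bwedge^k_R M)=\supp(\bwedge^k M)$. On the other side, for any $S$-module $P$ and $\mathfrak{m}=\varphi^{-1}(\mathfrak{n})$ one has $(P_{\varphi})_{\mathfrak{m}}\cong P_{\mathfrak{n}}$, because $R\smallsetminus\mathfrak{m}$ maps onto $S\smallsetminus\mathfrak{n}$ so that $S\otimes_R R_{\mathfrak{m}}\cong S_{\mathfrak{n}}$, while $(P_{\varphi})_{\mathfrak{m}}=0$ whenever $\ker\varphi\not\subseteq\mathfrak{m}$; hence $\supp(P_{\varphi})=\varphi^*(\supp(P))$. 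Applying this to $P=\bwedge^k_S N$ and combining with the isomorphism $\bwedge^k_R N_{\varphi}\cong (\bwedge^k_S N)_{\varphi}$ yields $\varphi^*(\supp(\bwedge^k N))\subseteq \supp(\bwedge^k M)$. Together with the injectivity of $\varphi^*$ from the first step, this is exactly the asserted embedding $\supp(\bwedge^k N)\inj \supp(\bwedge^k M)$. I would note that no finite-generation hypothesis is needed, since the localization identity and the exactness of localization hold for arbitrary modules.
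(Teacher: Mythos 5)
Your proof is correct, and its skeleton matches the paper's: both factor $\psi$ through restriction of scalars as $M \to N_{\varphi} \to N$, and both hinge on the identification $\bwedge^k_R N_{\varphi} \cong \big(\bwedge^k_S N\big)_{\varphi}$, which the paper asserts in one sentence and you actually prove (via the observation that $\ker\varphi$ kills $\bwedge^k_R N_{\varphi}$, so the $R$-structure descends to $S$ and the canonical comparison map acquires an inverse). Where you genuinely diverge is the endgame. The paper passes to annihilators and concludes through the equality $\supp = V(\ann)$ of Lemma \ref{lem:ann-supp}, a step that tacitly requires finite generation of the modules involved (true in all of the paper's applications, but not assumed in the statement of Lemma \ref{lem:supp-map}). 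Your finish is by direct localization: $\supp\big(\bwedge^k_R N_{\varphi}\big) \subseteq \supp\big(\bwedge^k_R M\big)$ by exactness of localization, together with the identity $\supp(P_{\varphi}) = \varphi^*(\supp(P))$, proved by matching $(P_{\varphi})_{\m}$ with $P_{\mathfrak{n}}$ when $\m = \varphi^{-1}(\mathfrak{n})$ and noting $(P_{\varphi})_{\m}=0$ when $\ker\varphi \not\subseteq \m$. This buys you two things. First, the lemma as literally stated, with no finite-generation hypothesis. Second, it sidesteps a small overstatement in the paper's proof, which claims that $\varphi$ restricts to a \emph{surjection} $\ann_R\big(\bwedge^k_R M\big) \surj \ann_S\big(\bwedge^k_S N\big)$: in general only the inclusion $\varphi\big(\!\ann_R\big(\bwedge^k_R M\big)\big) \subseteq \ann_S\big(\bwedge^k_S N\big)$ holds (for instance, $R=\Z$, $S=N=\Z_2$, $M=\Z\oplus\Z$, $\psi(a,b)=\bar{a}$, $k=2$ gives $\ann_R\big(\bwedge^2 M\big)=0$ while $\ann_S\big(\bwedge^2 N\big)=S$), and that inclusion is all that either argument actually needs to deduce the containment of supports.
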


\begin{proof}
By \eqref{eq:cover-factor}, the map $\psi$ factors as a composite, 
$M \to N_{\varphi}\to N$.  Taking exterior powers of the first map, 
we obtain epimorphisms 
$\bigwedge^k_R M\surj \bigwedge^k_R N_{\varphi}$. 
Since the ring map $\varphi \colon R\surj S$ is surjective, the module
$\bigwedge^k_R N_{\varphi}$ is obtained  from $\bigwedge^k_S N$ 
by restriction of scalars.
Therefore, the map $\varphi$ restricts to a surjection,  
$\ann_R \!\big(\bigwedge^k_R M \big) \surj 
\ann_S \!\big(\bigwedge^k_S N \big)$, and the 
claim follows.
\end{proof}

By Proposition \ref{prop:bq-tf}, we have an epimorphism 
$\kappa\otimes \C\colon B(G)\otimes \C \surj B_{\rat}(G)\otimes \C$ 
which covers the ring map $\tilde\nu\colon \C[G_{\ab}] \surj \C[G_{\abf}]$. 
Applying the previous lemma, we obtain the following corollary. 

\begin{corollary}
\label{cor:supp-restrict}
For a finitely generated group $G$, the inclusion $\T_G^0\inj \T_G^{}$ 
restricts to inclusions $\ZZ_k(G)\inj \YY_k(G)$, for all $k\ge 1$. 
\end{corollary}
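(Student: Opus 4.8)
The plan is to obtain this corollary as a direct consequence of Lemma \ref{lem:supp-map} and Proposition \ref{prop:bq-tf}, exactly as the sentence preceding the statement anticipates. First I would record that the finite generation hypothesis on $G$ forces $G_{\ab}$ to be a finitely generated abelian group, so that $\Tors(G_{\ab})$ is finite. This places us in the setting of Proposition \ref{prop:bq-tf}, part \eqref{bbq2}, which then guarantees that the comparison map $\kappa\otimes \C\colon B(G)\otimes \C \surj B_{\rat}(G)\otimes \C$ is surjective; by part \eqref{bbq1} of the same proposition it covers the ring epimorphism $\tilde\nu\colon \C[G_{\ab}]\surj \C[G_{\abf}]$ induced by the projection $\nu\colon G_{\ab}\surj G_{\abf}$. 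Finite generation of $G_{\ab}$ (and hence Noetherianity of the ground rings) also ensures that $B(G)\otimes \C$ and $B_{\rat}(G)\otimes \C$ are finitely generated modules, so that their supports are genuine Zariski closed subvarieties, as already used in defining $\YY_k(G)$ and $\ZZ_k(G)$.

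Next I would apply Lemma \ref{lem:supp-map} with $R=\C[G_{\ab}]$, $S=\C[G_{\abf}]$, $M=B(G)\otimes \C$, $N=B_{\rat}(G)\otimes \C$, the surjection $\psi=\kappa\otimes \C$, and the ring map $\varphi=\tilde\nu$. For each $k\ge 1$ the lemma produces an embedding $\supp\!\big(\bwedge^k N\big)\inj \supp\!\big(\bwedge^k M\big)$ realized by the induced map on maximal spectra, $\tilde\nu^*\colon \Spec(S)\inj \Spec(R)$. By the defining formulas \eqref{eq:alex-vars} and \eqref{eq:alex-vars-bis}, these supports are precisely $\ZZ_k(G)$ and $\YY_k(G)$, respectively, so the embedding reads $\ZZ_k(G)\inj \YY_k(G)$.

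The only point requiring genuine verification—and what I would flag as the mild obstacle—is the identification of $\tilde\nu^*$ with the geometric inclusion $\T_G^0\inj \T_G$ rather than some abstract embedding of affine varieties. This follows from the isomorphisms $\T_G=\Spec(\C[G_{\ab}])$ and $\abf^*\colon \T_{G_{\abf}}\isom \T_G^0$ together with the observation from \S\ref{subsec:char-strat} that the identity component of the character group consists exactly of those characters that factor through the maximal torsion-free abelian quotient $G_{\abf}$; under these identifications, dualizing the surjection $\nu$ yields the inclusion of the identity component. With this identification in hand, the embeddings $\ZZ_k(G)\inj \YY_k(G)$ are seen to be cut out by the inclusion $\T_G^0\inj \T_G$, completing the argument.
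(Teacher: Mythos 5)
Your proposal is correct and follows essentially the same route as the paper, which likewise combines Proposition~\ref{prop:bq-tf} (surjectivity of $\kappa\otimes \C$, available since $\Tors(G_{\ab})$ is finite for a finitely generated $G$) with Lemma~\ref{lem:supp-map} applied to the epimorphism covering $\tilde\nu\colon \C[G_{\ab}]\surj \C[G_{\abf}]$. Your closing verification that $\tilde\nu^*$ is the geometric inclusion $\T_G^0\inj \T_G$ is a point the paper leaves implicit, and you justify it correctly via the identification of $\T_G^0$ with the characters factoring through $G_{\abf}$.
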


If $G_{\ab}$ is torsion-free, the map $\kappa\otimes \C$ is an isomorphism, 
and so $\ZZ_k(G)=\YY_k(G)$.  As illustrated by the next example, this equality 
may not hold when $\Tors(G_{\ab})\ne 0$.  

\begin{example}
\label{ex:b1=0-bis}
Consider again the group $G=\Z_2*\Z_2$, with character group 
$\TT_G=\{(\pm 1,\pm 1)\}$. Then 
$B(G)=\Z[x_1^{\pm 1},x_2^{\pm 1}]/(1+x_1,1+x_2)$, and so 
$\YY_1(G)=\{(-1,-1)\}$, whereas $B_{\rat}(G)=0$, and so 
$\ZZ_1(G)=\emptyset$.
\end{example}

\subsection{Exterior powers in exact sequences}
\label{subsec:commalg}
The next lemma is the key algebraic ingredient in our analysis of the higher-depth 
Alexander varieties.  The lemma is well-known in the case when $R=\k$ is a field and 
the modules are finite-dimensional $\k$-vector spaces. Nevertheless, we could not 
find a reference in the generality that we need here; thus, we provide a detailed proof.

\begin{lemma}
\label{lem:ext-mod}
Let $0\to M\xrightarrow{\alpha} N\xrightarrow{\beta}  P\to 0$ 
be an exact sequence of modules over a commutative ring $R$. 
For each $k\ge 1$, the exterior power $\bwedge^{k}  N$ 
admits a decreasing filtration by $R$-submodules, 
\begin{equation}
\label{eq:filt1}
\bwedge^{k} N = F^k_0 \supseteq F^k_1 
\supseteq \cdots \supseteq F^k_{k+1}=0\, ,
\end{equation}
such that 
\begin{equation}
\label{eq:filt2}
F_i^k/F_{i+1}^k \cong \bwedge^i  M \otimes  
\bwedge^{k-i} P 
\end{equation}
for $0\le i\le k$.
\end{lemma}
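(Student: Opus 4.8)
The plan is to construct the filtration explicitly from the exterior powers of the injection $\alpha$, to produce a natural surjection onto each associated graded piece that is valid for \emph{any} short exact sequence, and then to upgrade these surjections to isomorphisms. Throughout I identify $M$ with the submodule $\alpha(M)\subseteq N$, which is legitimate since $\alpha$ is injective, so that $P\cong N/\alpha(M)$ with quotient map $\beta$. For $0\le i\le k+1$ I would set
\[
F_i^k\coloneqq \operatorname{im}\!\Big(\bwedge^{i} M\otimes_R \bwedge^{k-i} N\longrightarrow \bwedge^{k} N\Big),
\]
where the displayed map sends $(m_1\wedge\cdots\wedge m_i)\otimes(n_1\wedge\cdots\wedge n_{k-i})$ to $\alpha(m_1)\wedge\cdots\wedge\alpha(m_i)\wedge n_1\wedge\cdots\wedge n_{k-i}$; equivalently, $F_i^k$ is the $R$-submodule of $\bwedge^{k} N$ spanned by the decomposable wedges having at least $i$ entries drawn from $\alpha(M)$. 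Then $F_0^k=\bwedge^{k} N$, while $F_{k+1}^k=0$ since a $k$-fold wedge cannot carry $k+1$ entries from $M$, and plainly $F_{i+1}^k\subseteq F_i^k$; this yields the decreasing filtration \eqref{eq:filt1}. Nothing beyond injectivity of $\alpha$ is used, so the construction is valid for an arbitrary short exact sequence.

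Next I would compare the graded pieces with the target modules. The composite $\bwedge^{i} M\otimes \bwedge^{k-i} N\to F_i^k\to F_i^k/F_{i+1}^k$ is $R$-linear and alternating in each group of variables, and I claim it descends to $\bwedge^{i} M\otimes \bwedge^{k-i} P$: if some entry $n_j$ already lies in $\alpha(M)$, the corresponding generator has $i+1$ entries from $M$ and hence lies in $F_{i+1}^k$, so the induced map depends only on the classes $\beta(n_j)\in P$. This produces a natural, visibly surjective homomorphism
\[
\lambda_i\colon \bwedge^{i} M\otimes_R \bwedge^{k-i} P\longrightarrow F_i^k/F_{i+1}^k,
\]
again for an arbitrary short exact sequence, so that \eqref{eq:filt2} reduces entirely to proving each $\lambda_i$ injective. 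The extreme case $i=0$ I would dispatch at once: right-exactness of $\bwedge^{k}(-)$ gives $\bwedge^{k} N/F_1^k\cong\bwedge^{k}(N/M)=\bwedge^{k} P$, so $\lambda_0$ is an isomorphism with no auxiliary hypotheses.

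The main obstacle is the injectivity of $\lambda_i$ for the intermediate indices, and here my plan is to manufacture a two-sided inverse by a retraction that reads off the $M$- and $P$-parts of a wedge. Concretely, I would attempt to define $s_i\colon F_i^k/F_{i+1}^k\to \bwedge^{i} M\otimes \bwedge^{k-i} P$ on a spanning wedge $\alpha(m_1)\wedge\cdots\wedge\alpha(m_i)\wedge n_1\wedge\cdots\wedge n_{k-i}$ by $(m_1\wedge\cdots\wedge m_i)\otimes(\beta(n_1)\wedge\cdots\wedge\beta(n_{k-i}))$, and then verify $s_i\circ\lambda_i=\operatorname{id}$ formally on generators. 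The delicate point---and the step on which I expect to spend essentially all the effort---is the \emph{well-definedness} of $s_i$, i.e. that every $R$-linear relation among the spanning wedges of $F_i^k$, read modulo $F_{i+1}^k$, is already a consequence of the alternating and multilinear relations imposed in $\bwedge^{i} M\otimes\bwedge^{k-i} P$. I would attack this by dévissage: present $M$, $N$, and $P$ by frees compatibly with the sequence, confirm the statement on the nose in the free case, where $\bwedge^{k} N$ decomposes as $\bigoplus_i \bwedge^{i}M\otimes\bwedge^{k-i}P$ and $s_i$ is the evident projection, and then descend the retraction to the given modules while tracking that it stays compatible with the passage to the subquotients $F_i^k/F_{i+1}^k$. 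Controlling this descent---so that it commutes with the filtration without appealing to a global splitting of $0\to M\to N\to P\to 0$---is precisely the heart of the argument, and is where I would concentrate the verification.
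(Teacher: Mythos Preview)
Your filtration $F_i^k=\im\!\big(\bwedge^{i}M\otimes\bwedge^{k-i}N\to\bwedge^{k}N\big)$ and your surjections $\lambda_i$ coincide with the paper's construction; the paper then \emph{asserts} that a map $\rho_i^k\colon F_i^k\to\bwedge^{i}M\otimes\bwedge^{k-i}P$, $\bwedge^{i}\alpha(u)\wedge v\mapsto u\otimes\bwedge^{k-i}\beta(v)$, is well-defined---this is exactly your $s_i$---and proceeds from there without further justification. So you and the paper are aiming at the same step, and you have correctly located it as the crux.

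The difficulty is genuine: the lemma is false in the stated generality, so your d\'evissage cannot close the gap. Take $R=\Z$, the non-split sequence $0\to\Z\xrightarrow{\,\cdot 2\,}\Z\to\Z/2\to 0$, and $k=2$. Then $\bwedge^{2}N=\bwedge^{2}\Z=0$, forcing every $F_i^2/F_{i+1}^2$ to vanish; yet for $i=1$ the putative graded piece is $M\otimes P\cong\Z/2\ne 0$, so $\lambda_1$ has nontrivial kernel and no retraction $s_1$ can exist. Your plan breaks exactly at the descent step: the projection you write down in the free case encodes a splitting of the \emph{free cover} of the sequence, and there is no mechanism for it to pass to the quotients when $0\to M\to N\to P\to 0$ itself does not split. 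What survives without extra hypotheses is only the surjectivity of each $\lambda_i$; the isomorphism \eqref{eq:filt2} does hold as soon as the sequence splits (your free-module argument then applies verbatim), and this is all the paper actually needs, since in Theorems~\ref{thm:cvb}, \ref{thm:cvbq}, and \ref{thm:res-supp} the lemma is invoked only after localizing at $\m\ne I$, where $P_{\m}\cong R_{\m}$ is free of rank one and the localized sequence splits.
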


\begin{proof}
For $k\ge 1$ and $0\le i\le k$, define an $R$-linear map 
$\alpha^{k}_{i}\colon \bwedge^i M\otimes \bwedge^{k-i} N \to \bwedge^{k} N$ 
by $\alpha^{k}_{i} (u \otimes v)=\bbwedge^i\alpha(u) \wedge v$, and set 
$F^k_i \coloneqq \im(\alpha^{k}_{i})$. Clearly, this defines a filtration on the  
module $\bwedge^{k} N$ such that \eqref{eq:filt1} holds. To show 
that \eqref{eq:filt2} does also hold, we use \cite[Proposition A.2.2(d)]{Eisenbud}, 
from which we extract the following statement: for each $j\ge 1$, there is 
an exact sequence
\begin{equation}
\label{eq:eisenbud}
\begin{tikzcd}[column sep=16pt]
M\otimes \bwedge^{j-1} N  \ar[rr, "\alpha^{j-1}_{1}"]
&& \bwedge^j N  \ar[rr, "\bbwedge^j \beta"]  &&  
 \bwedge^j  P  \ar[r] & 0\, .
\end{tikzcd}
\end{equation}
(For $j=1$, this is the original exact sequence.) Now fix $i\ge 0$ and 
set $j=k-i$; tensoring the sequence \eqref{eq:eisenbud} with $\bwedge^i M$, 
we obtain the exact sequence at the top of the following diagram. 
{\small{
\begin{equation}
\label{eq:eis-cd}
\begin{tikzcd}[column sep=14.5pt]
&[-35pt]\bwedge^i M \otimes M\otimes \bwedge^{k-i-1} N  
\ar[d, twoheadrightarrow, pos=.4, "\tilde\alpha^k_{i+1}"]
\ar[r, "\id \otimes\alpha^{k-i-1}_{1}"]
&[28pt] \bwedge^i M \otimes  \bwedge^{k-i} N  
\ar[d, twoheadrightarrow, pos=.4, "\alpha^k_i"]
\ar[r, "\!\id\otimes \bbwedge^{k-i} \beta"]  
&[30pt] \bwedge^i M \otimes \bwedge^{k-i}  P \ar[equal]{d} \ar[r] & 0
\phantom{\, .}
\\
0\ar[r]&[-38pt] F^k_{i+1}  \ar[r]
&F^k_{i} \ar[r, "\rho^k_i"]  
& \bwedge^i M \otimes \bwedge^{k-i}  P  \ar[r] & 0 \, .
\end{tikzcd}
\end{equation}
}}

In this diagram, the map $\id\otimes \bbwedge^{k-i} \beta$ factors through 
the map $\rho^k_i$ which sends 
$\bbwedge^i\alpha(u) \wedge v$ to $u \otimes \bbwedge^{k-i}  \beta (v)$, 
while $\tilde\alpha^k_{i+1}=\alpha^k_{i+1}\circ \pi_i$, where  
$\pi_i\colon \bwedge^i M \otimes M\surj  \bwedge^{i+1} M$ 
is the canonical projection. It is readily seen that diagram \eqref{eq:eis-cd}
commutes, and therefore $\rho^k_i$ induces an isomorphism 
$F_i^k/F_{i+1}^k \isom  \bwedge^i M \otimes  \bwedge^{k-i} P$. 
This completes the proof.
\end{proof}

\subsection{Characteristic varieties and support loci}
\label{subsec:cv-alex}
It has been known for a long time that the characteristic varieties and 
the Alexander varieties of spaces and groups are intimately related. 
For instance, it was shown in \cite{DSY17, PS-plms10} that 
$\V_1(G)=\YY_1(G)$ and $\WW_1(G)=\ZZ_1(G)$, at least away 
from $1$. Those proofs, based on a change-of-rings spectral 
sequence argument, are very specific to depth $k=1$ and do not 
generalize to higher depths. We give here a proof valid in all depths $k\ge 1$.  
The proof is modeled on the proof of \cite[Proposition 0.2]{Li};  
since the argument given there is not complete 
(a proof of Lemma \ref{lem:ext-mod} 
is missing) and not quite in the generality we need (it assumes $G$ 
is finitely presented), we provide full details. 

\begin{theorem}
\label{thm:cvb}
Let $G$ be a finitely generated group. Then, for all $k\ge 1$, 
\begin{equation}
\label{eq:cvar-ann}
\V_k(G)=
\supp \big(\bwedge^k B(G)\otimes \C \big) ,
\end{equation}
at least away from the identity $1\in \T_{G}$.
\end{theorem}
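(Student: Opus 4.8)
The plan is to reduce the statement to Hironaka's description of the characteristic varieties and then to transfer the Fitting-ideal computation from the Alexander module $A(G)$ to the Alexander invariant $B(G)$ by means of the Crowell exact sequence. Set $R=\C[G_{\ab}]$ and work with the finitely generated (hence, $R$ being Noetherian, finitely presented) $R$-modules $A=A(G)\otimes\C$ and $B=B(G)\otimes\C$. First I would invoke Lemma \ref{lem:hironaka} to write $\V_k(G)=V(\Fitt_{k+1}(A))$ away from $1$, and then rephrase the right-hand side as a support locus via Lemma \ref{lem:ann-supp}: since $V(\Fitt_{k+1}(A))=V(\ann_R(\bwedge^{k+1}A))=\supp(\bwedge^{k+1}A)$, the task reduces to proving $\supp(\bwedge^{k+1}A)=\supp(\bwedge^{k}B)$ away from $1$.

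Next I would bring in the Crowell sequence \eqref{eq:crowell}. Tensoring with $\C$, which is flat over $\Z$, preserves exactness and yields a short exact sequence of $R$-modules $0\to B\to A\to I_\C\to 0$, where $I_\C=I(G_{\ab})\otimes\C$ is precisely the augmentation ideal of $R$, that is, the maximal ideal of the identity character $1\in\T_G$. Applying Lemma \ref{lem:ext-mod} with $k$ replaced by $k+1$ equips $\bwedge^{k+1}A$ with a finite filtration whose successive quotients are $\bwedge^{i}B\otimes_R\bwedge^{k+1-i}I_\C$ for $0\le i\le k+1$. Additivity of supports along this filtration then gives $\supp(\bwedge^{k+1}A)=\bigcup_{i=0}^{k+1}\supp\!\big(\bwedge^{i}B\otimes\bwedge^{k+1-i}I_\C\big)$.

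The crux is the local structure of the augmentation ideal $I_\C$ away from $1$. For any character $\rho\ne 1$ there is some $g\in G_{\ab}$ with $\rho(g)\ne 1$, so $g-1\notin\m_\rho$, whence $(I_\C)_{\m_\rho}=R_{\m_\rho}$; in other words, $I_\C$ is locally free of rank $1$ on $\T_G\setminus\{1\}$. Since exterior powers commute with localization, this forces $\supp(\bwedge^{j}I_\C)\subseteq\{1\}$ for all $j\ge 2$, while $\bwedge^1 I_\C=I_\C$ is supported on all of $\T_G\setminus\{1\}$ and $\bwedge^0 I_\C=R$. Because $R$ is Noetherian and the modules are finitely generated, $\supp(M\otimes_R N)=\supp(M)\cap\supp(N)$, so away from $1$ every filtration quotient with $k+1-i\ge 2$ (that is, $i\le k-1$) drops out, and only the terms $i=k$ and $i=k+1$ survive, contributing $\supp(\bwedge^{k}B)$ and $\supp(\bwedge^{k+1}B)$ respectively.

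Finally I would close the argument by noting that the support loci are nested, $\supp(\bwedge^{k+1}B)\subseteq\supp(\bwedge^{k}B)$ — which follows from $\Fitt_k(B)\subseteq\Fitt_{k+1}(B)$ together with Lemma \ref{lem:ann-supp} — so the union of the two surviving terms collapses to $\supp(\bwedge^{k}B)$. Combining the displays yields $\V_k(G)=\supp(\bwedge^{k}B(G)\otimes\C)$ away from $1$, as desired. I expect the main obstacle to be the careful handling of the identity character: keeping the localization and support bookkeeping honest ``away from $1$'', confirming that $I_\C$ is genuinely locally free of rank one off the identity, and verifying that the Noetherian and finite-generation hypotheses needed for $\supp=V(\ann)$ and for $\supp(M\otimes N)=\supp(M)\cap\supp(N)$ are all in force; the exterior-power filtration and the additivity of supports are then purely formal.
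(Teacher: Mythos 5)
Your proposal is correct and follows essentially the same route as the paper's proof: Lemma \ref{lem:hironaka} plus Lemma \ref{lem:ann-supp} reduce the claim to $\supp(\bwedge^{k+1}A)=\supp(\bwedge^{k}B)$ away from $1$, which both you and the paper obtain from the Crowell sequence, the filtration of Lemma \ref{lem:ext-mod}, and the fact that the augmentation ideal localizes to the whole ring at every maximal ideal other than the one at $1$. The only presentational differences are that the paper localizes first at each $\m\ne I$ to get the short exact sequence $0\to\bwedge^{k}B_{\m}\to\bwedge^{k}A_{\m}\to\bwedge^{k-1}B_{\m}\to 0$ rather than doing global support bookkeeping with $\supp(M\otimes N)=\supp(M)\cap\supp(N)$, and that you make explicit the nesting $\supp(\bwedge^{k+1}B)\subseteq\supp(\bwedge^{k}B)$ that the paper's appeal to additivity of supports leaves implicit.
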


\begin{proof}
Let $B=B(G)$ and $A=A(G)$, viewed as modules over $R=\Z[G_{\ab}]$, 
and let $I=I_{\Z}(G_{\ab})$ be the 
augmentation ideal. By \eqref{eq:crowell}, we have an exact 
sequence of $R$-modules, $0\to B\to A\to I\to 0$. Fix a maximal 
ideal $\m\in \Spec(R)$. Localization is an exact functor; hence, 
localizing at $\m$ yields an exact sequence of $R_{\m}$-modules, 
\begin{equation}
\label{eq:crowell-loc}
\begin{tikzcd}[column sep=16pt]
0\ar[r]& B_{\m} \ar[r]&  A_{\m} \ar[r]&  I_{\m}\ar[r]&  0 \, .
\end{tikzcd}
\end{equation}
By Lemma \ref{lem:ext-mod}, there is a 
filtration by $R_{\m}$-submodules, 
$\bwedge^{k}  A_{\m}=F^k_0\supseteq F^k_1  \supseteq \cdots$, 
with successive quotients  
$F_i^k/F_{i+1}^k \cong \bwedge^i  B_{\m} \otimes_{R_{\m}}  
\bwedge^{k-i} I_{\m}$. 

On the other hand, localizing at $\m$ the exact sequence 
$0\to I \to R\xrightarrow{\varepsilon} \Z \to 0$, we get the 
exact sequence $0\to I_{\m} \to R_{\m} \to \Z_{\m} \to 0$. 
Assuming $\m\ne I$, we have that $\Z_{\m}=0$,
and so $I_{\m}=R_{\m}$. Hence, $ \bwedge^{j} I_{\m}$ 
is isomorphic to $R_m$ if $j=0$ or $1$ and is equal to $0$ if $j>1$. 

Putting things together, we infer the following: for every $k\ge 1$ 
and for every maximal ideal $\m\ne I$, we have an exact sequence 
\begin{equation}
\label{eq:local}
\begin{tikzcd}[column sep=18pt]
0\ar[r]& \bwedge^{k} B_{\m} \ar[r]& \bwedge^{k}  A_{\m} 
\ar[r]& \bwedge^{k-1} B_{\m} \ar[r]& 0 \, .
\end{tikzcd}
\end{equation}

By additivity of supports, it follows that 
$\supp \!\big(\bwedge^k A\otimes \C \big)=
\supp \!\big(\bwedge^{k-1} B\otimes \C \big)$, at least away from 
$\supp(I)=\{1\}$. On the other hand, by Lemma \ref{lem:ann-supp}, 
we have that $\supp \big(\bwedge^{k} A\otimes \C \big)=
V(\Fitt_{k}(A\otimes \C))$. Applying now Lemma \ref{lem:hironaka} 
completes the proof.
\end{proof}

The next corollary sharpens a result that goes back to the work 
of Dwyer and Fried \cite{DF}, and was further developed in 
\cite{PS-plms10,Su-imrn,SYZ-pisa}. In all those results, only the 
depth $k=1$ was considered; the novelty here is that we work 
with arbitrary depth. 

\begin{corollary}
\label{cor:trivial-cv}
Let $G$ be a finitely generated group. For each $k\ge 1$, the following 
conditions are equivalent.
\begin{enumerate}
\item The characteristic variety $\VV_k(G)$ is a finite subset of $\T_G$. 
\item The $\C$-vector space $\bwedge^k (B(G)\otimes \C)$ is finite-dimensional.
\end{enumerate}
\end{corollary}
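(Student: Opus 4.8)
The plan is to translate both conditions into statements about the support of a single finitely generated module and then apply a standard zero-dimensionality criterion from commutative algebra. Set $R=\C[G_{\ab}]$ and $M=\bwedge^k\big(B(G)\otimes\C\big)$, which by base change equals $\big(\bwedge^k B(G)\big)\otimes\C$, so that $\T_G=\Spec(R)$ and, by definition \eqref{eq:alex-vars}, $\supp(M)=\YY_k(G)$. Since $G$ is finitely generated, $G_{\ab}$ is a finitely generated abelian group, so $R$ is a finitely generated (hence Noetherian) $\C$-algebra; moreover $B(G)$ is finitely generated over $\Z[G_{\ab}]$, as recorded after \eqref{eq:crowell}, whence $B(G)\otimes\C$ and its exterior power $M$ are finitely generated $R$-modules.

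The crux of the argument is the following purely algebraic equivalence, which I would isolate as an auxiliary lemma: \emph{for a finitely generated module $M$ over a finitely generated $\C$-algebra $R$, one has $\dim_{\C}M<\infty$ if and only if $\supp(M)$ is a finite set.} For the forward direction, the faithful action of $R/\ann_R(M)$ on $M$ realizes this quotient as a subalgebra of $\Hom_{\C}(M,M)$, so $R/\ann_R(M)$ is a finite-dimensional, hence Artinian, $\C$-algebra and therefore has only finitely many maximal ideals; by Lemma \ref{lem:ann-supp}, part \eqref{ann1}, $\supp(M)=V(\ann_R(M))$ is finite. For the converse, if $V(\ann_R(M))$ is finite then $A\coloneqq R/\ann_R(M)$ is a finitely generated $\C$-algebra with finite maximal spectrum; Noether normalization forces the Krull dimension of $A$ to be $0$, since otherwise $A$ would be module-finite over a polynomial ring $\C[y_1,\dots,y_d]$ with $d\ge 1$, and surjectivity of the induced map on maximal spectra would make $\Spec(A)$ infinite. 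Thus $A$ is module-finite over $\C$, i.e.\ finite-dimensional, and the finitely generated $A$-module $M$ is finite-dimensional over $\C$ as well.

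With this lemma in hand, it remains only to account for the identity character. By Theorem \ref{thm:cvb}, the sets $\V_k(G)$ and $\YY_k(G)=\supp(M)$ coincide away from $1\in\T_G$, so they differ by at most the single point $1$; in particular one is finite exactly when the other is. Applying the lemma to $M$ then yields the chain of equivalences $\V_k(G)\text{ finite}\iff\YY_k(G)\text{ finite}\iff\dim_{\C}M<\infty$, which is the asserted statement.

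I expect the only genuine obstacle to lie in the converse half of the commutative-algebra lemma, where one must rule out positive-dimensional support; the cleanest route is Noether normalization combined with the surjectivity of the map on maximal spectra induced by a module-finite extension of finitely generated $\C$-algebras. The remaining ingredients---finite generation of $M$ and the harmless discrepancy at $1$---are routine.
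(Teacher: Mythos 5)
Your argument is correct and follows the same route as the paper: identify $\VV_k(G)$ with $\supp\bigl(\bwedge^k B(G)\otimes\C\bigr)$ away from $1$ via Theorem \ref{thm:cvb}, then invoke the equivalence between finite support and finite $\C$-dimension for a finitely generated module over an affine $\C$-algebra. The only difference is that the paper cites this commutative-algebra fact as well known (with references to \cite{DF,PS-mrl,SYZ-pisa}), whereas you prove it directly via the embedding $R/\ann_R(M)\inj\Hom_{\C}(M,M)$ and Noether normalization---a correct and self-contained substitute for the citation.
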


\begin{proof}
As is well-known (see e.g.~\cite{DF,PS-mrl, SYZ-pisa}), 
a finitely generated module $M$ over an affine $\C$-algebra 
$R$ has finite support if and only if $\dim_{\C} M<\infty$. 
Letting $M=\bwedge^k B(G)\otimes \C$, viewed as a 
module over the $\C$-algebra $R=\C[G_{\ab}]$, the claim 
follows from Theorem \ref{thm:cvb}.
\end{proof}

\begin{example}
\label{ex:knots}
Let $K$ be a tame knot in $S^3$, and let $G$ be the fundamental 
group of the knot complement. Since $G_{\ab}=\Z$, we may identify 
$\T_G=\C^*$. The variety $\VV_1(G)$ consists 
of $1$, together with the roots of the Alexander polynomial 
of the knot, $\Delta_{K} \in \Z[t^{\pm 1}]$; in particular, $\VV_1(G)$ 
is  finite.   By the above corollary, the $\C$-vector space 
$B(G)\otimes \C$ is finite-dimensional; in fact,
as is well-known, its dimension is equal to $\deg \Delta_K$.
\end{example}

\subsection{Restricting to the character torus}
\label{subsec:cv-alex-0}
We now restrict our attention to the identity component of the 
character group, $\T_G^0$, and prove analogous results 
for the restricted characteristic varieties, $\WW_k(G)=\VV^k(G)\cap \T_G^0$, 
and the corresponding support loci, 
$\ZZ_k(G)= \supp \big( \bwedge^k B_{\rat}(G)\otimes \C \big)$.

\begin{theorem}
\label{thm:cvbq}
Let $G$ be a finitely generated group. Then, for all $k\ge 1$. 
\begin{equation}
\label{eq:cvar-ann-q}
\WW_k(G)=
\supp \big(\bwedge^k B_{\rat}(G)\otimes \C \big) ,
\end{equation}
at least away from the identity $1\in \T^0_{G}$.
\end{theorem}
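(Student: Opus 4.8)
The plan is to follow the proof of Theorem~\ref{thm:cvb} line for line, substituting the rational/restricted objects for the integral ones. I would work over the affine $\C$-algebra $R=\C[G_{\abf}]$, whose maximal spectrum is $\T^0_G$, and set $B_{\rat}=B_{\rat}(G)\otimes\C$ and $A_{\rat}=A_{\rat}(G)\otimes\C$, together with the augmentation ideal $I_0=I_0(G)\otimes\C=\ker(\varepsilon\colon R\to\C)$. Since $G$ is finitely generated, $G_{\abf}$ is finitely generated abelian, so $R$ is Noetherian; and by Lemma~\ref{lem:alexmod-q}, part~\eqref{amq2}, the module $A_{\rat}(G)\otimes\Q$ is a quotient of the finitely generated free module $C_1(X^{\abf};\Q)$, so after base change $A_{\rat}$ and all of its exterior powers are finitely generated $R$-modules. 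This finite generation is exactly what is needed for the annihilator/support dictionary of Lemma~\ref{lem:ann-supp}.

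First I would base-change the rational Crowell sequence. Tensoring \eqref{eq:crowell-rat} with $\C$ over $\Q$, which preserves exactness because $\C$ is flat over $\Q$, produces the short exact sequence of $R$-modules $0\to B_{\rat}\to A_{\rat}\to I_0\to 0$. Next, fix a maximal ideal $\m\in\Spec(R)=\T^0_G$ corresponding to a character $\rho\ne 1$, so that $\m\ne I_0$. Localizing $0\to I_0\to R\xrightarrow{\varepsilon}\C\to 0$ at $\m$ annihilates the cokernel $\C=R/I_0$, whose support is $V(I_0)=\{1\}$; hence $(I_0)_{\m}=R_{\m}$, and therefore $\bwedge^{j}(I_0)_{\m}$ is free of rank one for $j=0,1$ and vanishes for $j>1$.

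I would then localize the Crowell sequence at $\m$ and apply Lemma~\ref{lem:ext-mod} with its exterior-power index taken to be $k+1$. The filtration of $\bwedge^{k+1}(A_{\rat})_{\m}$ has successive quotients $\bwedge^{i}(B_{\rat})_{\m}\otimes\bwedge^{k+1-i}(I_0)_{\m}$, all of which vanish except for $i=k$ and $i=k+1$, so it collapses to a short exact sequence $0\to\bwedge^{k+1}(B_{\rat})_{\m}\to\bwedge^{k+1}(A_{\rat})_{\m}\to\bwedge^{k}(B_{\rat})_{\m}\to 0$. By additivity of supports, and since the supports of the exterior powers are nested so that $\supp(\bwedge^{k+1}B_{\rat})\subseteq\supp(\bwedge^{k}B_{\rat})$, I obtain $\supp(\bwedge^{k+1}A_{\rat})=\supp(\bwedge^{k}B_{\rat})$ away from $\{1\}$. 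Combining the identity $\supp(\bwedge^{k+1}A_{\rat})=V(\Fitt_{k+1}(A_{\rat}))$ from Lemma~\ref{lem:ann-supp} with the equality $\WW_k(G)=V(\Fitt_{k+1}(A_{\rat}(G)\otimes\C))$ of Lemma~\ref{lem:hironaka-q} yields $\WW_k(G)=\supp(\bwedge^{k}B_{\rat})$ away from $1$, which is the assertion.

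Because every ingredient is already in hand — the rational Crowell sequence \eqref{eq:crowell-rat}, the exterior-power filtration of Lemma~\ref{lem:ext-mod}, and the rational Fitting-ideal description of Lemma~\ref{lem:hironaka-q} — the argument introduces no new geometry, and I expect no serious obstacle. The one point that genuinely demands care is the passage from $\Q$ to $\C$: the rational Crowell sequence is formulated over $\Q$, so I must invoke flatness of $\C$ over $\Q$ to keep it exact after base change, and I must check that the module $B_{\rat}(G)\otimes\C$ appearing in the statement really equals $(B_{\rat}(G)\otimes\Q)\otimes_{\Q}\C$, so that the object produced by the filtration matches the one in the theorem. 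A secondary bookkeeping matter is the index shift, namely that the $\Fitt_{k+1}$ of Lemma~\ref{lem:hironaka-q} corresponds to $\bwedge^{k+1}A_{\rat}$, which in turn pairs with $\bwedge^{k}B_{\rat}$, exactly as in the integral case.
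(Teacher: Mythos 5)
Your proposal is correct and follows essentially the same route as the paper's own proof, which likewise passes to the complexified rational Crowell sequence $0\to B_{\rat}\to A_{\rat}\to I_0\to 0$ from Lemma \ref{lem:alexmod-q} and then repeats verbatim the localization and exterior-power filtration argument of Theorem \ref{thm:cvb}, concluding via Lemmas \ref{lem:ann-supp} and \ref{lem:hironaka-q}. The points you flag for care---flatness of $\C$ over $\Q$, the identification $B_{\rat}(G)\otimes\C\cong (B_{\rat}(G)\otimes\Q)\otimes_{\Q}\C$, the nesting $\supp\big(\bwedge^{k+1}B_{\rat}\big)\subseteq \supp\big(\bwedge^{k}B_{\rat}\big)$, and the index shift matching $\Fitt_{k+1}$ with $\bwedge^{k+1}A_{\rat}$---are all handled correctly and merely make explicit steps the paper leaves implicit.
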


\begin{proof}
Let $B_{\rat}= B_{\rat}(G)\otimes \C$  and 
$A_{\rat}=A_{\rat}(G)\otimes \C$, and let 
$I_{0}$ be the augmentation ideal of $\C[G_{\abf}]$.
By Lemma \ref{lem:alexmod-q}, part \eqref{amq3}, we have an 
an exact sequence of $\C[G_{\abf}]$-modules,
$0\to B_{\rat}\to A_{\rat}\to I_{0}\to 0$. 
Continuing as in the proof of Theorem \ref{thm:cvb}, we find that 
$\supp \!\big(\bwedge^k A_{\rat} \big)=
\supp \!\big(\bwedge^{k-1} B_{\rat}\big)$, at least away 
from $\{1\}$.  On the other hand, by Lemma \ref{lem:ann-supp}, 
we have that 
$\supp \big(\bwedge^{k} A_{\rat}\big)=V(\Fitt_{k}(A_{\rat}))$. 
Applying now Lemma \ref{lem:hironaka-q} completes the proof.
\end{proof}

It follows at once from Theorems \ref{thm:cvb} and \ref{thm:cvbq} that 
$\ZZ_k(G)=\YY_k(G)\cap \T^0_{G}$.  Moreover, we have the following 
corollary, which, in view of Lemma \ref{lem:bq}, part \eqref{b2}, 
sharpens results from \cite{DF,PS-plms10,Su-imrn,SYZ-pisa}.

\begin{corollary}
\label{cor:trivial-cw}
Let $G$ be a finitely generated group. For each $k\ge 1$, the following 
conditions are equivalent.
\begin{enumerate}
\item The characteristic variety $\WW_k(G)$ is a finite subset of $\T^0_G$. 
\item The $\Q$-vector space $\bwedge^k B_{\rat}(G)\otimes \Q$ is 
finite-dimensional.
\end{enumerate}
\end{corollary}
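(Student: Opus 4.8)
The plan is to deduce Corollary \ref{cor:trivial-cw} from Theorem \ref{thm:cvbq} in essentially the same way that Corollary \ref{cor:trivial-cv} was deduced from Theorem \ref{thm:cvb}. The key conceptual input is the standard commutative-algebra fact recorded in the proof of Corollary \ref{cor:trivial-cv}: a finitely generated module $M$ over an affine $\C$-algebra $R$ has finite support if and only if $\dim_{\C} M < \infty$. First I would check that this fact applies in the present setting. The relevant module is $\bwedge^k B_{\rat}(G)\otimes \C$, viewed over the $\C$-algebra $R=\C[G_{\abf}]$. Since $G$ is finitely generated, $G_{\abf}$ is a finitely generated free abelian group, so $R$ is a Laurent polynomial ring, which is indeed an affine $\C$-algebra; and by the finite-presentation discussion surrounding \eqref{eq:crowell-rat} the module $B_{\rat}(G)\otimes \C$ is finitely generated over $R$, hence so is its $k$-th exterior power.

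The second step is to connect the two formulations. By Theorem \ref{thm:cvbq}, the variety $\WW_k(G)$ agrees with $\supp\big(\bwedge^k B_{\rat}(G)\otimes \C\big)$ away from the identity $1\in\T^0_G$. Since adding or removing the single point $1$ does not affect finiteness, $\WW_k(G)$ is a finite subset of $\T^0_G$ if and only if $\supp\big(\bwedge^k B_{\rat}(G)\otimes \C\big)$ is finite. Applying the commutative-algebra criterion to $M=\bwedge^k B_{\rat}(G)\otimes \C$ then shows this support is finite precisely when $\dim_{\C}\big(\bwedge^k B_{\rat}(G)\otimes \C\big)<\infty$.

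The last step is a purely formal passage between the two coefficient fields. The statement of the corollary is phrased in terms of $\bwedge^k B_{\rat}(G)\otimes \Q$ being finite-dimensional over $\Q$, whereas the argument above produces a statement about $\bwedge^k B_{\rat}(G)\otimes\C$ over $\C$. Because exterior powers commute with the flat base change $\Q\hookrightarrow\C$, we have $\bwedge^k B_{\rat}(G)\otimes\C \cong \big(\bwedge^k B_{\rat}(G)\otimes\Q\big)\otimes_{\Q}\C$, so the $\C$-dimension of the former equals the $\Q$-dimension of the latter; in particular one is finite if and only if the other is. This identifies condition (2) with the finite-dimensionality statement obtained in the second step, completing the equivalence.

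I do not expect a genuine obstacle here, since the proof is a direct transcription of the one for Corollary \ref{cor:trivial-cv} with $B(G)$ replaced by $B_{\rat}(G)$, $\T_G$ by $\T^0_G$, and Theorem \ref{thm:cvb} by Theorem \ref{thm:cvbq}. The only point requiring a word of care is the $\Q$-versus-$\C$ bookkeeping in the third step; I would handle it with the flat-base-change remark above rather than re-proving anything. (One could alternatively phrase everything over $\C$ from the start, invoking Lemma \ref{lem:bq}, part \eqref{b2}, which identifies $B_{\rat}(G)\otimes\Q$ with $H_1(X^{\abf};\Q)$, but the tensor-product identity is the cleanest route.)
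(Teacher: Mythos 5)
Your proof is correct and takes essentially the same route as the paper: the paper's own argument likewise applies the finite-support criterion from the proof of Corollary \ref{cor:trivial-cv} to the module $M=\bwedge^k B_{\rat}(G)\otimes \C$ over the affine $\C$-algebra $\C[G_{\abf}]$, invoking Theorem \ref{thm:cvbq} in place of Theorem \ref{thm:cvb}. Your flat-base-change remark identifying $\dim_{\C}\big(\bwedge^k B_{\rat}(G)\otimes\C\big)$ with $\dim_{\Q}\big(\bwedge^k B_{\rat}(G)\otimes\Q\big)$ simply makes explicit a bookkeeping step the paper leaves implicit.
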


\begin{proof}
Follows from Theorem \ref{thm:cvbq} using the same argument 
as in the proof of Corollary \ref{cor:trivial-cv}, applied this time 
to the vector space $M=\bwedge^k B_{\rat}(G)\otimes \C$, viewed 
as a module over the $\C$-algebra $R=\C[G_{\abf}]$.
\end{proof}

As an application, we obtain the following corollary.

\begin{corollary}
\label{cor:trivial-cw-chen}
Let $G$ be a finitely generated group, and suppose 
$\WW_1(G)$ is finite. Then the Chen ranks $\theta_n(G)$ 
vanish for $n\gg 0$.
\end{corollary}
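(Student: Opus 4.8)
The plan is to reduce the statement to the behavior of the $I$-adic filtration on the rational Alexander invariant, and then to exploit the finiteness hypothesis to show that this filtration stabilizes. First I would invoke Corollary \ref{cor:trivial-cw} in depth $k=1$: since $\bwedge^1 B_{\rat}(G)\otimes \Q=B_{\rat}(G)\otimes \Q$, the finiteness of $\WW_1(G)$ is equivalent to the assertion that the $\Q$-vector space $M\coloneqq B_{\rat}(G)\otimes \Q$ is finite-dimensional. Because $G$ is finitely generated, we have $b_1(G)<\infty$, so Corollary \ref{cor:chen-alrat} applies and yields $\theta_n(G)=\dim_{\Q}\gr_{n-2}(M)$ for all $n\ge 2$, where the associated graded module is formed with respect to the $I$-adic filtration of $M$ and $I=I_{\Q}(G_{\abf})$ is the augmentation ideal of $\Q[G_{\abf}]$.

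The key step is then purely linear-algebraic. The $I$-adic filtration provides a descending chain of $\Q$-subspaces
\begin{equation*}
M\supseteq IM\supseteq I^2 M\supseteq \cdots \supseteq I^m M\supseteq \cdots
\end{equation*}
of the finite-dimensional vector space $M$. Such a chain necessarily stabilizes, so there is an integer $m_0$ with $I^m M=I^{m_0}M$ for all $m\ge m_0$. Consequently the successive quotients vanish, that is, $\gr_m(M)=I^m M/I^{m+1}M=0$ for every $m\ge m_0$.

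Putting the two steps together, for $n\ge m_0+2$ I would conclude $\theta_n(G)=\dim_{\Q}\gr_{n-2}(M)=0$, as desired. The argument is short, and I do not anticipate a genuine obstacle; the only points requiring a little care are the bookkeeping of the degree shift between the Chen ranks and the graded pieces of $M$ furnished by Corollary \ref{cor:chen-alrat}, and the observation that it is stabilization of the descending filtration inside a finite-dimensional space---rather than eventual vanishing of the submodules $I^m M$ themselves---that forces the graded quotients to die in high degree.
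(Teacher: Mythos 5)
Your proposal is correct and follows essentially the same route as the paper: it combines Corollary \ref{cor:trivial-cw} (depth $k=1$) with Corollary \ref{cor:chen-alrat}, and then observes that finite-dimensionality of $B_{\rat}(G)\otimes\Q$ forces the graded pieces of its $I$-adic associated graded module to vanish in high degrees. Your explicit remark that it is stabilization of the descending filtration---not vanishing of the submodules $I^mM$---that kills the graded quotients is a careful spelling-out of what the paper's terser ``$\gr(B_{\rat}(G)\otimes\Q)$ is finite-dimensional'' argument implicitly uses.
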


\begin{proof}
By Corollary \ref{cor:chen-alrat}, we have that 
$\theta_n(G)=\dim_{\Q} \gr_{n-2}( B_{\rat}(G)\otimes \Q)$ 
for all $n\ge 2$. On the other hand, since  
$\WW_1(G)$ is finite, Corollary \ref{cor:trivial-cw} implies that 
$B_{\rat}(G)\otimes \Q$ is finite-dimensional. Hence, the associated 
graded vector space $\gr(B_{\rat}(G)\otimes \Q)$ is also finite-dimensional;  
thus, its graded pieces must vanish in sufficiently high degrees.
\end{proof}

\section{Characteristic varieties in group extensions}
\label{sect:cv-ext}

We exhibit in this section several relationships 
between the characteristic varieties of a finitely generated group $G$, 
of a normal subgroup $K$, and of the quotient group $Q=G/K$.

\subsection{Homomorphisms and jump loci}
\label{subsec:func-cv}
Let $\alpha\colon G\to H$ be a homomorphism between two 
finitely generated groups. Letting $\alpha_{\ab}\colon G_{\ab}\to H_{\ab}$ 
be the induced map on abelianizations,  its linear extension to 
group algebras, $\tilde\alpha\colon \C[G_{\ab}] \to \C[H_{\ab}]$, 
defines a morphism between the corresponding maximal spectra. Under our 
previous identifications, this morphism coincides with the induced map on 
character groups, $\alpha^*\colon \T_H\to \T_G$, 
given by $\alpha^*(\rho)(g)=\alpha(\rho(g))$.  
In general, this morphism may not send $\V_k(H)$ to $\V_k(G)$, 
even when $\alpha$ is injective. Here is a simple example.

\begin{example}
\label{ex:z-f2}
Let $F_n$ be the free group of rank $n\ge 2$, and let $\Z<F_n$ be a cyclic 
subgroup.  The inclusion $\iota\colon \Z\to F_n$ induces a surjective morphism 
on character tori, $\iota^*\colon (\C^*)^n \surj \C^*$. 
This morphism sends $\V_1(F_n)=\cdots = \V_{n-1}(F_n)=(\C^*)^n$ onto $\C^*$ 
and $ \V_{n}(F_n)=\{1\}$ to $\{1\}$;  
on the other hand, $\V_1(\Z)=\{1\}$ while  $\V_k(\Z)=\emptyset$ for $k>1$, 
so the map $\iota^*$ does not preserve characteristic varieties, for any depth 
$k\le n$.
\end{example}

Under certain assumptions, though, the characteristic varieties are 
preserved.  We present now one such situation, and will return to this issue  
in Theorem \ref{thm:cv-abf}, where a completely different situation 
will be analyzed. 

Suppose $\pi\colon G\to Q$ is surjective; then clearly $\pi^*\colon \T_Q\to \T_G$ 
is injective.  Moreover, $\pi^*$ sends the characteristic varieties of $Q$ 
to those of $G$. We proved this assertion in \cite[Lemma 2.13]{Su-imrn}, 
starting from the jump loci definition \eqref{eq:cvar} of the characteristic 
varieties, and using a spectral sequence argument. We give here another, 
self-contained proof of this result, based on the support loci interpretation 
from the previous section. 

\begin{proposition}[\cite{Su-imrn}]  
\label{prop:v1-nat}
Let  $G$ be a finitely generated group, and let $\pi\colon G\surj Q$ 
be a surjective homomorphism.  Then the induced morphism between 
character groups, $\pi^*\colon \T_Q\inj\T_G$, 
restricts to embeddings $\VV_k(Q) \inj \VV_k(G)$ for all $k\ge 1$.
\end{proposition}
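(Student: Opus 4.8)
The plan is to reduce the statement to the support-loci description of the characteristic varieties furnished by Theorem \ref{thm:cvb}, and then to feed that into the functoriality of exterior-power supports under surjections recorded in Lemma \ref{lem:supp-map}. Concretely, I would work entirely with the Alexander invariant $B(\cdot)$ as a module over the relevant group algebra, since the support loci $\supp\big(\bwedge^k B(\cdot)\otimes\C\big)$ are the objects on which Lemma \ref{lem:supp-map} operates, and these agree with $\VV_k(\cdot)$ away from the identity.

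First I would observe that, since $\pi\colon G\surj Q$ is surjective, the induced map on abelianizations $\pi_{\ab}\colon G_{\ab}\surj Q_{\ab}$ is surjective, and hence its linear extension $\tilde\pi_{\ab}\colon \C[G_{\ab}]\surj \C[Q_{\ab}]$ is a surjective ring map. Under the identification of $\T_G$ and $\T_Q$ with $\Spec(\C[G_{\ab}])$ and $\Spec(\C[Q_{\ab}])$, the morphism $\pi^*\colon \T_Q\to \T_G$ is exactly the map on spectra induced by $\tilde\pi_{\ab}$; being dual to a ring epimorphism, it is a closed embedding, which accounts for the injectivity already asserted in the statement. Next, recalling from \S\ref{subsec:alexinv-func} that $B$ is functorial and carries surjections to surjections, I would note that $B(\pi)\colon B(G)\surj B(Q)$ is surjective and covers $\tilde\pi_{\ab}$. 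Tensoring with $\C$ preserves both surjectivity and the covering relation, so $B(\pi)\otimes\C\colon B(G)\otimes\C\surj B(Q)\otimes\C$ is a surjective morphism covering the surjective ring map $\tilde\pi_{\ab}$. Applying Lemma \ref{lem:supp-map} with $R=\C[G_{\ab}]$, $S=\C[Q_{\ab}]$, $M=B(G)\otimes\C$, and $N=B(Q)\otimes\C$ then shows that $\pi^*$ restricts to embeddings $\supp\big(\bwedge^k B(Q)\otimes\C\big)\inj\supp\big(\bwedge^k B(G)\otimes\C\big)$ for all $k\ge 1$. By Theorem \ref{thm:cvb}, these support loci coincide with $\VV_k(Q)$ and $\VV_k(G)$, respectively, at least away from the identity character.

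The main obstacle is precisely that the support-loci description in Theorem \ref{thm:cvb} is valid only away from $1$, so the behavior of $\pi^*$ at the identity character must be checked separately. Here I would use that $\pi^*(1_Q)=1_G$ together with the observation that the surjection $G_{\ab}\surj Q_{\ab}$ forces $b_1(G)\ge b_1(Q)$. Since $1\in\VV_k(Q)$ if and only if $b_1(Q)\ge k$, and likewise $1\in\VV_k(G)$ if and only if $b_1(G)\ge k$, it follows that whenever $1\in\VV_k(Q)$ we also have $1\in\VV_k(G)$; thus $\pi^*$ sends the identity point of $\VV_k(Q)$, when present, into $\VV_k(G)$. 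Combining this with the away-from-$1$ conclusion of the previous paragraph yields that $\pi^*$ restricts to embeddings $\VV_k(Q)\inj\VV_k(G)$ for all $k\ge 1$, completing the argument.
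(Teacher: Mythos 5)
Your proof is correct and follows essentially the same route as the paper's: functoriality of the Alexander invariant gives a surjection $B(\pi)\colon B(G)\surj B(Q)$ covering the ring epimorphism $\tilde\pi_{\ab}$, after which Lemma \ref{lem:supp-map} and Theorem \ref{thm:cvb} yield the embeddings of characteristic varieties. Your explicit verification at the identity character, via $b_1(G)\ge b_1(Q)$ and the criterion $1\in\VV_k$ iff $b_1\ge k$, carefully settles a point that the paper's terse proof leaves implicit in the ``away from $1$'' caveat of Theorem \ref{thm:cvb}.
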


\begin{proof}
The homomorphism $\pi_{\ab}\colon G_{\ab}\surj Q_{\ab}$ 
extends linearly to a ring map, $\tilde\pi_{\ab}\colon R\surj S$, 
between the (commutative) rings $R=\Z[G_{\ab}]$ 
and  $S=\Z[Q_{\ab}]$. Moreover, 
the map $\pi\colon G\surj Q$ induces an epimorphism 
$B(\pi)\colon B(G)\surj B(Q)$ which covers the map $\tilde\pi$. 
The claim now follows from Lemma \ref{lem:supp-map} and 
Theorem \ref{thm:cvb} upon complexifying all these rings and modules 
and taking supports of exterior powers.
\end{proof}
 
\subsection{Jump loci in $\ab$-exact and $\abf$-exact sequences}
\label{subsec:cv-ab-abf} 
Let $K\triangleleft G$ be a normal subgroup, and assume both 
$G$ and $K$ are finitely generated.  The inclusion map 
$\iota\colon K\inj G$ induces a surjective algebraic morphism between 
character groups, $\iota^*\colon \TT_{G} \surj \TT_{K}$, 
which restricts to a surjective map between the 
identity components of those groups, 
$\iota^*\colon \TT^0_{G} \surj \TT^0_{K}$.  The next 
theorem shows that, under appropriate triviality assumptions 
on the monodromy of the resulting extension, these maps preserve 
the respective characteristic varieties.

\begin{theorem}
\label{thm:cv-abf}
Let $\begin{tikzcd}[column sep=14pt]
\!\!1\ar[r] & K\ar[r, "\iota"]
& G \ar[r] & Q\ar[r] & 1\!\!
\end{tikzcd}$ 
be an exact sequence of finitely generated groups. 

\begin{enumerate}
\item \label{cv1}
If the sequence is $\ab$-exact and $Q$ is abelian, 
then the map $\iota^*\colon \TT_{G} \to\TT_{K}$ 
restricts to maps $\iota^* \colon \VV_k(G)\to \VV_k(K)$ 
for all $k\ge 1$; furthermore, the map 
$\iota^* \colon \VV_1(G)\to \VV_1(K)$ is a surjection.

\item \label{cv2}
If the sequence is $\abf$-exact and $Q$ is torsion-free abelian, 
then the map $\iota^*\colon \TT^0_{G} \surj \TT^0_{K}$ 
restricts to maps $\iota^* \colon \WW_k(G)\to \WW_k(K)$ 
for all $k\ge 1$; furthermore, the map 
$\iota^* \colon \WW_1(G)\to \WW_1(K)$ is a surjection.
\end{enumerate}
\end{theorem}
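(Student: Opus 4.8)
The plan is to translate both assertions into statements about supports of exterior powers of Alexander invariants, where the module-level isomorphisms of Part~\ref{part:ext} can be brought to bear. Throughout, set $M\coloneqq B(G)\otimes\C$, a finitely generated module over $\C[G_{\ab}]$ (finite generation holds since $G$ is finitely generated). By Theorem~\ref{thm:cvb}, away from $1$ we have $\VV_k(G)=\supp(\bwedge^k M)$ inside $\TT_G=\Spec(\C[G_{\ab}])$ and $\VV_k(K)=\supp(\bwedge^k B(K)\otimes\C)$ inside $\TT_K=\Spec(\C[K_{\ab}])$. Under $\ab$-exactness with $Q$ abelian, Theorem~\ref{thm:alex-abex}\eqref{ng2} supplies a $\Z[K_{\ab}]$-linear isomorphism $B(K)\isom B(G)_{\iota}$; complexifying identifies $B(K)\otimes\C$ with $M_{\iota}$, the restriction of scalars of $M$ along the ring map $\tilde\iota_{\ab}\colon\C[K_{\ab}]\to\C[G_{\ab}]$ induced by the injection $\iota_{\ab}\colon K_{\ab}\inj G_{\ab}$. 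Because $\C^{*}$ is divisible and $\iota_{\ab}$ is injective with cokernel $Q$, restriction of characters makes $\iota^{*}\colon\TT_G\to\TT_K$ surjective, with fibers the cosets of $\ker(\iota^{*})\cong\TT_Q$.

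For the containment $\iota^{*}(\VV_k(G))\subseteq\VV_k(K)$, I would use that, by base change of exterior powers together with Nakayama, a character $\rho$ lies in $\supp(\bwedge^k M)$ precisely when the fiber $M(\rho)\coloneqq M\otimes_{\C[G_{\ab}]}\C_{\rho}$ satisfies $\dim_{\C}M(\rho)\ge k$, and likewise for $M_{\iota}$ over $\C[K_{\ab}]$. The crux is the tensor identity $M_{\iota}\otimes_{\C[K_{\ab}]}\C_{\sigma}=M\otimes_{\C[G_{\ab}]}\big(\C[G_{\ab}]\otimes_{\C[K_{\ab}]}\C_{\sigma}\big)$, whose second factor is the coordinate ring of the fiber $(\iota^{*})^{-1}(\sigma)$. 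If $\rho$ lies in that fiber, then $\C_{\rho}$ is a quotient of the fiber algebra, so $M(\rho)$ is a quotient of $M_{\iota}\otimes_{\C[K_{\ab}]}\C_{\sigma}$, giving $\dim_{\C}\big(M_{\iota}\otimes\C_{\sigma}\big)\ge\dim_{\C}M(\rho)$. Taking $\sigma=\iota^{*}(\rho)$, any $\rho\in\VV_k(G)$ yields $\dim_{\C}M(\rho)\ge k$, hence $\iota^{*}(\rho)\in\VV_k(K)$.

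For surjectivity of $\iota^{*}\colon\VV_1(G)\to\VV_1(K)$, fix $\sigma\in\VV_1(K)$ with $\sigma\ne1$. Then $M_{\iota}\otimes_{\C[K_{\ab}]}\C_{\sigma}\ne0$, and rewriting this via the tensor identity as $M\otimes_{\C[G_{\ab}]}\C[(\iota^{*})^{-1}(\sigma)]\ne0$ shows, since $M$ is finitely generated, that $\supp(M)$ meets the fiber $(\iota^{*})^{-1}(\sigma)$. Any $\rho$ in this intersection satisfies $M(\rho)\ne0$ and $\iota^{*}(\rho)=\sigma$; as $\sigma\ne1$ forces $\rho\ne1$, we get $\rho\in\VV_1(G)$ with $\iota^{*}(\rho)=\sigma$. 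Part~\eqref{cv2} then follows by the identical argument, with $B$, $\TT$, $\C[G_{\ab}]$, and Theorems~\ref{thm:cvb}, \ref{thm:alex-abex}\eqref{ng2} replaced by $B_{\rat}$, $\TT^{0}$, $\C[G_{\abf}]$, and Theorems~\ref{thm:cvbq}, \ref{thm:alex-lcs-ngq}\eqref{ngq2}; here injectivity of $\iota_{\abf}\colon K_{\abf}\inj G_{\abf}$ with torsion-free abelian cokernel $Q$ yields the surjection $\iota^{*}\colon\TT^{0}_G\surj\TT^{0}_K$.

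The main obstacle is that the ring map $\tilde\iota_{\ab}$ is injective but not finite, so the behaviour of supports under restriction of scalars is not governed by Lemma~\ref{lem:supp-map}, which requires a \emph{surjective} ring map (the situation of Proposition~\ref{prop:v1-nat}). The replacement is the fiberwise dimension comparison above, whose essential content is the elementary fact that each point of a fiber determines a quotient of the fiber algebra. A further point demanding care is the trivial character: the support descriptions of $\VV_k$ and $\WW_k$ are valid only away from $1$, so the entire argument is run there, in accordance with the conventions of Theorems~\ref{thm:cvb} and \ref{thm:cvbq}.
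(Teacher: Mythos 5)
Your proposal is correct, and it follows the same global strategy as the paper: both reduce the statement to the support-locus descriptions $\VV_k=\supp\big(\bwedge^k B\otimes\C\big)$ and $\WW_k=\supp\big(\bwedge^k B_{\rat}\otimes\C\big)$ of Theorems \ref{thm:cvb} and \ref{thm:cvbq}, and both feed in the restriction-of-scalars isomorphisms $B(K)\isom B(G)_{\iota}$ and $B_{\rat}(K)\isom B_{\rat}(G)_{\iota}$ from Theorems \ref{thm:alex-abex}\eqref{ng2} and \ref{thm:alex-lcs-ngq}\eqref{ngq2}. Where you diverge is in the local mechanism. The paper transports annihilator ideals: it checks that $\tilde\iota_{\ab}$ carries $\ann\big(\bigwedge^k B(K)\big)$ into $\ann\big(\bigwedge^k B(G)\big)$ and takes zero-sets, which yields the containment $\iota^*(\VV_k(G))\subseteq\VV_k(K)$ quickly; for surjectivity in depth $1$ it then observes that $B(G)_{\iota}\to B(G)$ is injective, so that $\ann\big(B(K)\big)=\tilde\iota_{\ab}^{-1}\big(\ann B(G)\big)$, and asserts surjectivity in one line. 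You instead argue fiberwise, using that exterior powers commute with base change, Nakayama, and the identity $M_{\iota}\otimes_{\C[K_{\ab}]}\C_{\sigma}\cong M\otimes_{\C[G_{\ab}]}\big(\C[G_{\ab}]\otimes_{\C[K_{\ab}]}\C_{\sigma}\big)$. This buys something real: the zero-set of a contracted ideal is in general only the Zariski \emph{closure} of the image (the image of a closed subvariety of $\TT_G$ under $\iota^*$ need not be closed), so the paper's terse step from the annihilator equality to surjectivity of $\iota^*\colon\VV_1(G)\to\VV_1(K)$ leaves a gap that your argument actually fills—nonvanishing of $M\otimes\C\big[(\iota^*)^{-1}(\sigma)\big]$ plus finite generation shows each fiber over a point of $\VV_1(K)\setminus\{1\}$ genuinely meets $\supp(M)$. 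You are also right that Lemma \ref{lem:supp-map} is unavailable here (the ring map is injective, not surjective, unlike in Proposition \ref{prop:v1-nat}), which is exactly why the paper resorts to the ad hoc annihilator computation. Two small caveats, shared equally by the paper's proof: your containment argument produces $\sigma=\iota^*(\rho)$ lying in $\supp\big(\bwedge^k B(K)\otimes\C\big)$, which identifies $\VV_k(K)$ only away from $1$, so characters $\rho\ne 1$ with $\iota^*(\rho)=1$ fall under the same ``away from the identity'' convention you flag; and for surjectivity one should note the trivial case $\sigma=1$, where $b_1(G)\ge b_1(K)$ (from $\ab$- or $\abf$-exactness) gives $1\in\VV_1(G)$ mapping to $1$.
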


\begin{proof}
Let $R=\Z[K_{\ab}]$ and $S=\Z[G_{\ab}]$. The induced map 
between Alexander invariants, $B(\iota)\colon B(K)\to B(G)$, 
covers the ring map $\tilde\iota_{\ab} \colon R\to S$ obtained from 
$\iota_{\ab}\colon K_{\ab}\to G_{\ab}$ by linear extension to group rings.
Furthermore, $B(\iota)$ may be viewed as the 
composite $B(K) \to B(G)_{\iota} \to B(G)$, where the first arrow is a map 
of $R$-modules and the second arrow is the identity map of $B(G)$, thought of 
as a map covering $\tilde\iota_{\ab}$.  Taking exterior $k$-powers, we may realize 
the map $\bwedge^k B(\iota)$ as the composite 
\begin{equation}
\label{eq:bwedge-alex}
\begin{tikzcd}[column sep=20pt]
\bwedge^k_R\, B(K)\ar[r]& \bwedge^k_R\, B(G)_{\iota}
\ar[r]& \bwedge^k_S\, B(G) .
\end{tikzcd}
\end{equation}

To prove claim \eqref{cv1}, first note that the map $\iota_{\ab}$ is injective, 
since the given sequence is $\ab$-exact. Thus, the map $\tilde\iota_{\ab}$ 
is also injective, and the induced map between character 
groups, $\iota^*\colon \TT_{G} \to\TT_{K}$, is surjective.
Furthermore, since $Q$ is abelian, 
Theorem \ref{thm:alex-abex}, part \eqref{ng2} shows that 
the map $B(K) \to B(G)_{\iota}$ is an $R$-isomorphism. 
Therefore, the map $\tilde\iota_{\ab}$ restricts to a map   
$\ann_R\! \big(\bigwedge^k_R B(K) \big) \to 
\ann_S\! \big(\bigwedge^k_S B(G) \big)$. 
Applying Theorem \ref{thm:cvb}, we obtain a map $\VV_k(G)\to \VV_k(K)$
after tensoring with $\C$ and taking zero-sets. By a previous remark, this 
map coincides with the restriction of $\iota^*$ to $\VV_k(G)$.

When $k=1$, the map $B(G)_{\iota} \to B(G)$ is injective; 
the argument above then shows that the map 
$\iota^* \colon \VV_1(G)\to \VV_1(K)$ is surjective, and  
the proof of the first claim is complete.

To prove claim \eqref{cv2}, first note that the map $\iota_{\abf}$ 
is injective, since by assumption our sequence is $\abf$-exact.  
Thus, the induced morphism, $\iota^*\colon \TT^0_{G} \to\TT^0_{K}$, 
is surjective.  Moreover, Theorem \ref{thm:cvbq} insures 
that $\WW_k(G)$ coincides, at least away from $1$, with the 
support of the $\C[G_{\abf}]$-module $\bwedge^k \big(B_{\rat}(G)\otimes \C\big)$. 
Since $Q$ is torsion-free abelian, 
Theorem \ref{thm:alex-lcs-ngq}, part \eqref{ngq2} yields a 
$\Z[K_{\abf}]$-isomorphism, $B_{\rat} (K) \to B_{\rat} (G)_{\iota}$. 
The second claim now follows as above.
\end{proof}

It is clear that, in the above theorem, we need to make some triviality 
assumptions on the action of $Q$ in first homology, for otherwise we 
may well have $b_1(K)>b_1(G)$.  When this happens, the map 
$\iota^*\colon \TT_{G} \to\TT_{K}$ is not surjective, making it 
unlikely that it would restrict to a surjection from $\VV_1(G)$ 
to $\VV_1(K)$. We illustrate this point with a simple example, 
and will expand on this issue in \S\ref{subsec:cv-split-discuss}.

\begin{example}
\label{ex:cv-free}
Let $G=F_n$ be a free group of rank $n>1$; as observed previously, 
$\V_1(G)=\T_G$ in this case.  Now let $\pi\colon F_n\surj \Z_m$ be an 
epimorphism $(m>1)$; then $K=\ker(\pi)$ is isomorphic to $F_{nm-m+1}$. 
Thus, the inclusion $\iota^*\colon \TT_{G} \to\TT_{K}$ is not surjective, 
and neither is its restriction to the characteristic varieties, 
$\iota^*\colon \V_1(G) \to \V_1(K)$.
\end{example}

\begin{corollary}
\label{cor:cv-ab-df}
Let $1\to K\to G\to Q\to 1$ be an $\ab$-exact sequence 
of finitely generated groups. Suppose $Q$ is abelian and 
$\V_1(G)$ is finite. Then $\dim_{\Q} B(K)\otimes \Q<\infty$ 
and $\theta_n(K)=0$ for $n\gg 0$.
\end{corollary}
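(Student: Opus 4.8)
The plan is to combine the surjectivity statement of Theorem \ref{thm:cv-abf}, part \eqref{cv1}, with the finiteness criterion of Corollary \ref{cor:trivial-cv}, and then feed the resulting finite-dimensionality of $B(K)\otimes\Q$ into the Chen-rank formula of Corollary \ref{cor:chen-alrat}.

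First I would invoke Theorem \ref{thm:cv-abf}, part \eqref{cv1}: since the sequence is $\ab$-exact and $Q$ is abelian, the map $\iota^*\colon \VV_1(G)\surj \VV_1(K)$ is a surjection. As the image of a finite set under any map is finite, the hypothesis that $\VV_1(G)$ is finite immediately forces $\VV_1(K)$ to be a finite subset of $\T_K$. Next I would apply Corollary \ref{cor:trivial-cv} to the finitely generated group $K$ in the case $k=1$: the finiteness of $\VV_1(K)$ is equivalent to the finite-dimensionality of the $\C$-vector space $B(K)\otimes\C$. Since $B(K)\otimes\C=(B(K)\otimes\Q)\otimes_{\Q}\C$, we obtain $\dim_{\Q} B(K)\otimes\Q=\dim_{\C}B(K)\otimes\C<\infty$, which is the first assertion.

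For the statement on Chen ranks, note that $K$ is finitely generated, so $b_1(K)<\infty$ and Corollary \ref{cor:chen-alrat} applies to $K$, giving $\theta_n(K)=\dim_{\Q}\gr_{n-2}(B(K)\otimes\Q)$ for all $n\ge 2$. Because $B(K)\otimes\Q$ is a finite-dimensional $\Q$-vector space, its associated graded module with respect to the $I$-adic filtration is also finite-dimensional, whence the graded pieces $\gr_{n-2}(B(K)\otimes\Q)$ vanish once $n-2$ exceeds the top nonzero degree. Therefore $\theta_n(K)=0$ for $n\gg 0$, completing the proof.

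The argument is essentially a routine concatenation of earlier results, so I do not anticipate a serious obstacle; the only point requiring a modicum of care is the bookkeeping that the integral characteristic-variety versions (Theorem \ref{thm:cv-abf}, part \eqref{cv1}, and Corollary \ref{cor:trivial-cv}) are the correct inputs here---as opposed to the restricted, rational versions underlying Corollary \ref{cor:trivial-cw-chen}---since the hypothesis is phrased in terms of $\VV_1(G)$ rather than $\WW_1(G)$.
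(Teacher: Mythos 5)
Your proposal is correct and takes essentially the same route as the paper: the first claim is proved identically, via the surjection $\iota^*\colon \VV_1(G)\surj \VV_1(K)$ from Theorem \ref{thm:cv-abf}, part \eqref{cv1}, followed by Corollary \ref{cor:trivial-cv}. For the Chen-rank vanishing, the paper's primary argument passes through the finiteness of $\WW_1(K)=\V_1(K)\cap \T^0_K$ and Corollary \ref{cor:trivial-cw-chen}, but it explicitly records your derivation---from the finite-dimensionality of $B(K)\otimes\Q$ together with Corollary \ref{cor:chen-alrat}---as an equivalent alternative, so your bookkeeping (including the care taken to use the integral rather than restricted versions as inputs) is exactly right.
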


\begin{proof}
By Theorem \ref{thm:cv-abf}, part \eqref{cv1}, 
the inclusion $\iota\colon K\to G$ induces a surjective morphism, 
$\iota^* \colon \V_1(G) \surj \V_1(K)$.  Since $\V_1(G)$ is a finite 
set, the same must be true for $\V_1(K)$, and the first claim follows 
from Corollary \ref{cor:trivial-cv}.  Now note that 
$\WW_1(K)=\V_1(K)\cap \T^0_K$ is also finite, and so the second 
claim follows from Corollary \ref{cor:trivial-cw-chen}; alternatively, 
the second claim follows from the first one and Corollary \ref{cor:chen-alrat}.
\end{proof}

\begin{corollary}
\label{cor:cv-abf-df}
Let $1\to K\to G\to Q\to 1$ be an $\abf$-exact sequence of finitely generated 
groups. Suppose $Q$ is torsion-free abelian and $\WW_1(G)$ is finite. Then 
$\dim_{\Q} B_{\rat}(K)\otimes \Q<\infty$ and $\theta_n(K)=0$ for $n\gg 0$.
\end{corollary}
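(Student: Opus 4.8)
The plan is to run the exact analogue of the proof of Corollary \ref{cor:cv-ab-df}, replacing the integral characteristic varieties $\VV_k$ by the restricted varieties $\WW_k$ and invoking the $\abf$-versions of the supporting results. First I would apply Theorem \ref{thm:cv-abf}, part \eqref{cv2}: since the sequence is $\abf$-exact and $Q$ is torsion-free abelian, the inclusion $\iota\colon K\inj G$ induces a surjection $\iota^*\colon \WW_1(G)\surj \WW_1(K)$. As the image of a finite set under any map is finite, the hypothesis that $\WW_1(G)$ is finite forces $\WW_1(K)$ to be finite as well.

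Next, because $K$ is finitely generated, Corollary \ref{cor:trivial-cw} applies to $K$: the finiteness of $\WW_1(K)\subset \T^0_K$ is equivalent to the finite-dimensionality of the $\Q$-vector space $\bwedge^1 B_{\rat}(K)\otimes\Q = B_{\rat}(K)\otimes\Q$. This yields the first assertion, $\dim_{\Q} B_{\rat}(K)\otimes\Q<\infty$.

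For the statement on Chen ranks, the cleanest route is to apply Corollary \ref{cor:trivial-cw-chen} directly to the finitely generated group $K$: since $\WW_1(K)$ is finite, the Chen ranks $\theta_n(K)$ vanish for $n\gg 0$. Alternatively, one can deduce this from the first assertion together with Corollary \ref{cor:chen-alrat}: using $b_1(K)<\infty$ (automatic since $K$ is finitely generated), that corollary gives $\theta_n(K)=\dim_{\Q}\gr_{n-2}(B_{\rat}(K)\otimes\Q)$ for $n\ge 2$, and finite-dimensionality of $B_{\rat}(K)\otimes\Q$ forces its associated graded module to be finite-dimensional, hence to vanish in sufficiently high degrees.

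There is essentially no genuine obstacle here; the substance of the corollary is already packaged into Theorem \ref{thm:cv-abf} and Corollaries \ref{cor:trivial-cw} and \ref{cor:trivial-cw-chen}, and the argument is parallel to the $\ab$-exact case. The only point requiring a moment's care is to confirm the finite-generation hypotheses demanded by those auxiliary results: $K$ is finitely generated by assumption, which guarantees $b_1(K)<\infty$ and licenses applying Corollary \ref{cor:trivial-cw} (finite support is equivalent to finite dimension over the affine $\C$-algebra $\C[K_{\abf}]$) and the Chen-rank vanishing criterion to $K$ rather than to $G$.
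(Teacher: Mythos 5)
Your proposal is correct and matches the paper's proof essentially step for step: the paper also deduces the first claim from Theorem \ref{thm:cv-abf}, part \eqref{cv2} combined with Corollary \ref{cor:trivial-cw}, and the second from Corollary \ref{cor:trivial-cw-chen}, noting the same alternative route via Corollary \ref{cor:chen-alrat}. Your added care about the finite-generation hypotheses for $K$ is a correct and welcome elaboration, but introduces no new ideas beyond the paper's argument.
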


\begin{proof}
This is proved in like fashion: the first claim follows from 
Theorem \ref{thm:cv-abf}, part \eqref{cv2} and Corollary \ref{cor:trivial-cw}, 
while the second claim follows again from Corollary \ref{cor:trivial-cw-chen} 
(or from the first one and Corollary \ref{cor:chen-alrat}).
\end{proof}

\subsection{Jump loci in split-exact sequences}
\label{subsec:cv-split} 
For split extensions, Theorem \ref{thm:cv-abf} admits a slightly more 
convenient formulation. 

\begin{corollary}
\label{cor:cv-abf-semi}
Let $\begin{tikzcd}[column sep=14pt]
\!\!1\ar[r] & K\ar[r, "\iota"]
& G \ar[r] & Q\ar[r] & 1\!\!
\end{tikzcd}$ 
be a split exact sequence of finitely generated groups. 

\begin{enumerate}
\item \label{semi-cv1}
If $Q$ is abelian and acts trivially on $H_1(K;\Z)$, 
then the map $\iota^*\colon \TT_{G} \to\TT_{K}$ 
restricts to maps $\iota^* \colon \VV_k(G)\to \VV_k(K)$ 
for all $k\ge 1$; furthermore, the map 
$\iota^* \colon \VV_1(G)\to \VV_1(K)$ is a surjection.

\item \label{semi-cv2}
If $Q$ is torsion-free abelian and acts trivially on $H_1(K;\Q)$, 
then the map $\iota^*\colon \TT^0_{G} \surj \TT^0_{K}$ 
restricts to maps $\iota^* \colon \WW_k(G)\to \WW_k(K)$ 
for all $k\ge 1$; furthermore, the map 
$\iota^* \colon \WW_1(G)\surj \WW_1(K)$ is a surjection.
\end{enumerate}
\end{corollary}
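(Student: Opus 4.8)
The plan is to deduce the corollary directly from Theorem \ref{thm:cv-abf}, whose hypotheses are phrased in terms of $\ab$-exactness and $\abf$-exactness rather than in terms of the monodromy action of $Q$ on the first homology of $K$. For a split extension these two formulations are equivalent, and it is precisely this translation that I would carry out; the only genuine subtlety resides in part \eqref{semi-cv2}.

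First I would dispose of part \eqref{semi-cv1}. Since the sequence $1\to K\to G\to Q\to 1$ splits and $Q$ acts trivially on $K_{\ab}=H_1(K;\Z)$, Proposition \ref{prop:ab-exact-split} shows that the sequence is $\ab$-exact; that is, $G=K\rtimes Q$ is an almost direct product. As $Q$ is abelian by hypothesis, all the assumptions of Theorem \ref{thm:cv-abf}, part \eqref{cv1} are met, and applying that theorem yields both the restriction maps $\iota^*\colon \VV_k(G)\to\VV_k(K)$ for all $k\ge 1$ and the surjectivity of $\iota^*\colon \VV_1(G)\to\VV_1(K)$.

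For part \eqref{semi-cv2} the argument is parallel but requires one extra observation. Because $K$ is finitely generated, its abelianization $K_{\ab}$, and hence its maximal torsion-free abelian quotient $K_{\abf}$, is finitely generated. This is exactly the finite-generation hypothesis needed to invoke Proposition \ref{prop:abf-exact-split}, part \eqref{ssq2}, which upgrades the assumed triviality of the $Q$-action on $H_1(K;\Q)$ to triviality on $K_{\abf}$ itself, and thereby establishes that the split sequence is $\abf$-exact. With $Q$ torsion-free abelian, Theorem \ref{thm:cv-abf}, part \eqref{cv2} then applies verbatim and furnishes the restriction maps $\iota^*\colon \WW_k(G)\to\WW_k(K)$ together with the surjection $\iota^*\colon \WW_1(G)\surj\WW_1(K)$.

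The main (and indeed only) point requiring care is this finite-generation step in part \eqref{semi-cv2}: as the Baumslag--Solitar groups of Example \ref{ex:baumslag-solitar} illustrate, without it the triviality of the action on $H_1(K;\Q)$ need not force triviality on $K_{\abf}$, so that $\abf$-exactness can fail. Since finite generation of $K$ is built into the hypotheses, no further work is needed, and the corollary follows at once from the theorem.
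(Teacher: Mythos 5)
Your proposal is correct and follows the paper's own proof essentially verbatim: both parts are deduced from Theorem \ref{thm:cv-abf}, using Proposition \ref{prop:ab-exact-split} to translate the triviality of the action on $H_1(K;\Z)$ into $\ab$-exactness for part \eqref{semi-cv1}, and Proposition \ref{prop:abf-exact-split}, part \eqref{ssq2} for part \eqref{semi-cv2}. Your explicit remark that finite generation of $K$ guarantees $K_{\abf}$ is finitely generated (the hypothesis needed to invoke that proposition) is exactly the point left implicit in the paper, so nothing is missing.
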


\begin{proof}
Both claims follow from Theorem \ref{thm:cv-abf}, together with 
Proposition \ref{prop:ab-exact-split} for the first one, and 
Proposition \ref{prop:abf-exact-split}, part \eqref{ssq2} for the 
second one.
\end{proof}

Here is a large class of examples where Corollary \ref{cor:cv-abf-semi} 
applies. 

\begin{example}
\label{ex:raag-cv}
Let $\Gamma$ be a connected, finite graph, and let 
$1\to N_{\Gamma} \xrightarrow{\iota} G_{\Gamma} \to \Z\to 1$ 
be the $\ab$-exact exact sequence from \eqref{eq:raag-bb}. 
In \cite[Lemma 8.3(i)]{PS-jlms07}, it was shown that the 
map $\iota^*\colon \TT_{G_{\Gamma}} \surj\TT_{N_{\Gamma}}$ 
restricts to a surjection 
$\iota^* \colon \VV_1(G_{\Gamma})\surj \VV_1(N_{\Gamma})$, 
provided $\pi_1(\Delta_{\Gamma})=0$. 
Corollary \ref{cor:cv-abf-semi} recovers this result, 
without this additional assumption.
\end{example}

\subsection{Discussion}
\label{subsec:cv-split-discuss} 
For the rest of this section, we discuss the necessity of the assumptions 
we made in Corollary \ref{cor:cv-abf-semi}, and thus, implicitly, in 
Theorem \ref{thm:cv-abf}, too. The first example shows why it is necessary 
to assume that the action of $Q$ on $H_1(K;\Z)$ ought to be trivial, 
even when $Q=\Z$ and $b_1(K)=b_1(G)$. 

\begin{example}
\label{ex:klein}
Let $G=\langle t,a\mid tat^{-1}=a^{-1}\rangle$ be 
the fundamental group of the Klein bottle. 
Then $G=\Z\rtimes_{\varphi} \Z$, where the monodromy automorphism $\varphi$
acts by inversion on the subgroup $K=\Z=\langle a\rangle$, and 
$G_{\ab}=\Z\oplus \Z_2$. The inclusion $\iota\colon K\to G$ induces a 
surjection $\iota^*\colon \T_G\to \T_K$, where $\T_G=\C^*\times \{\pm 1\}$ 
and $\T_K=\C^*$. This map sends $\V_1(G)=\{(1,1),(-1,1)\}$ onto $\{\pm 1\}\subset \C^*$, 
but $\{\pm 1\}$ is not contained in $\V_1(K)=\{1\}$. 
\end{example}

The next example shows that the condition requiring $Q$ to be  
abelian is crucial for Corollary \ref{cor:cv-abf-semi} to hold, even 
when $Q$ acts trivially on abelianization.

\begin{example}
\label{ex:braids}
Let $P_n$ be the Artin pure braid group on $n\ge 4$ strands. We then have 
a split exact sequence, $1\to F_{n-1} \xrightarrow{\iota} P_n \to P_{n-1}\to 1$, 
with monodromy given by the Artin embedding, $P_{n-1}\inj \Aut (F_{n-1})$. 
Since pure braids act trivially on $H_1(F_{n-1};\Z)$, the sequence is $\ab$-exact; 
moreover, both $P_n$ and $F_{n-1}$ are finitely generated, though of course 
$P_{n-1}$ is {\em not}\/ abelian.  It is known that $\dim(\V_1(P_n))=2$ 
(see e.g.~\cite{Su-toul} and references therein), whereas 
$\dim(\V_1(F_{n-1}))=n-1$. Thus, the morphism  
$\iota^*\colon \T_{P_n} \to \T_{F_{n-1}}$ does {\em not}\/ restrict to 
a surjection $\V_1(P_n)\to \V_1(F_{n-1})$. 
\end{example}

\begin{remark}
\label{rem:higher-depth}
Given an $\ab$-exact sequence as in Corollary \ref{cor:cv-abf-semi}, 
part \eqref{semi-cv1}, the morphisms $\iota^*\colon \VV_k(G)\to \VV_k(K)$ 
may fail to be surjective for $k>1$. The reason is that exterior powers do 
not necessarily commute with restriction of scalars, and so the map 
$\bwedge^k_R\, B(K)\to \bwedge^k_S\, B(G)_{\iota}$ 
may fail to be injective.  
\end{remark}

Computations done in \cite[Example 6.5]{Su-revroum} may be used 
to show that the phenomenon mentioned in the above remark occurs 
in the context of Milnor fibrations of complex hyperplane arrangements. 
This topic will be discussed in more detail in \cite{Su-mfmono}. 
We give here a different kind of example; we will return to it in 
\S\ref{subsec:res-discuss} in order to illustrate a related point 
regarding the higher-depth resonance varieties.

\begin{example}
\label{ex:high-depth-cv}
Let $\A=\A(2134)$ be the arrangement of transverse planes through 
the origin of $\R^4$ defined in complex coordinates by the function 
$f(z,w)=zw(z-w)(z-2\bar{w})$.  
As noted in Example \ref{ex:planes}, the arrangement group, $G$, 
fits into an $\ab$-exact sequence, $1\to F_{3} \xrightarrow{\iota} G \to \Z\to 1$, 
and so all the hypothesis of Corollary \ref{cor:cv-abf-semi}, 
part \eqref{semi-cv1} are satisfied.  As shown in \cite{MS-top,MS-imrn}, 
the variety $\V_1(G)$ consists of two codimension $1$ subtori in $(\C^*)^4$, 
which are sent by $\iota^*$ 
onto $\V_1(F_3)=(\C^*)^3$, as predicted. On the other hand, $\V_2(G)$ 
consists of two $1$-dimensional subtori and a $2$-dimensional translated 
subtorus; thus, its image under $\iota^*$ is strictly contained in 
$\V_2(F_3)=(\C^*)^3$.
\end{example}

\part{Holonomy and resonance}
\label{part:resonance}

\section{Holonomy Lie algebras and formality properties}
\label{sect:formal}

In this section we review some basic notions regarding the holonomy 
Lie algebra and the Malcev Lie algebra of a finitely generated group $G$, 
and discuss the graded formality and $1$-formality properties of such groups.

\subsection{The holonomy Lie algebra of a group}
\label{subsec:holonomy}
Let $G$ be a group such that the maximal torsion-free abelian quotient 
$G_{\abf}$ is finitely generated. 
There is then another graded Lie algebra that can be associated to it, 
besides the ones already mentioned in \S\ref{sect:grg}. 
This Lie algebra is much easier to understand, in that it uses only 
information about the cohomology ring of $G$ encoded in the 
cup-product map $\cup_G\colon H^1(G)\wedge H^1(G) \to H^2(G)$. 

More precisely, let $\L=\Lie(G_{\abf})$ be the free Lie algebra on $G_{\abf}$. 
This is a (positively) graded Lie algebra, with grading given by bracket length. 
Writing $\L=\bigoplus_{n\ge 1} \L_n$, we have $\L_1=G_{\abf}$ 
and $\L_2=G_{\abf}\wedge G_{\abf}$.  Taking the dual of the 
cup-product map and writing $H^{\vee}\coloneqq \Hom(H,\Z)$, 
we obtain the comultiplication map, 
\begin{equation}
\label{eq:nabla}
\begin{tikzcd}[column sep=20pt]
\cup_G^{\vee}\colon H^2(G)^{\vee} \ar[r]& (H^1(G)\wedge H^1(G))^{\vee}\cong 
G_{\abf}\wedge G_{\abf}\, .
\end{tikzcd}
\end{equation} 

Following \cite{Chen77,Markl-Papadima, PS-imrn04, PS-mathann06, SW-jpaa}, 
we define the {\em holonomy Lie algebra}\/ of $G$, denoted by $\h(G)$,  
as the quotient 
\begin{equation}
\label{eq:holo-lie}
\h(G) = \Lie(G_{\abf})/(\im(\cup_G^{\vee}))
\end{equation}
of the free Lie algebra $\L=\Lie(G_{\abf})$ by the Lie ideal generated by 
the image of $\cup_G^{\vee}$, viewed as a subgroup of $\L_2$.
The holonomy Lie algebra inherits a natural grading from the 
free Lie algebra, which is compatible with the Lie bracket.  
By construction, $\h(G)$ is a quadratic Lie algebra: 
it is generated in degree $1$ by $G_{\abf}$, and all the 
relations are in degree $2$. In particular, the derived Lie 
subalgebra, $\h(G)'$, coincides with $\h_{\ge 2}(G)$. 
As noted in \cite{SW-jpaa}, the projection map 
$G\surj G/\gamma_n(G)$ induces an isomorphism 
$\h(G)\isom \h(G/\gamma_n(G))$ for all $n\ge 3$. 
Consequently, the holonomy Lie algebra of $G$ depends 
only on its second nilpotent quotient, $G/\gamma_3 (G)$. 

This construction is functorial. Indeed, 
let $\alpha\colon G \to H$ be a homomorphism between two 
groups as above; then the induced homomorphism 
$\alpha_{\abf}\colon G_{\abf} \to H_{\abf}$ extends 
to a morphism $\L(\alpha_{\abf})\colon \L(G_{\abf}) \to \L(H_{\abf})$
between the respective free Lie algebras. The map $\alpha$ 
also induces a morphism between cohomology rings, and 
thus sends $\im(\cup_G^{\vee})$ to $\im(\cup_H^{\vee})$. 
Consequently, $\L(\alpha_{\abf})$ induces a morphism of 
graded Lie algebras, $\h(\alpha) \colon \h(G)\to \h(H)$; 
it is easily checked that $\h(\beta\circ\alpha)=\h(\beta)\circ\h(\alpha)$.

A notable fact about the holonomy Lie algebra
is its relationship to the associated graded Lie algebra, 
as embodied in the next theorem. 

\begin{theorem}[\cite{Markl-Papadima,PS-imrn04,SW-jpaa}]
\label{thm:holo-epi}
For every group $G$ such that $G_{\abf}$ is finitely generated,  
there exists a natural epimorphism of graded Lie algebras, 
$\Psi\colon \h(G) \surj \gr(G)$, which induces 
isomorphisms in degrees $1$ and $2$ and descends to epimorphisms  
$\Psi^{(r)}\colon \h(G)/\h(G)^{(r)} \surj \gr(G/G^{(r)})$ for all $r\ge 2$.  
\end{theorem}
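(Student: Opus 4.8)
The plan is to construct $\Psi$ from the universal property of the free Lie algebra $\L=\Lie(G_{\abf})$ and then to isolate the two genuinely structural inputs: a degree-$2$ comparison coming from the five-term exact sequence \eqref{eq:stallings-5}, and the behaviour of $\Psi$ under the derived series.

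First I would produce the map. Since $\gr(G)$ is generated in degree one (\S\ref{subsec:lcs}), lifting a basis of $G_{\abf}$ into $\gr_1(G)$ and applying the universal property of $\L$ gives a surjection of graded Lie algebras $\widetilde\Psi\colon\L\surj\gr(G)$ whose degree-one part is the tautological map $G_{\abf}\to\gr_1(G)$ and whose degree-two part is the bracket map $\bar x\wedge\bar y\mapsto[x,y]\,\gamma_3(G)$. The only thing to verify here is that the quadratic relations of $\h(G)$ are killed, i.e.\ that $\im(\cup_G^{\vee})\subseteq G_{\abf}\wedge G_{\abf}$ maps to $0$ in $\gr_2(G)$; granting this, $\widetilde\Psi$ factors through the asserted epimorphism $\Psi\colon\h(G)\surj\gr(G)$. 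Naturality is then automatic, since $G\leadsto G_{\abf}$, the cup product, and the lower central series are all functorial, as recorded in \S\ref{subsec:holonomy}.

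The heart of the argument, and the step I expect to be the main obstacle, is the degree-$2$ isomorphism. Here I would apply the five-term exact sequence \eqref{eq:stallings-5} to the extension $1\to G'\to G\to G_{\ab}\to 1$, which reads $H_2(G)\xrightarrow{\pi_*}\bigwedge^2 G_{\ab}\to\gr_2(G)\to 0$ and thus presents $\gr_2(G)$ as $\bigwedge^2 G_{\ab}/\im(\pi_*)$. The classical duality between the abelianization map $\pi_*$ and the cup product $\cup_G$ identifies $\im(\pi_*)$ with the defining degree-$2$ relations $\im(\cup_G^{\vee})$ of $\h(G)$; this simultaneously verifies the vanishing needed in Step~1 and yields $\h_2(G)\isom\gr_2(G)$. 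The degree-one isomorphism is immediate from the construction. Getting this comparison right over $\Z$, rather than merely rationally, is the only place a real cohomological computation intervenes.

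Finally I would deduce the descent. For $q\colon G\surj G/G^{(r)}$ with $r\ge 2$, one has $G^{(r)}\subseteq\gamma_3(G)$ (as $G''\subseteq\gamma_4(G)$), so $\h(q)\colon\h(G)\isom\h(G/G^{(r)})$ because $\h$ depends only on $G/\gamma_3$. Since $\Psi$ is a morphism of Lie algebras, $\Psi(\h(G)^{(r)})\subseteq\gr(G)^{(r)}$, and this is carried by $\gr(q)$ into $\gr(G/G^{(r)})^{(r)}$. It therefore suffices to know that a group of derived length $\le r$ has associated graded of derived length $\le r$, i.e.\ $\gr(G/G^{(r)})^{(r)}=0$; this follows from the inclusion $\gr(H)^{(r)}_n\subseteq\im\!\big(\gamma_n(H)\cap H^{(r)}\to\gr_n(H)\big)$, which I would prove by induction on $r$ using the identity $[\gamma_i,\gamma_j]\subseteq\gamma_{i+j}$ together with commutator calculus. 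Consequently $\h(G)^{(r)}\subseteq\ker(\gr(q)\circ\Psi)$, so $\gr(q)\circ\Psi$ factors through the desired epimorphism $\Psi^{(r)}\colon\h(G)/\h(G)^{(r)}\surj\gr(G/G^{(r)})$.
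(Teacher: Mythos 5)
The paper offers no proof of its own here: immediately after the statement it says the result ``is stated and proved in the references cited, in various degrees of generality,'' with essentially the same proof working under the weaker hypothesis that only $G_{\abf}$ is finitely generated. Your proposal reconstructs exactly that standard argument, in the same three moves: the universal property of $\L=\Lie(G_{\abf})$ together with degree-one generation of $\gr(G)$; the five-term sequence \eqref{eq:stallings-5} applied to $1\to G'\to G\to G_{\ab}\to 1$, presenting $\gr_2(G)$ as $\bigwedge^2 G_{\ab}/\im(\pi_*)$ and invoking the duality between $\pi_*$ on $H_2$ and $\cup_G$ to kill the quadratic relations and get the degree-$2$ isomorphism; and the descent to $\Psi^{(r)}$ via the inclusion $\gr(H)^{(r)}_n\subseteq\im\big(\gamma_n(H)\cap H^{(r)}\to\gr_n(H)\big)$ combined with the fact, recorded in \S\ref{subsec:holonomy}, that $\h$ depends only on $G/\gamma_3(G)$ (your use of $G^{(r)}\subseteq G''\subseteq\gamma_4(G)$ is correct, and Step 3 is complete as written).

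One point in your write-up needs repair, and it is precisely where integral torsion makes the statement delicate. There is no ``tautological'' map $G_{\abf}\to\gr_1(G)$: one has $\gr_1(G)=G_{\ab}$, and the natural map goes the other way, $G_{\ab}\surj G_{\abf}$. Your construction therefore secretly chooses a splitting (``lifting a basis of $G_{\abf}$ into $\gr_1(G)$''), after which naturality is no longer ``automatic'' as you claim --- a choice of lifts must be shown to give a map independent of the choice, or one must work in the setting where $\Tors(G_{\ab})$ plays no role (over a field $\k$ of characteristic $0$, as in \S\ref{subsec:gr-formal}, or at the level of ranks, which is all the paper actually uses downstream). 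The same wrinkle propagates into your degree-$2$ step: the five-term sequence lives over $\bigwedge^2 G_{\ab}$, while the holonomy relations $\im(\cup_G^{\vee})$ live in $\bigwedge^2 G_{\abf}$, so the asserted identification of $\im(\pi_*)$ with $\im(\cup_G^{\vee})$ must be routed through the projection $\bigwedge^2 G_{\ab}\to\bigwedge^2 G_{\abf}$ and through a universal-coefficients comparison of $\cup_G^{\vee}$ with the homological comultiplication $H_2(G)\to\bigwedge^2 G_{\ab}$. You correctly flag this as ``the only place a real cohomological computation intervenes,'' but you do not carry it out, and it is also the one spot where the paper's weak hypotheses bite: $H_2(G)$ and $\Tors(G_{\ab})$ need not be finitely generated, so the dualization step requires genuine care rather than the finite-dimensional reflexivity one would use rationally. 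In short: your route is the route of the cited proofs, and your Steps 1 and 3 are sound modulo the degree-one fix, but the integral degree-$2$ comparison remains an announced computation rather than a proved one.
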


This result is stated and proved in the references cited, in various 
degrees of generality.  Essentially the same proof works in the slightly 
more general context adopted here. 

It follows from the above theorem that the Lie algebra $\h(G)/\h(G)''$ 
maps surjectively onto the Chen Lie algebra $\gr(G/G^{''})$. Thus, if 
we define the {\em holonomy Chen ranks}\/ of $G$ as $\bar\theta_n(G)\coloneqq 
\rank \left(\h(G)/\h(G)''\right)_n$, we have that $\bar\theta_n(G)\ge \theta_n(G)$, 
for all $n\ge 1$.

\subsection{Graded formality}
\label{subsec:gr-formal}

Suppose now that $G$ is a group with $b_1(G)<\infty$, 
and let $\k$ be a field of characteristic $0$. 
We may then define in a completely analogous fashion the 
holonomy Lie algebra of $G$ with coefficients in $\k$, 
denoted $\h(G;\k)$, as the free Lie algebra on $H_1(G;\k)$, 
modulo the ideal generated by the image of the comultiplication map, 
$H_2(G;\k) \to H_1(G;\k)\wedge H_1(G;\k)$.  

An analogue of Theorem \ref{thm:holo-epi} holds over $\k$ even when 
$b_1(G)<\infty$:  There exists a natural epimorphism of graded Lie algebras, 
$\Psi_{\k}\colon \h(G;\k) \surj \gr(G)\otimes \k$, which induces 
isomorphisms in degrees $1$ and $2$ and descends to epimorphisms  
$\Psi^{(r)}_{\k}\colon \h(G;\k)/\h(G;\k)^{(r)} \surj \gr(G/G^{(r)})\otimes \k$\, 
for all $r\ge 2$.  Thus, if we set $\bar\theta_n(G)\coloneqq 
\dim_{\k} \left(\h(G;\k)/\h(G;\k)''\right)_n$, 
we have that $\bar\theta_n(G)\ge \theta_n(G)$, 
for all $n\ge 1$.  When $G_{\abf}$ is finitely generated, 
$\h(G;\k)=\h(G)\otimes \k$, and the holonomy Chen ranks 
$\bar\theta_n(G)$ defined here coincide with the ones 
from \S\ref{subsec:holonomy}. 

Following \cite{SW-jpaa, SW-forum}, we say that a finitely generated 
group $G$ is {\em graded formal}\/ if the map 
$\Psi_{\k} \colon \h(G;\k) \surj \gr(G)\otimes \k$ is an isomorphism. 
This condition is equivalent to  $\gr(G)\otimes \k$ being a quadratic Lie algebra. 
As shown in \cite[Theorem 5.11]{SW-forum}, if $K\le G$ is a retract 
of a graded formal group $G$, then $K$ is also graded formal. The next 
theorem gives another condition insuring that graded formality is 
inherited by a subgroup. 

\begin{theorem}
\label{thm:abf-gr-formal}
Let $G=K\rtimes Q$ be a split extension of finitely generated groups, 
and assume $Q$ acts trivially on $H_1(K;\k)$. If $G$ is graded formal, 
then $K$ is also graded formal.
\end{theorem}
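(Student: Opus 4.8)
The plan is to prove that $K$ is graded formal by showing that its $\k$-associated graded Lie algebra $\gr(K)\otimes\k$ is quadratic. This suffices, since the canonical epimorphism $\Psi_\k\colon\h(K;\k)\surj\gr(K)\otimes\k$ furnished by the $\k$-analogue of Theorem~\ref{thm:holo-epi} is always an isomorphism in degrees $1$ and $2$, so $\Psi_\k$ is an isomorphism exactly when its target is quadratic.

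First I would record the two structural inputs. The splitting $\sigma\colon Q\to G$ realizes $Q$ as a retract of $G$ (with retraction the projection $\pi$); since $G$ is graded formal, \cite[Theorem~5.11]{SW-forum} shows that $Q$ is graded formal as well, so $\gr(Q)\otimes\k=\h(Q;\k)$ is quadratic. Next, as $\ch\k=0$ we have $H_1(K;\k)=H_1(K;\Q)\otimes_{\Q}\k$, so the $Q$-action on $H_1(K;\k)$ is trivial if and only if it is trivial on $H_1(K;\Q)$; and since $K$ is finitely generated, $K_{\abf}$ is finitely generated. Hence Corollary~\ref{cor:fr-abf-rat} applies, and extending scalars from $\Q$ to $\k$ gives an isomorphism of graded Lie algebras
\[
\gr(G)\otimes\k\;\cong\;\bigl(\gr(K)\otimes\k\bigr)\rtimes_{\bar\varphi}\bigl(\gr(Q)\otimes\k\bigr),
\]
in which $\gr(K)\otimes\k$ is the ideal $\ker(\gr(\pi)\otimes\k)$, generated in degree $1$ by $H_1(K;\k)$. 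Because $G$ is graded formal, the ambient algebra $\gr(G)\otimes\k$ is quadratic.

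The problem is thus reduced to the following assertion about the Falk--Randell decomposition: the ideal $\gr(K)\otimes\k$ of the quadratic Lie algebra $\gr(G)\otimes\k$ is itself quadratic. I would attack this through holonomy ranks. Writing $\bar\phi_n(\cdot)=\dim_\k\h(\cdot;\k)_n$ and $\phi_n(\cdot)=\dim_\k(\gr(\cdot)\otimes\k)_n$, graded formality of $G$ and of $Q$ reads $\bar\phi_n(G)=\phi_n(G)$ and $\bar\phi_n(Q)=\phi_n(Q)$, while the decomposition above gives $\phi_n(G)=\phi_n(K)+\phi_n(Q)$. If one can establish the holonomy-level analogue of Falk--Randell, namely a decomposition $\h(G;\k)\cong\h(K;\k)\rtimes\h(Q;\k)$ (equivalently, the additivity $\bar\phi_n(G)=\bar\phi_n(K)+\bar\phi_n(Q)$), then combining these four identities forces $\bar\phi_n(K)=\phi_n(K)$ for all $n$; since $\Psi_\k\colon\h(K;\k)\surj\gr(K)\otimes\k$ is a degreewise surjection between finite-dimensional graded pieces, equal dimensions in every degree make it an isomorphism, proving $K$ graded formal.

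The hard part will be establishing this holonomy decomposition, because exterior powers and defining ideals do not commute with passage to the subalgebra generated by $H_1(K;\k)$: a priori the quadratic relations of $\gr(G)\otimes\k$, i.e.\ the image of the dual cup product $\cup_G^{\vee}$, could impose relations purely among the $H_1(K;\k)$-generators beyond those coming from $\cup_K^{\vee}$. Ruling this out amounts to a careful analysis of the low-degree cohomology of the split extension $G=K\rtimes Q$ with trivial monodromy on $H_1(K;\k)$. Concretely, I would use $H^1(G;\k)=H^1(K;\k)\oplus H^1(Q;\k)$ together with the Lyndon--Hochschild--Serre spectral sequence to put $\cup_G^{\vee}$ in block-triangular form relative to $H_1(G;\k)=H_1(K;\k)\oplus H_1(Q;\k)$: the part landing in $\bwedge^2 H_1(Q;\k)$ recovers the relations of $\h(Q;\k)$, the part in $\bwedge^2 H_1(K;\k)$ recovers those of $\h(K;\k)$, and the mixed part in $H_1(K;\k)\otimes H_1(Q;\k)$ encodes exactly the derivation action $\bar\varphi$, contributing no new relations in the $H_1(K;\k)$-generators alone. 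The subtle point, and the crux of the argument, is the control of the mixed cup products $H^1(K;\k)\cup H^1(Q;\k)$; once they are shown to realize precisely the action making $\h(K;\k)$ an ideal, the desired semidirect-product decomposition of holonomy Lie algebras, and hence the theorem, follows.
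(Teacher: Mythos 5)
Your preliminary reductions are all sound and agree with the paper's framework: graded formality of $K$ is indeed equivalent to quadraticity of $\gr(K)\otimes\k$; the retract result \cite[Theorem 5.11]{SW-forum} does give graded formality of $Q$; triviality of the action on $H_1(K;\k)$ does pass to $K_{\abf}$ (Proposition \ref{prop:abf-exact-split}, using that $K$ is finitely generated), so Corollary \ref{cor:fr-abf-rat} applies; and your dimension bookkeeping would close the argument \emph{if} the holonomy-level decomposition $\h(G;\k)\cong\h(K;\k)\rtimes\h(Q;\k)$ were available. But that decomposition---which you correctly flag as ``the hard part''---is never proved, and this is a genuine gap, not a detail. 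The degree-$2$ part of your spectral-sequence sketch is actually unconditional: since $\Psi_{\k}$ is an isomorphism in degree $2$ (Theorem \ref{thm:holo-epi}) and $\gr_2(G)\otimes\k=\gr_2(K)\otimes\k\oplus\gr_2(Q)\otimes\k$, one does get $\im(\cup_G^{\vee})\cap \bwedge^2 H_1(K;\k)=\im(\cup_K^{\vee})$ and a quadratic presentation of $\h(G;\k)$ by the relations of $\h(K;\k)$, those of $\h(Q;\k)$, and graph relations $[t,v]-D_t(v)$ with $D_t(v)\in\h_2(K;\k)$. The trouble lives in degrees $\ge 3$: such a presentation yields a semidirect product only if each $D_t$ descends to a derivation of $\h(K;\k)$, i.e., preserves the relation ideal, and the $D_t$'s assemble into an action of $\h(Q;\k)$. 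Bracketing a defining relation $r$ of $\h(K;\k)$ with $t$ shows that $D_t(r)$ vanishes in $\h(G;\k)$; if $D_t(r)$ does not already lie in the ideal generated by the quadratic relations of $\h(K;\k)$, this is a \emph{new} cubic relation among the $K$-generators, and $\h(\iota)$ fails to be injective. Nothing in low-degree cup-product or Lyndon--Hochschild--Serre data rules this out, and your sketch never invokes the graded formality of $G$, without which the decomposition is false in general.

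Worse, under the standing hypotheses your needed decomposition is logically equivalent to the theorem itself, so the reduction relocates the difficulty rather than resolving it: writing $\Psi^G_{\k}$, $\Psi^K_{\k}$ for the canonical surjections, one has $\Psi^G_{\k}\circ\h(\iota)=(\gr(\iota)\otimes\k)\circ\Psi^K_{\k}$, and since $\Psi^G_{\k}$ is injective (graded formality of $G$) and $\gr(\iota)\otimes\k$ is injective (the Falk--Randell decomposition), $\ker \Psi^K_{\k}=\ker\h(\iota)$. Thus injectivity of $\h(\iota)$---the substance of your holonomy decomposition---is the very statement to be proved. The paper avoids the holonomy level entirely: it runs the argument of \cite[Theorem 5.12]{SW-forum} on the associated-graded side, starting from the isomorphism $\Psi^G_{\k}$ and using the splitting $\gr^{\rat}(G)\to\gr^{\rat}(K)$ supplied by Theorem \ref{thm:fr-rational} (the quadraticity transfer happens there, inside the quadratic Lie algebra $\gr(G)\otimes\k$, where the $\h(Q;\k)$-action on $\gr(K)\otimes\k$ exists for free). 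To repair your proof you would either have to reproduce that associated-graded argument or find an independent proof of the higher-degree compatibility of the $D_t$'s, which is exactly where the formality hypothesis must enter.
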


\begin{proof}
An analogous result was proved in \cite[Theorem 5.12]{SW-forum}, 
under the more restrictive assumption that $Q$ should act trivially 
on $K_{\ab}$. Starting from the hypothesis that the map 
$\Psi_{\k}\colon \h(G;\k) \surj \gr(G)\otimes \k$ 
is an isomorphism, the proof uses the splitting 
$\gr(G)\to \gr(K)$ provided by Falk and Randell's 
Theorem \ref{thm:falk-ran} to conclude that the map 
$\Psi_{\k}\colon \h(K;\k) \surj \gr(K)\otimes \k$ 
is also an isomorphism.

In our more general setting, Proposition \ref{prop:abf-exact-split} insures 
that $Q$ acts trivially on $K_{\abf}$. Therefore, the same proof works here, 
using instead the splitting $\gr(G)\otimes \k\to \gr(K)\otimes \k$ 
induced by the splitting  $\gr^{\rat}(G)\to \gr^{\rat}(K)$ from 
Theorem \ref{thm:fr-rational}.
\end{proof}

\subsection{Malcev completion}
\label{subsec:malcev}
In \cite{Qu}, Quillen associated to every group $G$ 
a filtered Lie algebra over the rationals, $\m(G)$, called the 
{\em Malcev Lie algebra}\/ of $G$. Let $I=I_{\Q}(G)$ be the 
augmentation ideal of the group algebra of $G$, and let 
\begin{equation}
\label{eq:qg-completion}
\widehat{\Q[G]}=\varprojlim_{n} \Q[G]/I^n
\end{equation}
be the completion of $\Q[G]$ with respect to the $I$-adic filtration. 
The usual Hopf algebra structure on the group algebra extends to 
the completion, making $\widehat{\Q[G]}$ into a complete 
Hopf algebra.  By definition, $\m(G)$ is the Lie algebra of 
primitive elements in $\widehat{\Q[G]}$, endowed with the 
induced filtration and the (compatible) Lie bracket $[x,y]=xy-yx$. 
It is readily seen that this construction is functorial.

Let $\gr(\m(G))$ be the associated graded Lie algebra 
with respect to the lower central series filtration of $\m(G)$. 
Quillen then showed in \cite{Quillen68} that 
\begin{equation}
\label{eq:quillen-iso}
\gr(\m(G))\cong \gr(G)\otimes \Q \, . 
\end{equation}

For more on this topic, we refer 
to \cite{DHP14, DPS-duke, PS-imrn04, PS-formal, SW-forum, SW-ejm}.  
We will only recall here two results which will prove to be useful to us. 
The first result, proved in \cite{Hain} and recorded in \cite{DHP14}, 
is a filtered version of a celebrated result of Stallings (\cite[Theorem~7.3]{St}).

\begin{theorem}[\cite{DHP14, Hain, St}] 
\label{thm:hain-stallings}
Let $\alpha\colon G\to H$ be a homomorphism that induces an isomorphism 
$H_1(G;\Q)\isom H_1(H;\Q)$ and an epimorphism $H_2(G;\Q)\surj H_2(H;\Q)$. 
Then $\alpha$ induces an isomorphism of filtered Lie algebras, 
$\m(G)\isom \m(H)$, and thus, and isomorphism of graded Lie algebras, 
$\gr(G)\otimes \Q\isom \gr(H)\otimes \Q$.
\end{theorem}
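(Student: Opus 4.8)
The plan is to prove the strong (filtered) statement $\m(G)\isom\m(H)$ first and then read off the graded statement from Quillen's isomorphism \eqref{eq:quillen-iso}. Recall that $\m(G)$ is the Lie algebra of primitives in the complete Hopf algebra $\widehat{\Q[G]}$, and that $\alpha$ extends to a filtration-preserving morphism of complete Hopf algebras, $\widehat{\alpha}\colon\widehat{\Q[G]}\to\widehat{\Q[H]}$, which restricts on primitives to a filtered Lie algebra map $\m(\alpha)\colon\m(G)\to\m(H)$. Since $\m(G)$ and $\m(H)$ are complete, Hausdorff, and exhaustively filtered by their lower central series filtrations, such a morphism is an isomorphism as soon as its associated graded is; this is the same completeness principle already used in the proof of Theorem \ref{thm:hat-kappa} (cf.~\cite[Lemma 2.4]{SW-ejm}). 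By naturality of \eqref{eq:quillen-iso}, the associated graded of $\m(\alpha)$ is identified with $\gr(\alpha)\otimes\Q\colon\gr(G)\otimes\Q\to\gr(H)\otimes\Q$. Thus the whole theorem reduces to the single assertion that $\alpha$ induces an isomorphism $\gr(G)\otimes\Q\isom\gr(H)\otimes\Q$: once this is in hand, the filtered isomorphism $\m(G)\isom\m(H)$ follows by completeness, and the graded isomorphism is then immediate.

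To establish this graded isomorphism I would invoke the classical rational theorem of Stallings \cite{St}: under the stated hypotheses, $\alpha$ induces isomorphisms on all rational nilpotent quotients, equivalently isomorphisms $\gr_n(G)\otimes\Q\isom\gr_n(H)\otimes\Q$ for every $n\ge 1$. I would run this as an induction on $n$, comparing the two towers of nilpotent quotients $\{G/\gamma_{n+1}(G)\}$ and $\{H/\gamma_{n+1}(H)\}$ through the five-term exact sequence \eqref{eq:stallings-5} in rational homology. The base case $n=1$ is precisely the hypothesis that $\alpha$ induces an isomorphism on $H_1(-;\Q)$ (when $b_1(G)<\infty$ the degree-$2$ comparison is also visible through the holonomy Lie algebra of Theorem \ref{thm:holo-epi}, which is computed from $H_1$ and the dual cup product, but the induction below needs no finiteness assumption).

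The inductive step is where the real work lies, and I expect it to be the main obstacle. Granting $G/\gamma_n(G)\otimes\Q\isom H/\gamma_n(H)\otimes\Q$, one must compare the central extensions $1\to\gr_n(G)\to G/\gamma_{n+1}(G)\to G/\gamma_n(G)\to 1$ with their $H$-counterparts and deduce the isomorphism one level higher. The five-term sequence reduces this to controlling the connecting maps out of $H_2$ of the nilpotent quotients, and the crux is a Dwyer-type surjectivity statement: the hypothesis $H_2(G;\Q)\surj H_2(H;\Q)$ must be propagated up the tower so that $H_2(G/\gamma_n(G);\Q)\surj H_2(H/\gamma_n(H);\Q)$ remains surjective at each stage, forcing the obstruction to lifting the isomorphism to vanish. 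Verifying this propagation — rather than the homological bookkeeping around \eqref{eq:stallings-5}, which is routine — is the delicate point; it is exactly here that the \emph{epimorphism} (as opposed to mere injectivity) on $H_2$ is consumed, and it is what makes the passage from the two-dimensional homological data $(H_1,H_2)$ to an isomorphism in all degrees a genuine theorem rather than a formal consequence. Having secured the graded isomorphism, the filtered and graded conclusions both follow as described in the first paragraph.
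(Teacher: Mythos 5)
The paper does not actually prove this theorem: it is quoted as known, with the graded statement attributed to Stallings \cite{St} and the filtered refinement to Hain \cite{Hain}, as recorded in \cite{DHP14}. So there is no in-paper proof to match your proposal against; judged on its own merits, your two-step plan is sound and is essentially the standard route through the literature. The reduction of the filtered statement to the graded one is correct: $\m(\alpha)$ is a filtration-preserving map of complete, separated filtered Lie algebras, Quillen's isomorphism \eqref{eq:quillen-iso} is natural, so $\gr(\m(\alpha))$ is identified with $\gr(\alpha)\otimes\Q$, and the completeness principle (the same one the paper uses for Theorem \ref{thm:hat-kappa}, cf.\ \cite[Lemma 2.4]{SW-ejm}) upgrades a graded isomorphism to a filtered one. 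Invoking Stallings' rational theorem for the graded statement is then legitimate, and your five-term-sequence induction is the correct skeleton of its proof.

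One correction to the inductive step, which you rightly single out as the delicate point but describe slightly wrongly. What must be carried up the tower is not surjectivity of $H_2(G/\gamma_n(G);\Q)\to H_2(H/\gamma_n(H);\Q)$ but the stronger statement that $G/\gamma_n(G)\to H/\gamma_n(H)$ is a \emph{rational equivalence} of nilpotent groups; this yields an \emph{isomorphism} on $H_2(-;\Q)$ of the quotients (rationally equivalent nilpotent groups have isomorphic rational homology, e.g.\ via Malcev completion). This matters for the diagram chase: in the five-term sequence \eqref{eq:stallings-5} for $1\to\gamma_n(G)\to G\to G/\gamma_n(G)\to 1$, with $\Q$-coefficients, the sharp five lemma gives that $\gr_n(G)\otimes\Q\to\gr_n(H)\otimes\Q$ is an isomorphism from: leftmost arrow $H_2(G;\Q)\to H_2(H;\Q)$ \emph{epi} (this is where the global $H_2$-hypothesis is consumed, once per stage, with no propagation needed), second arrow \emph{iso} (the inductive hypothesis just described), and the two $H_1$ arrows iso. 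If the second arrow were merely surjective, as in your formulation, you would get surjectivity of the $\gr_n$ comparison but not injectivity. The induction then closes by comparing the central extensions $0\to\gr_n\to G/\gamma_{n+1}\to G/\gamma_n\to 1$ for $G$ and $H$: rationally isomorphic kernels and quotients give a rational equivalence in the middle, restoring the inductive hypothesis at level $n+1$. With this adjustment your outline is a complete and correct reconstruction; note also that Hain's own proof of the filtered statement proceeds somewhat differently (directly with completions), so your completeness-plus-Stallings route is a genuine, and arguably more elementary, alternative.
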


Let $I$ be the augmentation ideal of the ring $R=\Q[G_{\ab}]$. 
The second result, proved in \cite[Proposition 5.4]{DPS-duke}, 
relates the $I$-adic completion of the rationalized Alexander 
invariant, $B(G)\otimes \Q$, viewed as a module over $\widehat{R}$, 
to the Malcev Lie algebra of $G$. For a subset 
$\mathfrak{a} \subset \m(G)$, we denote by 
$\overline{\mathfrak{a}}$ its closure in the topology 
defined by the filtration on $\m(G)$.

\begin{theorem}[\cite{DPS-duke}]
\label{thm:alex-malcev}
For a finitely generated group $G$, there is a filtration-preserving 
$\widehat{R}$-linear isomorphism, $\widehat{B(G)\otimes \Q}\cong 
\overline{\m(G)'} /\overline{\m(G)''}$.
\end{theorem}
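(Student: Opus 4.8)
The plan is to prove the isomorphism by matching associated graded objects and then invoking the standard principle — used already in the proof of Theorem \ref{thm:hat-kappa} via \cite[Lemma 2.4]{SW-ejm} — that a filtration-preserving map between complete filtered modules inducing an isomorphism on associated graded objects is itself an isomorphism. The essential preliminary reduction is to the maximal metabelian quotient $\bar G = G/G''$. By functoriality, the projection $G\surj \bar G$ induces a surjection $\m(G)\surj \m(\bar G)$ (surjectivity holds because $H_1(G;\Q)\isom H_1(\bar G;\Q)$, cf.\ Theorem \ref{thm:hain-stallings}), and $\m(\bar G)$ is metabelian — indeed $\gr(\bar G)\otimes \Q$ is metabelian since double commutators already vanish in $\bar G$, whence $\overline{\m(\bar G)''}=0$ by a filtration argument. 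Comparing associated graded Lie algebras via \eqref{eq:quillen-iso} then identifies $\m(\bar G)$ with $\m(G)/\overline{\m(G)''}$, so that $\overline{\m(G)'}/\overline{\m(G)''}=\overline{\m(\bar G)'}=F_2\m(\bar G)$, the second term of the lower central series filtration of $\m(\bar G)$. Under the exponential/logarithm identification $\widehat R\cong \widehat{U(H_1(G;\Q))}$ (both being power series on $H_1(G;\Q)$, since torsion in $G_{\ab}$ dies in the augmentation completion), the adjoint action of $\m(\bar G)/\overline{\m(\bar G)'}$ endows this abelian subalgebra with the $\widehat R$-module structure appearing on the right-hand side.

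Next I would construct the comparison map. The canonical map $\bar G\to \m(\bar G)$, sending a group element to the logarithm of its group-like image in $\widehat{\Q[\bar G]}$, carries $\bar G'=G'/G''=B(G)$ into $\m(\bar G)'$. Extending $\Q$-linearly produces a filtration-preserving, $\widehat R$-linear map $B(G)\otimes\Q\to \overline{\m(\bar G)'}$, which passes to completions to give $\widehat\kappa\colon \widehat{B(G)\otimes\Q}\to \overline{\m(\bar G)'}$.

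Then I would compute both associated graded modules and see that $\widehat\kappa$ induces an isomorphism. On the source, reindexing the $I$-adic filtration, Corollary \ref{cor:alex-chen} gives $\gr_n(B(G)\otimes\Q)\cong \gr_{n+2}(G/G'')\otimes\Q$. On the target, applying Quillen's isomorphism \eqref{eq:quillen-iso} \emph{directly to} $\bar G$ yields $\gr(\m(\bar G))\cong \gr(\bar G)\otimes\Q=\gr(G/G'')\otimes\Q$, the Chen Lie algebra; since $\overline{\m(\bar G)'}=F_2\m(\bar G)$ and the Chen Lie algebra is generated in degree $1$, the same reindexing gives $\gr_n(\overline{\m(\bar G)'})\cong \gr_{n+2}(G/G'')\otimes\Q$. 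A check that $\gr(\widehat\kappa)$ realizes the evident identification of these two copies of the derived Chen Lie algebra — compatibly with the $\gr(\widehat R)=\Sym(H_1(G;\Q))$-module structures — then completes the proof.

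The hard part will be this final compatibility, together with the reduction step itself. The crucial subtlety is that one must \emph{not} compute the associated graded of $\overline{\m(G)'}/\overline{\m(G)''}$ as the derived-abelianized quotient of $\gr(\m(G))\cong\gr(G)\otimes\Q$: associated graded does not commute with metabelianization, and doing so would return $(\gr(G)/\gr(G)'')\otimes\Q$ rather than the Chen Lie algebra $\gr(G/G'')\otimes\Q$, which differ beyond low degrees. Routing the Quillen isomorphism through $\bar G$ is exactly what repairs this, and verifying that $\gr(\widehat\kappa)$ matches Massey's correspondence on the source with Quillen's theorem on the target — including the module-structure identification (conjugation action versus adjoint action under $\exp/\log$) and the degree shift by $2$ — is the technical heart of the argument, as in \cite{DPS-duke}.
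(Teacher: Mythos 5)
The paper itself offers no proof of this theorem: it is quoted from \cite[Proposition 5.4]{DPS-duke}, so the only meaningful comparison is with the argument given there. Your outline reconstructs that argument in all essentials: the reduction to $\bar{G}=G/G''$ via the identification $\m(\bar{G})\cong \m(G)/\overline{\m(G)''}$ (an auxiliary lemma in \cite{DPS-duke}), the $\log$-based comparison map, the matching of associated graded objects using Massey's correspondence (Corollary \ref{cor:alex-chen}) on the source against Quillen's isomorphism \eqref{eq:quillen-iso} applied to $\bar{G}$ on the target, the $\exp/\log$ identification of $\widehat{R}$ with a power series ring acting through the adjoint representation, and the completeness principle (as in Theorem \ref{thm:hat-kappa}) to upgrade a $\gr$-isomorphism. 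Your flagged pitfall---that $\gr$ does not commute with metabelianization, so one must route Quillen's theorem through $\bar{G}$ rather than metabelianize $\gr(G)\otimes\Q$---is exactly the right one, and is the reason the reduction step exists.

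Two justifications need repair, though neither dooms the approach. First, the appeal to Theorem \ref{thm:hain-stallings} for surjectivity of $\m(G)\surj \m(\bar{G})$ is inapposite: that theorem also requires an epimorphism on $H_2(-;\Q)$, which fails already for $G$ free, where $H_2(G;\Q)=0$ while $H_2(G/G'';\Q)$ is typically large. Surjectivity follows instead from completeness of the source together with the fact that $\gr(\m(\bar{G}))\cong \gr(G/G'')\otimes \Q$ is generated in degree one and the map is an isomorphism in that degree. Second, your ``filtration argument'' for $\overline{\m(\bar{G})''}=0$ is the genuinely soft step: knowing that $\gr(\m(\bar{G}))$ is metabelian does not directly kill $\overline{\m(\bar{G})''}$, because the initial form of an element of $\m(\bar{G})''$ may drop filtration and need not lie in $\gr(\m(\bar{G}))''$. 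The clean fix is the nilpotent tower $\m(\bar{G})=\varprojlim \m(\bar{G}/\gamma_n(\bar{G}))$: the Malcev completion of a finitely generated nilpotent metabelian group is a metabelian Lie algebra, so $\m(\bar{G})''=0$ outright; moreover the same tower, combined with $\m(\bar{G}/\gamma_n)'=(\bar{G}/\gamma_n)'\otimes \Q$ and Massey's identity $I^nB(G)=\gamma_{n+2}(G/G'')$, yields $\widehat{B(G)\otimes\Q}\cong \overline{\m(\bar{G})'}$ directly by passing to inverse limits, which can replace part of your $\gr$-comparison. Relatedly, in the reduction step you should record the BCH verification that $\log$ carries $G''$ into $\overline{\m(G)''}$, since this is what identifies $\gr(\overline{\m(G)''})$ with the kernel of the Chen surjection $\gr(G)\otimes\Q\surj \gr(G/G'')\otimes\Q$. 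With these repairs, your proposal is a faithful account of the proof in \cite{DPS-duke}.
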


The following is an immediate corollary of the above two theorems.

\begin{corollary} 
\label{cor:ac-map}
Let $\alpha\colon G\to H$ be a homomorphism between two finitely 
generated groups.  Suppose $\alpha$ induces an isomorphism 
$H_1(G;\Q)\isom H_1(H;\Q)$ and an epimorphism $H_2(G;\Q)\surj H_2(H;\Q)$. 
Then $\alpha$ induces a filtration-preserving isomorphism 
$\widehat{B(G)\otimes \Q}\isom \widehat{B(H)\otimes \Q}$, 
and thus, a degree-preserving isomorphism 
$\gr(B(G)\otimes \Q)\isom \gr(B(H)\otimes \Q)$.
\end{corollary}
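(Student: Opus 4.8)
The plan is to deduce the corollary directly from Theorems \ref{thm:hain-stallings} and \ref{thm:alex-malcev}, using only the functoriality of the Malcev completion together with the naturality of the isomorphism supplied by Theorem \ref{thm:alex-malcev}. No new computation is needed; the work is entirely in assembling the two quoted results.

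First I would invoke Theorem \ref{thm:hain-stallings}: since $\alpha$ induces an isomorphism $H_1(G;\Q)\isom H_1(H;\Q)$ and an epimorphism $H_2(G;\Q)\surj H_2(H;\Q)$, the induced map $\m(\alpha)\colon \m(G)\to \m(H)$ is an isomorphism of filtered Lie algebras. The key elementary observation is that such an isomorphism is compatible with the derived series and with the filtration topology: because the derived subalgebras $\m(\cdot)'$ and $\m(\cdot)''$ are defined purely in terms of the Lie bracket, $\m(\alpha)$ restricts to isomorphisms $\m(G)'\isom \m(H)'$ and $\m(G)''\isom \m(H)''$; and since $\m(\alpha)$ is a filtration-preserving homeomorphism, it carries closures to closures, giving $\overline{\m(G)'}\isom \overline{\m(H)'}$ and $\overline{\m(G)''}\isom \overline{\m(H)''}$. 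Passing to quotients yields a filtration-preserving isomorphism $\overline{\m(G)'}/\overline{\m(G)''}\isom \overline{\m(H)'}/\overline{\m(H)''}$.

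Next I would apply Theorem \ref{thm:alex-malcev} to both $G$ and $H$, identifying these two quotients with $\widehat{B(G)\otimes \Q}$ and $\widehat{B(H)\otimes \Q}$, respectively. Composing with the isomorphism from the previous step produces the desired filtration-preserving isomorphism $\widehat{B(G)\otimes \Q}\isom \widehat{B(H)\otimes \Q}$; it covers the ring isomorphism $\widehat{R}\isom \widehat{S}$ induced by the abelianization isomorphism $\alpha_{\ab}\otimes \Q$, where $R=\Q[G_{\ab}]$ and $S=\Q[H_{\ab}]$. To see that this composite coincides with the map $\widehat{B(\alpha)\otimes\Q}$ induced by $\alpha$ itself, rather than merely being some abstract isomorphism, I would check that the isomorphism of Theorem \ref{thm:alex-malcev} is natural in the group, so that the square relating $\widehat{B(\alpha)\otimes\Q}$ to the map induced by $\m(\alpha)$ commutes. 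Finally, taking associated graded objects with respect to the respective filtrations turns this filtered isomorphism into the degree-preserving isomorphism $\gr(B(G)\otimes \Q)\isom \gr(B(H)\otimes \Q)$, completing the argument.

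The main obstacle is precisely the naturality issue just flagged: Theorem \ref{thm:alex-malcev} is quoted as an existence statement, so I would need to confirm, from its source in \cite{DPS-duke}, that the isomorphism $\widehat{B(G)\otimes\Q}\cong \overline{\m(G)'}/\overline{\m(G)''}$ is functorial with respect to group homomorphisms. Granting that, every other step is formal, and all remaining verifications—that an isomorphism of filtered Lie algebras respects derived subalgebras, their closures, and the induced grading—are routine.
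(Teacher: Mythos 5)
Your proposal is correct and follows exactly the route the paper intends: the paper states Corollary \ref{cor:ac-map} as an immediate consequence of Theorems \ref{thm:hain-stallings} and \ref{thm:alex-malcev}, precisely the composition you carry out. Your explicit flagging of the naturality of the isomorphism in Theorem \ref{thm:alex-malcev} (which the paper's word ``natural'' in that theorem's statement, sourced from \cite{DPS-duke}, is meant to supply) is the one point the paper leaves implicit, and you handle it correctly.
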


In particular, as noted in \cite{DHP14}, if $G$ is finitely generated,  
$K\le G$ has finite index, and the inclusion 
map $\iota\colon K\to G$ induces an isomorphism on $H_1(-;\Q)$, 
then it also induced an isomorphism between completed 
Alexander invariants (over $\Q$). 

\subsection{The $1$-formality property}
\label{subsec:formal}
A finitely generated group $G$ is said to be 
{\em $1$-formal}\/ if $\m(G)$ is isomorphic (as a filtered Lie algebra) 
to $\widehat{\h(G; \Q)}$, the completion of the rational holonomy 
Lie algebra of $G$ with respect to its lower central series filtration.  
 
Now let $\Psi_{\Q}\colon \h(G; \Q) \surj \gr(G)\otimes \Q$  
and $\Psi^{(r)}_{\Q} \colon \h(G;\Q)/\h(G;\Q)^{(r)} 
\surj \gr(G/G^{(r)})\otimes \Q$ be the functorial morphisms of graded 
Lie algebras from \S\ref{subsec:gr-formal}. 
If $G$ is a $1$-formal group, then all these morphisms
are isomorphisms; this basic result follows from \cite{Qu} 
and \cite{PS-imrn04}, respectively (see also \cite{PS-formal,SW-jpaa,SW-forum}). 
Consequently, $G$ is $1$-formal if and only if $G$ is graded formal 
and $\m(G)$ is isomorphic to the completion of its associated graded 
Lie algebra (\cite{SW-forum}). It is still an open question as to 
when the $1$-formality property propagates through group extensions.  
Here is one instance when formality is inherited by (normal) subgroups.

\begin{theorem}[\cite{DP-pisa}]
\label{thm:1-formal n-cover}
Let $K \triangleleft G$ be a normal subgroup.  
Suppose $G$ is $1$-formal, the quotient $Q=G/K$ is finite, 
and $Q$ acts trivially on $H_1(K;\Q)$.  Then $K$ is also $1$-formal. 
\end{theorem}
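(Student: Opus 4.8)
The plan is to show that the inclusion $\iota\colon K\inj G$ induces an isomorphism on Malcev Lie algebras and an isomorphism on rational holonomy Lie algebras, and then to transport the $1$-formality of $G$ across these isomorphisms. Since $G$ is $1$-formal, there is a filtered Lie algebra isomorphism $\m(G)\cong \widehat{\h(G;\Q)}$. Thus, if I can produce a filtered isomorphism $\m(K)\cong \m(G)$ together with a graded isomorphism $\h(K;\Q)\cong \h(G;\Q)$ (the latter completing to $\widehat{\h(K;\Q)}\cong\widehat{\h(G;\Q)}$), then stringing together $\m(K)\cong \m(G)\cong \widehat{\h(G;\Q)}\cong \widehat{\h(K;\Q)}$ exhibits $K$ as $1$-formal. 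Note that $K$ is finitely generated, being a finite-index subgroup of the finitely generated group $G$, so all of these objects are defined.

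The homological input is that $\iota$ induces an isomorphism $\iota_*\colon H_1(K;\Q)\isom H_1(G;\Q)$ and an epimorphism $\iota_*\colon H_2(K;\Q)\surj H_2(G;\Q)$. To establish this, I would run the Lyndon--Hochschild--Serre spectral sequence of $1\to K\to G\to Q\to 1$ with $\Q$-coefficients. Since $Q$ is finite and the coefficients are $\Q$-vector spaces, Maschke's theorem gives $H_p(Q;-)=0$ for $p>0$, so the spectral sequence collapses to natural isomorphisms $H_n(G;\Q)\cong H_n(K;\Q)_Q$, the edge map being realized by $\iota_*$ followed by passage to $Q$-coinvariants. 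For $n=2$ this already gives surjectivity of $\iota_*$, as coinvariants form a quotient; for $n=1$ the hypothesis that $Q$ acts trivially on $H_1(K;\Q)$ forces the coinvariants to coincide with the whole space, so $\iota_*$ is an isomorphism.

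With these facts in hand, Theorem \ref{thm:hain-stallings} applies verbatim to $\iota$ and yields a filtered isomorphism $\m(K)\cong\m(G)$. For the holonomy Lie algebras I would argue directly from the definition \eqref{eq:holo-lie}. Dualizing over the field $\Q$ turns the $H_1$-isomorphism into an isomorphism $\iota^*\colon H^1(G;\Q)\isom H^1(K;\Q)$ and the $H_2$-epimorphism into a monomorphism $\iota^*\colon H^2(G;\Q)\inj H^2(K;\Q)$. These fit into the commuting square expressing naturality of cup products, namely $\iota^*\circ \cup_G=\cup_K\circ(\wedge^2\iota^*)$. Because the left vertical map $\wedge^2\iota^*$ is an isomorphism and the right vertical map is injective, the square gives $\wedge^2\iota^*(\ker \cup_G)=\ker \cup_K$; taking annihilators and using that $\iota_*$ and $\iota^*$ are mutually dual identifies $\im(\cup_K^\vee)$ with $\im(\cup_G^\vee)$ under $\wedge^2\iota_*$, where $\cup_G^\vee$ is the comultiplication of \eqref{eq:nabla}. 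Since the holonomy relations are precisely the ideals generated by these images, and $\iota_*$ is an isomorphism on the generating spaces $H_1(-;\Q)$, the functorial map $\h(\iota)\colon \h(K;\Q)\to \h(G;\Q)$ is an isomorphism of graded Lie algebras.

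Assembling these, the graded isomorphism $\h(\iota)$ completes to a filtered isomorphism of the respective $I$-adic completions, and the chain of filtered isomorphisms in the first paragraph shows that $\m(K)$ is isomorphic to $\widehat{\h(K;\Q)}$, so $K$ is $1$-formal. I expect the main obstacle to be the bookkeeping of the second paragraph: one must verify that the collapse of the spectral sequence is realized by the \emph{actual} induced map $\iota_*$ (not merely by some abstract isomorphism), since Theorem \ref{thm:hain-stallings} imposes its hypotheses on $\iota$ itself. Once that is pinned down, the Hain--Stallings comparison and the holonomy identification are formal consequences of duality over $\Q$.
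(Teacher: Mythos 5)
Your proof is correct, and there is a preliminary point worth making: the paper does not prove this statement at all---Theorem \ref{thm:1-formal n-cover} is imported as a black box from \cite{DP-pisa}---so the comparison is with the paper's surrounding machinery rather than with an internal proof. Your route (transfer/LHS collapse to get that $\iota$ induces an isomorphism on $H_1(-;\Q)$ and an epimorphism on $H_2(-;\Q)$, then Theorem \ref{thm:hain-stallings} for $\m(K)\isom\m(G)$, then invariance of the holonomy Lie algebra, then the chain of filtered isomorphisms) is exactly the argument the paper's toolkit is designed to support, and it is sound: the paper's definition of $1$-formality asks only for an abstract filtered isomorphism $\m(K)\cong \widehat{\h(K;\Q)}$, so composing filtered isomorphisms is legitimate. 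The edge-map worry you flag in your last paragraph is real but easily discharged, and in fact you can avoid the spectral sequence entirely via the transfer $\tau$: over $\Q$ one has $\iota_*\circ\tau=\abs{Q}\cdot\id$ on $H_*(G;\Q)$, giving surjectivity of $\iota_*$ in every degree with no hypothesis, while $\tau\circ\iota_*=\sum_{q\in Q}q_*=\abs{Q}\cdot\id$ on $H_1(K;\Q)$ by the trivial-action assumption, giving injectivity on $H_1$; both identities are statements about $\iota_*$ itself, which is what Theorem \ref{thm:hain-stallings} requires. One small refinement in your holonomy step: since $K$ is finitely generated (finite index in $G$) but need not be finitely presented, $H_2(K;\Q)$ may be infinite-dimensional, and your annihilator/duality bookkeeping in cohomology silently passes through double duals. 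It does go through, because the ambient space $\bwedge^2 H^1(K;\Q)$ is finite-dimensional, but it is cleaner to use the paper's homological definition of $\h(-;\k)$ from \S\ref{subsec:gr-formal} and naturality of the comultiplication, $(\bwedge^2\iota_*)\circ\nabla_K=\nabla_G\circ\iota_*$: surjectivity of $\iota_*$ on $H_2$ and bijectivity of $\bwedge^2\iota_*$ immediately give $(\bwedge^2\iota_*)(\im\nabla_K)=\im\nabla_G$, hence $\h(\iota)\otimes\Q$ is an isomorphism of graded Lie algebras, which completes to the filtered isomorphism you need. For the record, the original argument in \cite{DP-pisa} is phrased for finite Galois covers and works at the level of Malcev completions and minimal models; your version, resting on the transfer plus the filtered Stallings theorem of Hain that the paper records as Theorem \ref{thm:hain-stallings} (from \cite{DHP14, Hain}), is arguably more elementary and stays entirely within the present paper's framework.
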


More generally, one may consider the $1$-formality problem for short 
exact sequences of the form $1\to K \to G \to  Q\to 1$, where both 
$G$ and $K$ are finitely generated groups. If $K$ is $1$-formal, 
the group $G$ does not have to be $1$-formal, as illustrated 
by the Heisenberg group $G=\Z^2\rtimes \Z$ (see Example \ref{ex:heisenberg}). 
Likewise, if $G$ is $1$-formal, the group $K$ need not be $1$-formal;  in fact, 
$K$ is need not be $1$-formal, even if $Q=G/K$ is finite, 
see for instance \cite{DP-pisa,Su-toul}.
On the other hand, we have the following positive result.

\begin{theorem}[\cite{SW-forum}] 
\label{thm:1-formal-retract}
Let $G$ be a $1$-formal group, and let $K\triangleleft G$ be a 
normal subgroup. If $K$ is a retract of $G$, then $K$ is also $1$-formal. 
\end{theorem}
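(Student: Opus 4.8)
The plan is to combine the functoriality of the Malcev and holonomy Lie algebras with the characterization of $1$-formality recorded in \S\ref{subsec:formal}: a finitely generated group is $1$-formal precisely when it is graded formal and its Malcev Lie algebra is isomorphic, as a filtered Lie algebra, to the completion of its own associated graded Lie algebra. Writing $\iota\colon K\inj G$ for the inclusion and $r\colon G\surj K$ for the retraction, so that $r\circ\iota=\id_K$, note first that $K=r(G)$ is finitely generated. By \cite[Theorem 5.11]{SW-forum}, recalled above, a retract of a graded formal group is graded formal, so $K$ is graded formal; it therefore remains only to produce a filtered isomorphism $\m(K)\isom\widehat{\gr(\m(K))}$.

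To this end, I would transport the analogous isomorphism for $G$ across the splitting. Applying the functors $\m(-)$ and $\gr(-)\otimes\Q$ to $r\circ\iota=\id_K$ yields $\m(r)\circ\m(\iota)=\id_{\m(K)}$ and $\bar r\circ\bar\iota=\id$, where $\bar\iota=\gr(\iota)\otimes\Q$ and $\bar r=\gr(r)\otimes\Q$; these exhibit $\m(K)$ and $\gr(K)\otimes\Q$ as retracts of $\m(G)$ and $\gr(G)\otimes\Q$. Since $G$ is $1$-formal, there is a filtered Lie algebra isomorphism $\Theta\colon\m(G)\isom\widehat{\gr(G)\otimes\Q}$ inducing the identity on associated graded objects, under Quillen's natural isomorphism \eqref{eq:quillen-iso}. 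I would then set $\Theta_K\coloneqq\widehat{\bar r}\circ\Theta\circ\m(\iota)\colon\m(K)\to\widehat{\gr(K)\otimes\Q}$, a composite of filtered Lie algebra morphisms.

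Finally I would verify that $\Theta_K$ is the desired isomorphism. Passing to associated graded objects and using the naturality of \eqref{eq:quillen-iso} to identify $\gr(\m(\iota))$ with $\bar\iota$ and $\gr(\widehat{\bar r})$ with $\bar r$, together with $\gr(\Theta)=\id$, one computes $\gr(\Theta_K)=\bar r\circ\bar\iota=\id$. A filtered morphism between complete filtered Lie algebras that induces an isomorphism on associated graded is itself a filtered isomorphism, by a standard successive-approximation argument on the quotients by the filtration; hence $\Theta_K$ is a filtered isomorphism, giving $\m(K)\isom\widehat{\gr(\m(K))}$. Combined with the graded formality already established, this yields $\m(K)\isom\widehat{\h(K;\Q)}$, so $K$ is $1$-formal.

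The main obstacle I anticipate is the naturality bookkeeping rather than any hard new estimate: one must confirm that Quillen's isomorphism $\gr(\m(-))\cong\gr(-)\otimes\Q$ is natural with respect to both $\iota$ and $r$, and that $\Theta$ genuinely restricts to the identity on associated graded, so that $\gr(\Theta_K)$ collapses to $\bar r\circ\bar\iota$. Once these compatibilities are in place, the completeness argument upgrading an associated-graded isomorphism to a filtered one is routine.
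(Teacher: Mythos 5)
Your proposal is correct, and it is essentially the argument behind the result in the cited source \cite{SW-forum}, which the paper invokes without reproving: retract-inheritance of graded formality via \cite[Theorem 5.11]{SW-forum}, followed by transporting the filtered-formality isomorphism through the Malcev splitting $\m(r)\circ\m(\iota)=\id_{\m(K)}$ and invoking the standard fact that a filtered morphism of complete, separated filtered Lie algebras inducing an isomorphism on associated graded is a filtered isomorphism (note your argument never uses normality of $K$, which is harmless since the statement only needs the retraction). The one step to make explicit is your assumption that $\gr(\Theta)=\id$: the definition of $1$-formality only provides a filtered isomorphism $\Theta$ whose associated graded is some graded automorphism $\phi$ of $\gr(G)\otimes\Q$, but replacing $\Theta$ by $\widehat{\phi}^{\,-1}\circ\Theta$ arranges $\gr(\Theta)=\id$, after which your computation $\gr(\Theta_K)=\bar r\circ\bar\iota=\id$ goes through as written.
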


\section{Infinitesimal Alexander invariants and the BGG correspondence}
\label{sect:inf-alex}

We now use the holonomy Lie algebra  
to construct a graded module over a symmetric algebra  
which can be viewed as the infinitesimal version of the 
Alexander invariant. Using the BGG correspondence, 
we also construct an infinitesimal version of the Alexander 
module, and relate the two modules via a Crowell-type exact sequence.

\subsection{Infinitesimal Alexander invariant}
\label{subsec:inf-alex-inv}
Let $G$ be a group and assume that $G_{\abf}$ is finitely generated. 
We will denote by $\Sym(G_{\abf})$ the symmetric algebra on this 
free abelian group of finite rank. Note that $\Sym(G_{\abf})$ is naturally 
isomorphic to $\gr(\Z[G_{\abf}])$. In concrete terms, if we identify 
$G_{\abf}$ with $\Z^r$, where $r=b_1(G)$, then $\Sym(G_{\abf})$ 
gets identified with the polynomial ring $\Z[x_1,\dots,x_r]$. 

A homomorphism $\alpha\colon G \to H$ between two groups 
as above induces a homomorphism $\alpha_{\abf}\colon G_{\abf} \to H_{\abf}$, 
which extends to a ring map, 
$\tilde\alpha_{\abf}\colon \Sym(G_{\abf})\to  \Sym(H_{\abf})$. 
If we indentify these symmetric algebras with the corresponding 
polynomial rings, the map $\tilde\alpha_{\abf}$ is  the 
linear change of variables defined by the matrix of  $\alpha_{\abf}$. 
Consequently, if  $\alpha_{\abf}$ is injective (respectively, surjective), 
then $\tilde\alpha_{\abf}$ is also injective (respectively, surjective).

Now let $\h(G)$ be the holonomy Lie algebra of $G$. 
Following the approach from \cite{PS-imrn04} 
(see also \cite{DP-ann,PS-jtop,PS-crelle,SW-mz,SW-aam}), 
we define the {\em infinitesimal Alexander invariant}\/ of 
$G$ to be the quotient group
\begin{equation}
\label{eq:b-lin}
\B(G)\coloneqq \h(G)'/\h(G)'', 
\end{equation}
viewed as a graded module over $\Sym(G_{\abf})$. The 
module structure comes from the exact sequence 
\begin{equation}
\label{eq:holo-ses}
\begin{tikzcd}[column sep=18pt]
0\ar[r]& \h(G)'/\h(G)'' \ar[r]& \h(G)/\h(G)'' \ar[r]&
\h(G)/\h(G)' \ar[r]& 0
\end{tikzcd}
\end{equation}
via the adjoint action of $\h(G)/\h(G)' =\h_1(G) =G_{\abf}$ 
on $\h(G)'/\h(G)''$ given by $g\cdot \bar{x} = \overline{[g,x]}$ 
for $g\in \h_1(G)$ and $x\in \h(G)'$, and with the grading inherited 
from the one on $\h(G)$. 
When $G$ admits a finite, commutator-relators presentation, 
this module is isomorphic to the ``linearization" of $B(G)$, 
cf.~\cite[Proposition 9.3]{PS-imrn04}. 

The above construction is functorial. More precisely, if $\alpha\colon G \to H$ 
is a homomorphism between two groups as above, then $\alpha$ 
induces a morphism of graded Lie algebras, $\h(\alpha) \colon \h(G)\to \h(H)$, 
which preserves the respective derived series. Hence, the restriction 
$\h'(\alpha) \colon \h(G)'\to \h(H)'$ induces a map $\B(\alpha)\colon 
\B(G)\to \B(H)$. A routine check shows that $\B(\alpha)$ is a 
morphism of modules covering the ring map 
$\tilde\alpha_{\abf}\colon \Sym(G_{\abf})\to  \Sym(H_{\abf})$, and 
that $\B(\beta\circ\alpha)=\B(\beta)\circ\B(\alpha)$.

Denoting by $\B(H)_{\alpha}$ the module obtained from $\B(H)$ by 
restriction of scalars along $\tilde\alpha_{\abf}$, 
we may view the map $\B(\alpha)$ as the composite 
$\B(G) \to \B(H)_{\alpha} \to \B(H)$, 
where the first arrow is a $\Sym(G_{\abf})$-linear map and the second 
arrow is the identity map of $\B(H)$, thought of as covering the ring 
map $\tilde\alpha_{\abf}$. 

Consider now the  {\em canonical element}, 
$\omega_G \in G_{\abf}^{\vee} \otimes G_{\abf}^{\:}$; 
by definition, this tensor corresponds to the identity 
automorphism of $G_{\abf}$ under the tensor-hom adjunction. Identifying 
$G_{\abf}^{\vee}$ with $H^1(G;\Z)$ and $G_{\abf}$ with the degree $1$ piece 
of $S=\Sym(G_{\abf})$, multiplication by $\omega_G$ in the graded ring 
$H^{\hdot}(G;\Z)\otimes S$ restricts to a $S$-linear map, 
$\cdot \omega_G\colon H^1(G;\Z)\otimes S \to H^2(G;\Z)\otimes S$. 
By analogy with \eqref{eq:alexmod}, we define the {\em infinitesimal  
Alexander module}\/ of $G$ to be the cokernel of the $S$-dual of this map,  
\begin{equation}
\label{eq:inf-amod}
\begin{tikzcd}[column sep=20pt]
\AA(G)\coloneqq \coker \big( (\cdot \omega_G)^{*} \colon 
 H^2(G;\Z)^{\vee} \otimes S \ar[r]& G_{\abf} \otimes S \big) \,.
 \end{tikzcd}
\end{equation}

If $\alpha\colon G \to H$ is a group homomorphism, it is readily seen that  
$\big(\alpha_{\abf}^{\vee} \otimes \tilde\alpha_{\abf}^{\:}\big) \circ (\cdot \omega_G)^* 
= (\cdot \omega_H)^* \circ \big(H^2(\alpha)^{\vee}\otimes \tilde\alpha_{\abf}^{\:}\big)$. 
Consequently, $\alpha$ induces a morphism of modules, 
$\AA(\alpha)\colon \AA(G)\to \AA(H)$, which covers the 
ring map $\tilde\alpha_{\abf}$.

\subsection{The BGG correspondence}
\label{subsec:BGG}
All these notions may be extended to the more general context of groups $G$ 
with $b_1(G)<\infty$, provided we work over a field $\k$ of characteristic $0$. 
Specifically, let us define the infinitesimal Alexander invariant with 
coefficients in $\k$ as the quotient 
\begin{equation}
\label{eq:b-lin-k}
\B(G;\k)= \h(G;\k)'/\h(G;\k)''  ,
\end{equation} 
viewed as a graded module over the $\k$-algebra $\Sym(H_1(G;\k))$. 
By a previous remark, when $G_{\abf}$ is finitely 
generated we have that $\h(G;\k)=\h(G)\otimes \k$, and so 
$\B(G;\k)=\B(G)\otimes \k$. In the 
particular case when $G$ itself is finitely generated, the module 
$\B(G;\C)$ coincides with the {\em Koszul module}\/ introduced 
in \cite{PS-crelle}, and further studied in \cite{AFPRW1,  AFPRW2, AFRS}.

We write $H^{\hdot}=H^{\hdot}(G;\k)$ for the cohomology algebra of $G$ 
with coefficients in $\k$, and $H_{i}=H_i(G;\k)$ for the dual 
$\k$-vector spaces.  So let $S=\Sym(H_1)$ be the symmetric 
$\k$-algebra on $H_1$, viewed as the coordinate ring of $H^1$.  
Let $E^{\hdot}=\bwedge H^1$
be the exterior algebra on $H^1$; the identification $E^1=H^1$ 
extends to a map of graded rings, $E \to H$, which turns 
$H$ into an $E$-module.  The Bernstein--Gelfand--Gelfand 
correspondence (see \cite[\S7B]{Ei}) yields a cochain complex of 
free $S$-modules, 
\begin{equation}
\label{eq:univ aomoto}
\begin{tikzcd}[column sep=24pt]
(H^{\hdot}\otimes_{\k} S,\delta)\colon 
H^{0}\otimes_{\k} S \ar[r, "\delta^0_H"]
&H^{1} \otimes_{\k} S \ar[r, "\delta^1_H"]
&H^{2} \otimes_{\k} S \ar[r] & \cdots.
\end{tikzcd}
\end{equation}

Upon identifying $H_1$ with $S_1$, the differentials in \eqref{eq:univ aomoto} 
are given by multiplication by the canonical element, $\omega \in H^1 \otimes_{\k} H_1$. 
For $i\ge 1$, let $\partial_i^{H}\colon H_i\otimes_{\k} S \to H_{i-1}\otimes_{\k} S$ 
be the map $S$-dual to $\delta^{i-1}_H$.  Much as before, we define 
the infinitesimal Alexander module of $G$ over $\k$ to be the $S$-module 
\begin{equation}
\label{eq:inf-amod-k}
\AA(G,\k)\coloneqq \coker (\partial_2^{H}) \, .
\end{equation}
Clearly, $\AA(G;\k)=\AA(G)\otimes \k$, provided $G_{\abf}$ is finitely 
generated.

Finally, if $\alpha\colon G\to H$ is a homomorphism, we define as before morphisms 
$\B(\alpha;\k)$ and $\AA(\alpha;\k)$ covering the ring map $\tilde\alpha_{\abf}\colon 
\Sym(H_1(G;\k)) \to \Sym(H_1(H;\k))$. When $G_{\abf}$ and $H_{\abf}$ are finitely 
generated, $\B(\alpha;\k)=\B(\alpha)\otimes \k$ and $\AA(\alpha;\k)=\AA(\alpha)\otimes \k$.

\subsection{An infinitesimal Crowell exact sequence}
\label{subsec:inf-crowell}
Pursuing our analogy with the classical theory of Alexander invariants 
and modules, we now set up the infinitesimal counterpart (over a field 
$\k$ of characteristic $0$) of Crowell's exact sequence \eqref{eq:crowell}.

\begin{theorem}
\label{thm:inf-crowell}
Let $G$ be a group with $b_1(G)<\infty$. Set $S=\Sym(H_1(G;\k))$, 
and let $\II(G;\k)$ be the maximal ideal of $S$ at $0$.  There is then a 
natural exact sequence of $S$-modules, 
\begin{equation}
\label{eq:ag-bg}
\begin{tikzcd}[column sep=18pt]
0\ar[r]& \B(G;\k)\ar[r]&\AA(G;\k)\ar[r]&\II(G;\k)\ar[r]& 0 \, .
\end{tikzcd}
\end{equation}
\end{theorem}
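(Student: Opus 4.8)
The plan is to extract the sequence \eqref{eq:ag-bg} from the $S$-dual of the Bernstein--Gelfand--Gelfand complex \eqref{eq:univ aomoto} and to identify its homology at the two relevant spots by comparison with the Koszul resolution of $\k$ over $S=\Sym(H_1(G;\k))$. First I would analyze the dualized BGG complex
\[
\cdots \to H_2\otimes_\k S \xrightarrow{\partial_2^H} H_1\otimes_\k S \xrightarrow{\partial_1^H} H_0\otimes_\k S = S .
\]
Since $\partial_1^H$ is $S$-dual to $\delta^0_H$ (multiplication by the canonical element $\omega$), it is precisely the multiplication map $H_1\otimes S\to S$, $x\otimes s\mapsto xs$, after identifying $H_1$ with $S_1$; its image is therefore the maximal ideal $\II(G;\k)=S_{\ge 1}$, onto which it surjects. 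Because $\partial_1^H\circ\partial_2^H=0$, the map $\partial_1^H$ factors through $\AA(G;\k)=\coker(\partial_2^H)$ (see \eqref{eq:inf-amod-k}), producing a surjection $\AA(G;\k)\surj \II(G;\k)$ with kernel $\ker(\partial_1^H)/\im(\partial_2^H)$. This already yields \eqref{eq:ag-bg}, provided one identifies that kernel with $\B(G;\k)$.

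The core of the argument is this identification. Let $d_j\colon \bwedge^j H_1\otimes S\to \bwedge^{j-1}H_1\otimes S$ denote the Koszul differentials resolving $\k$ over $S$; note $d_1$ coincides with $\partial_1^H$, so exactness of the Koszul complex gives $\ker(\partial_1^H)=\ker(d_1)=\im(d_2)$. Next I would establish the key identity
\[
\partial_2^H = \pm\, d_2\circ\bigl(\cup_G^{\vee}\otimes\id_S\bigr),
\]
a direct computation comparing, in a basis $\{e_i\}$ of $H^1$ with dual basis $\{x_i\}$ of $H_1=S_1$, the $S$-dual of multiplication by $\omega=\sum_i e_i\otimes x_i$ with the composite of the dual cup product $\cup_G^{\vee}\colon H_2\to \bwedge^2 H_1$ from \eqref{eq:nabla} and the Koszul map $d_2$. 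Granting this, $\im(\partial_2^H)=d_2\bigl(\im(\cup_G^{\vee}\otimes\id)\bigr)$; since $d_2$ induces an isomorphism $(\bwedge^2 H_1\otimes S)/\ker(d_2)\isom\im(d_2)$ and $\ker(d_2)=\im(d_3)$, I obtain
\[
\ker(\partial_1^H)/\im(\partial_2^H)\isom (\bwedge^2 H_1\otimes S)\big/\bigl(\im(d_3)+\im(\cup_G^{\vee}\otimes\id)\bigr).
\]
The right-hand side is exactly the presentation of $\B(G;\k)=\h(G;\k)'/\h(G;\k)''$ as the cokernel of $(\cup_G^{\vee}\otimes\id)\oplus d_3\colon (H_2\oplus\bwedge^3 H_1)\otimes S\to \bwedge^2 H_1\otimes S$ arising from the quadratic presentation of the holonomy Lie algebra, as in \cite{PS-imrn04}. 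This completes the identification of the kernel and hence the exactness of \eqref{eq:ag-bg}. Naturality is then automatic: the BGG complex, the Koszul complex, and the map $\cup_G^{\vee}$ are all functorial in $G$, so the comparison is compatible with the morphisms $\B(\alpha;\k)$ and $\AA(\alpha;\k)$ induced by any homomorphism $\alpha\colon G\to H$.

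The main obstacle will be the key identity $\partial_2^H=\pm\, d_2\circ(\cup_G^{\vee}\otimes\id)$ together with the bookkeeping that makes the whole comparison compatible with the internal grading, so that \eqref{eq:ag-bg} holds as a sequence of \emph{graded} $S$-modules: one must match the grading on $\B(G;\k)$ inherited from $\h(G;\k)$ (with its shift by $2$, so that $\B_0\cong \h_2=\bwedge^2 H_1/\im(\cup_G^{\vee})$) against the $S$-grading on $\ker(\partial_1^H)/\im(\partial_2^H)$, and fix the signs in the dualization so that the Koszul and BGG differentials agree on the nose. A secondary point, if one wants a fully self-contained proof, is to reprove the presentation of $\B(G;\k)$ rather than merely quote it, since \cite{PS-imrn04} states it under the assumption that $G_{\abf}$ be finitely generated, whereas here one works in the broader setting $b_1(G)<\infty$ over a field $\k$ of characteristic $0$; the argument there carries over verbatim once $\h(G;\k)$ is substituted for $\h(G)\otimes\k$.
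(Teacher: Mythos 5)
Your proposal is correct and follows essentially the same route as the paper's proof: both extract the sequence from the $S$-dual of the BGG complex, identify $\II(G;\k)$ with $\im(\partial_1^E)$, use the presentation of $\B(G;\k)$ as $\coker\big(\partial_3^E + \nabla_G\otimes S\big)$ from \cite[Theorem 6.2]{PS-imrn04}, and conclude by comparing with the exact Koszul complex. The only cosmetic difference is that your ``key identity'' $\partial_2^H=\partial_2^E\circ(\cup_G^{\vee}\otimes\id_S)$, which you propose to check by a basis computation, is obtained in the paper directly from the functoriality of the BGG correspondence applied to the ring map $E\to H$ (the commuting diagram \eqref{eq:koszul}), which also disposes of the sign and grading bookkeeping you flag as the main obstacle.
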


\begin{proof}
As noted in \S\ref{subsec:BGG}, the BGG correspondence 
yields cochain complexes of free 
$S$-modules, $(H^{\hdot}\otimes_{\k} S, \delta_H)$ and 
$(E^{\hdot}\otimes_{\k} S, \delta_E)$. 
Dualizing these complexes and setting $E_i=(E^i)^{\vee}$ and 
$\partial_{i+1}=(\delta^i)^{*}$, the functoriality of the 
correspondence gives a commuting diagram, 
\begin{equation}
\label{eq:koszul}
\begin{tikzcd}[column sep=24pt, row sep=24pt]
& H_2\otimes_{\k} S\ar[r, "\partial_2^H"] \ar[d, "\nabla_H \otimes S"] 
& H_1\otimes_{\k} S\ar[r, "\partial_1^H"] \ar[equal]{d}
& H_0\otimes_{\k} S  \ar[equal]{d} \phantom{,}
\\
E_3\otimes_{\k} S\ar[r, "\partial_3^E"]  & E_2\otimes_{\k} S\ar[r, "\partial_2^E"] 
& E_1\otimes_{\k} S\ar[r, "\partial_1^E"] & E_0\otimes_{\k} S ,
\end{tikzcd}
\end{equation}
where the bottom row is the beginning of the standard Koszul complex, while 
$\nabla_H\colon H_2\to E_2=H_1\wedge H_1$ is the comultiplication map 
$\cup_G^{\vee}\otimes \k$ from \eqref{eq:nabla}.
As shown in \cite[Theorem 6.2]{PS-imrn04}, the $S$-module $\B(G;\k)$ 
admits the presentation 
\begin{equation}
\label{eq:bbmod-pres}
\begin{tikzcd}[column sep=20pt]
(E_3 \oplus H_2)\otimes_{\k} S \ar["\partial_3^E + \nabla_G \otimes S"]{r} 
&[30pt] E_2\otimes_{\k} S \ar[r]&\B(G;\k)  \ar[r]&  0 \, .
\end{tikzcd}
\end{equation}

By definition, $\AA(G;\k)$ is equal to $\coker (\partial_2^{H})$. 
We infer from \eqref{eq:koszul} that the Koszul differential $\partial_2^E$ 
descends to a map $\B(G;\k)\to \AA(G;\k)$. Noting that $E_0\otimes_{\k} S=S$, 
we may identify $\II(G;\k)$ with $\im(\partial_1^E)$. A diagram chase 
now yields the exact sequence \eqref{eq:ag-bg}. 

Finally, let $\alpha\colon G\to H$ be a group homomorphism. 
It is readily checked that the restriction of $\AA(\alpha; \k)$ to 
$\B(G;\k)$ coincides with $\B(\alpha;\k)$, and induces the map 
$\tilde\alpha_{\abf}$ on augmentation ideals. 
This verifies the naturality of \eqref{eq:ag-bg}, 
and completes the proof.
\end{proof}

\subsection{Chen ranks and $1$-formality}
\label{subsec:chen-formal}
Recall we defined in \S\ref{subsec:gr-formal} the holonomy 
Chen ranks of a group $G$ with $b_1(G)<\infty$ as  $\bar\theta_n(G)=
\dim_{\k} \left(\h(G;\k)/\h(G;\k)''\right)_n$, where $\k$ is a field of 
characteristic $0$. The proof of \cite[Proposition 8.1]{SW-mz}---%
adapted to our slightly more general setting---shows that the 
following infinitesimal version of Massey's correspondence holds. 

\begin{proposition}[\cite{SW-mz}]
\label{prop:inf-massey-corr}
Let $G$ be a group with $b_1(G)<\infty$. Then 
\begin{equation}
\label{eq:inf-massey-corr}
\bar\theta_{n}(G)=\dim_{\k} \B_{n-2}(G;\k), \text{ for all $n\ge 2$} .
\end{equation}
\end{proposition}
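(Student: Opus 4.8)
The plan is to establish the graded Massey correspondence at the infinitesimal level, mirroring the integral correspondence of Theorem~\ref{thm:massey-alexinv} and Corollary~\ref{cor:alex-chen}, but now with $\h(G;\k)$ playing the role of $G$ and its second derived quotient $\B(G;\k)=\h(G;\k)'/\h(G;\k)''$ playing the role of $B(G)$. Recall that by definition $\bar\theta_n(G)=\dim_{\k}\bigl(\h(G;\k)/\h(G;\k)''\bigr)_n$, so the statement amounts to identifying, in each degree $n\ge 2$, the degree-$n$ piece of the Chen Lie algebra of $\h(G;\k)$ with the degree-$(n-2)$ piece of $\B(G;\k)$. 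First I would invoke Proposition~\ref{prop:inf-massey-corr}, which is precisely the cited result from \cite{SW-mz}; the task is therefore to record its proof in the present, slightly more general setting (where we only assume $b_1(G)<\infty$ rather than $G_{\abf}$ finitely generated, working over a characteristic-$0$ field $\k$).

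The key structural input is the exact sequence
\begin{equation}
\label{eq:chen-holo-ses}
\begin{tikzcd}[column sep=18pt]
0\ar[r]& \h(G;\k)'/\h(G;\k)'' \ar[r]& \h(G;\k)/\h(G;\k)'' \ar[r]&
\h(G;\k)/\h(G;\k)' \ar[r]& 0
\end{tikzcd}
\end{equation}
from \eqref{eq:holo-ses}. The middle term is the Chen Lie algebra of the holonomy Lie algebra, whose degree-$n$ dimension is $\bar\theta_n(G)$; the quotient on the right is simply $\h_1=H_1(G;\k)$, concentrated in degree~$1$; and the submodule on the left is $\B(G;\k)$. The essential point is a degree shift: the module structure on $\B(G;\k)$ comes from the adjoint action of $\h_1$ in degree~$1$, so the summand of $\B(G;\k)$ sitting in internal degree $d$ contributes to $\bigl(\h(G;\k)/\h(G;\k)''\bigr)_{d}$, but the grading conventions place $\B_{n-2}(G;\k)$ (as a $\Sym(H_1)$-module graded so that its generators live in degree~$0$) in correspondence with Chen-degree $n$. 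Concretely, for $n\ge 2$ the middle and left terms of \eqref{eq:chen-holo-ses} agree, since $\bigl(\h(G;\k)/\h(G;\k)'\bigr)_n=0$ for $n\ge 2$ (the abelianization of the holonomy Lie algebra is concentrated in degree~$1$). Thus in each degree $n\ge 2$ the inclusion becomes an equality, $\bigl(\h(G;\k)/\h(G;\k)''\bigr)_n\cong \bigl(\h(G;\k)'/\h(G;\k)''\bigr)_n=\B(G;\k)_n$, and matching this internal degree against the module grading on $\B(G;\k)$ yields $\bar\theta_n(G)=\dim_{\k}\B_{n-2}(G;\k)$.

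The main obstacle—really the only delicate point—will be pinning down the degree-shift bookkeeping so that the index $n-2$ on $\B$ is unambiguously correct. This stems from the convention (fixed in \S\ref{subsec:inf-alex-inv}) that $\B(G)$ is graded as a $\Sym(G_{\abf})$-module with its lowest nonzero piece $\B_0(G)$ corresponding to the degree-$2$ part of $\h(G)$, since $\h(G)'=\h_{\ge 2}(G)$. I would verify carefully that the internal grading inherited from $\h(G;\k)$ places $\h(G;\k)'/\h(G;\k)''$ in degrees $\ge 2$, and that the reindexing $\B_m(G;\k)=\bigl(\h(G;\k)'/\h(G;\k)''\bigr)_{m+2}$ is consistent with the module grading used when one identifies $\gr\B(G)$ with the associated graded of $B(G)$ (cf.\ the shift of $2$ mentioned after \eqref{eq:infalex-intro} in the introduction). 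Once this shift is confirmed, the identity follows immediately from \eqref{eq:chen-holo-ses}, and there is no further computation required. I would close by remarking that this is the exact infinitesimal analogue of Corollary~\ref{cor:alex-chen}, with the holonomy Chen ranks $\bar\theta_n(G)$ standing to $\B(G;\k)$ as the Chen ranks $\theta_n(G)$ stand to $\gr(B(G)\otimes\Q)$ via \eqref{eq:hilb-b-chen}.
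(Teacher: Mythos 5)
Your argument is correct and is essentially the argument the paper intends: the paper proves this proposition only by citing \cite[Proposition 8.1]{SW-mz} and noting that the proof adapts, and your reconstruction---the exact sequence \eqref{eq:holo-ses}, the quadraticity of $\h(G;\k)$ forcing $\h(G;\k)'=\h_{\ge 2}(G;\k)$ and hence $\bigl(\h(G;\k)/\h(G;\k)''\bigr)_n\cong\bigl(\h(G;\k)'/\h(G;\k)''\bigr)_n$ for $n\ge 2$, combined with the regrading $\B_m(G;\k)=\bigl(\h(G;\k)'/\h(G;\k)''\bigr)_{m+2}$---is exactly that degree-counting proof. You are also right to single out the shift as the one point requiring care: the phrase ``grading inherited from the one on $\h(G)$'' in \S\ref{subsec:inf-alex-inv} must be read with the shift of $2$ announced after \eqref{eq:infalex-intro} (consistent with Corollary \ref{cor:ps-chen}), as the base case $\bar\theta_2(G)=\dim_\k\h_2(G;\k)=\dim_\k\B_0(G;\k)$ confirms.
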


Recall also that the Chen ranks 
$\theta_{n}(G)$ satisfy the inequality $\theta_{n}(G)\le \bar\theta_{n}(G)$.
As we shall see below, when the group $G$ is $1$-formal, 
this becomes an equality for all $n$. The key  
result towards establishing this fact (proved 
in \cite[Theorem 5.6]{DPS-duke} and further enhanced  
in \cite{SW-mz}) uses the $1$-formality hypothesis to 
construct a functorial isomorphism between the 
Alexander invariant and its infinitesimal version, at the level of 
completions (over $\Q$).

\begin{theorem}[\cite{DPS-duke}]
\label{thm:linalex-c}
Let $G$ be a $1$-formal group.  There is then a natural, 
filtration-preserving isomorphism of completed modules, 
$\widehat{B(G)\otimes \Q} \cong \widehat{\B(G)\otimes \Q}$. 
\end{theorem}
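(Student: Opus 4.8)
=== PROOF PROPOSAL ===

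The plan is to establish Theorem~\ref{thm:linalex-c} by combining the two filtered-module descriptions of completed Alexander invariants that have already been assembled in the excerpt: the classical one from Theorem~\ref{thm:alex-malcev}, which identifies $\widehat{B(G)\otimes \Q}$ with the closed derived quotient $\overline{\m(G)'}/\overline{\m(G)''}$ of the Malcev Lie algebra, and an infinitesimal counterpart which identifies $\widehat{\B(G)\otimes \Q}$ with the analogous derived quotient of the completed holonomy Lie algebra $\widehat{\h(G;\Q)}$. The bridge between the two is precisely the $1$-formality hypothesis, which by the discussion in \S\ref{subsec:formal} provides a filtration-preserving isomorphism of filtered Lie algebras $\m(G)\cong \widehat{\h(G;\Q)}$.

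First I would set up the infinitesimal analogue of Theorem~\ref{thm:alex-malcev}. Since $\h(G;\Q)$ is a graded Lie algebra generated in degree $1$ by $H_1(G;\Q)$, its $I$-adic completion $\widehat{\h(G;\Q)}$ coincides with the degreewise completion with respect to the bracket-length (lower central series) filtration. The infinitesimal Alexander invariant $\B(G;\Q)=\h(G;\Q)'/\h(G;\Q)''$ is by \eqref{eq:b-lin-k} a graded $\Sym(H_1(G;\Q))$-module, and because it is generated in finitely many degrees over an affine algebra (using $b_1(G)<\infty$), its completion with respect to the maximal ideal $\II(G;\Q)$ is computed degreewise. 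The key point is to verify that completing the derived quotient of $\h(G;\Q)$ agrees with taking the derived quotient of the completion, i.e.\ that $\widehat{\B(G)\otimes\Q}\cong \overline{\widehat{\h(G;\Q)}{}'}\big/\overline{\widehat{\h(G;\Q)}{}''}$, where the closures are taken in the filtration topology. This is a formal manipulation: the derived and second-derived subalgebras are compatible with the lower central series filtration, so passing to associated graded pieces and then reassembling the completion commutes with forming $\h'/\h''$.

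Next I would invoke $1$-formality. By the characterization recalled in \S\ref{subsec:formal}, a $1$-formal group $G$ admits an isomorphism of filtered Lie algebras $\m(G)\isom \widehat{\h(G;\Q)}$. Because any filtered Lie algebra isomorphism carries derived subalgebras to derived subalgebras and is continuous for the filtration topology, it identifies $\overline{\m(G)'}$ with $\overline{\widehat{\h(G;\Q)}{}'}$ and $\overline{\m(G)''}$ with $\overline{\widehat{\h(G;\Q)}{}''}$, hence induces an isomorphism of the corresponding filtered derived quotients. Combining this with Theorem~\ref{thm:alex-malcev} on one side and the infinitesimal description established in the previous step on the other yields the desired filtration-preserving isomorphism $\widehat{B(G)\otimes \Q}\cong \widehat{\B(G)\otimes \Q}$. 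Naturality follows because each of the three ingredients---the Malcev completion, the holonomy Lie algebra, and the $1$-formality isomorphism---is functorial, so the composite respects homomorphisms of $1$-formal groups.

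The main obstacle I anticipate is the bookkeeping around \emph{topological closures} and the interchange of completion with the derived-series operations: one must check carefully that $\overline{\m(G)'}/\overline{\m(G)''}$, which a priori involves closures inside the complete Lie algebra, matches the completion of the purely algebraic object $\B(G)\otimes\Q$ degree by degree, and that the filtered isomorphism from $1$-formality is genuinely bicontinuous so that it descends to the quotients. Since $\h(G;\Q)$ is generated in degree $1$, the relevant filtrations are cofinal with the grading and these issues reduce to finite-dimensional statements in each degree; nevertheless, articulating the precise sense in which ``the closed derived quotient of the completion equals the completion of the derived quotient'' is where the care is needed. I would lean on the already-established Corollaries~\ref{cor:alex-chen} and its infinitesimal analogue (Proposition~\ref{prop:inf-massey-corr}) to cross-check the statement at the level of graded ranks, which guarantees the isomorphism is degreewise correct and thus confirms the completion-level identification.
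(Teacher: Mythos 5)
Your argument is correct and is essentially the proof of the cited source: the paper states Theorem \ref{thm:linalex-c} without proof, attributing it to \cite{DPS-duke}, and the route you take---Theorem \ref{thm:alex-malcev} on one side, the identification $\widehat{\B(G)\otimes\Q}\cong \overline{\widehat{\h(G;\Q)}'}\big/\overline{\widehat{\h(G;\Q)}''}$ on the other (valid, as you note, because $\h(G;\Q)$ is graded and generated in degree $1$ while $\B(G;\Q)$ is a finitely generated graded $\Sym(H_1(G;\Q))$-module by \eqref{eq:bbmod-pres}, so the $\II(G;\Q)$-adic and degreewise filtrations are cofinal), glued together by the filtered Lie algebra isomorphism $\m(G)\cong\widehat{\h(G;\Q)}$---is exactly the argument of \cite[Theorem 5.6]{DPS-duke}. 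The one soft spot is your final sentence: the $1$-formality isomorphism is a non-canonical choice rather than a natural transformation, so naturality does not follow merely from ``functoriality of the three ingredients''; upgrading the isomorphism to a functorial one is precisely the enhancement supplied in \cite{SW-mz}, which the paper flags just before the statement.
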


Passing to associated graded modules, we obtain as an 
immediate corollary the following result.

\begin{corollary}
\label{cor:linalex-gr}
If $G$ is $1$-formal, then $\gr(B(G) \otimes \Q)\cong \B(G) \otimes\Q$, 
as graded modules over $\gr(\Q[G_{\ab}]) \cong \Sym(H_1(G;\Q))$.
\end{corollary}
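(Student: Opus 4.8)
The statement is an immediate consequence of Theorem \ref{thm:linalex-c}, so the plan is simply to apply the associated-graded functor to the completed isomorphism supplied there and then identify the two resulting associated graded objects with $\gr(B(G)\otimes\Q)$ and $\B(G)\otimes\Q$, respectively. No new geometric or algebraic input beyond Theorem \ref{thm:linalex-c} is needed; the work is purely formal.

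First I would invoke Theorem \ref{thm:linalex-c}: since $G$ is $1$-formal, there is a natural filtration-preserving isomorphism $\widehat{B(G)\otimes\Q}\cong\widehat{\B(G)\otimes\Q}$, linear over the completion $\widehat{R}$ of $R=\Q[G_{\ab}]$ at its augmentation ideal $I$ (equivalently, over $\widehat{S}$ for $S=\Sym(H_1(G;\Q))$, using the standard identification $\widehat{R}\cong\widehat{S}$ between the completed group algebra of the finitely generated abelian group $G_{\ab}$ at $I$ and the completed symmetric algebra at its maximal ideal). Because $\gr(-)$ is a functor on filtered modules, a filtration-preserving isomorphism induces an isomorphism of the corresponding associated graded modules, $\gr(\widehat{B(G)\otimes\Q})\cong\gr(\widehat{\B(G)\otimes\Q})$, covering the induced isomorphism of graded rings $\gr(\widehat{R})\cong\gr(\widehat{S})$.

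Next I would identify each side with the desired object. On the ring level, $\gr(\widehat{R})=\gr(R)=\gr(\Q[G_{\ab}])\cong\Sym(H_1(G;\Q))=S$, as recorded in \S\ref{subsec:inf-alex-inv} and Remark \ref{rem:grobner}; since $S$ is already graded, $\gr(\widehat{S})=S$, and the two agree. For the modules, the classical side gives $\gr(\widehat{B(G)\otimes\Q})=\gr(B(G)\otimes\Q)$, which is precisely the identification $\gr(\widehat{B})=\gr(B)$ recalled in \S\ref{subsec:gr-bg}. For the infinitesimal side, $\B(G)\otimes\Q$ is by construction a graded $S$-module which, since $b_1(G)<\infty$, is finitely generated over the Noetherian ring $S$ and concentrated in nonnegative degrees; hence completing at the grading filtration and passing back to the associated graded returns the module itself, $\gr(\widehat{\B(G)\otimes\Q})=\B(G)\otimes\Q$. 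Combining these identifications yields the claimed graded $S$-module isomorphism $\gr(B(G)\otimes\Q)\cong\B(G)\otimes\Q$.

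The main point to note is that there is essentially no obstacle here beyond bookkeeping: the entire content sits in Theorem \ref{thm:linalex-c}, which itself rests on the $1$-formality hypothesis (via Theorems \ref{thm:hain-stallings} and \ref{thm:alex-malcev} and the completed comparison of \cite{DPS-duke}). The only steps requiring a moment of care are the two ``$\gr$ commutes with completion'' identifications --- $\gr(\widehat{B})=\gr(B)$ on the classical side and $\gr(\widehat{\B})=\B(G)\otimes\Q$ on the infinitesimal side --- both standard for finitely generated modules over a Noetherian ring filtered by powers of a fixed ideal, together with the routine check that the ring isomorphism $\widehat{R}\cong\widehat{S}$ induces on associated graded rings the canonical identification $\gr(\Q[G_{\ab}])\cong\Sym(H_1(G;\Q))$.
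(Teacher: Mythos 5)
Your proposal is correct and matches the paper's argument exactly: the paper derives Corollary \ref{cor:linalex-gr} from Theorem \ref{thm:linalex-c} simply by ``passing to associated graded modules,'' which is precisely your route. The identifications you spell out --- $\gr(\widehat{B(G)\otimes\Q})=\gr(B(G)\otimes\Q)$ as in \S\ref{subsec:gr-bg}, $\gr(\widehat{\B(G)\otimes\Q})=\B(G)\otimes\Q$ for the graded module $\B(G)\otimes\Q$, and $\gr(\Q[G_{\ab}])\cong\Sym(H_1(G;\Q))$ --- are exactly the bookkeeping the paper leaves implicit in calling the result an ``immediate corollary.''
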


The next corollary was first proved in \cite[Theorem 4.2]{PS-imrn04}; 
we provide here a short proof, based on the above results.

\begin{corollary}[\cite{PS-imrn04}]
\label{cor:ps-chen}
If $G$ is $1$-formal, then $\theta_n(G)=
\bar\theta_n(G)=\dim_{\Q} \B_{n-2}(G;\Q)$ 
for all $n\ge 2$.
\end{corollary}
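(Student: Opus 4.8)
The plan is to assemble the claimed equalities from results already in place, since the substantive content lives in Corollary \ref{cor:linalex-gr} and the two incarnations of Massey's correspondence. First I would record that a $1$-formal group is by definition finitely generated, so that $b_1(G)<\infty$ and $G_{\abf}$ is finitely generated. This ensures that the Chen ranks, holonomy Chen ranks, and the relevant associated graded modules are all well defined, and—by the identification noted in \S\ref{subsec:BGG}—that $\B(G;\Q)=\B(G)\otimes\Q$.

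The core of the argument is a three-step chain of identifications carried out in each degree $n\ge 2$. By Corollary \ref{cor:chen-alrat}, the Chen ranks are read off from the associated graded of the rationalized Alexander invariant,
\[
\theta_n(G)=\dim_{\Q}\gr_{n-2}(B(G)\otimes\Q)\, .
\]
Next I would bring in the $1$-formality hypothesis through Corollary \ref{cor:linalex-gr}, which furnishes an isomorphism of graded $\Sym(H_1(G;\Q))$-modules $\gr(B(G)\otimes\Q)\cong\B(G)\otimes\Q$; taking dimensions in degree $n-2$ gives
\[
\dim_{\Q}\gr_{n-2}(B(G)\otimes\Q)=\dim_{\Q}\B_{n-2}(G)\otimes\Q=\dim_{\Q}\B_{n-2}(G;\Q)\, .
\]
Finally, Proposition \ref{prop:inf-massey-corr} identifies the right-hand side with the holonomy Chen ranks, $\bar\theta_n(G)=\dim_{\Q}\B_{n-2}(G;\Q)$. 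Concatenating these equalities yields $\theta_n(G)=\bar\theta_n(G)=\dim_{\Q}\B_{n-2}(G;\Q)$ for all $n\ge 2$, as required.

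There is no genuine obstacle here: the proof simply threads the completion-level isomorphism of Theorem \ref{thm:linalex-c} (passed to associated graded objects in Corollary \ref{cor:linalex-gr}) between the classical Massey correspondence of Corollary \ref{cor:chen-alrat} and its infinitesimal analogue in Proposition \ref{prop:inf-massey-corr}. The only point requiring a moment's care is the bookkeeping of the degree shift by $2$: one must verify it is applied consistently across all three inputs—each records a degree-$(n-2)$ piece of an Alexander-type module as the degree-$n$ Chen or holonomy-Chen datum—and that the $\B(G;\Q)$ versus $\B(G)\otimes\Q$ conventions agree, which they do by the finite generation of $G_{\abf}$.
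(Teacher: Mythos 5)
Your proposal is correct and follows essentially the same route as the paper's own proof: the classical Massey correspondence (formula \eqref{eq:hilb-b-chen}, equivalently Corollary \ref{cor:chen-alrat}) gives $\theta_n(G)=\dim_{\Q}\gr_{n-2}(B(G)\otimes\Q)$, Corollary \ref{cor:linalex-gr} converts this under $1$-formality to $\dim_{\Q}\B_{n-2}(G;\Q)$, and Proposition \ref{prop:inf-massey-corr} identifies that with $\bar\theta_n(G)$. Your added remarks on finite generation and the $\B(G;\Q)$ versus $\B(G)\otimes\Q$ convention are accurate housekeeping that the paper leaves implicit.
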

\begin{proof}
The classical Massey correspondence, as summarized in 
formula \eqref{eq:hilb-b-chen}, implies that 
$\theta_n(G)$ is equal to 
$\dim_{\Q} \gr_{n-2}(B(G)\otimes \Q)$. 
In turn, Corollary \ref{cor:linalex-gr} allows 
us to replace the dimension of this vector space 
by the dimension of $\B_{n-2}(G;\Q)$, which 
we know from Proposition \ref{prop:inf-massey-corr} 
is equal to $\bar\theta_n(G)$.
\end{proof}

\section{Resonance varieties}
\label{sect:res}

The resonance varieties of a group are a different kind of jump loci, 
built solely from cohomological information in low degrees. In this section, 
we relate these varieties to the support loci of the infinitesimal Alexander 
invariant,  and discuss some of their properties. 

\subsection{A stratification of the first cohomology group}
\label{subsec:res-strat}
Let $G$ be a group, and let $H^{\hdot}=H^{\hdot}(G;\C)$ be its cohomology 
algebra over $\C$.  For our purposes here, we will only consider 
the truncated algebra $H^{\le 2}$; moreover, 
we will assume that $b_1(G)=\dim_{\C} H^1$ is finite.
For each element $a\in H^1$, we have $a^2=0$, 
and so left-multiplication by $a$ defines a cochain complex, 
\begin{equation}
\label{eq:aomoto}
\begin{tikzcd}[column sep=26pt]
(H , \delta_a)\colon  \ 
H^0\ar[r, "\delta^0_a"] & H^1\ar[r, "\delta^1_a"]& H^2 , 
\end{tikzcd}
\end{equation}
with differentials $\delta^i_a(u)=a\cdot u$ for $u\in H^i$. 
It is readily checked that the specialization of the cochain complex 
\eqref{eq:univ aomoto} at $a$ coincides with \eqref{eq:aomoto}; see  
for instance \cite{DSY17,Su-edinb}.

The  resonance varieties witness the extent to which this 
complex fails to be exact in the middle. More precisely, for each $k\ge 1$, 
the {\em depth $k$ resonance variety}\/ of $G$ is defined as 
\begin{equation}
\label{eq:rv}
\RR_k(G)\coloneqq \{a \in H^1 \mid \dim_{\C} H^1(H, \delta_a) \ge k\}.
\end{equation}

These sets are homogeneous algebraic subvarieties of the affine space 
$H^1=H^1(G;\C)$. Clearly, $0\in \RR_k(G)$ 
if and only if $k\le b_1(G)$; in particular, $\RR_1(G)= \emptyset $ 
if and only if $b_1(G)=0$.  Furthermore, we have a descending filtration, 
\begin{equation}
\label{eq:res-strat}
H^1(G;\C) \supseteq \RR_1(G)  \supseteq \RR_2(G)  \supseteq \cdots 
 \supseteq \RR_{r}(G) \supseteq \RR_{r+1}(G)=\emptyset  ,
\end{equation}
where $r=b_1(G)$.  A linear subspace $U\subset H^1$ is said to be 
isotropic if the restriction of $\cup_G\colon H^1\wedge H^1\to H^2$  
to $U\wedge U$ vanishes; that is, $ab=0$ for all $a,b\in U$. 
As noted in \cite[Lemma 2.2]{Su-edinb}, the variety 
$\RR_{k}(G)$ contains every  isotropic subspace of $H^1$ 
whose dimension is at most $k+1$; moreover, $\RR_1(G)$ 
is the union of all isotropic planes in $H^1$.

The following (well-known) lemma shows that the resonance varieties 
are determinantal varieties of the infinitesimal Alexander module, and 
thus, Zariski closed subsets of the affine space $H^1$. Proofs 
in various levels of generality have been given, for instance,  
in \cite{MS00,PS-mrl,Su-edinb}. We give here a quick proof, 
in a slightly greater generality, along 
the lines of the proof of Lemma \ref{lem:hironaka}.

\begin{lemma}
\label{lem:res-fitt}
Let $G$ be a group with $b_1(G)<\infty$. Then, for all $k\ge 1$, 
\[
\RR_{k}(G)=V(\Fitt_{k+1}(\AA(G;\C))) \, ,
\]
at least away from $0\in H^1(G;\C)$, with equality at $0$ for $k<b_1(G)$. 
\end{lemma}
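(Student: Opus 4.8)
The plan is to mimic the proof of Lemma \ref{lem:hironaka}, but carried out over the polynomial ring $S=\Sym(H_1(G;\C))$ instead of the group algebra $\C[G_{\ab}]$, using the cochain complex \eqref{eq:univ aomoto} supplied by the BGG correspondence in place of the equivariant chain complex \eqref{eq:abcover-cc}. The resonance variety $\RR_k(G)$ is defined pointwise in \eqref{eq:rv} via the ranks of the differentials $\delta^0_a$ and $\delta^1_a$ of the specialized Aomoto complex \eqref{eq:aomoto}; the first observation I would record is that, as already noted after \eqref{eq:aomoto}, this specialized complex is exactly the evaluation at $a\in H^1$ of the universal complex $(H^{\hdot}\otimes_\C S,\delta)$. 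Evaluating a point $a$ corresponds to applying the ring map $S\to\C$ sending a linear form to its value at $a$.

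First I would translate the rank condition defining $\RR_k(G)$ into a statement about minors. Since $\dim_\C H^1=r=b_1(G)$, the condition $\dim_\C H^1(H,\delta_a)\ge k$ is equivalent to $\rank\delta^0_a+\rank\delta^1_a\le r-k$, i.e. to the simultaneous vanishing at $a$ of all minors of appropriate size in the block map $\delta^0_H\oplus\delta^1_H$ evaluated at $a$. This is the exact analogue of the step in Lemma \ref{lem:hironaka} where $\VV_k$ is cut out by the minors of $\partial^{\ab}_2\oplus\partial^{\ab}_1$.

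Next I would isolate the behavior of the two differentials. Dualizing, $\partial^H_1\colon H_1\otimes_\C S\to H_0\otimes_\C S$ is $S$-dual to $\delta^0_H$, and since $H^0=\C$ with the cup product by $a$ injective on $H^0$ for $a\ne 0$, the map $\delta^0_a$ has rank $1$ precisely when $a\ne 0$ and rank $0$ when $a=0$; equivalently $\partial^H_1(a)=0$ if and only if $a=0$. Hence for $a\ne 0$ one of the ranks is pinned down, and the condition $a\in\RR_k(G)$ reduces to the vanishing of the codimension-$k$ minors of $\delta^1_H$, equivalently of $\partial^H_2$, evaluated at $a$. Since $\AA(G;\C)=\coker(\partial^H_2)$ by \eqref{eq:inf-amod-k}, the ideal of such minors is, up to the standard shift in Fitting-ideal indexing, $\Fitt_{k+1}(\AA(G;\C))$; this is where I would invoke the definition of $\Fitt_k$ from \S\ref{subsec:fitting} as the ideal of codimension $(k-1)$ minors of a presentation matrix. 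This yields the equality away from $0$. For the behavior at $0$, I would note that the specialization of the whole universal complex at $a=0$ is just the cohomology $H^{\hdot}$ itself with zero differentials, so $0\in\RR_k(G)$ iff $k\le b_1(G)=r$, matching $V(\Fitt_{k+1}(\AA(G;\C)))$ at the origin for $k<r$.

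The main obstacle I anticipate is the bookkeeping that aligns the rank drop in \eqref{eq:aomoto} with the correct Fitting index of $\coker(\partial^H_2)$, together with cleanly separating the contribution of $\delta^0_a$ so that the $\delta^1$ minors alone control membership away from $0$. This is precisely the same subtlety handled in Lemma \ref{lem:hironaka}, so I would follow that template verbatim, substituting the BGG complex \eqref{eq:univ aomoto} and its dual \eqref{eq:koszul} for the cover chain complex and using that $\partial^H_1$ vanishes only at the origin. No genuinely new ideas should be required beyond this dictionary.
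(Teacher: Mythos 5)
Your proposal is correct and matches the paper's proof of Lemma \ref{lem:res-fitt} essentially verbatim: the paper likewise specializes the BGG complex \eqref{eq:univ aomoto} at $a\in H^1$, notes that $\partial_1^H(a)=0$ if and only if $a=0$ (since $H_0=\C$), and concludes from $\AA(G;\C)=\coker(\partial_2^H)$ by following the template of Lemma \ref{lem:hironaka}. Your handling of the point $a=0$ is, if anything, slightly more explicit than the paper's.
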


\begin{proof}
Let $a\in H^1$. By the above discussion, we have that 
$\delta^i_H(a)=\delta^i_a$. Thus, $a$ belongs to $\RR_k(G)$ if and only if 
$\rank \partial_{2}^H(a) + \rank \partial_{1}^H(a) \le b_1(G) - k$. 
But $H_0=\C$ and $\partial_{1}^H(a) =0$ if and only if $a=0$. 
Since $\AA(G,\C)=\coker (\partial_2^{H})$, the lemma follows.
\end{proof}

\subsection{Resonance and exterior powers}
\label{subsec:res-ext}

The next result identifies the depth-$k$ resonance variety of a group as the 
support locus of the $k$-th exterior power of its infinitesimal Alexander 
invariant. The result was first proved in \cite[Lemma~4.2]{DPS-serre} 
for finitely presented groups, using a specialization argument; 
in fact, the same proof works for finitely generated groups, see 
\cite[Lemma~5.1]{DP-ann}. We offer here a completely different 
proof, in greater generality, using the BGG correspondence and 
the localization approach from the proof of Theorem \ref{thm:cvb}. 

\begin{theorem}[\cite{DP-ann, DPS-serre}]
\label{thm:res-supp}
Let $G$ be a group with $b_1(G)<\infty$. Then 
\begin{equation}
\label{eq:res-supp}
\RR_k(G) = \supp \big( \bwedge^k \B(G)\otimes \C\big) 
\end{equation}
for all $k\ge 1$, at least away from $0\in H^1(G;\C)$.
\end{theorem}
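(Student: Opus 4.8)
The plan is to transport the localization argument from the proof of Theorem~\ref{thm:cvb} to the infinitesimal setting, using the infinitesimal Crowell sequence of Theorem~\ref{thm:inf-crowell} in place of the classical one. Throughout, take $\k=\C$, set $S=\Sym(H_1(G;\C))$, and write $\B=\B(G;\C)$, $\AA=\AA(G;\C)$, and $\II=\II(G;\C)$ for the maximal ideal of $S$ at $0$; here $\B$ and $\AA$ coincide with $\B(G)\otimes\C$ and $\AA(G)\otimes\C$ when $G_{\abf}$ is finitely generated, and are the correct objects to use in general. First I would record that both $\B$ and $\AA$ are finitely generated $S$-modules: since $b_1(G)<\infty$, the module $\AA=\coker(\partial_2^H)$ is a quotient of the free module $H_1(G;\C)\otimes_\C S$ of finite rank $b_1(G)$, hence is finitely generated over the Noetherian ring $S$, while $\B$ embeds into $\AA$ by Theorem~\ref{thm:inf-crowell} and so is finitely generated as well. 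This is precisely what is needed in order for the Fitting-ideal/support dictionary of Lemma~\ref{lem:ann-supp} to apply to these modules and to their exterior powers.

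The core of the argument is a local computation. Fix a maximal ideal $\m\in\Spec(S)$ with $\m\ne\II$, that is, a point of $H^1(G;\C)$ other than the origin. Localizing the defining sequence $0\to\II\to S\to\C\to 0$ at $\m$ shows that $\C_\m=0$, whence $\II_\m=S_\m$; consequently $\bwedge^j\II_\m\cong S_\m$ for $j=0,1$ and $\bwedge^j\II_\m=0$ for $j\ge 2$. Now I would localize the infinitesimal Crowell sequence $0\to\B\to\AA\to\II\to 0$ at $\m$ and feed it into the exterior-power filtration of Lemma~\ref{lem:ext-mod}, whose successive quotients are $\bwedge^i\B_\m\otimes\bwedge^{k-i}\II_\m$. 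All quotients except the two with $i\in\{k-1,k\}$ vanish, so the filtration collapses to a short exact sequence
\[
0\longrightarrow \bwedge^k\B_\m \longrightarrow \bwedge^k\AA_\m \longrightarrow \bwedge^{k-1}\B_\m \longrightarrow 0,
\]
valid for every $k\ge 1$ and every $\m\ne\II$; here I use that localization commutes with exterior powers.

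From this, additivity of supports gives $\supp(\bwedge^k\AA)=\supp(\bwedge^k\B)\cup\supp(\bwedge^{k-1}\B)$ away from the origin. Since Fitting ideals are nested, Lemma~\ref{lem:ann-supp} yields $\supp(\bwedge^k\B)\subseteq\supp(\bwedge^{k-1}\B)$, so in fact $\supp(\bwedge^k\AA)=\supp(\bwedge^{k-1}\B)$ away from $0$. Replacing $k$ by $k+1$ and invoking Lemma~\ref{lem:ann-supp} to identify $\supp(\bwedge^{k+1}\AA)$ with $V(\Fitt_{k+1}(\AA))$, I obtain $V(\Fitt_{k+1}(\AA))=\supp(\bwedge^k\B)$ away from $0$. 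Finally, Lemma~\ref{lem:res-fitt} identifies $\RR_k(G)$ with $V(\Fitt_{k+1}(\AA))$ away from $0$, which gives the asserted equality $\RR_k(G)=\supp(\bwedge^k\B(G)\otimes\C)$.

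I expect the main obstacle to be bookkeeping rather than a genuinely new idea, since the two substantive inputs---the infinitesimal Crowell sequence and the exterior-power filtration---are already in hand. The delicate points will be: confirming finite generation over $S$ so that Lemma~\ref{lem:ann-supp} is applicable; correctly exploiting that $\II$ becomes the unit ideal upon localizing away from $0$, so that its higher exterior powers vanish and the filtration collapses to a three-term sequence; and tracking the index shift together with the support nesting $\supp(\bwedge^{k}\B)\subseteq\supp(\bwedge^{k-1}\B)$ that converts the two-term union of supports into a single term.
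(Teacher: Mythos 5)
Your proposal is correct and takes essentially the same route as the paper's own proof: localize the infinitesimal Crowell sequence of Theorem~\ref{thm:inf-crowell} at a maximal ideal $\m\ne\II$, use $\II_\m=S_\m$ and Lemma~\ref{lem:ext-mod} to collapse the exterior-power filtration to the three-term sequence, and finish with Lemmas~\ref{lem:ann-supp} and~\ref{lem:res-fitt}. Your two extra verifications---finite generation of $\AA$ and $\B$ over $S$ (via $\AA$ being a quotient of the finite free module $H_1(G;\C)\otimes_{\C}S$ and $\B\inj\AA$), and the nesting $\supp\big(\bwedge^k\B\big)\subseteq\supp\big(\bwedge^{k-1}\B\big)$ from the inclusion of Fitting ideals, which turns the union of supports into a single term---are both sound and, if anything, make explicit two points the paper leaves implicit.
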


\begin{proof}
Let $0\to \B \to \AA \to \II \to 0$ be the infinitesimal Crowell exact 
sequence from Theorem \ref{thm:inf-crowell}, over $\k=\C$. 
Localizing at a maximal ideal $\m\in \Spec(S)$, we obtain 
an exact sequence of $S_{\m}$-modules, 
$0\to \B_{\m} \to \AA_{\m} \to \II_{\m} \to 0$. 
On the other hand, localizing at $\m$ the exact sequence 
$0\to \II \to S\to \C \to 0$, we get the 
exact sequence $0\to \II_{\m} \to S_{\m} \to \C_{\m} \to 0$. 
Assuming $\m\ne \II$, we have that $\C_{\m}=0$,
and so $\II_{\m}=S_{\m}$. Hence, $ \bwedge^{j} \II_{\m}$ 
is isomorphic to $S_m$ if $j=0,1$ and is equal to $0$ if $j>1$. 

Lemma \ref{lem:ext-mod} yields an exact sequence, 
$0\to \bwedge^{k+1} \B_{\m} \to \bwedge^{k+1}  \AA_{\m} 
\to \bwedge^{k} \B_{\m} \to 0$, 
from which we deduce that $\supp \!\big(\bwedge^k \AA\big)=
\supp \!\big(\bwedge^{k-1} \B\big)$, at least away 
from $\supp(\II)=\{0\}$.  
On the other hand, by Lemma \ref{lem:ann-supp}, 
we have that $\supp \big(\bwedge^{k} \AA \big)=V(\Fitt_{k}(\AA))$. 
Applying now Lemma \ref{lem:res-fitt} completes the proof.
\end{proof}

As a consequence of Theorem \ref{thm:res-supp}, we infer that the resonance 
varieties only depend on the holonomy Lie algebra of the group. More precisely, 
let $G_1$ and $G_2$ be two groups with finite first Betti number, and suppose 
that $\h(G_1; \C)\cong \h(G_2;\C)$, as graded Lie algebras. 
There is then a linear isomorphism, $H^1(G_1;\C) \cong H^1(G_2;\C)$, 
restricting to isomorphisms $\RR_k(G_1)\cong \RR_k(G_2)$ for all $k\ge 1$.

\subsection{The Tangent Cone formula}
\label{subsec:res-formal}

Let us identify the tangent space to 
the character group $\T_G=H^1(G;\C^*)$ with the linear space $H^1(G;\C)$. 
We denote by $\TC_1(\V_k(G))$ the tangent cone at the identity to the 
characteristic variety $\V_k(G)$; clearly, this set coincides with $\TC_1(\WW_k(G))$. 
It is known that $\TC_1(\WW_k(G))$ is always a (homogeneous) subvariety 
of the resonance variety $\RR_k(G)$. The basic relationship between the 
characteristic and resonance varieties in the $1$-formal setting is 
encapsulated in the ``Tangent Cone formula"  from \cite[Theorem A]{DPS-duke}, 
which we recall next.

\begin{theorem}[\cite{DPS-duke}]
\label{thm:tcone}
If $G$ is a $1$-formal group, then $\TC_1(\WW_k(G))=\RR_k(G)$, 
for all $k\ge 1$.
\end{theorem}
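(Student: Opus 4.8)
The plan is to prove the Tangent Cone formula by reducing it to the corresponding statement about the support loci of the integral and infinitesimal Alexander invariants, which we have already identified with the characteristic and resonance varieties respectively. The key inputs are Theorem \ref{thm:cvbq} (which gives $\WW_k(G)=\supp(\bwedge^k B_{\rat}(G)\otimes \C)$ away from $1$), Theorem \ref{thm:res-supp} (which gives $\RR_k(G)=\supp(\bwedge^k \B(G)\otimes \C)$ away from $0$), and the $1$-formality bridge furnished by Theorem \ref{thm:linalex-c} together with Corollary \ref{cor:linalex-gr} (the graded isomorphism $\gr(B(G)\otimes \Q)\cong \B(G)\otimes \Q$). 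Since $G$ is $1$-formal it is in particular finitely generated, so all these tools apply, and by Corollary \ref{cor:linalex-gr} and Theorem \ref{thm:hat-kappa} the rationalized modules $B(G)\otimes \Q$ and $B_{\rat}(G)\otimes \Q$ have isomorphic associated graded, so we may work with either one.

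First I would set up the identification of tangent cones. Taking the tangent cone at the identity commutes with taking supports after passing to associated graded: the defining ideal of $\WW_k(G)$ in $\C[G_{\abf}]$ is (away from $1$) the annihilator of $\bwedge^k B_{\rat}(G)\otimes \C$, and the tangent cone $\TC_1(\WW_k(G))$ is cut out by the initial ideal, which is the annihilator of $\gr(\bwedge^k B_{\rat}(G)\otimes \C)$ in the graded ring $\gr(\C[G_{\abf}])\cong \Sym(H_1(G;\C))$. The essential point is that forming the $k$-th exterior power is compatible, up to the filtration, with passing to associated graded, so that $\gr(\bwedge^k B_{\rat}(G)\otimes \C)$ is supported on the same variety as $\bwedge^k \gr(B_{\rat}(G)\otimes \C)$; here I would invoke the filtered structure of $B_{\rat}(G)\otimes \C$ by powers of the augmentation ideal and argue that the natural surjection $\gr(\bwedge^k M)\to \bwedge^k \gr(M)$ becomes an isomorphism after taking supports.

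Next I would string together the isomorphisms. By Corollary \ref{cor:linalex-gr} we have $\gr(B(G)\otimes \Q)\cong \B(G)\otimes \Q$ as graded $\Sym(H_1(G;\Q))$-modules, and by Theorem \ref{thm:hat-kappa} the graded modules attached to $B(G)$ and $B_{\rat}(G)$ agree after rationalization; complexifying, we obtain $\gr(B_{\rat}(G)\otimes \C)\cong \B(G)\otimes \C$. Consequently $\supp(\bwedge^k \gr(B_{\rat}(G)\otimes \C))=\supp(\bwedge^k \B(G)\otimes \C)$, which by Theorem \ref{thm:res-supp} equals $\RR_k(G)$ away from $0$. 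Combining this with the tangent-cone computation from the previous paragraph yields $\TC_1(\WW_k(G))=\RR_k(G)$ away from the origin, and both varieties are homogeneous and contain $0$ precisely when $k\le b_1(G)$, so the identity at the origin is automatic and the formula holds globally.

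The main obstacle I expect is the compatibility of exterior powers with the associated-graded functor in the filtered setting: one must verify that the canonical comparison $\gr(\bwedge^k M)\to \bwedge^k \gr(M)$, which need not be an isomorphism of modules, nevertheless induces an equality of support loci. The cleanest route is probably to avoid this issue entirely by passing to completions first, using Theorem \ref{thm:linalex-c} to get a filtered isomorphism $\widehat{B(G)\otimes \Q}\cong \widehat{\B(G)\otimes \Q}$, then taking exterior powers of the completed modules and computing Fitting ideals via Lemma \ref{lem:ann-supp}, so that the Fitting-ideal descriptions of $\WW_k$ (Lemma \ref{lem:hironaka-q}) and $\RR_k$ (Lemma \ref{lem:res-fitt}) match after taking initial ideals. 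This replaces a potentially delicate exterior-power-versus-grading argument with a comparison of annihilators of isomorphic completed modules, where the $1$-formality isomorphism does all the work.
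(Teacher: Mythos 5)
First, a remark on the benchmark: the paper does not prove Theorem \ref{thm:tcone} at all --- it is quoted from \cite[Theorem A]{DPS-duke}, whose original proof shows that for a $1$-formal group the exponential map induces an isomorphism of analytic germs $(\RR_k(G),0)\isom (\WW_k(G),1)$, using precisely the completed-module comparison recorded here as Theorem \ref{thm:linalex-c}. Your fallback route (last paragraph) is therefore, in essence, the original argument. Your primary route, however, has a genuine gap, and it sits exactly where you suspect. Two claims are asserted without proof: (i) that $\TC_1(\supp M)=\supp(\gr M)$ for the $I$-adic filtration --- the inclusion $\init(\ann M)\subseteq \ann(\gr M)$ is elementary, but equality of zero sets needs a real argument (e.g., via the Rees module and deformation to the normal cone, using $t$-torsion-freeness to identify the special fibre's support with the flat limit of the scaled supports); and (ii) that $\gr\big(\bwedge^k M\big)$ and $\bwedge^k \gr(M)$ have the same support. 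On (ii), the natural comparison map runs $\bwedge^k \gr(M)\surj \gr\big(\bwedge^k M\big)$ (you wrote it in the opposite direction), and its surjectivity yields only $\supp \gr\big(\bwedge^k M\big) \subseteq \supp \bwedge^k \gr(M)$, i.e., only the inclusion $\TC_1(\WW_k(G))\subseteq \RR_k(G)$ --- which holds for \emph{all} finitely generated groups anyway, as the paper notes just before the theorem. The hard inclusion $\RR_k(G)\subseteq\TC_1(\WW_k(G))$ is exactly what your asserted ``equality after taking supports'' would have to deliver, and I see no way to obtain it at the level of associated graded modules; this route cannot be completed as written.

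Your completion route does work, and for a specific reason you should make explicit: exterior powers commute with the flat base change $R\to\widehat{R}$ on finitely generated modules, so $\bwedge^k \widehat{M}\cong\widehat{\bwedge^k M}$ and annihilators and Fitting ideals extend along completion --- this is the step that has no $\gr$-analogue and is what dissolves the obstacle from your first route (the passage between $B$ and $B_{\rat}$ via Theorem \ref{thm:hat-kappa} is fine). Two points still need attention. First, Theorem \ref{thm:linalex-c} is an isomorphism covering a ring isomorphism of completions; to conclude \emph{equality}, rather than merely a linear isomorphism, of $\TC_1(\WW_k(G))$ and $\RR_k(G)$, you must check that this ring map induces the canonical identification $I/I^2\cong H_1(G;\C)$ on degree-one graded pieces --- in \cite{DPS-duke} the map is the exponential, whose linear part is the identity, but your writeup never addresses this. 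Second, the ``away from $1$'' and ``away from $0$'' caveats in Theorems \ref{thm:cvbq} and \ref{thm:res-supp} matter only when $1$ is an isolated point of $\WW_k(G)$; these edge cases match on the two sides because $1\in\WW_k(G)$ iff $k\le b_1(G)$ iff $0\in\RR_k(G)$, and because finite-dimensionality of $\bwedge^k\widehat{B_{\rat}(G)\otimes\C}$ corresponds under the completed isomorphism to that of $\bwedge^k\B(G)\otimes\C$ (compare Corollaries \ref{cor:trivial-cw} and \ref{cor:zero-res}). With these two items supplied, and with the general inclusion $\TC_1(\WW_k)\subseteq\RR_k$ plus homogeneity of $\RR_k$ already in hand, your completed-module argument is a sound reconstruction of the cited proof.
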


Far-reaching generalizations of this theorem are now known, 
but this is all we will need for our purposes here. 

\subsection{Vanishing resonance}
\label{subsec:res-zero}
Particularly interesting---and important for many applica\-tions---is the case 
when the resonance vanishes. In fact, as shown in \cite[Theorem 3.3]{PS-mrl} and 
\cite[Propositions 2.6 and 2.7]{PS-crelle}, the case when $\RR_1(G)=\{0\}$  
is generic, in a sense that can be made very precise.

The next result generalizes \cite[Theorem 5.2]{DP-ann}, which is only valid 
in depth $k=1$, and is only proved there for finitely generated groups $G$ 
(as is  \cite[Lemma 2.4]{PS-crelle}, too).

\begin{corollary}
\label{cor:zero-res}
Let $G$ be a group with $b_1(G)<\infty$. For each $k\ge 1$, the following 
conditions are equivalent.
\begin{enumerate}[itemsep=1pt]
\item The resonance variety $\RR_k(G)$ is empty or equal to $\{0\}$. 
\item  The  $\Q$-vector space $\bwedge^k \B(G;\Q)$ 
is finite-dimensional.
\end{enumerate}
\end{corollary}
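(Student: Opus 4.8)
The plan is to mirror the structure of Corollary \ref{cor:trivial-cv} and Corollary \ref{cor:trivial-cw}, replacing the classical Alexander invariant with its infinitesimal counterpart and invoking Theorem \ref{thm:res-supp} in place of Theorem \ref{thm:cvb}. First I would recall the standard fact from commutative algebra that a finitely generated module $M$ over an affine $\k$-algebra $R$ has finite support if and only if $\dim_{\k} M < \infty$; this is the same input used in the proofs of Corollaries \ref{cor:trivial-cv} and \ref{cor:trivial-cw}. Here the relevant module is $M = \bwedge^k \B(G;\C)$, viewed over the polynomial ring $S = \Sym(H_1(G;\C))$, which is indeed an affine $\C$-algebra since $b_1(G) < \infty$.

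The key step is the following chain of equivalences. By Theorem \ref{thm:res-supp}, the resonance variety $\RR_k(G)$ coincides with $\supp\big(\bwedge^k \B(G)\otimes \C\big)$ away from $0 \in H^1(G;\C)$. Therefore $\RR_k(G)$ is empty or equal to $\{0\}$ precisely when the support of $\bwedge^k \B(G;\C)$ contains no point other than (possibly) the origin, i.e.\ when this support is a finite set. By the commutative algebra fact above, finiteness of the support is equivalent to $\bwedge^k \B(G;\C)$ being finite-dimensional over $\C$. Finally, since $\B(G;\C) = \B(G;\Q)\otimes_{\Q} \C$ and exterior powers commute with the (flat) field extension $\Q \subseteq \C$, we have $\bwedge^k \B(G;\C) \cong \big(\bwedge^k \B(G;\Q)\big)\otimes_{\Q}\C$, so the $\C$-dimension equals the $\Q$-dimension of $\bwedge^k \B(G;\Q)$; hence one is finite if and only if the other is. Chaining these gives the desired equivalence.

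I expect the only genuine subtlety to be the bookkeeping around the origin: Theorem \ref{thm:res-supp} is stated only ``away from $0$,'' so I must be careful that the set-theoretic discrepancy at $\{0\}$ does not affect the finiteness conclusion. This is harmless, however, since adding or removing the single point $0$ does not change whether a variety is finite, and the statement of the corollary is deliberately phrased as ``empty or equal to $\{0\}$'' precisely to absorb this ambiguity. Thus the tangent-cone point $0$ never obstructs the argument, and there is no formality hypothesis needed here—the corollary is purely about the infinitesimal invariant and its resonance, both of which are defined whenever $b_1(G) < \infty$. The main (very mild) obstacle is simply to cite the right commutative-algebra lemma with the correct hypotheses and to note the flat base change for exterior powers; everything else is a direct transcription of the earlier support-loci arguments into the infinitesimal setting.

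\begin{proof}
As is well-known (see e.g.~\cite{DF,PS-mrl,SYZ-pisa}), a finitely generated
module $M$ over an affine $\C$-algebra $R$ has finite support if and only if
$\dim_{\C} M<\infty$. Apply this to $M=\bwedge^k \B(G;\C)$, viewed as a module
over the polynomial ring $R=\Sym(H_1(G;\C))$; note that $R$ is an affine
$\C$-algebra since $b_1(G)<\infty$, and that $M$ is finitely generated.
By Theorem \ref{thm:res-supp}, we have
$\RR_k(G)=\supp\big(\bwedge^k \B(G)\otimes \C\big)$, at least away from
$0\in H^1(G;\C)$. Hence $\RR_k(G)$ is empty or equal to $\{0\}$ if and only if
the support of $\bwedge^k \B(G;\C)$ is a finite set, which by the above holds
if and only if $\bwedge^k \B(G;\C)$ is finite-dimensional over $\C$.
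Finally, since $\B(G;\C)=\B(G;\Q)\otimes_{\Q}\C$ and exterior powers commute
with the field extension $\Q\subseteq \C$, we have
$\bwedge^k \B(G;\C)\cong \big(\bwedge^k \B(G;\Q)\big)\otimes_{\Q}\C$; therefore
$\dim_{\C}\bwedge^k \B(G;\C)=\dim_{\Q}\bwedge^k \B(G;\Q)$, and one is finite
precisely when the other is. This establishes the equivalence.
\end{proof}
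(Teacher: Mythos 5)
Your proposal follows the paper's own route almost verbatim: the paper proves this corollary by applying the finite-support criterion from the proof of Corollary \ref{cor:trivial-cv} to the module $M=\bwedge^k \B(G;\C)$ over $S=\Sym(H_1(G;\C))$ and invoking Theorem \ref{thm:res-supp}, exactly as you do. One step, however, is not fully justified as written. In your biconditional ``$\RR_k(G)$ is empty or equal to $\{0\}$ if and only if $\supp\big(\bwedge^k \B(G;\C)\big)$ is finite,'' the implication from finiteness of the support back to $\RR_k(G)\subseteq\{0\}$ needs more than your remark that adding or removing the single point $0$ is harmless. Finiteness of the support, together with Theorem \ref{thm:res-supp}, only gives that $\RR_k(G)$ is a \emph{finite} set; by itself this does not exclude, say, $\RR_k(G)=\{0,a\}$ with $a\ne 0$. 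To conclude $\RR_k(G)\subseteq\{0\}$ you must invoke the homogeneity of $\RR_k(G)$ (recorded in \S\ref{subsec:res-strat}): a nonzero point $a\in \RR_k(G)$ would force the entire punctured line $\C^{*}a$ into $\RR_k(G)$, contradicting finiteness. The paper's proof makes precisely this move, closing with the phrase ``taking into account the homogeneity of $\RR_k(G)$,'' whereas your write-up never cites it; the fix is one line. On the positive side, your explicit base-change observation $\bwedge^k \B(G;\C)\cong \big(\bwedge^k \B(G;\Q)\big)\otimes_{\Q}\C$, reconciling the $\C$-coefficient support argument with the $\Q$-vector-space statement of item (2), fills in a detail the paper leaves implicit, and your note that no formality hypothesis is needed is accurate.
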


\begin{proof}
The claim follows from Theorem \ref{thm:res-supp} using the same argument 
as in the proof of Corollary \ref{cor:trivial-cv}, applied this time 
to the vector space $M=\bwedge^k \B(G;\C)$, viewed as a 
module over the $\C$-algebra $S=\Sym(H_1(G;\C))$, and taking 
into the account the homogeneity of $\RR_k(G)$.
\end{proof}

\begin{corollary}
\label{cor:trivial-res-holo-chen}
Let $G$ be a group with $b_1(G)<\infty$, and suppose 
$\RR_1(G)=\{0\}$. Then the holonomy Chen ranks $\bar\theta_n(G)$ 
vanish for $n$ sufficiently large ($n\gg 0$).
\end{corollary}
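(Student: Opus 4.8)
The plan is to deduce this immediately from the infinitesimal analogue of Massey's correspondence together with the characterization of vanishing resonance. Concretely, I would combine Proposition \ref{prop:inf-massey-corr}, which identifies the holonomy Chen ranks with the graded dimensions of the infinitesimal Alexander invariant, with Corollary \ref{cor:zero-res} in depth $k=1$, which translates the hypothesis $\RR_1(G)=\{0\}$ into a finiteness statement for $\B(G;\Q)$. This mirrors exactly the proof of Corollary \ref{cor:trivial-cw-chen} for the ordinary Chen ranks, with $\theta_n$ replaced by $\bar\theta_n$, $B_{\rat}(G)\otimes \Q$ replaced by $\B(G;\Q)$, and the geometric input $\WW_1(G)$ finite replaced by $\RR_1(G)=\{0\}$.

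The key steps, in order, are as follows. First, invoke Proposition \ref{prop:inf-massey-corr} with $\k=\Q$ to write $\bar\theta_n(G)=\dim_{\Q}\B_{n-2}(G;\Q)$ for all $n\ge 2$. Next, apply Corollary \ref{cor:zero-res} in the case $k=1$: since $\bwedge^1\B(G;\Q)=\B(G;\Q)$, the hypothesis that $\RR_1(G)$ is empty or equal to $\{0\}$ is equivalent to $\B(G;\Q)$ being a finite-dimensional $\Q$-vector space. Finally, since $\B(G;\Q)=\bigoplus_{n\ge 0}\B_n(G;\Q)$ is graded, finite-dimensionality forces its homogeneous components $\B_n(G;\Q)$ to vanish for all $n$ sufficiently large; combined with the first step, this yields $\bar\theta_n(G)=0$ for $n\gg 0$.

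I do not expect a genuine obstacle here, since every ingredient is already in place; the statement is essentially a formal consequence of the two cited results. The one point worth flagging is a simplification relative to the classical case: the infinitesimal Alexander invariant $\B(G;\Q)$ is \emph{intrinsically} graded (it is the derived-quotient $\h(G;\Q)'/\h(G;\Q)''$ of a graded Lie algebra), so unlike in the proof of Corollary \ref{cor:trivial-cw-chen} there is no need to pass to an associated graded module of a filtered object before reading off the vanishing of high-degree pieces. One should only take care to use a fixed field $\k$ of characteristic zero throughout—taking $\k=\Q$ makes the field in Proposition \ref{prop:inf-massey-corr} match condition (2) of Corollary \ref{cor:zero-res}, while the hypothesis on $\RR_1(G)$ (a variety over $\C$) is exactly condition (1) there.
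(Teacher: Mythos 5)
Your proof is correct and is essentially identical to the paper's own argument, which likewise combines Proposition \ref{prop:inf-massey-corr} (giving $\bar\theta_n(G)=\dim_{\Q}\B_{n-2}(G;\Q)$ for $n\ge 2$) with Corollary \ref{cor:zero-res} in depth $k=1$. Your observation that $\B(G;\Q)$ is intrinsically graded, so no passage to an associated graded object is needed, is a fair point of comparison with Corollary \ref{cor:trivial-cw-chen} and matches how the paper's two-line proof implicitly works.
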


\begin{proof}
By Proposition \ref{prop:inf-massey-corr}, we have that 
$\bar\theta_n(G)=\dim_{\Q} \B_{n-2}(G;\Q)$ for all $n\ge 2$. 
The claim now follows from Corollary \ref{cor:zero-res}, in 
the case when $k=1$.
\end{proof}

\begin{corollary}
\label{cor:trivial-res-chen}
Let $G$ be a $1$-formal group, and suppose 
$\RR_1(G)=\{0\}$. Then the Chen ranks $\theta_n(G)$ 
vanish for $n\gg 0$.
\end{corollary}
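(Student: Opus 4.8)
The plan is to combine the preceding corollary on holonomy Chen ranks with the $1$-formality hypothesis, which upgrades the inequality $\theta_n(G)\le \bar\theta_n(G)$ to an equality. Concretely, Corollary \ref{cor:ps-chen} asserts that for a $1$-formal group $G$ one has $\theta_n(G)=\bar\theta_n(G)$ for all $n\ge 2$, so it suffices to show that the holonomy Chen ranks $\bar\theta_n(G)$ vanish for $n\gg 0$. But this is exactly the content of Corollary \ref{cor:trivial-res-holo-chen}, whose only hypotheses are $b_1(G)<\infty$ and $\RR_1(G)=\{0\}$. Both of these hold here: $1$-formality presupposes $b_1(G)<\infty$, and $\RR_1(G)=\{0\}$ is assumed.

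First I would invoke the $1$-formality of $G$ to apply Corollary \ref{cor:ps-chen}, recording the identity $\theta_n(G)=\bar\theta_n(G)$ for all $n\ge 2$. Next I would apply Corollary \ref{cor:trivial-res-holo-chen}, using the standing assumption $\RR_1(G)=\{0\}$ (and the fact that $b_1(G)<\infty$, which is part of the definition of $1$-formality), to conclude that $\bar\theta_n(G)=0$ for all $n$ sufficiently large. Combining the two statements then yields $\theta_n(G)=\bar\theta_n(G)=0$ for $n\gg 0$, which is the desired conclusion.

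Since every ingredient is already established in the excerpt, there is essentially no obstacle to overcome; the proof is a two-line deduction. The only point requiring a moment's care is to confirm that the hypotheses of each cited corollary are genuinely met. For Corollary \ref{cor:ps-chen} the hypothesis is precisely $1$-formality, which is given. For Corollary \ref{cor:trivial-res-holo-chen} the hypotheses are $b_1(G)<\infty$ and $\RR_1(G)=\{0\}$; the former is subsumed in $1$-formality and the latter is the standing assumption. Thus the statement follows immediately, and I would write the proof as a single short paragraph chaining these two corollaries together.

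\begin{proof}
Since $G$ is $1$-formal, Corollary \ref{cor:ps-chen} gives
$\theta_n(G)=\bar\theta_n(G)$ for all $n\ge 2$. On the other hand,
$1$-formality entails $b_1(G)<\infty$, and $\RR_1(G)=\{0\}$ by
hypothesis; hence Corollary \ref{cor:trivial-res-holo-chen}
applies and yields $\bar\theta_n(G)=0$ for $n\gg 0$. Combining
these two facts, we conclude that $\theta_n(G)=0$ for all
sufficiently large $n$.
\end{proof}
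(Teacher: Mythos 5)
Your proof is correct and matches the paper's argument exactly: the paper likewise deduces the corollary by combining Corollary \ref{cor:ps-chen} with Corollary \ref{cor:trivial-res-holo-chen}. Your added hypothesis-check (that $1$-formality is stated for finitely generated groups, hence $b_1(G)<\infty$) is a sound clarification of a step the paper leaves implicit.
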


\begin{proof}
Follows at once from Corollaries \ref{cor:ps-chen} 
and \ref{cor:trivial-res-holo-chen}.
\end{proof}

\begin{remark}
\label{rem:effective}
The range where those Chen ranks vanish has been made precise in 
\cite{AFPRW1}: If $G$ is $1$-formal,  $\RR_1(G)=\{0\}$, and $b_1(G)\ge 3$, 
then $\theta_n(G)=0$ for $n\ge b_1(G)-1$. Generalizations of this 
result to the setting where the resonance does not necessarily vanish  
will be given in \cite{AFRS}. 
\end{remark}

\begin{remark}
\label{rem:zero-res}
By \cite[Theorem C]{DP-ann} and \cite[Corollary 6.3]{PS-mrl}, the 
following holds for any finitely generated group $G$: If 
$\RR_1(G)\subseteq \{0\}$, then $\dim_{\Q} \widehat{B(G)\otimes \Q}<\infty$. 
The converse is not true in general, but it is valid in the case when 
$G$ is $1$-formal.
\end{remark}

\section{Resonance varieties in group extensions}
\label{sect:res-ext}

In this final section we investigate the behavior of the resonance varieties 
under certain kinds of group extensions.

\subsection{Maps between resonance varieties}
\label{subsec:func-res}
Let $\alpha\colon G\to H$ be a homomorphism between two 
finitely generated groups. The induced homomorphism in first cohomology, 
$\alpha^*\colon H^1(H;\C) \to H^1(G;\C)$, may not 
preserve the resonance varieties.
For instance, take a cyclic subgroup  $\Z<F_n$ ($n\ge 2$) as 
in Example \ref{ex:z-f2}. The inclusion $\iota\colon \Z\to F_n$ induces  
a surjective morphism in first cohomology, $\iota^*\colon \C^n \surj \C$; 
this morphism sends $\RR_1(F_n)=\C^n$ onto $\C$, which 
strictly contains $\RR_1(\Z)=\{0\}$. We will further illustrate 
this phenomenon in Example \ref{ex:heisenberg}.

Nevertheless, the resonance varieties enjoy a partial naturality property 
similar to the one possessed  by the characteristic varieties.  

\begin{proposition}[\cite{PS-mathann06, Su-toul}]  
\label{prop:r1-nat}
Let  $G$ be a finitely generated group, and let $\pi\colon G\surj Q$ 
be a surjective homomorphism.   Then the induced homomorphism, 
$\pi^*\colon H^1(Q, \C) \to H^1(G,\C)$, is injective, and restricts 
to embeddings $\RR_k(Q) \inj \RR_k(G)$ for all $k\ge 1$.
\end{proposition}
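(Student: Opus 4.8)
The plan is to mirror the argument used for the characteristic varieties in Proposition \ref{prop:v1-nat}, replacing the Alexander invariant $B(-)$ and Theorem \ref{thm:cvb} by their infinitesimal counterparts $\B(-;\C)$ and Theorem \ref{thm:res-supp}. Since $G$ is finitely generated, so is its quotient $Q$, whence $b_1(G)$ and $b_1(Q)$ are both finite; thus the resonance varieties and the infinitesimal Alexander invariants over $\C$ are all defined, and (as $G_{\abf}$ and $Q_{\abf}$ are finitely generated) we have $\B(G;\C)=\B(G)\otimes \C$ and $\B(Q;\C)=\B(Q)\otimes \C$. First I would record the injectivity of $\pi^*$: the surjection $\pi$ induces a surjection $\pi_*\colon H_1(G;\C)\surj H_1(Q;\C)$, and dualizing gives the injection $\pi^*\colon H^1(Q;\C)\inj H^1(G;\C)$. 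This map is precisely the morphism on maximal spectra induced by the ring map $\tilde\pi_{\abf}\colon \Sym(H_1(G;\C))\to \Sym(H_1(Q;\C))$, which is surjective because $\pi_*$ is onto on the degree-$1$ generators.

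The key step is to promote $\pi$ to a surjection on infinitesimal Alexander invariants. Because the holonomy Lie algebra is generated in degree $1$ and $\pi_*$ is surjective in degree $1$, the functorial morphism $\h(\pi)\colon \h(G;\C)\to \h(Q;\C)$ is surjective. A surjective Lie algebra homomorphism $f\colon L\surj M$ carries $L'=[L,L]$ onto $M'=[M,M]$ and $L''$ onto $M''$, hence induces a surjection $L'/L''\surj M'/M''$; applied to $\h(\pi)$ this yields a surjection $\B(\pi)\colon \B(G;\C)\surj \B(Q;\C)$ covering the surjective ring map $\tilde\pi_{\abf}$. I would then invoke Lemma \ref{lem:supp-map} with the data $(R,S,M,N,\varphi,\psi)=(\Sym(H_1(G;\C)),\Sym(H_1(Q;\C)),\B(G;\C),\B(Q;\C),\tilde\pi_{\abf},\B(\pi))$, which shows that $\pi^*$ restricts to embeddings $\supp(\bwedge^k \B(Q;\C))\inj \supp(\bwedge^k \B(G;\C))$ for all $k\ge 1$.

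Finally I would identify these support loci with the resonance varieties using Theorem \ref{thm:res-supp}, which gives $\supp(\bwedge^k \B(Q;\C))=\RR_k(Q)$ and $\supp(\bwedge^k \B(G;\C))=\RR_k(G)$ away from the origin, and then dispose of the origin directly: $\pi^*(0)=0$, while $0\in \RR_k(Q)$ exactly when $k\le b_1(Q)\le b_1(G)$, in which case $0\in \RR_k(G)$ as well. Combining the two cases gives $\pi^*(\RR_k(Q))\subseteq \RR_k(G)$, and since $\pi^*$ is globally injective, the induced map $\RR_k(Q)\to \RR_k(G)$ is an embedding, as claimed.

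The part I expect to require the most care is the surjectivity of $\B(\pi)$ in the second paragraph, namely the clean passage from surjectivity of $\h(\pi)$ to surjectivity on the second derived quotient $\h'/\h''$; once this is in hand, the remainder is bookkeeping together with a direct application of Lemma \ref{lem:supp-map} and Theorem \ref{thm:res-supp}.
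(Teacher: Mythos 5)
Your proof is correct and follows exactly the route the paper itself indicates for this proposition: adapt the support-locus argument of Proposition \ref{prop:v1-nat}, replacing the $\Z[G_{\ab}]$-module $B(G)$ by the infinitesimal Alexander invariant $\B(G)$ and Theorem \ref{thm:cvb} by Theorem \ref{thm:res-supp}, with Lemma \ref{lem:supp-map} doing the work. The details the paper leaves implicit --- the surjectivity of $\B(\pi)$, which you correctly derive from the surjectivity of $\h(\pi)$ via degree-one generation of the holonomy Lie algebra, and the separate treatment of the origin using $0\in\RR_k(Q) \Leftrightarrow k\le b_1(Q)\le b_1(G)$ --- are all handled properly.
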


Starting directly from definition \eqref{eq:rv}, a proof for $k=1$ was 
given in \cite[Lemma 5.1]{PS-mathann06}, while the general case was proved 
in \cite[Proposition A.1]{Su-toul}. Alternatively, the proof of Proposition \ref{prop:v1-nat} 
can be readily adapted to this context, with the $\Z[G_{\ab}]$-module $B(G)$
replaced by the $\Sym[G_{\abf}]$-module $\B(G)$, and with $\RR_k(G)$ 
defined as in \eqref{eq:res-supp}.

\subsection{Resonance in split-exact sequences}
\label{subsec:reson-split}

Our main goal in this section is to relate the infinitesimal Alexander invariants 
and the resonance varieties of a group $G$ to those of a normal subgroup $K$, 
under suitable hypothesis. We start with the case when $G=K\rtimes Q$ is 
a semidirect product.

\begin{theorem}
\label{thm:res-split-abf}
Let $\begin{tikzcd}[column sep=16pt]
\!\!1\ar[r] & K\ar[r, "\iota"]
& G \ar[r, "\pi"] & Q\ar[r] & 1\!\!
\end{tikzcd}$ be a split exact sequence of finitely generated groups 
such that $Q$ acts trivially on $H_1(K;\Q)$ and $G$ 
is graded formal. Then,
\begin{enumerate}[itemsep=3pt]
\item \label{hr1}
$\begin{tikzcd}[column sep=18pt]
\!\!0\ar[r] & \h(K)\otimes \Q\ar[rr, "\!\!\h(\iota)\otimes \Q"]
&& \h(G)\otimes \Q \ar[rr, "\!\!\h(\pi)\otimes \Q"] && \h(Q)\otimes \Q\ar[r] & 0 \!\!
\end{tikzcd}$
is a split exact sequence of graded Lie algebras.
\item \label{hr2}
Suppose $Q$ is abelian.  Then 
\begin{enumerate}[itemsep=2pt,topsep=1pt]
\item \label{bbk1}
The map $\B(\iota)\colon  \B(K)\to \B(G)$ gives  rise to a 
$\Sym(H_1(K;\Q))$-linear isomorphism,  
$\B(K)\otimes \Q\isom \B(G)_{\iota}\otimes \Q$.
\item \label{bbk2}
$\bar\theta_n(K)\le \bar\theta_n(G)$ for all $n\ge 1$. 
\item \label{bbk3}
The morphism $\iota^*\colon H^1(G;\C) \surj H^1(K;\C)$ 
restricts to maps $\iota^* \colon \RR_k(G)\to \RR_k(K)$ for all $k\ge 1$; 
moreover, the map $\iota^* \colon \RR_1(G)\surj \RR_1(K)$ is a 
surjection.
\end{enumerate}
\end{enumerate}
\end{theorem}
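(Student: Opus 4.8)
The plan is to prove the three parts of \eqref{hr2} in sequence, each feeding into the next, with the graded formality hypothesis entering precisely to reduce the resonance statement \eqref{bbk3} to the module-level statement \eqref{bbk1}. The overarching strategy mirrors the integral argument of Theorem \ref{thm:alex-abex} and Theorem \ref{thm:cv-abf}, but carried out on the infinitesimal side: I replace the Alexander invariant $B(G)$ by the infinitesimal Alexander invariant $\B(G)$, replace Massey's correspondence by the holonomy-Chen identity of Proposition \ref{prop:inf-massey-corr}, and replace Theorem \ref{thm:cvb} by the support formula of Theorem \ref{thm:res-supp}. The key input that is \emph{new} to the split setting is part \eqref{hr1}, the split short exact sequence of holonomy Lie algebras, which I take as already established when proving \eqref{hr2}.

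\textbf{Step 1: the isomorphism of infinitesimal Alexander invariants.} First I would derive \eqref{bbk1} from \eqref{hr1}. Since $Q$ is abelian, $\h(Q)\otimes\Q$ is concentrated in degree $1$ (it is the abelian Lie algebra on $Q_{\abf}\otimes\Q$), so its derived subalgebra $(\h(Q)\otimes\Q)'$ vanishes. Applying the derived-series functor to the split exact sequence \eqref{hr1} and using that the sequence splits, one gets $\h(K)'\otimes\Q \isom \h(G)'\otimes\Q$ and $\h(K)''\otimes\Q \isom \h(G)''\otimes\Q$ as graded objects, exactly as in the proof of Theorem \ref{thm:alex-abex}\eqref{ng2}. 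Passing to the quotient $\h(\,\cdot\,)'/\h(\,\cdot\,)''$ gives the identification $\B(K)\otimes\Q = \B(G)\otimes\Q$ as abelian groups; the point is then to check that under the inclusion $\iota$ this is a $\Sym(H_1(K;\Q))$-linear isomorphism onto $\B(G)_{\iota}\otimes\Q$, the module obtained from $\B(G)\otimes\Q$ by restriction of scalars along $\tilde\iota_{\abf}$. This is formal once the underlying graded pieces agree, since the $\Sym(H_1(K;\Q))$-action on both sides is the adjoint action of $\h_1(K)\otimes\Q = H_1(K;\Q)$ through the sequence \eqref{eq:holo-ses}, and $H_1(K;\Q)\inj H_1(G;\Q)$ is injective.

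\textbf{Step 2: the holonomy Chen rank inequality.} Part \eqref{bbk2} follows by passing to graded ranks. From \eqref{bbk1}, the map $\B(\iota)\otimes\Q\colon \B(K)\otimes\Q \to \B(G)\otimes\Q$ factors as the isomorphism $\B(K)\otimes\Q \isom \B(G)_{\iota}\otimes\Q$ followed by the identity of $\B(G)\otimes\Q$ viewed as covering the injective ring map $\tilde\iota_{\abf}\colon \Sym(H_1(K;\Q))\inj\Sym(H_1(G;\Q))$. Since $\tilde\iota_{\abf}$ is the linear change of variables attached to the injection $H_1(K;\Q)\inj H_1(G;\Q)$, it is an injection of polynomial rings, so each graded piece $\B_n(K;\Q)$ injects into $\B_n(G;\Q)$; hence $\dim_{\Q}\B_n(K;\Q)\le \dim_{\Q}\B_n(G;\Q)$. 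Invoking Proposition \ref{prop:inf-massey-corr}, which reads $\bar\theta_{n+2}(G)=\dim_{\Q}\B_n(G;\Q)$ and likewise for $K$, gives $\bar\theta_n(K)\le\bar\theta_n(G)$ for $n\ge 2$; the case $n=1$ is $b_1(K)\le b_1(G)$, immediate from the injection on $H_1$.

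\textbf{Step 3: surjectivity of resonance maps, and the main obstacle.} For \eqref{bbk3} I would complexify and apply Theorem \ref{thm:res-supp}, which identifies $\RR_k(\,\cdot\,)$ (away from $0$) with the support of $\bwedge^k\B(\,\cdot\,)\otimes\C$. The surjective ring map $\iota^*\colon H^1(G;\C)\surj H^1(K;\C)$ dualizes the injection $H_1(K;\C)\inj H_1(G;\C)$; using the isomorphism of \eqref{bbk1} together with Lemma \ref{lem:supp-map}, the induced embedding on maximal spectra restricts to inclusions $\supp(\bwedge^k\B(K)\otimes\C)\inj\supp(\bwedge^k\B(G)\otimes\C)$, giving the maps $\iota^*\colon\RR_k(G)\to\RR_k(K)$. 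For the depth-one surjectivity, when $k=1$ the module $\B(G)_{\iota}\otimes\C$ maps isomorphically (not merely injectively) so the argument of Theorem \ref{thm:cv-abf}\eqref{cv1} applies: restriction of scalars along the surjection $\tilde\nu$-type map carries $\ann(\B(K)\otimes\C)$ onto $\ann(\B(G)\otimes\C)$ in the relevant sense, yielding that $\iota^*\colon\RR_1(G)\to\RR_1(K)$ is onto. The subtlety here—precisely the point flagged in Remark \ref{rem:higher-depth}—is that exterior powers need not commute with restriction of scalars, so for $k>1$ I only claim $\iota^*$ maps $\RR_k(G)$ into $\RR_k(K)$, not onto it. The main obstacle, then, is bookkeeping the distinction between the $\Sym(H_1(K;\Q))$-module structure and the $\Sym(H_1(G;\Q))$-module structure on the \emph{same} underlying graded vector space $\B(G)\otimes\Q$, and verifying that graded formality of $G$ is genuinely needed only to make Theorem \ref{thm:res-supp} applicable to $K$ as well—here Theorem \ref{thm:abf-gr-formal} guarantees that $K$ inherits graded formality, so that $\RR_k(K)$ is controlled by $\h(K)$ via $\B(K)$ in the first place.
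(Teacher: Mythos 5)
Your treatment of part \eqref{hr2} is essentially the paper's proof: derive $\h(K)'\otimes \Q \cong \h(G)'\otimes \Q$ (hence the same for second derived subalgebras) from part \eqref{hr1} and the fact that $\h(Q)\otimes\Q$ is concentrated in degree $1$, deduce the $\Sym(H_1(K;\Q))$-linear isomorphism $\B(K)\otimes\Q \isom \B(G)_{\iota}\otimes\Q$, get the rank inequality from Proposition \ref{prop:inf-massey-corr} and the injectivity of $\tilde\iota_{\abf}$, and obtain \eqref{bbk3} from Theorem \ref{thm:res-supp} by the annihilator/support argument of Theorem \ref{thm:cv-abf}, with surjectivity only in depth $1$ for exactly the reason in Remark \ref{rem:higher-depth}. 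All of this is sound (in fact, since $\B(G)_{\iota}\otimes\Q$ has the same underlying graded vector space as $\B(G)\otimes\Q$, your Step 2 actually yields equality $\bar\theta_n(K)=\bar\theta_n(G)$ for $n\ge 2$, which is stronger than the claimed inequality).

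The genuine gap is that you never prove part \eqref{hr1}: you declare it ``already established,'' but it is half of the theorem, and it is precisely where all three hypotheses do their work. The holonomy Lie algebra sequence $0\to \h(K)\otimes\Q \to \h(G)\otimes\Q \to \h(Q)\otimes\Q \to 0$ is \emph{not} exact for a general split extension with trivial monodromy on $H_1(K;\Q)$ --- holonomy Lie algebras are functorial but do not turn group extensions into Lie algebra extensions. The paper's route is: since $K$ is finitely generated and $Q$ acts trivially on $H_1(K;\Q)$, Proposition \ref{prop:abf-exact-split} makes the sequence $\abf$-exact, and Corollary \ref{cor:fr-abf-rat} gives a split exact sequence of \emph{associated graded} Lie algebras $0\to \gr(K)\otimes\Q \to \gr(G)\otimes\Q \to \gr(Q)\otimes\Q\to 0$; then graded formality of $G$ passes to $Q$ (a retract, by \cite[Theorem 5.11]{SW-forum}) and to $K$ (Theorem \ref{thm:abf-gr-formal}), and the three isomorphisms $\h(\cdot)\otimes\Q \cong \gr(\cdot)\otimes\Q$ convert the $\gr$-sequence into the $\h$-sequence. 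Relatedly, your closing remark misidentifies the role of the formality hypothesis: Theorem \ref{thm:res-supp} holds for \emph{every} group with $b_1<\infty$, with no formality assumption whatsoever, so graded formality is not needed ``to make Theorem \ref{thm:res-supp} applicable to $K$.'' It is needed solely to establish \eqref{hr1}; without that, your Step 1 has no starting point. Once you supply the argument for \eqref{hr1} as above, the rest of your proposal goes through.
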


\begin{proof} 
Since $K$ is finitely generated and $Q$ acts trivially on $H_1(K;\Q)$,  
Proposition \ref{prop:abf-exact-split} insures that the sequence 
$1\to K\to G\to Q\to 1$ is $\abf$-exact; in particular, $\iota_{\abf}$ 
is injective and $\bar\theta_1(K)=\rank K_{\abf}$ is less or equal to 
$\bar\theta_1(G)=\rank G_{\abf}$.
By Corollary \ref{cor:fr-abf-rat}, the given sequence yields a 
split exact sequence of associated graded $\Q$-Lie algebras, 
$0\to \gr(K)\otimes \Q\to \gr(G)\otimes \Q\to \gr(Q)\otimes \Q\to 0$. 

The assumption that $G$ is graded formal means that the natural 
morphism $\h(G)\otimes \Q\surj \gr(G)\otimes \Q$ is an isomorphism. 
By \cite[Theorem 5.11]{SW-forum}, $Q$ is graded formal, since  it  
is a retract of $G$, a graded formal group. 
Likewise, Theorem \ref{thm:abf-gr-formal} implies that $K$ 
is graded formal. Putting things together shows that the sequence 
from \eqref{hr1} is also a split exact sequence of graded $\Q$-Lie algebras.

Assume now that $Q$ is abelian. We then have $\gr_n(Q)=0$ for all $n\ge 2$, 
and so $\h_n(Q)\otimes \Q=0$ for all $n\ge 2$. Using part \eqref{hr1}, 
we infer that $\h(K)'\otimes \Q\cong \h(G)'\otimes \Q$. Therefore, 
we also have $\h(K)''\otimes \Q\cong \h(G)''\otimes \Q$.  
Thus, the induced map, $\B(K)\otimes \Q\to 
\B(G)_{\iota}\otimes \Q$, is an isomorphism of modules 
over $\Sym(H_1(K;\Q))$, and claim \eqref{bbk1} is established. 

By Proposition \ref{prop:inf-massey-corr}, we have that 
$\bar\theta_{n}(G)=\dim_{\k} \B_{n-2}(G)\otimes \Q$ for $n\ge 2$, and 
likewise for $\bar\theta_{n}(K)$. Therefore, claim \eqref{bbk2} for $n\ge 2$ 
follows from part \eqref{bbk1}, using  the injectivity of the map 
$\tilde\iota_{\abf}\colon \Sym(K_{\abf})\to  \Sym(G_{\abf})$
and a reasoning similar to the proof of Theorem \ref{thm:alex-lcs-ngq}, 
part \eqref{ngq4}.

By Theorem \ref{thm:res-supp}, we have that 
$\RR_k(G) = \supp ( \bwedge^k \B(G)\otimes \C )$ and similarly for  
$\RR_k(K)$, at least away from $0$. Claim \eqref{bbk3} now follows 
from part \eqref{bbk1}, in a manner similar to the proof of Theorem \ref{thm:cv-abf}.
\end{proof} 

\begin{corollary}
\label{cor:zero-res-kg}
Let $1\to K\to G \to Q\to 1$ be a split exact sequence of finitely 
generated groups. Assume $Q$ is abelian and acts trivially on $H_1(K;\Q)$,  
while $G$ is graded formal and $\RR_1(G)\subseteq \{0\}$. Then,  
\begin{enumerate}[itemsep=2pt]
\item \label{vr1}
$\dim_{\Q}\B(K)\otimes \Q<\infty$ and $\RR_1(K)\subseteq \{0\}$. 
\item  \label{vr2}
$\bar\theta_n(K)\le \bar\theta_n(G)$ for all $n\ge 1$  
and  $\bar\theta_n(K)=0$ for $n\gg 0$.
\end{enumerate}
\end{corollary}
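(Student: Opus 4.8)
The plan is to derive both parts of Corollary \ref{cor:zero-res-kg} directly from Theorem \ref{thm:res-split-abf}, combined with the characterization of vanishing resonance from Corollary \ref{cor:zero-res} and the infinitesimal Massey correspondence of Proposition \ref{prop:inf-massey-corr}. The hypotheses of the corollary exactly match those of Theorem \ref{thm:res-split-abf}\eqref{hr2}: the sequence splits, $Q$ is abelian and acts trivially on $H_1(K;\Q)$, and $G$ is graded formal. So all conclusions of that theorem are available; the task is just to feed them into the vanishing-resonance machinery.

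For part \eqref{vr1}, first I would invoke Theorem \ref{thm:res-split-abf}\eqref{bbk3}, which gives a surjection $\iota^* \colon \RR_1(G) \surj \RR_1(K)$. Since by hypothesis $\RR_1(G)\subseteq \{0\}$, its image under the linear map $\iota^*$ is contained in $\{0\}$, and because the surjection is onto $\RR_1(K)$ we conclude $\RR_1(K)\subseteq \{0\}$. Then I would apply Corollary \ref{cor:zero-res} in depth $k=1$: the condition $\RR_1(K)\subseteq\{0\}$ is equivalent to $\B(K;\Q)=\B(K)\otimes\Q$ being finite-dimensional over $\Q$. (Here I would note that $K$ is finitely generated—hence $K_{\abf}$ is finitely generated—so $\B(K;\Q)=\B(K)\otimes\Q$, justifying the identification.) This establishes both assertions of \eqref{vr1} at once.

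For part \eqref{vr2}, the inequality $\bar\theta_n(K)\le \bar\theta_n(G)$ for all $n\ge 1$ is precisely Theorem \ref{thm:res-split-abf}\eqref{bbk2}, so it may simply be cited. For the vanishing $\bar\theta_n(K)=0$ for $n\gg 0$, I would combine the finite-dimensionality of $\B(K)\otimes\Q$ from \eqref{vr1} with Proposition \ref{prop:inf-massey-corr}, which states $\bar\theta_n(K)=\dim_{\Q}\B_{n-2}(K;\Q)$ for $n\ge 2$. Since $\B(K)\otimes\Q$ is a finite-dimensional graded vector space, its graded pieces $\B_{m}(K;\Q)$ vanish for all sufficiently large $m$; translating back via the degree shift $m=n-2$ gives $\bar\theta_n(K)=0$ for $n\gg 0$. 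This mirrors the argument already used in Corollary \ref{cor:trivial-res-holo-chen}.

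None of the steps presents a genuine obstacle, since the heavy lifting is done in Theorem \ref{thm:res-split-abf}; the corollary is an assembly of already-proven facts. The only point requiring a moment of care is the compatibility of notation between $\B(K)\otimes\Q$ and $\B(K;\Q)$—these agree because $K_{\abf}$ is finitely generated—and ensuring the surjectivity in \eqref{bbk3} (as opposed to mere containment) is what lets us pass from $\RR_1(G)\subseteq\{0\}$ to the genuine inclusion $\RR_1(K)\subseteq\{0\}$ rather than a weaker statement. I would write the proof as a short paragraph citing these three results in sequence.
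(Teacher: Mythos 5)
Your proposal is correct and matches the paper's own proof, which likewise deduces everything from Theorem \ref{thm:res-split-abf}, part \eqref{hr2}, together with Corollaries \ref{cor:zero-res} and \ref{cor:trivial-res-holo-chen}; your only deviation is cosmetic, in that you unpack Corollary \ref{cor:trivial-res-holo-chen} into its ingredients (Proposition \ref{prop:inf-massey-corr} plus finite-dimensionality of $\B(K)\otimes\Q$) rather than citing it directly. Your side remarks---that the surjectivity in \eqref{bbk3} is what yields $\RR_1(K)\subseteq\{0\}$, and that $\B(K;\Q)=\B(K)\otimes\Q$ since $K_{\abf}$ is finitely generated---are both accurate and consistent with the paper.
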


\begin{proof}
All claims follow directly from Theorem \ref{thm:res-split-abf}, part \eqref{hr2}, 
and from Corollaries \ref{cor:zero-res} and \ref{cor:trivial-res-holo-chen}.
\end{proof}

\subsection{Discussion and examples}
\label{subsec:res-discuss}
Let us discuss the necessity of some of the assumptions 
we made in Theorem \ref{thm:res-split-abf}. The extension 
$1\to F_n \to P_n \to P_{n-1}\to 1$ ($n\ge 4$) 
from Example \ref{ex:braids} can be used again to show 
that in part \eqref{hr2} we need to assume $Q$ to be abelian. 
In the next example, $Q$ is abelian but acts non-trivially on 
$H_1(K;\Q)$, while $G$ is not graded-formal. 

\begin{example}
\label{ex:heisenberg}
Let $G$ be the Heisenberg group from 
Example \ref{ex:heis-alex} and Remark \ref{rem:central ex}. 
This group can be realized as a split extension of the form 
$\Z^2\rtimes_{\varphi} \Z$, with monodromy given by the matrix 
$\big(\begin{smallmatrix}  1 & 1\\0& 1  \end{smallmatrix}\big)$.
The inclusion $\iota\colon \Z^2 \inj G$ 
induces a homomorphism $\iota^*\colon H^1(G;\C)\to H^1(\Z^2;\C)$ 
which can be identified with the linear map $\iota^*\colon \C^2\to \C^2$ with 
matrix $\big(\begin{smallmatrix}  0 & 0\\0& 1  \end{smallmatrix}\big)$. 
Since $\cup_G=0$, we have that $\RR_1(G)=\C^2$; 
thus, $\iota^*$ does not take $\RR_1(G)$ to $\RR_1(\Z^2)=\{0\}$. 
For comparison, though, note that the map $\iota^*\colon \T_{G}\to \T_{\Z^2}$ 
does take $\V_1(G)=\{1\}$ to $\V_1(\Z^2)=\{1\}$; the discrepancy is due to 
the fact that $G$ is not $1$-formal.
\end{example}

Given an exact sequence as in Theorem \ref{thm:res-split-abf} 
(or as in Theorem \ref{thm:res-abf} below), the morphisms 
$\iota^*\colon \RR_k(G)\to \RR_k(K)$ may fail to be 
surjective for $k>1$. The reason is the same as the 
one given in Remark \ref{rem:higher-depth}: exterior 
powers do not necessarily commute with restriction 
of scalars, and so the map 
$\bwedge^k\, \B(K)\otimes \Q\to \bwedge^k \,\B(G)_{\iota} \otimes \Q$ 
may fail to be injective for $k>1$. We illustrate this phenomenon 
with an example.

\begin{example}
\label{ex:high-depth-res}
Let $G$ be the arrangement group from Example \ref{ex:high-depth-cv}.
As noted previously, this group fits into a split, $\ab$-exact 
sequence, $1\to F_{3} \xrightarrow{\iota} G \to \Z\to 1$.  Moreover, 
the group $G$ is graded formal, see \cite[Theorem 7.6]{SW-jpaa}.
Thus, all the hypothesis of Theorem \ref{thm:res-split-abf} 
are satisfied.  Now, as shown in \cite[Example 6.3]{MS00}, 
the variety $\RR_1(G)$ consists of two hyperplanes in 
$\C^4$, while $\RR_2(G)$ consists of two lines. It is 
readily seen that $\iota^*$ sends $\RR_1(G)$ onto 
$\RR_1(F_3)=\C^3$, as predicted; by dimension reasons, though, 
$\iota^*(\RR_2(G))$ is strictly contained in $\RR_2(F_3)=\C^3$.
\end{example}

\subsection{Resonance in $\ab$-exact and $\abf$-exact sequences}
\label{subsec:reson-abf}

We now relax the split exactness assumption from the preceding 
theorem at the price of making a more stringent formality assumption. 
Due to its rather broad scope, the next theorem is one of the main 
results of this paper.

\begin{theorem}
\label{thm:res-abf}
Let $\begin{tikzcd}[column sep=14pt]
\!\!1\ar[r] & K\ar[r, "\iota"]
& G \ar[r] & Q\ar[r] & 1\!\!
\end{tikzcd}$ be an exact sequence of groups, and assume 
the following hold.
\begin{enumerate}[(i)]
\item  \label{rab1} 
Either the sequence if $\ab$-exact and $Q$ is abelian, 
or the sequence is $\abf$-exact and $Q$ is torsion-free abelian.  
\item \label{rab2} 
Both $G$ and $K$ are $1$-formal.
\end{enumerate}
Then 
\begin{enumerate}[itemsep=2pt]
\item \label{xx1}
The map $\B(\iota)\colon  \B(K)\to \B(G)$ gives rise to a 
$\Sym(H_1(K;\Q))$-linear isomorphism,  
$\B(K)\otimes \Q\isom \B(G)_{\iota}\otimes \Q$.

\item \label{xx2}
$\theta_n(K)\le \theta_n(G)$ for all $n\ge 1$. 

\item \label{xx3}
The morphism $\iota^*\colon H^1(G, \C) \surj H^1(K, \C)$ 
restricts to maps $\iota^* \colon \RR_k(G)\to \RR_k(K)$ for all $k\ge 1$; 
moreover, the map $\iota^* \colon \RR_1(G)\surj \RR_1(K)$ is a 
surjection.
\end{enumerate}
\end{theorem}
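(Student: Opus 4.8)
The plan is to reduce the $1$-formal, possibly non-split situation to the $1$-formal \emph{Malcev-theoretic} comparison of Alexander invariants, using the completed-module isomorphism of Theorem \ref{thm:linalex-c} as the bridge between the classical invariant $B$ and its infinitesimal counterpart $\B$. The key observation is that under hypothesis \eqref{rab1} we already know from Theorem \ref{thm:alex-abex}, part \eqref{ng2} (in the $\ab$-exact case) or Theorem \ref{thm:alex-lcs-ngq}, part \eqref{ngq2} (in the $\abf$-exact case) that the map on \emph{classical} Alexander invariants, $B(\iota)\colon B(K)\to B(G)$, factors through a $\Z[K_{\ab}]$- (resp.\ $\Z[K_{\abf}]$-) linear isomorphism $B(K)\isom B(G)_{\iota}$. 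So the hard algebraic content over $\Z$ is already in hand; what remains is to transport this across the formality isomorphism to the infinitesimal side.

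First I would prove \eqref{xx1}. Passing to associated graded modules from the classical isomorphism $B(K)\otimes\Q\isom B(G)_{\iota}\otimes\Q$ (over the completions, as in the proof of Theorem \ref{thm:hat-kappa}) gives an isomorphism $\gr(B(K)\otimes\Q)\isom \gr(B(G)_{\iota}\otimes\Q)$ covering $\gr(\tilde\iota_{\abf})$. Now invoke Corollary \ref{cor:linalex-gr}: since both $K$ and $G$ are $1$-formal, $\gr(B(K)\otimes\Q)\cong \B(K)\otimes\Q$ and $\gr(B(G)\otimes\Q)\cong \B(G)\otimes\Q$ as graded modules over $\Sym(H_1(-;\Q))$. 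These isomorphisms are natural in the group, so they intertwine $\gr(B(\iota))$ with $\B(\iota)$; the compatibility of the comparison with restriction of scalars along $\tilde\iota_{\abf}$ is preserved because $\iota_{\abf}$ is injective (by the $\abf$-exactness built into \eqref{rab1}). Hence the induced map $\B(K)\otimes\Q\to \B(G)_{\iota}\otimes\Q$ is an isomorphism of $\Sym(H_1(K;\Q))$-modules, which is exactly \eqref{xx1}.

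With \eqref{xx1} in place, parts \eqref{xx2} and \eqref{xx3} follow formally, exactly as in Theorem \ref{thm:res-split-abf}, part \eqref{hr2}. For \eqref{xx2}, combine the injectivity of $\tilde\iota_{\abf}\colon \Sym(K_{\abf})\to\Sym(G_{\abf})$ (from the proof of Lemma \ref{lem:inj}) with the factorization of $\B(\iota)$ through the isomorphism of \eqref{xx1} followed by restriction of scalars, conclude that $\gr(\B(\iota))$ is injective in each degree, and then use Corollary \ref{cor:ps-chen}—valid because both groups are $1$-formal—to read off $\theta_n(K)=\dim_\Q\B_{n-2}(K;\Q)\le \dim_\Q\B_{n-2}(G;\Q)=\theta_n(G)$ for $n\ge 2$, with the $n=1$ case being $\theta_1(K)=\rank K_{\abf}\le \rank G_{\abf}=\theta_1(G)$. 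For \eqref{xx3}, apply Theorem \ref{thm:res-supp} to identify $\RR_k(-)$ with $\supp(\bwedge^k\B(-)\otimes\C)$ away from $0$, and then run the Lemma \ref{lem:supp-map} argument verbatim from the proof of Theorem \ref{thm:cv-abf}: the isomorphism $\B(K)\otimes\C\isom\B(G)_{\iota}\otimes\C$ covering the injective ring map $\tilde\iota_{\abf}$ yields $\iota^*\colon \RR_k(G)\to \RR_k(K)$, and in depth $k=1$ the map $\B(G)_{\iota}\otimes\C\to \B(G)\otimes\C$ is injective, forcing surjectivity of $\iota^*\colon \RR_1(G)\surj\RR_1(K)$.

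The main obstacle I anticipate is the \emph{naturality} of the formality isomorphism of Corollary \ref{cor:linalex-gr}: one must check that the isomorphism $\gr(B(-)\otimes\Q)\cong \B(-)\otimes\Q$ can be chosen functorially in the group, so that it genuinely intertwines $\gr(B(\iota))$ with $\B(\iota)$ rather than merely matching source and target abstractly. This traces back to the functoriality of Theorem \ref{thm:linalex-c} (the filtered isomorphism $\widehat{B(G)\otimes\Q}\cong\widehat{\B(G)\otimes\Q}$), which in turn rests on the naturality of the Malcev completion and of the BGG presentation of $\B$; I would verify that the diagram comparing $K$ and $G$ commutes by tracking the canonical element $\omega$ and the comultiplication $\nabla$ through $\iota$, exploiting that $1$-formality makes the comparison depend only on the second nilpotent quotient, on which $\iota$ restricts compatibly.
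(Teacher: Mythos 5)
Your proposal is correct, and for parts \eqref{xx1} and \eqref{xx2} it follows the paper's proof: claim \eqref{xx1} is obtained exactly as you describe, by transporting the integral isomorphisms of Theorem \ref{thm:alex-abex}, part \eqref{ng2}, or Theorem \ref{thm:alex-lcs-ngq}, part \eqref{ngq2} (with Theorem \ref{thm:hat-kappa} bridging $B_{\rat}$ and $\gr(B\otimes \Q)$ in the $\abf$ case) across the isomorphism of Corollary \ref{cor:linalex-gr}; and your derivation of \eqref{xx2} via Corollary \ref{cor:ps-chen} is precisely the alternative argument the paper itself records, its primary route being to quote Theorem \ref{thm:alex-abex}, part \eqref{ng4}, or Theorem \ref{thm:alex-lcs-ngq}, part \eqref{ngq4} directly. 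For part \eqref{xx3}, however, you take a genuinely different route. You deduce \eqref{xx3} from \eqref{xx1}, identifying $\RR_k$ with $\supp\big(\bwedge^k \B\otimes\C\big)$ via Theorem \ref{thm:res-supp} and running the annihilator/restriction-of-scalars argument of Lemma \ref{lem:supp-map} and Theorem \ref{thm:cv-abf} on the infinitesimal side---this is exactly the argument the paper uses in the \emph{split} case, Theorem \ref{thm:res-split-abf}, part \eqref{bbk3}. The paper instead proves \eqref{xx3} \emph{independently} of \eqref{xx1}: it applies Theorem \ref{thm:cv-abf} to obtain maps $\iota^*\colon \WW_k(G)\to \WW_k(K)$, surjective in depth $1$, takes tangent cones at the identity, and then invokes the Tangent Cone formula (Theorem \ref{thm:tcone}), valid by the $1$-formality of both $G$ and $K$, to replace $\TC_1(\WW_k(G))$ and $\TC_1(\WW_k(K))$ by $\RR_k(G)$ and $\RR_k(K)$. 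The trade-off is instructive: your route makes \eqref{xx3} a formal consequence of \eqref{xx1} and concentrates all the formality input in the functoriality of the comparison isomorphism $\widehat{B(\cdot)\otimes\Q}\cong \widehat{\B(\cdot)\otimes\Q}$---the very point you flag as the main obstacle, and which is in fact already settled because Theorem \ref{thm:linalex-c} is stated (and proved in \cite{DPS-duke, SW-mz}) as a \emph{natural} isomorphism, so no further diagram chase through $\omega$ and $\nabla$ is required; the paper's route avoids relying on this naturality for \eqref{xx3}, at the cost of importing the deeper Tangent Cone theorem from \cite{DPS-duke}. Both arguments share the same depth-$1$ surjectivity mechanism (injectivity of $\B(G)_{\iota}\to \B(G)$, resp.\ of $B(G)_{\iota}\to B(G)$), and both handle the basepoint ($0$, resp.\ $1$) by homogeneity of $\RR_k$, so your proof is a sound, and arguably more self-contained, alternative to the one in the paper.
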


\begin{proof} 
Assumption \ref{rab1} says that the hypothesis of either 
Theorem \ref{thm:alex-abex} or Theorem \ref{thm:alex-lcs-ngq} 
are satisfied. In either case, we infer that the map 
$B(\iota)\colon  B(K)\to B(G)$ 
gives rise to a $\Q[H_1(K;\Q)]$-linear isomorphism,  
$B(K)\otimes \Q\to B(G)_{\iota}\otimes \Q$. 
On the other hand, the formality assumption \ref{rab2} 
and Corollary \ref{cor:linalex-gr} yield functorial isomorphisms 
$\gr(B(K))\otimes \Q \cong \B(K)\otimes \Q$ 
and $\gr(B(G))\otimes \Q \cong \B(G)\otimes \Q$. 
Claim \eqref{xx1} readily follows.

Under assumption \ref{rab1}, claim \eqref{xx2} 
follows directly from Theorem \ref{thm:alex-abex}, part \eqref{ng4} 
in the $\ab$-exact case, or from 
Theorem \ref{thm:alex-lcs-ngq}, part \eqref{ngq4} 
in the $\abf$-exact case. Alternatively, the claim 
follows from part \eqref{xx1}, using the formality 
assumption \ref{rab2} and Corollary \ref{cor:ps-chen}. 

Assumption \ref{rab1} also says that the hypothesis of 
Theorem \ref{thm:cv-abf}---from either part \eqref{cv1} or 
part \eqref{cv2}---are satisfied. In both cases, the map $\iota^*\colon 
\T_G\to \T_K$ restricts to maps $\iota^*\colon\WW_k(G)\to \WW_k(K)$ 
for all $k\ge 1$, with the map $\iota^*\colon\WW_1(G)\to \WW_1(K)$ 
being a surjection.  Taking tangent cones at the identities of 
$\T_G$ and $\T_K$, respectively, we infer that 
the homomorphism $\iota^*\colon H^1(G;\C) \surj H^1(K;\C)$ restricts 
to maps $\TC_1(\WW_k(G))\to \TC_1(\WW_k(K))$ for $k\ge 2$ and 
to a surjection $\TC_1(\WW_1(G))\surj \TC_1(\WW_1(K))$. 

Finally, by virtue of our $1$-formality assumptions on $G$ and $K$, 
Theorem \ref{thm:tcone} allows us to replace 
$\TC_1(\WW_k(G))$ with $\RR_k(G)$ and 
$\TC_1(\WW_k(K))$ with $\RR_k(K)$ 
for each $k\ge 1$. This proves claim \eqref{xx3}.
\end{proof}

\begin{corollary}
\label{cor:zero-res-k}
With the notation and assumptions of Theorem \ref{thm:res-abf}, 
suppose $\RR_1(G)\subseteq \{0\}$. Then,
\begin{enumerate}[itemsep=2pt]
\item \label{rr1}
$\dim_{\Q}\B(K)\otimes \Q<\infty$ and $\RR_1(K)\subseteq \{0\}$. 
\item  \label{rr2}
$\theta_n(K)\le \theta_n(G)$ for all $n\ge 1$  
and  $\theta_n(K)=0$ for $n\gg 0$.
\end{enumerate}
\end{corollary}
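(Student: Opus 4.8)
The plan is to derive both parts of Corollary \ref{cor:zero-res-k} directly from the conclusions of Theorem \ref{thm:res-abf}, combined with the vanishing-resonance machinery established in Section \ref{sect:res}. The hypotheses of Theorem \ref{thm:res-abf} are already assumed, so I have access to the isomorphism $\B(K)\otimes \Q\isom \B(G)_{\iota}\otimes \Q$ from part \eqref{xx1}, the inequality $\theta_n(K)\le \theta_n(G)$ from part \eqref{xx2}, and the surjection $\iota^*\colon \RR_1(G)\surj \RR_1(K)$ from part \eqref{xx3}.

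First I would prove claim \eqref{rr1}. The surjectivity of $\iota^*\colon \RR_1(G)\surj \RR_1(K)$ from Theorem \ref{thm:res-abf}\eqref{xx3}, together with the hypothesis $\RR_1(G)\subseteq \{0\}$, forces $\RR_1(K)\subseteq \{0\}$ immediately, since the image of a set contained in $\{0\}$ is contained in $\{0\}$. Having established $\RR_1(K)\subseteq\{0\}$, I then invoke Corollary \ref{cor:zero-res} in the case $k=1$: the condition that $\RR_1(K)$ is empty or equal to $\{0\}$ is equivalent to $\bwedge^1 \B(K;\Q)=\B(K;\Q)=\B(K)\otimes \Q$ being finite-dimensional over $\Q$. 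This yields both assertions of part \eqref{rr1}.

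Next I would prove claim \eqref{rr2}. The inequality $\theta_n(K)\le \theta_n(G)$ for all $n\ge 1$ is simply restated from Theorem \ref{thm:res-abf}\eqref{xx2}. For the vanishing, I observe that under assumption \ref{rab2} both $G$ and $K$ are $1$-formal, so Corollary \ref{cor:trivial-res-chen} applies to $K$: since $K$ is $1$-formal and (by part \eqref{rr1}) $\RR_1(K)\subseteq \{0\}$, the Chen ranks $\theta_n(K)$ vanish for $n\gg 0$. Alternatively, one may route this through the finite-dimensionality of $\B(K)\otimes \Q$ established above, together with Corollary \ref{cor:ps-chen}, which for the $1$-formal group $K$ gives $\theta_n(K)=\dim_{\Q}\B_{n-2}(K;\Q)$; since the total module is finite-dimensional its graded pieces vanish in high degrees.

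There is no real obstacle here: the corollary is a formal consequence of the preceding theorem and the already-proved vanishing criteria. The only point requiring minor care is to confirm that the hypotheses of Corollary \ref{cor:zero-res} and Corollary \ref{cor:trivial-res-chen} are all met in the present setting---namely that $b_1(K)<\infty$ (which follows from $\abf$-exactness, since $\iota_{\abf}$ is injective and $G$ is finitely generated) and that $K$ is $1$-formal (which is assumption \ref{rab2}). Consequently the proof would simply read: ``All claims follow directly from Theorem \ref{thm:res-abf}, part \eqref{xx2} and part \eqref{xx3}, together with Corollaries \ref{cor:zero-res} and \ref{cor:trivial-res-chen}.''
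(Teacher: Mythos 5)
Your proposal is correct and matches the paper's own proof, which reads in its entirety: ``All claims follow from the theorem and from Corollaries \ref{cor:zero-res} and \ref{cor:trivial-res-chen}.'' You have simply unpacked that one-line argument accurately --- using the surjection $\iota^* \colon \RR_1(G)\surj \RR_1(K)$ from Theorem \ref{thm:res-abf}\eqref{xx3}, Corollary \ref{cor:zero-res} with $k=1$, and the $1$-formality of $K$ to invoke Corollary \ref{cor:trivial-res-chen} --- and your side remarks on verifying $b_1(K)<\infty$ are a reasonable extra check, not a deviation.
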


\begin{proof}
All claims follow from the theorem and from 
Corollaries \ref{cor:zero-res} and \ref{cor:trivial-res-chen}.
\end{proof}

We conclude with a general class of examples where 
Theorem \ref{thm:res-abf} applies.

\begin{example}
\label{ex:raag-res}
For a finite, connected graph $\Gamma$, the right-angled Artin group 
$G_{\Gamma}$ is always $1$-formal (see \cite{PS-mathann06}), 
whereas the Bestvina--Brady group $N_{\Gamma}$ is $1$-formal 
whenever $\pi_1(\Delta_{\Gamma})=0$ (see \cite{PS-jlms07}), or, 
more generally, $H_1(\Delta_{\Gamma};\Q)=0$ (see \cite{PS-adv09}). 
When $\Delta_{\Gamma}$ is simply-connected, Theorem 
\ref{thm:res-abf}, part \eqref{xx2} recovers Lemma 8.3(ii) from 
\cite{PS-jlms07}. 
In the broader setting when $b_1(\Delta_{\Gamma})=0$ yet 
$\pi_1(\Delta_{\Gamma})\ne 0$ (for instance, when $\Gamma$ is the 
$1$-skeleton of a flag triangulation of $\RP^2$), our theorem still applies, 
although in this case $N_{\Gamma}$ is finitely generated yet 
not finitely presented.
\end{example}

%%%%  Bibliography %%%%%

\newcommand{\arxiv}[1]
{\texttt{\href{https://arxiv.org/abs/#1}{arXiv:#1}}}

\newcommand{\arxi}[1]
{\texttt{\href{https://arxiv.org/abs/#1}{arxiv:}}
\texttt{\href{https://arxiv.org/abs/#1}{#1}}}

\newcommand{\arxx}[2]
{\texttt{\href{https://arxiv.org/abs/#1.#2}{arxiv:#1.}}
\texttt{\href{https://arxiv.org/abs/#1.#2}{#2}}}

\newcommand{\doi}[1]
{\texttt{\href{https://dx.doi.org/#1}{doi:#1}}}

\renewcommand{\MR}[1]
{\href{https://www.ams.org/mathscinet-getitem?mr=#1}{MR#1}}

\newcommand{\Zbl}[1]
{\href{http://zbmath.org/?q=an:#1}{Zbl #1}}


\begin{thebibliography}{00}

\bibitem{AFPRW1}
M.~Aprodu, G.~Farkas, S.~Papadima, C.~Raicu, J.~Weyman,
\href{https://doi.org/10.1215/00127094-2022-0010}%
{\em Topological invariants of groups and {K}oszul modules}, 
Duke Math. Journal \textbf{171} (2022), no.~19, 2013--2046.
\MR{4484204}

\bibitem{AFPRW2}
M.~Aprodu, G.~Farkas, S.~Papadima, C.~Raicu, J.~Weyman,
\href{https://doi.org/10.1007/s00222-019-00894-1}%
{\em Koszul modules and Green's Conjecture}, 
Invent. Math. \textbf{218} (2019), 657--720.
\MR{4022070}

\bibitem{AFRS}
M.~Aprodu, G.~Farkas, C.~Raicu, A.~Suciu,
{\em An effective proof of the Chen ranks conjecture},
draft (2021).

\bibitem{BL} H.~Bass, A.~Lubotzky, 
\href{https://dx.doi.org/10.1090/conm/169/01651}%
{\em Linear-central filtrations on groups}, 
in: The mathematical legacy of Wilhelm Magnus: groups, 
geometry and special functions (Brooklyn, NY, 1992), 45--98, 
Contemp. Math., vol.~169, Amer. Math. Soc., Providence, RI, 1994. 
\MR{1292897}

\bibitem{BG} P.~Bellingeri, S.~Gervais,
\href{https://dx.doi.org/10.2140/agt.2016.16.547}%
{\em On $p$-almost direct products and residual properties
of pure braid groups of nonorientable surfaces}, Alg. Geom. Topol. 
\textbf{16} (2016) 547--568.
\MR{3470709}

\bibitem{BGG11} P.~Bellingeri, S.~Gervais, J.~Guaschi,
{\em Exact sequences, lower central series and
representations of surface braid groups}, \arxiv{1106.4982v1}.

\bibitem{BB} M.~Bestvina, N.~Brady, 
\href{https://doi.org/10.1007/s002220050168}%
{\em Morse theory and finiteness properties of groups}, 
Invent. Math. \textbf{129} (1997), no.~3, 445--470. 
\MR{1465330} 

\bibitem{Brown} K.S.~Brown, 
\href{https://dx.doi.org/10.1007/978-1-4684-9327-6}%
{\emph{Cohomology of groups}}, Corrected reprint of the 1982 original.  
Graduate Texts in Mathematics, vol.~87, Springer-Verlag, New York, 1994. 
\MR{1324339}

\bibitem{Chen51} K.-T.~Chen, 
\href{https://dx.doi.org/10.2307/1969316}%
{\emph{Integration in free groups}}, Ann. of Math. (2) \textbf{54} 
(1951), no.~1, 147--162.
\MR{0042414} 

\bibitem{Chen77} K.-T.~Chen,
\href{https://dx.doi.org/10.1016/0001-8708(77)90120-7}%
{\em Extension of $C^{\infty}$ function algebra by integrals 
and {M}alcev completion of $\pi_1$}, Adv. in Math. \textbf{23} 
(1977), no.~2, 181--210.
\MR{0458461}

\bibitem{CEP} T.~Church, M.~Ershov, A.~Putman, 
\href{https://dx.doi.org/10.4171/JEMS/1157}%
{\em On finite generation of the Johnson filtrations}, 
J. Eur. Math. Soc. (JEMS) \textbf{24} (2022), no.~8, 2875--2914.
\MR{4416592}

\bibitem{Co04} T.~Cochran, 
\href{https://dx.doi.org/10.2140/agt.2004.4.347}%
{\em Noncommutative knot theory}, 
Algebr. Geom. Topol. \textbf{4} (2004), 347--398. 
\MR{2077670}

\bibitem{CH05} T.~Cochran, S.~Harvey, 
\href{https://dx.doi.org/10.2140/gt.2005.9.2159}%
{\em Homology and derived series of groups}, 
Geom. Topol. \textbf{9} (2005), 2159--2191. 
\MR{2209369}

\bibitem{CH08} T.~Cochran, S.~Harvey, 
\href{https://dx.doi.org/10.1112/jlms/jdn046}%
{\em Homology and derived $p$-series of groups}, 
J. London Math. Soc. \textbf{78} (2008), 677--692. 
\MR{2456898}

\bibitem{CH10} T.~Cochran, S.~Harvey, 
\href{https://dx.doi.org/10.2140/pjm.2010.246.31}%
{\em Homological stability of series of groups}, 
Pacific J. Math. \textbf{246} (2010), nr.~1, 31--47. 
\MR{2645878}

\bibitem{CSc-adv} D.C.~Cohen, H.K.~Schenck,
\href{https://dx.doi.org/10.1016/j.aim.2015.07.023}%
{\em Chen ranks and resonance}, Adv. Math. 
\textbf{285} (2015), 1--27.
\MR{3406494}

\bibitem{CS-tams99} D.C.~Cohen, A.I.~Suciu, 
\href{https://dx.doi.org/10.1090/S0002-9947-99-02206-0}%
{\em Alexander invariants of complex hyperplane arrangements}, 
Trans. Amer. Math. Soc. \textbf{351} (1999), no.~10, 4043--4067. 
\MR{1475679}

\bibitem{Cooper} J.~Cooper, 
\href{https://dx.doi.org/10.1142/S1793525315500120}%
{\em Two mod-$p$ Johnson filtrations}, 
J. Topol. Anal. 7 (2015), no.~2, 309--343.
\MR{3326304}

\bibitem{Cr65}  R.H.~Crowell, 
\href{https://www.jstor.org/stable/24901277}%
{\em Torsion in link modules}, 
J. Math. Mech. \textbf{14} (1965), 289--298. 
\MR{0174606}

\bibitem{DSY17} G.~Denham, A.I.~Suciu, S.~Yuzvinsky, 
\href{https://dx.doi.org/10.1007/s00029-017-0343-5}%
{\em Abelian duality and propagation of resonance}, 
Selecta Math. \textbf{23} (2017), no.~4, 2331--2367. 
\MR{3703455}

\bibitem{DHP14} A.~Dimca, R.~Hain, S.~Papadima, 
\href{https://dx.doi.org/10.4171/JEMS/447}%
{\em The abelianization of the {J}ohnson kernel}, J. Eur. 
Math. Soc \textbf{16} (2014) no. 4, 805--822. 
\MR{3191977}

\bibitem{DP-pisa} A.~Dimca, S.~Papadima, 
\href{https://dx.doi.org/10.2422/2036-2145.2011.2.01}%
{\em Finite {G}alois covers, cohomology jump loci, formality 
properties, and multinets},  Ann. Sc. Norm. Super. Pisa Cl. Sci. 
\textbf{10} (2011), no.~2, 253--268. 
\MR{2856148} 

\bibitem{DP-ann} A.~Dimca, S.~Papadima, 
\href{https://doi.org/10.4007/annals.2013.177.2.1}%
{\em Arithmetic group symmetry and finiteness properties of 
{T}orelli groups}, Ann. Math. \textbf{177} (2013), no.~2, 395--423. 
\MR{3010803}

\bibitem{DPS-serre} A.~Dimca, S.~Papadima, A.I.~Suciu,  
{\em Formality, {A}lexander invariants, and a question of {S}erre}, 
unpublished manuscript, \arxiv{math.AT/0512480v3}.
 
\bibitem{DPS-imrn} A.~Dimca, S.~Papadima, A.I.~Suciu,
\href{https://dx.doi.org/10.1093/imrn/rnm119}%
{\em Alexander polynomials: {E}ssential variables and 
multiplicities}, Int. Math. Res. Not. IMRN \textbf{2008}, 
no.~3, Art. ID rnm119, 36 pp. 
\MR{2416998} 

\bibitem{DPS-duke} A.~Dimca, S.~Papadima, A.I.~Suciu,
\href{https://dx.doi.org/10.1215/00127094-2009-030}%
{\em Topology and geometry of cohomology jump loci}, 
Duke Math. Journal \textbf{148} (2009), no.~3, 405--457.
\MR{2527322} 

\bibitem{DF} W.G.~Dwyer, D.~Fried,
\href{https://dx.doi.org/10.1112/blms/19.4.350}%
{\em Homology of free abelian covers. \textup{I}}, Bull.
London Math. Soc. \textbf{19} (1987), no.~4, 350--352.
\MR{0887774} 

\bibitem{Eisenbud} D.~Eisenbud, 
\href{https://dx.doi.org/10.1007/978-1-4612-5350-1}
{\em Commutative algebra}, with a view towards algebraic geometry,
Grad. Texts in Math., vol.~150, Springer-Verlag, New~York, 1995.
\MR{1322960}

\bibitem{Ei} D.~Eisenbud, 
\href{https://dx.doi.org/10.1007/b137572}%
{\em The geometry of syzygies},  
a second course in commutative algebra and algebraic geometry, 
Grad. Texts in Math., vol.~229, Springer-Verlag, New York, 2005.
\MR{2103875}

\bibitem{EN} D.~Eisenbud, W.~Neumann, 
\href{https://press.princeton.edu/titles/2356.html}%
{\em Three-dimensional link theory and invariants of plane curve
singularities}, Annals of Math. Studies, vol.~110, Princeton 
University Press, Princeton, NJ, 1985.
\MR{0817982} 

\bibitem{FR} M. Falk, R. Randell,
\href{https://dx.doi.org/10.1007/BF01394780}%
{\em The lower central series of a fiber-type arrangement},
Invent. Math. \textbf{82} (1985), no.~1, 77--88. 
\MR{808110}

\bibitem{GP} J.~Guaschi, C.M.~Pereiro, 
\href{https://doi.org/10.1016/j.jpaa.2020.106309}%
{\em Lower central and derived series of semi-direct products, 
and applications to surface braid groups}, J. Pure Appl. Algebra 
\textbf{224} (2020), no.~7, 106309, 39 pp. 
\MR{4058243}

\bibitem{Hain} R.~Hain, 
\href{https://doi.org/10.2969/aspm/05210309}%
{\em Relative weight filtrations on completions of mapping 
class groups}, in: Groups of diffeomorphisms, 309--368, 
Adv. Stud. Pure Math., vol.~52, Math. Soc. Japan, Tokyo, 2008. 
\MR{2509715}

\bibitem{Hall} P.~Hall, 
\href{https://doi.org/10.1112/plms/s2-36.1.29}%
{\em A contribution to the theory of groups of prime-power order}, 
Proc. Lond. Math. Soc., II. Ser. \textbf{36} (1933), 29--95.
\Zbl{0007.29102}

\bibitem{Ha05} S.L.~Harvey, 
\href{https://doi.org/10.1016/j.top.2005.03.001}%
{\em Higher-order polynomial invariants of $3$-manifolds giving lower 
bounds for the Thurston norm}, Topology \textbf{44} (2005), no.~5, 895--945. 
\MR{2153977}

\bibitem{HHR} W.~Herfort, K.H.~Hofmann, F.G.~Russo, 
\href{https://doi.org/10.1515/9783110599190}%
{\em Periodic locally compact groups}, De Gruyter Stud. Math., vol. 71. 
De Gruyter, Berlin, 2019.
\MR{3932104}

\bibitem{HS} P.J.~Hilton, U.~Stammbach,
\href{https://dx.doi.org/10.1007/978-1-4419-8566-8}%
{\emph{A course in homological algebra}}, Second ed., Graduate 
Texts in Mathematics, vol.~4, Springer-Verlag, New York, 1997. 
\MR{1438546}  

\bibitem{Hi97} E.~Hironaka,
\href{https://doi.org/10.5802/aif.1573}%
{\em Alexander stratifications of character varieties}, Ann. Inst. 
Fourier  (Grenoble) \textbf{47} (1997), no.~2, 555--583.
\MR{1450425}

\bibitem{Karpilovsky} G.~Karpilovsky, 
{\em The Schur multiplier}, London Math. Soc. Monogr. (N.S.), 
vol.~2, Clarendon Press, Oxford Univ. Press, New York, 1987
\MR{1200015}

\bibitem{Lk09}  M.~Lackenby, 
\href{https://doi.org/10.1112/plms/pdn032}%
{\em New lower bounds on subgroup growth and homology growth}, 
Proc. Lond. Math. Soc. (3) \textbf{98} (2009), no.~2, 271--297. 
\MR{2481949}

\bibitem{Lk10}  M.~Lackenby, 
\href{https://dx.doi.org/10.1016/j.jalgebra.2010.07.012}%
{\em Detecting large groups}, 
J. Algebra \textbf{324} (2010), no.~10, 2636--2657. 
\MR{2725193}

\bibitem{Lazard} M.~Lazard, 
\href{https://doi.org/10.24033/asens.1021}%
{\em Sur les groupes nilpotents et les anneaux de {L}ie}, 
Ann. Sci. \'{E}cole Norm. Sup. (3) \textbf{71} (1954), 101--190.
\MR{0088496}

\bibitem{Leedham-McKay} C.R.~Leedham-Green, S.~McKay, 
{\em The structure of groups of prime power order}, 
London Math. Soc. Monogr. (N.S.), vol.~27. 
Oxford University Press, Oxford, 2002. 
\MR{1918951}

\bibitem{LR} J.C.~Lennox, D.J.S.~Robinson, 
\href{https://doi.org/10.1093/acprof:oso/9780198507284.001.0001}%
{\em The theory of infinite soluble groups}, 
The Clarendon Press, Oxford University Press, Oxford, 2004.
\MR{2093872}

\bibitem{Li} A.~Libgober,
\href{https://dx.doi.org/10.1016/0166-8641(92)90137-O}%
{\em On the homology of finite abelian coverings},
Topology. Appl. \textbf{43} (1992), 157--166.
\MR{1152316}

\bibitem{MKS} W.~Magnus, A.~Karrass, D.~Solitar,
{\em Combinatorial group theory}, Second revised edition, 
Dover, New~York, 1976.
\MR{0422434}

\bibitem{Markl-Papadima} M.~Markl, S.~Papadima,
\href{https://doi.org/10.5802/aif.1315}%
{\emph{Homotopy {L}ie algebras and fundamental groups via 
deformation theory}}, Ann. Inst.  Fourier (Grenoble) \textbf{42} 
(1992), no.~4, 905--935. 
\MR{1196099}

\bibitem{Ms-80}  W.~S.~Massey,
\href{https://dx.doi.org/10.1215/S0012-7094-80-04724-9}%
{\em Completion of link modules}, Duke Math. J.
\textbf{47} (1980), no.~2, 399--420.
\MR{0575904}

\bibitem{Mass} G.~Massuyeau, 
\href{https://dx.doi.org/10.1112/jlms/jdm034}%
{\em Finite-type invariants of $3$-manifolds and the dimension subgroup 
problem}, J. Lond. Math. Soc. (2) \textbf{75} (2007), no.~3, 791–811. 
\MR{2352736}

\bibitem{MS-top} D.~Matei, A.I.~Suciu,
\href{http://dx.doi.org/10.1016/S0040-9383(98)00058-5}%
{\em Homotopy types of complements of $2$-arrangements in $\R^4$}, 
Topology \textbf{39} (2000), no.~1, 61--88. 
\MR{1710992} 

\bibitem{MS00} D.~Matei, A.I.~Suciu,
\href{https://dx.doi.org/10.2969/ASPM/02710185}%
{\em Cohomology rings and nilpotent quotients of real 
and complex arrangements}, Arrangements---{T}okyo 1998, 
Adv. Stud. Pure Math., vol.~27, Kinokuniya, Tokyo, 2000, pp.~185--215.
\MR{1796900}

\bibitem{MS-imrn}  D.~Matei, A.I.~Suciu,
\href{https://dx.doi.org/10.1155/S107379280210907X}%
{\em Hall invariants, homology of subgroups, and characteristic 
varieties}, Internat. Math. Res. Notices \textbf{2002} (2002), 
no.~9, 465--503.
\MR{1884468}

\bibitem{PS-imrn04} S.~Papadima, A.I.~Suciu, 
\href{https://dx.doi.org/10.1155/S1073792804132017}%
{\em Chen {L}ie algebras}, Int. Math. Res. Not. 
\textbf{2004} (2004), no.~21, 1057--1086. 
\MR{2037049}

\bibitem{PS-mathann06} S.~Papadima, A.I.~Suciu,
\href{https://dx.doi.org/10.1007/s00208-005-0704-9}%
{\emph{Algebraic invariants for right-angled {A}rtin groups}}, 
Math. Ann. \textbf{334} (2006), no.~3, 533--555. 
\MR{2207874}

\bibitem{PS-jlms07} S.~Papadima, A.I.~Suciu,
\href{https://doi.org/10.1112/jlms/jdm045}%
{\em Algebraic invariants for Bestvina-Brady groups},   
J. London Math. Soc. \textbf{76} (2007), no.~2, 273--292. 
\MR{2363416}

\bibitem{PS-adv09} S.~Papadima, A.I.~Suciu,
\href{https://dx.doi.org/10.1016/j.aim.2008.09.008}%
{\em Toric complexes and Artin kernels}, 
Adv. Math. \textbf{220} (2009), no.~2, 441--477. 
\MR{2466422}

\bibitem{PS-formal} S.~Papadima, A.~Suciu,
\href{https://www.jstor.org/stable/43679144}%
{\em Geometric and algebraic aspects of $1$-formality}, 
Bull. Math. Soc. Sci. Math. Roumanie \textbf{52} (2009), 
no.~3, 355--375. 
\MR{2554494}

\bibitem{PS-plms10} S.~Papadima, A.I.~Suciu,
\href{https://dx.doi.org/10.1112/plms/pdp045}%
{\em Bieri--{N}eumann--{S}trebel--{R}enz invariants and 
homology jumping loci}, Proc. London Math.~Soc. 
\textbf{100} (2010), no.~3, 795--834.  
\MR{2640291} 

\bibitem{PS-jtop} S.~Papadima, A.I.~Suciu,
\href{https://dx.doi.org/10.1112/jtopol/jts023}%
{\em Homological finiteness in the {J}ohnson filtration of 
the automorphism group of a free group}, 
J. Topol. \textbf{5} (2012), no.~4,  909--944.
\MR{3001315}
 
\bibitem{PS-mrl} S.~Papadima, A.I.~Suciu,
\href{https://dx.doi.org/10.4310/MRL.2014.v21.n4.a13}%                                                                                                                                                                                                                                                                                                                                                                                                         
{\em Jump loci in the equivariant spectral sequence}, 
Math. Res. Lett. \textbf{21} (2014), no.~4, 863--883.
\MR{3275650}

\bibitem{PS-crelle} S.~Papadima, A.I.~Suciu,
\href{https://dx.doi.org/10.1515/crelle-2013-0073}%
{\emph{Vanishing resonance and representations of {L}ie algebras}},
J. Reine Angew. Math. \textbf{706} (2015), 83--101.
\MR{3393364}

\bibitem{Paris} L.~Paris, 
\href{https://doi.org/10.1090/S0002-9947-08-04573-X}%
{\em Residual $p$ properties of mapping class groups and surface groups}, 
Trans. Amer. Math. Soc. \textbf{361} (2009), no.~5, 2487--2507. 
\MR{2471926}

\bibitem{Passman}  D.S.~Passman, 
{\em The algebraic structure of group rings}, 
Pure and Applied Mathematics, Wiley-Interscience, 
New York--London--Sydney, 1977.
\MR{0470211}

\bibitem{Quillen68} D.G.~Quillen, 
\href{https://dx.doi.org/10.1016/0021-8693(68)90069-0}%
{\emph{On the associated graded ring of a group ring}}, 
J. Algebra \textbf{10} (1968), 411--418. 
\MR{0231919}

\bibitem{Qu} D.~Quillen, 
\href{https://dx.doi.org/10.2307/1970725}%
{\em Rational homotopy theory}, Ann. of Math. (2) \textbf{90}
(1969), 205--295. 
\MR{0258031}

\bibitem{SW92} P.B.~Shalen, P.~Wagreich,
\href{https://dx.doi.org/10.2307/2154149}%
{\em Growth rates, $\Z_p$-homology, and volumes of hyperbolic 
$3$-manifolds}, Trans. Amer. Math. Soc. \textbf{331} (1992), 
no.~2, 895--917. 
\MR{1156298}

\bibitem{St} J.~Stallings,
 \href{https://dx.doi.org/10.1016/0021-8693(65)90017-7}%
{\emph{Homology and central series of groups}},
J. Algebra \textbf{2} (1965), no.~2, 170--181.
\MR{0175956}

\bibitem{St83} J.R.~Stallings,
 \href{https://dx.doi.org/10.1007/BF01162003}%
{\em Surfaces in three-manifolds and nonsingular equations 
in groups}, Math. Z. \textbf{184} (1983), no.~1, 1--17. 
\MR{0711725}

\bibitem{Su-conm} A.I.~Suciu,
\href{https://dx.doi.org/10.1090/conm/276/04510}%
{\emph{Fundamental groups of line arrangements: enumerative aspects}},
in: {\em Advances in algebraic geometry motivated by physics} 
({L}owell, {MA}, 2000), 43--79, Contemp. Math., vol. 276, 
Amer. Math. Soc., Providence, RI, 2001.
\MR{1837109}

\bibitem{Su-imrn} A.I.~Suciu,
\href{https://dx.doi.org/10.1093/imrn/rns246}%
{\em Characteristic varieties and Betti numbers of free
abelian covers}, Int. Math. Res. Notices \textbf{2014} 
(2014), no. 4, 1063--1124.
\MR{3168402}

\bibitem{Su-toul} A.I.~Suciu,
\href{https://dx.doi.org/10.5802/afst.1412}% 
{\em Hyperplane arrangements and {M}ilnor fibrations}, 
Ann. Fac. Sci. Toulouse Math. \textbf{23} (2014), no.~2, 417--481.  
\MR{3205599}

\bibitem{Su-revroum} A.I.~Suciu,
\href{http://imar.ro/journals/Revue_Mathematique/pdfs/2017/1/10.pdf}%
{\em On the topology of Milnor fibrations of hyperplane arrangements},  
Rev. Roumaine Math. Pures Appl. \textbf{62} (2017), no.~1, 191--215.
\MR{3626439}

\bibitem{Su-edinb} A.I.~Suciu,
\href{https://doi.org/10.1017/prm.2019.55}%
{\em Poincar\'{e} duality and resonance varieties}, 
Proc. Roy. Soc. Edinburgh Sect. A
\textbf{150} (2020), nr.~6, 3001--3027. 
\MR{4190099}

\bibitem{Su-lcs} A.I.~Suciu,
{\em Lower central series and split extensions}, 
\arxiv{2105.14129v2}

\bibitem{Su-mfmono} A.I.~Suciu,
{\em Milnor fibrations of arrangements with trivial algebraic monodromy}, 
draft (2021).

\bibitem{SW-mz}  A.I.~Suciu, H.~Wang, 
\href{https://dx.doi.org/10.1007/s00209-016-1811-x}%
{\em Pure virtual braids, resonance, and formality}, 
Math. Z.  \textbf{286} (2017), no.~3--4, 1495--1524.
\MR{3671586}

\bibitem{SW-jpaa}  A.I.~Suciu, H.~Wang, 
\href{https://doi.org/10.1016/j.jpaa.2018.11.006}%
{\em Cup products, lower central series, and holonomy Lie algebras}, 
J. Pure Appl. Algebra \textbf{223} (2019), no.~8, 3359--3385.
\MR{3926216}

\bibitem{SW-forum} A.I.~Suciu, H.~Wang, 
\href{https://doi.org/10.1515/forum-2018-0098}%
{\em Formality properties of finitely generated groups and Lie 
algebras}, Forum Math. \textbf{31} (2019), no.~4, 867--905. 
\MR{3975666}

\bibitem{SW-aam} A.I.~Suciu, H.~Wang, 
\href{https://doi.org/10.1016/j.aam.2019.07.004}%
{\em Chen ranks and resonance varieties of the upper McCool groups}, 
Adv. in Appl. Math. \textbf{110} (2019), 197--234. 
\MR{3983125}

\bibitem{SW-ejm} A.I.~Suciu, H.~Wang, 
\href{https://doi.org/10.1007/s40879-019-00389-6}%
{\em Taylor expansions of groups and filtered-formality}, 
Eur. J. Math.  \textbf{6} (2020), nr.~3, 1073--1096.
\MR{4151729}

\bibitem{SYZ-pisa} A.I.~Suciu, Y.~Yang, G.~Zhao, 
\href{https://dx.doi.org/10.2422/2036-2145.201205_008}%
{\em  Homological finiteness of Abelian covers}, 
Ann. Sc. Norm. Super. Pisa Cl. Sci. \textbf{14} (2015), 
no.~1, 101--153.
\MR{3379489}

\end{thebibliography}
\end{document}